\definecolor{dark-red}{RGB}{200,0,0}
\definecolor{dark-green}{RGB}{0,128,0}
\definecolor{light-blue}{RGB}{0,109,217}
\definecolor{light-purple}{RGB}{197,85,254}
\definecolor{light-gray}{RGB}{190,190,190}
\definecolor{q6red}{RGB}{181,41,102}
\definecolor{q5yellow}{RGB}{188,190,59}
\definecolor{q4violet}{RGB}{138,109,202}
\definecolor{rosapastell}{RGB}{227,162,166}
\definecolor{ly}{RGB}{214,202,59}
\numberwithin{equation}{section} 
\newcommand{\R}{\mathbb{R}}
\newcommand{\<}{\langle}
\newcommand{\ex}{\mathrm{ex}}
\newcommand{\en}{\mathrm{en}}
\renewcommand{\>}{\rangle}
\DeclareMathOperator{\id}{id}
\DeclareMathOperator{\tr}{tr}
\DeclareMathOperator{\Ai}{Ai}
\DeclareMathOperator{\Bi}{Bi}
\DeclareMathOperator{\D}{D}
\DeclareMathOperator{\pr}{pr}
\newcommand{\eval}{\big{\rvert}} 
\DeclarePairedDelimiter\abs{\lvert}{\rvert} 
\newcommand{\eps}{\varepsilon} 
\newcommand{\txta}{\textnormal{a}}
\newcommand{\txtr}{\textnormal{r}}
\newcommand{\txts}{\textnormal{s}}
\newtheorem{defn}{Definition}[section] 
\newtheorem{lemma}[defn]{Lemma} 
\newtheorem{thm}[defn]{Theorem} 
\newtheorem{prop}[defn]{Proposition}
\newtheorem{remark}[defn]{Remark}
\begin{document}

\title{The hyperbolic umbilic singularity in fast-slow systems}

\author[H. Jard\'on-Kojakhmetov]{Hildeberto Jard\'on Kojakhmetov$^{1}$}
\address{$^{1}$ Bernoulli Institute for Mathematics, Computer Science and Artificial Intelligence, University of Groningen, 9700 AK Groningen, The Netherlands}
\email{h.jardon.kojakhmetov@rug.nl}
\author[C. Kuehn]{Christian Kuehn$^{2}$}
\address{$^{2}$ Department of Mathematics, Technical University of Munich, Boltzmannstr. 3, 85748 Garching bei München, Germany}
\email{ckuehn@ma.tum.de}
\author[M. Steinert]{Maximilian Steinert$^{2}$}
\email{max.steinert@tum.de}

\begin{abstract}

Fast-slow systems with three slow variables and gradient structure in the fast variables have, generically, hyperbolic umbilic, elliptic umbilic or swallowtail singularities. In this article we provide a detailed local analysis of a fast-slow system near a hyperbolic umbilic singularity. In particular, we show that under some appropriate non-degeneracy conditions on the slow flow, the attracting slow manifolds jump onto the fast regime and fan out as they cross the hyperbolic umbilic singularity. The analysis is based on the blow-up technique, in which the hyperbolic umbilic point is blown up to a 5-dimensional sphere. Moreover, the reduced slow flow is also blown up and embedded into the blown-up fast formulation. Further, we describe how our analysis is related to classical theories such as catastrophe theory and constrained differential equations.

\end{abstract}
\subjclass[2020]{34E15, 34A26, 58K35}

\maketitle

\textsc{Keywords.} Fast-slow system; Geometric desingularization; Blow-up method; Catastrophe theory. 

{ \hypersetup{linkcolor=black}
\tableofcontents
}
\section{Introduction and main result}

In this article we analyze a fast-slow system, also called a singularly perturbed differential equation, with two fast and three slow variables of the form
\begin{equation}
	\begin{split}
		\label{hyp_umb}
		\eps\dot{x} &= x^2 + ay + b + O(\eps) \\
		\eps\dot{y} &= y^2 + ax + c + O(\eps) \\
		\dot{a} &= g_a(x,y,a,b,c,\eps) \\
		\dot{b} &= g_b(x,y,a,b,c,\eps) \\
		\dot{c} &= g_c(x,y,a,b,c,\eps),
	\end{split}
\end{equation}
or equivalently, after suitable time change onto the fast time scale,
\begin{equation}
	\begin{split}
		\label{hyp_umb_f}
		x' &= x^2 + ay + b + O(\eps) \\
		y' &= y^2 + ax + c + O(\eps) \\
		a' &= \eps g_a(x,y,a,b,c,\eps) \\
		b' &= \eps g_b(x,y,a,b,c,\eps) \\
		c' &= \eps g_c(x,y,a,b,c,\eps),
	\end{split}
\end{equation}
for sufficiently small time scale separation parameter $0< \eps \ll 1$. More precisely, we focus on the dynamics of \eqref{hyp_umb_f} near the origin, which is the most degenerate singularity of \eqref{hyp_umb_f}. We refer to this singularity as hyperbolic umbilic singularity, since it arises from catastrophe theory, as will be shown below. For basic fast-slow systems terminology we refer to section \eqref{fs_terminology}. For more details and recent results, the reader is referred to, e.g. \cite{kuehn2015multiple,hjk2021survey,wechselberger2020geometric,de2021canard}.
 
Let us outline our motivation to study a hyperbolic umbilic singularity and in particular system \eqref{hyp_umb_f}.
The fast-slow system \eqref{hyp_umb_f} arises from the general class of fast-slow systems of the form
\begin{equation}
	\begin{split}
		\label{start0}
		z' &= \nabla_z V(z,\alpha) + O(\eps)\\
		\alpha' &= \eps g(z,\alpha,\eps),
	\end{split}
\end{equation}
where $z\in\mathbb R^n$, $\alpha\in\mathbb R^r$ and $V$ is a sufficiently smooth function. The formulation \eqref{start0} gives a (parameter-dependent) gradient field in the limit $\eps\to0$.
Note that the class of systems of the form \eqref{start0} contains any fast-slow system having only one fast variable. In particular, \eqref{start0} contains the fast-slow systems, which exhibit fold or cusp singularities, after suitable center manifold reduction and transformations \cite{krupa2001extending,broer2013cusp,wechselberger2012apropos,kojakhmetov2016cusp,szmolyan2001canards,krupa2001transcritical,kuehn2015multiple}. In this article, we are in general interested in a fast-slow system of the form \eqref{start0} near a singularity, for which at least two fast variables are necessary. More precisely, we are interested in a fast-slow system \eqref{start0} near a hyperbolic umbilic singularity. This singularity arises from elementary catastrophe theory \cite{arnold1973normal,michor1985cat} and needs at least two fast variables in \eqref{start0} for its occurrence. Additionally, this singularity occurs generically for at least three slow variables in \eqref{start0}. For a detailed explanation and the definition of this singularity in the setting of \eqref{start0} we refer to section \ref{sec:hyp_umb_singularity}, in particular to Definition \ref{sf:singularity}. The hyperbolic umbilic singularity, among others, already appeared in Takens' study of constrained differential equations \cite{takens1976}, which are related to the singular limit of \eqref{start0}. In fact, our main motivation is a list of ``normal forms'' provided by Takens \cite[{(4.10)}]{takens1976}, in which the hyperbolic umbilic singularity is contained. For detailed statements, we refer to section \ref{sec:hyp_umb_singularity}. Schematically, though, one can regard Takens' work \cite{takens1976}, to which we will refer to as ``Takens' program'', as a classification of gradient-type constrained differential equations, for which the constraints are given by the critical points of potential functions, that lead to a hierarchy of phase spaces (critical manifolds) as shown in figure \ref{fig:Hierarchy}. 
\begin{figure}[htbp!]
    \begin{tikzpicture}
        \node at (-5,0){
        \includegraphics[]{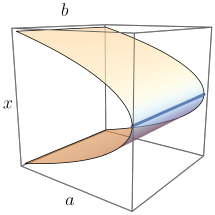}
        };
        \node at (0,0){
        \includegraphics[]{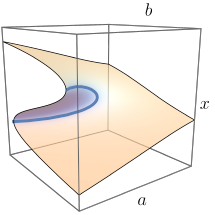}
        };
        \node at (5,0){
        \includegraphics[]{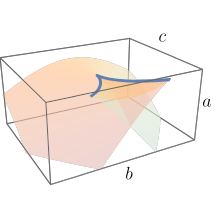}
        };
        \node at (-2.5,-5){
        \includegraphics[]{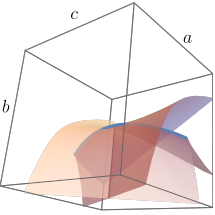}
        };
        \node at (2.5,-5){
        \includegraphics[]{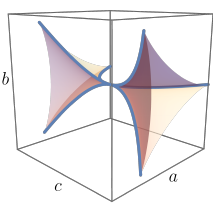}
        };
        \node at (-5,2){fold};
        \node at (0,2){cusp};
        \node at (5,2){swallowtail};

        \node at (-2.5,-3){hyperbolic umbilic};
        \node at (2.5, -3){elliptic umbilic};
    \end{tikzpicture}

    \caption{Takens' program is concerned with dynamical systems whose orbits are constrained to manifolds like the ones shown in this figure. Here, we depict Takens' classification of constraint manifolds as up to three parameters $(a,b,c)$ \cite{takens1976}. The hierarchy of these manifolds comes from the fact that previous singularities are contained in each new constraint as the number of parameters is increased. For example, the constraint manifold associated to the cusp singularity contains a curve of folds (blue) on which the cusp point is located. For the swallowtail, hyperbolic umbilic and elliptic umbilic, the constraint manifolds are 3-dimensional, so here we only depict a projection of their singularities in parameter space. Their singularities consist of surfaces of folds, which contain curves of cusps (blue) and so on.
    As we detail in Section \ref{sec:Takens}, these manifolds are defined by the classification of critical points of smooth functions, see for example also the Coxeter-Dynkin diagram in \cite[Corollary 8.7]{arnold1973normal}. It is worth mentioning that fast-slow systems related to fold and cusp critical manifolds (constraints) have already been studied. Still, to the best of our knowledge, our work is the first to address the dynamics in one of the umbilics.}
    \label{fig:Hierarchy}
\end{figure}

Roughly speaking, we choose the parameter-dependent potential
\begin{equation}
	\begin{split}
		\label{unfolding}
		V(x,y,a,b,c) = \frac{1}{3}x^3 + \frac{1}{3}y^3 + axy + bx + cy
	\end{split}
\end{equation} in \eqref{start0}, where $z = (x,y) \in \R^2$ are the fast variables and $\alpha = (a,b,c) \in \R^3$ are the slow variables. This choice leads to system \eqref{hyp_umb}. The potential $V$, given by \eqref{unfolding}, is a universal unfolding of the hyperbolic umbilic germ $\frac{1}{3}(x^3 + y^3)$ in the sense of catastrophe theory \cite{michor1985cat}, see also section \ref{sec:CT} below. The precise argument for our derivation of the fast-slow system \eqref{hyp_umb} from the general form \eqref{start0} is then given in section \ref{the_choice}.

\begin{remark}
    We notice that the chosen germ $\frac{1}{3}(x^3 + y^3)$ is permutation symmetric; also compare with appendix \ref{app:universal_unfolding}. This has interesting consequences, as we mention later in remark \ref{rem:folded-sings}.
\end{remark}

The main objective of this article is to provide a detailed analysis of the dynamics of the fast-slow system \eqref{hyp_umb} near the hyperbolic umbilic singularity at the origin, in a generic case. This analysis faces several challenges as such singularity has not been studied before in the context of fast-slow systems. To accomplish our objective, we employ geometric techniques from dynamical systems theory and especially from the theory of fast-slow systems \cite{kuehn2015multiple}. Our analysis is based on the blow-up technique, which has been used in numerous cases to desingularize fast-slow systems, e.g.\ \cite{krupa2001extending,szmolyan2001canards,krupa2001transcritical,broer2013cusp,kojakhmetov2016cusp,hjk2021survey,de2021canard}. We further mention that fast-slow systems and the blow-up technique are also useful in the context of regularization of Filipov (i.e. piece-wise smooth) systems \cite{panazzolo2017regularization,llibre2009study}. For now, much attention has been given to Filipov systems with one or two-dimensional discontinuity sets, and where fold and cusp catastrophes have already been shown to play an important role \cite{jeffrey2011geometry,perez2023slow,colombo2012bifurcations}. Although new phenomena appear in the context of Filipov systems, as one considers higher dimensional discontinuity sets, Takens' classification could also be relevant. Particularly, it may be interesting to know if there is a class of Filipov systems which after regularization lead to gradient-type fast-slow systems.

For the dynamics of \eqref{hyp_umb} near the hyperbolic umbilic, we focus on the continuation of attracting slow manifolds, denoted by $\mathcal{S}^{a}_{\eps}$, under the flow of the system. Attracting slow manifolds are obtained by Fenichel theory as ``perturbations'' from the singular limit $\eps = 0$ of the attracting region of the critical manifold $\mathcal{S}_{0}=\left\{ \nabla_z V(z,\alpha)=0\right\}$; for Fenichel theory we refer the reader to \cite{fenichel1979,wiggins1994normally,jones1995,kuehn2015multiple}. The aim is to analyze how attracting slow manifolds $\mathcal{S}^{a}_{\eps}$ of \eqref{hyp_umb} evolve near the hyperbolic umbilic singularity. Therefore, our analysis of \eqref{hyp_umb} will be of local nature. For sufficiently small $\nu > 0$, we define an entry- and exit section by
\begin{equation}
	\begin{split}
		\Delta^{\en} = \{ (x,y,a,b,c)\in\mathbb R^5\, | \, y = -\nu \}, \qquad \Delta^{\ex} = \{ (x,y,a,b,c)\in\mathbb R^5\, | \,x + y = 2\nu \}.
	\end{split}
\end{equation}
Let $J$ denote the closed line segment $\{ (x,y,a,b,c)\in\mathbb R^5\, | \, x \ge 0, y \ge 0, a = b = c = 0 \} \cap \Delta^{\ex}$ and denote by $j_x$ and $j_y$ the singletons containing the intersection of $J$ with the $x$-axis, respectively $y$-axis. Refer to figure \ref{fig:main_thm} for a sketch. Then, we track slow flow trajectories contained in (a choice of) an attracting slow manifold $\mathcal{S}^{a}_{\eps}$, which start at $\Delta^{\en}$ and come close to the hyperbolic umbilic singularity. These trajectories correspond to initial conditions in a subset $I_\eps \subset \Delta^{\en} \cap \mathcal{S}^{a}_{\eps}$ in the context of the following main result, which shows that these trajectories transition from the slow regime onto the fast regime and fan out. This fanning-out behavior is observed at the exit section $\Delta^{\ex}$. A sketch of this result is given in figure \ref{fig:main_thm} and the singular limit $\eps = 0$ is illustrated in figure \ref{fig:main_thm_sing_limit}.

\begin{thm}[Attracting slow manifolds jump and fan out at the hyperbolic umbilic]
	\label{thm:main_thm}
	Consider the fast-slow system \eqref{hyp_umb} and assume $g_b(0)>0$ and $ g_c(0) > 0$. For sufficiently small $\eps > 0$, there exist open sets $I_\eps \subset \Delta^{\en} \cap \mathcal{S}^{a}_{\eps}$ such that the transition maps $\Pi\colon I_\eps \to \Delta^{\ex}$ induced by the flow of \eqref{hyp_umb} are well-defined and satisfy the following assertions:
	\begin{enumerate}[(1)]
		\item $I_\eps$ and $\Pi(I_{\eps})$ are 2-dimensional manifolds and $I_{\eps}$ converges in Hausdorff distance to a single point $p$ for $\eps \to 0$.
		\item There exist open sets $L^x_{\eps},R_{\eps},L^{y}_{\eps}$ such that $I_{\eps} = L^x_{\eps} \cup R_{\eps} \cup L^{y}_{\eps}$ and $\Pi(R_\eps) \to J$, $\Pi(L^{x}_{\eps}) \to j_x$, $\Pi(L^{y}_{\eps}) \to j_y$ in Hausdorff distance for $\eps \to 0$.
		\item For $(x,y,a,b,c) \in \Pi(I_\eps)$ it holds that $a = O(\eps^{1/3}), b = O(\eps^{2/3}), c = O(\eps^{2/3})$.
		\item If $g_a(x,y,a,b,c,0) = a \cdot h(x,y,a,b,c)$ for sufficiently smooth $h$, then $a_i = O(\eps^{1/3})$ for $(x_i,-\nu,a_i,b_i,c_i) \in I_{\eps}$.
	\end{enumerate}
\end{thm}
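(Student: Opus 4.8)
The plan is to analyze \eqref{hyp_umb} via geometric desingularization: blow up the origin in the $(x,y,a,b,c)$-space to the sphere $\mathbb{S}^5$ using weights dictated by the scaling invariance of the normal form, namely $x = r\bar{x}$, $y = r\bar{y}$, $a = r\bar{a}$, $b = r^2\bar{b}$, $c = r^2\bar{c}$, and $\eps = r^3\bar{\eps}$. The choice of weights is forced by requiring the leading terms $x^2 + ay + b$ and $y^2 + ax + c$ to be homogeneous and by the fact that the slow drift in $b,c$ must enter at the same order as the fast dynamics to balance; this is precisely the exponent pattern $a\sim\eps^{1/3}$, $b,c\sim\eps^{2/3}$ appearing in assertion (3). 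I would then work in the standard charts: the entry chart (where $\bar{y}=-1$), the rescaling chart $K_2$ (where $\bar{\eps}=1$), and the exit chart(s) near the $x$- and $y$-axes of $\mathbb{S}^5$ (where $\bar{x}=1$ or $\bar{y}=1$).

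The core of the argument proceeds chart by chart. First, in the entry chart I track the attracting slow manifold $\mathcal{S}^a_\eps$ from $\Delta^{\en}$; Fenichel theory gives that it is $O(\eps)$-close to the attracting branch of $\mathcal{S}_0$, and I follow it up to the boundary of a small neighborhood of the blown-up sphere, identifying the image as a manifold exponentially close to a center manifold of the blown-up vector field. The crucial assumptions $g_b(0)>0$, $g_c(0)>0$ guarantee that the slow drift pushes the trajectories in the direction that causes the fast subsystem to lose its stable equilibrium — this is the mechanism behind the jump. Next, in the rescaling chart $K_2$, after dividing out the common factor $r^2$ (desingularization), the blown-up system becomes (to leading order) an $\eps$-independent system on $\mathbb{S}^5$ coupled to the blown-up reduced slow flow; I need to understand the passage of trajectories across this chart, showing they are guided by the dynamics of the rescaled fast subsystem $\dot{\bar{x}} = \bar{x}^2 + \bar{a}\bar{y} + \bar{b}$, $\dot{\bar{y}} = \bar{y}^2 + \bar{a}\bar{x} + \bar{c}$ with the slow parameters $\bar{a},\bar{b},\bar{c}$ themselves evolving. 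The fanning-out in assertion (2) comes from the fact that, in this chart, trajectories entering near a single point spread according to whether they flow toward the $x$-axis, the $y$-axis, or the diagonal of the positive quadrant — the three sets $L^x_\eps$, $L^y_\eps$, $R_\eps$ — and I would establish this by a careful analysis of the (finitely many) relevant equilibria and invariant lines of the rescaled fast flow in the positive quadrant, together with a shooting/transversality argument. Finally, the exit charts translate the $K_2$-conclusions back to $\Delta^{\ex}$, and the scaling $x = r\bar{x}$ etc. with $r \sim \eps^{1/3}$ yields the order estimates in (3). Assertion (4): if $g_a = a\cdot h$, then $a=0$ is invariant for the slow flow, so the blown-up $\bar{a}$-equation has $\{\bar a = 0\}$ (equivalently the factor $r\bar a$) persisting in a controlled way, and propagating $a = O(\eps^{1/3})$ through all charts — rather than the a priori weaker bound — follows because the $\bar{a}$-dynamics cannot grow $\bar a$ past $O(1)$ without passing through the region where the invariance and hyperbolicity estimates apply.

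The Hausdorff convergence statements in (1)–(2) then follow from standard estimates on the transition maps: in each chart the relevant center/slow manifolds and their images depend continuously on $\eps$ down to $\eps = 0$, and the set $I_\eps\subset\mathcal{S}^a_\eps\cap\Delta^{\en}$ — consisting exactly of those initial conditions whose forward orbits reach $\Delta^{\ex}$ before leaving the neighborhood — shrinks to the single point $p$ because in the singular limit only the one orbit through the blown-up locus connects entry to exit. The two-dimensionality of $I_\eps$ and $\Pi(I_\eps)$ reflects that we have a 5-dimensional phase space, a 1-dimensional flow direction, and a 2-dimensional family of slow manifolds parametrized by $(b,c)$ near the relevant values, with $a$ essentially pinned by the dynamics; I would make this precise by exhibiting $I_\eps$ as a graph over a 2-disk in the $(b,c)$-like coordinates on $\Delta^{\en}$.

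\textbf{Main obstacle.} I expect the hardest part to be the analysis in the rescaling chart $K_2$: unlike the classical fold or cusp, here the blown-up fast subsystem is genuinely two-dimensional with three slowly-varying parameters, so there is no explicit solution (the equilibria of $\bar{x}^2 + \bar{a}\bar{y} + \bar{b} = 0 = \bar{y}^2 + \bar{a}\bar{x} + \bar{c}$ trace out a surface with its own fold/cusp structure — the catastrophe set of the hyperbolic umbilic). Establishing that trajectories are correctly sorted into $L^x_\eps$, $R_\eps$, $L^y_\eps$ and that the sorting is robust requires a delicate combination of invariant-region arguments (using the $\bar{x}\leftrightarrow\bar{y}$ symmetry of the germ, broken by the generic drift), monotonicity of suitable Lyapunov-type functions along the flow, and possibly a secondary blow-up at the points $j_x, j_y$ where the limiting exit set degenerates. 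Matching the asymptotics of the entry-chart center manifold to the $K_2$ dynamics at the chart overlap, and controlling the accumulated drift in $\bar a, \bar b, \bar c$ over the (unbounded-in-rescaled-time) passage, is the technical heart of the proof.
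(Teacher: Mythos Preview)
Your overall architecture is right and matches the paper: the weighted blow-up with exponents $(1,1,1,2,2,3)$, the entry/rescaling/exit chart decomposition, and the identification of the scalings in (3) from $r\sim\eps^{1/3}$. But you have misidentified the main difficulty, and this leads to a genuine gap in the rescaling chart. You write that there is ``no explicit solution'' in $K_2$ because the blown-up fast subsystem is two-dimensional with three parameters, and you propose invariant-region/Lyapunov arguments and a possible secondary blow-up. In fact the opposite is true: on the blow-up sphere $\{r_2=0\}$ the $a_2$-equation reads $a_2'=0$, and in the invariant set $\{a_2=0\}$ the system \emph{decouples} into two independent Riccati equations $x_2'=x_2^2+b_2,\ b_2'=B_0$ and $y_2'=y_2^2+c_2,\ c_2'=C_0$, each identical to the rescaling-chart equation for the non-degenerate fold. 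These are solved explicitly via Airy functions, and the trichotomy $L^x_\eps/R_\eps/L^y_\eps$ is decided by a completely concrete criterion: which of the two ``dividing solutions'' $\gamma_{x_2},\gamma_{y_2}$ blows up first (governed by the first zero $z_0$ of $\Ai$ and the ratio $B_0^{1/3}/C_0^{1/3}$). The resulting one-parameter family $\gamma_s$ sweeps out a $2$-manifold $\Gamma$ whose forward limits are three hyperbolic equilibria $q_4,q_5,q_6$ on the equator; no secondary blow-up is needed there.

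Two further points where your sketch diverges from what actually works. First, in the entry chart the relevant organizing object is not just ``a center manifold of the blown-up vector field'' but the blow-up of the \emph{desingularized slow flow} $Y$ itself: $Y$ has a non-hyperbolic equilibrium at the origin, and after blow-up one finds an equilibrium $\zeta_2$ on the sphere to which the distinguished slow trajectory $\sigma$ connects; the transition $\Pi_1$ is then a passage near this hyperbolic (resonant) point inside a $4$-dimensional center manifold $W^c(\mathcal S^{\txta}_{0,1})$, and this is where the shrinking of $I_\eps$ to a point and the estimate in (4) are proved. Second, using separate exit charts $\{\bar x=1\}$ and $\{\bar y=1\}$ will not cleanly capture the fanning: the paper instead uses a single rotated exit chart (morally $\{\overline{x+y}=1\}$) in which all three exit equilibria $q_{4,5,6}$ are simultaneously visible, $q_5$ has a $2$-dimensional unstable manifold producing the fan, and the transversal intersection $\Gamma\pitchfork W^s(q_5)$ is what makes the splitting into $L^x_\eps,R_\eps,L^y_\eps$ robust under the $a_2$- and $r_2$-perturbations.
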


\begin{figure}[h]
	\centering
	\begin{overpic}[width=.65\textwidth]{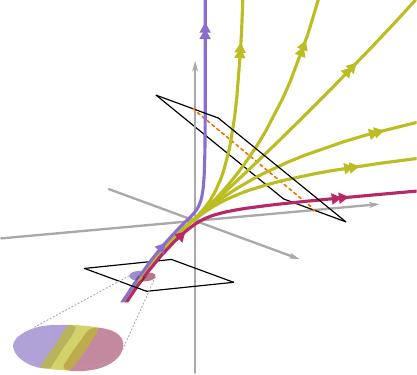}
		\put(73,26){\textcolor{gray}{$a$}}
		\put(90,38){\textcolor{gray}{$x$}}
		\put(44,75){\textcolor{gray}{$y$}}
		\put(59,59){\textcolor{orange}{$J$}}
		\put(73,36){\textcolor{orange}{$j_x$}}
		\put(43,67){\textcolor{orange}{$j_y$}}
		\put(24,28){$\Delta^{\en}$}
		\put(33,61){$\Delta^{\ex}$}
		\put(39,23){\textcolor{gray}{$I_{\eps}$}}
		\put(6,6){$L^{y}_{\eps}$}
		\put(15,5){$R_{\eps}$}
		\put(23,5){$L^{x}_{\eps}$}
	\end{overpic}
	\caption{Sketch of Theorem \ref{thm:main_thm}, describing the transition from a small region of initial conditions $I_\eps = \textcolor{q4violet}{L^{y}_{\eps}} \cup  \textcolor{q5yellow}{R_{\eps}} \cup \textcolor{q6red}{L^x_{\eps}} \subset \Delta^{\en}$ to $\Delta^{\ex}$: initial conditions in $L^{y}_{\eps}$ evolve close to the violet trajectory, $L^{x}_{\eps}$ evolves close to the red trajectory, whereas $R_{\eps}$ fans out in-between along the yellow trajectories. The trajectories were obtained by numerical integration of \eqref{hyp_umb} with $\eps = 10^{-3}$, $g_a = -1, g_b = 2, g_c = 1$ and suitably chosen initial conditions, such that they approach the hyperbolic umbilic at the origin and fan out. Most randomly sampled initial conditions near $I_{\eps}$ escape close to the $x$- or $y$-axis and only a few fan out in-between. This indicates that $R_{\eps}$ is small. The variables $b,c$ are suppressed in this sketch.}
	\label{fig:main_thm}
\end{figure}

\begin{remark}
    A relevant observation is that the hyperbolic umbilic does not present canard trajectories under the conditions of Theorem \ref{thm:main_thm}. 
\end{remark}

At this point, we provide a brief sketch of the proof to give an overview of the upcoming steps.

\begin{proof}[Sketch of the proof of Theorem \ref{thm:main_thm}]
	The first step is to understand the slow subsystem, which is the content of section \ref{sec:slow_singular_limit}. In particular, we study the so-called desingularized slow flow, which was already studied to some extent by Takens in \cite{takens1976}. The desingularized slow flow generically has a non-hyperbolic equilibrium at the hyperbolic umbilic point. Our study is then continued by blowing up the hyperbolic umbilic point in the fast formulation \eqref{hyp_umb_f} to a 5-d unit sphere $\mathbb{S}^5$. The desingularized slow flow \eqref{sf:desing}, which determines singular limit dynamics on the critical manifold, will be embedded into the blow-up analysis, which is carried out in section \ref{sec:desing_sf_blowup}.
	The transition map $\Pi$ and its properties are obtained by combining the results from an entry chart $\kappa_{-y}$, the rescaling chart $\kappa_{\eps}$ and an exit chart $\kappa_{\ex}$. The charts and their notation will be set up in the beginning of section \ref{sec:blowup}. In the entry chart, section \ref{sec:entry}, the main work is to obtain the flow nearby the hyperbolic umbilic in attracting center manifolds, which contain attracting slow manifolds. This will be accomplished by a center manifold reduction close to a point called $\zeta_2$, which arises from the desingularized slow flow, see Lemma \ref{lemma:CM_red}. In the rescaling chart $\kappa_\eps$, the key is to identify the relevant trajectories on the blow-up sphere, that lead attracting slow manifolds from $\zeta_2$ through the blow-up space. By perturbation arguments, this reduces to the analysis of two decoupled Riccati equations, which is a similar situation to the non-degenerate fold \cite{krupa2001extending}. This gives the flow close to the hyperbolic umbilic singularity, i.e.\ in the corresponding region in the rescaling chart. To enter this region, the transition $\Pi_1$ in section \ref{sec:entry_trans}, precisely the structure of \eqref{CM_red2}, requires that we limit the initial conditions suitably, which will eventually lead to $I_{\eps}$ shrinking to a point as $\eps \to 0$.
	In Proposition \ref{res:unique_gamma} we obtain an invariant 2-manifold $\Gamma$ on the blow-up sphere, leading the relevant trajectories in attracting slow manifolds $\mathcal{S}^{a}_\eps$ towards the fast regime in the exit chart $\kappa_{\ex}$. In the exit chart $\kappa_{\ex}$, we encounter three hyperbolic equilibria $q_{4,5,6}$, to which trajectories in $\Gamma$ connect. These three equilibria give three distinct forward asymptotic behavior of trajectories in $\Gamma$, which lead to the division of $I_\eps$ into three regions $L^x_{\eps}, R_{\eps}, L^{y}_{\eps}$.
	 Further, the three equilibria $q_{4,5,6}$ organize the transition onto the fast regime and lead to fanning-out, see Proposition \ref{ex:trans}. To obtain the fanning-out, the blown-up vector field is integrated in the invariant plane corresponding to the fast subsystem. Essentially, a $45^\circ$ rotation is included in $\kappa_{\ex}$, to have all escape directions visible in a single chart. The scalings of $\Pi(I_{\eps})$ originate from the convergence of $I_{\eps}$ to a point as $\eps \to 0$. The detailed proof is finished in section \ref{sec:main_proof}. For an overview of the blow-up analysis, we refer to figure \ref{fig:complete_transition}, in which all ingredients to the proof are visible.
\end{proof}

\begin{remark}
	Theorem \ref{thm:main_thm} does not cover the fast approach of trajectories towards attracting slow manifolds. However, from the fast subsystem one can roughly infer the basin of attraction of attracting slow manifolds. Compare to figure \ref{fig:main_thm_sing_limit}. Taking the stable manifold of an attracting slow manifold into account, we remark that Theorem \ref{thm:main_thm} does in this case only apply to suitable ``fast initial conditions'' located in fibers, whose base passes through $I_{\eps}$.
\end{remark}

\begin{figure}[h]
	\centering
	\begin{overpic}[width=.8\textwidth]{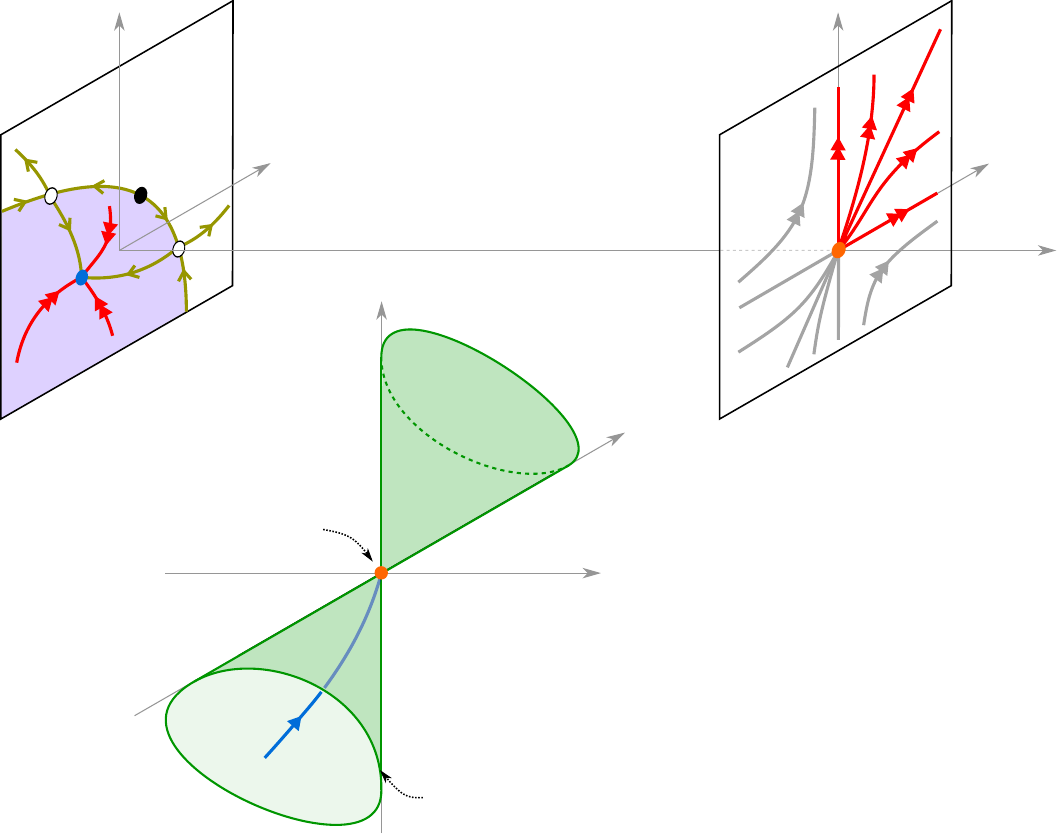}
		\put(93,57){\textcolor{gray}{$(a,b,c)$}}
		\put(94.5,63){\textcolor{gray}{$x$}}
		\put(81,77){\textcolor{gray}{$y$}}
		\put(27,63){\textcolor{gray}{$x$}}
		\put(12.7,77){\textcolor{gray}{$y$}}
		\put(57,23){\textcolor{gray}{$a$}}
		\put(60,37){\textcolor{gray}{$x$}}
		\put(33,48){\textcolor{gray}{$y$}}
		\put(28,56){\small \textcolor{gray}{$\leadsto$ appropriate slow flow $\leadsto$}}
		\put(41,18){\small \textcolor{gray}{slow subsystem}}
		\put(68,36){\rotatebox{30}{\small fast subsystem at $(0,0,0)$}}
		\put(0,36){\rotatebox{30}{\small fast subsystem at $(a,b,c)$}}
		\put(6,28){\small \textcolor{orange}{hyperbolic umbilic}}
		\put(41.5,3){\small \textcolor{dark-green}{double cone of singularities of $\mathcal{S}_0$}}
	\end{overpic}
	\caption{Singular limit sketch of \eqref{hyp_umb}. The upper two planar systems are fast subsystems for particular fixed choices of slow variables, i.e.\ ``parameters'' $(a,b,c)$. The right one is the fast subsystem at the hyperbolic umbilic, i.e.\ $(a,b,c) = (0,0,0)$. The slow subsystem on the critical manifold can be parametrized by $(x,y,a)$ and the green double cone represents the singularities. Fast segments of candidate trajectories for \eqref{hyp_umb} are shown in red. The blue trajectory is a particular slow candidate trajectory in the attracting part of the critical manifold, which approaches the hyperbolic umbilic singularity, depicted in orange. The blue segment can be concatenated with any of the red segments emanating from the hyperbolic umbilic, which corresponds to Theorem \ref{thm:main_thm} in the singular limit. The violet region depicts a basin of attraction for the blue sink, which is an instance of the attracting region of the critical manifold. Here, one can think of $(a,b,c)$ chosen such that this sink corresponds to the starting point of the blue slow candidate trajectory. Further, three fast candidate trajectories approaching the attracting critical manifold are depicted.}
	\label{fig:main_thm_sing_limit}
\end{figure}

To motivate and illustrate the behavior of the system \eqref{hyp_umb} and in particular Theorem \ref{thm:main_thm}, we present a singular limit sketch of \eqref{hyp_umb} in figure \ref{fig:main_thm_sing_limit}, in which two particular fast subsystems and a particular slow flow in the slow subsystem are depicted. Note that \eqref{hyp_umb} has a 3-dimensional critical manifold and 2-dimensional fast subsystems. A detailed analysis of the slow and fast subsystems is carried out in sections \ref{sec:slow_singular_limit} and \ref{sec:fast_subsystem}. Figure \ref{fig:main_thm_sing_limit} illustrates an appropriate way of combining the fast- and slow subsystem, i.e.\ the concatenation of candidate trajectories.

\begin{remark}
	The assumption $g_b(0) > 0, g_c(0) > 0$ of Theorem \ref{thm:main_thm} will be motivated in the singular limit by Lemma \ref{sf:origin_lin}, which shows that there exists a candidate trajectory $\sigma$ approaching the hyperbolic umbilic point from inside the attracting critical manifold. This trajectory $\sigma$ is depicted in blue in the lower cone in figure \ref{fig:main_thm_sing_limit}. In fact, the idea of the blow-up analysis is to track trajectories close to $\sigma$ through the blow-up space. 
\end{remark}

The following remark points out an interesting observation, which will be apparent from the upcoming blow-up analysis and follows from the permutation symmetry of the chosen germ $\frac{1}{3}(x^3 + y^3)$ of the potential function $V$. It is related to the behavior of slow flow trajectories jumping at folds, away from the hyperbolic umbilic singularity.

\begin{remark}[Fast-slow systems with folded singularities appear in the blow-up of \eqref{hyp_umb_f}]\label{rem:folded-sings}
	In the blow-up analysis of \eqref{hyp_umb_f} in section \ref{sec:blowup}, folded saddles or folded centers appear generically on the blow-up sphere. This observation is made in section \ref{sec:desing_sf_blowup} and analyzed further in section \ref{sec:further}. These folded singularities seem to organize the transition of attracting slow manifolds at fold singularities of \eqref{hyp_umb_f}. Their further analysis does not lie in the scope of this article. However, these folded singularities, especially the folded centers, deserve future attention. In the folded saddle scenario, existing results can be applied, e.g.\ singular canards perturb to maximal canard solutions \cite{szmolyan2001canards}. 
\end{remark}

The article is structured as follows. In section \ref{sec:hyp_umb_singularity} we will briefly recall fast-slow terminology, formalize what we mean by a hyperbolic umbilic singularity in a fast-slow system and derive our system \eqref{hyp_umb}. This derivation is motivated by ``Takens' program'' \cite{takens1976}. For convenience we provide Takens' definition of constrained differential equations in appendix \ref{app:takens}. Sections \ref{sec:slow_singular_limit} and \ref{sec:fast_subsystem} contain an analysis of the slow and fast subsystem of \eqref{hyp_umb}. The analysis of the ``desingularized'' or ``reduced'' slow flow $Y$ in section \ref{ss:sfdesing} is fundamental for the upcoming blow-up analysis of system \eqref{hyp_umb}. The blow-up analysis is presented in section \ref{sec:blowup}, in which the origin of \eqref{hyp_umb_f} is blown up to a 5-d sphere. We remark that also the reduced slow flow $Y$ is blown-up and embedded into the blow-up analysis, which is carried out in section \ref{sec:desing_sf_blowup}. The combined analysis from three charts leads to the proof of Theorem \ref{thm:main_thm} in section \ref{sec:main_proof}. Eventually, in section \ref{sec:further} we present further observations made in the blow-up analysis, concerning the folded singularities on the blow-up sphere and the blown-up desingularized slow flow.

\section{The hyperbolic umbilic singularity and Takens' program}
\label{sec:hyp_umb_singularity}

In this section we provide the basic terminology and the context of our study. We begin by recalling some basic notion of fast-slow systems. For context, we then describe what we call ``the Takens' program'' and its relationship with catastrophe theory and fast-slow systems. Ultimately this explains the reason to choose \eqref{hyp_umb} for our analysis.

\subsection{Fast-slow systems terminology}
\label{fs_terminology}

Throughout this document, we restrict ourselves to gradient-like fast-slow systems. Consider a general fast-slow system of the form
\begin{equation}
	\begin{split}
		\label{start}
		\eps \dot{z} &= \nabla_z V(z,\alpha) + O(\eps)\\
		\dot{\alpha} &=  g(z,\alpha,\eps),
	\end{split}
\end{equation}
where $V:\mathbb R^n\times\mathbb R^r\to\mathbb R$ is smooth, $0<\eps\ll1$ is a small parameter, and $g:\mathbb R^n\times\mathbb R^r\times\mathbb R\to\mathbb R^r$ is smooth. Later on we shall restrict to the case $n=2$, but for now this is not necessary.

We briefly recall terminology from the theory of fast-slow systems \cite{kuehn2015multiple,jones1995}. The following two subsystems of \eqref{start} are fundamental:

\noindent\begin{minipage}{.5\textwidth}
\begin{equation}
	\begin{split}
		\label{start:slow}
		0 &= \nabla_z V(z,\alpha)\\
		\dot{\alpha} &= g(z,\alpha,0),
	\end{split}
\end{equation}
\end{minipage}%
\begin{minipage}{.5\textwidth}
\begin{equation}
	\begin{split}
		\label{start:fast}
		z' &= \nabla_z V(z,\alpha)\\
		\alpha' &= 0.
	\end{split}
\end{equation}
\end{minipage}
System \eqref{start:slow} is the slow subsystem (also called reduced problem), obtained from \eqref{start} by putting $\eps = 0$. System \eqref{start:fast} is the fast subsystem (also called layer problem). Let $\tau$ denote the time in \eqref{start}, also called the slow time scale. Then \eqref{start:fast} is obtained by a time change in \eqref{start} onto the fast timescale $t = \tau/\eps$ and putting $\eps = 0$. The critical manifold $\mathcal{S}_0 = \{ \nabla_z V = 0 \}$ appears as equilibria of the fast subsystem and as constrained phase space of the slow subsystem. Of central importance are the normally hyperbolic points of the critical manifold $\mathcal{S}_0$. In our setting, a point $p \in \mathcal{S}_0$ is normally hyperbolic if all eigenvalues of the Hessian $\D^2_z V$ have nonzero real part. By Schwarz's theorem the Hessian $\D^2_z V$ is symmetric, so its eigenvalues are real. The eigenvalues determine the stability type with respect to the fast subsystem. In this way, we shall denote the attracting (resp.\ repelling, resp.\ saddle-type) region of the critical manifold as $\mathcal{S}_0^\txta$ (resp.\ $\mathcal{S}_0^\txtr$, resp.\ $\mathcal{S}_0^\txts$). Accordingly $p\in\mathcal{S}_0$ is non-hyperbolic, if at least one eigenvalue of $\D^2_z V$ is zero.

Away from non-hyperbolic points of $\mathcal{S}_0$, one can employ Fenichel theory \cite{fenichel1979,wiggins1994normally,jones1995,kuehn2015multiple} to analyze \eqref{start}. Roughly speaking, Fenichel theory allows to conclude that normally hyperbolic regions of the critical manifold $\mathcal{S}_0$ persist, for sufficiently small $\eps > 0$, in the form of slow manifolds with the corresponding stability type. The analysis of \eqref{start} close to non-hyperbolic points of $\mathcal{S}_0$ is considerably more challenging, since Fenichel theory is not applicable. In our context, non-hyperbolic points of $\mathcal{S}_0$ will be referred to as singularities of \eqref{start}.
It is precisely the singularities, which in general give rise to certain phenomena, for example jumps, canards and relaxation oscillation, see for example \cite{kuehn2015multiple,hjk2021survey,de2021canard,wechselberger2020geometric} and references therein.

By the implicit function theorem, at a normally hyperbolic point, $\mathcal{S}_0$ is locally given as a graph over the slow $\alpha$-variables. This determines a flow in the normally hyperbolic regions of $\mathcal{S}_0$. This corresponds to the flow arising from the so called ``reduced'' or ``desingularized'' slow flow, also see section \ref{ss:sfdesing}. The fast subsystem gives dynamics off the critical manifold. In the singular limit $\eps = 0$, the fast- and slow subsystem \eqref{start:fast},\eqref{start:slow} are combined by concatenation of trajectories: At singularities of $\mathcal{S}_0$ one allows to concatenate fast and slow trajectory segments. This singular limit analysis is usually the guiding starting point for further analysis. This is the content of section \ref{sec:slow_singular_limit} and \ref{sec:fast_subsystem} in this article. We now come to the hyperbolic umbilic singularity, the choice of \eqref{hyp_umb} and its motivation.

\subsection{Takens' program}\label{sec:Takens}
As pointed out above, one is interested in singularities of the critical manifold $\mathcal{S}_0 = \{ \nabla_z V(z,\alpha) = 0 \}$. The question which ``types'' of singularities of $\mathcal{S}_0$ generically occur is in fact answered by catastrophe theory. For \eqref{start} with $\alpha \in \R^r$, $r \le 5$, we generically have that the singularities of $\mathcal{S}_0$ are Thom's seven elementary catastrophes plus four additional catastrophes of codimension 5, see \cite{michor1985cat}. 
In the article \cite{takens1976}, Takens introduced the notion of ``constrained differential equations'' and analyzed their behavior. We refer to appendix \ref{app:takens} for Takens' precise definition. In fast-slow terminology, Takens roughly studied the singular limit of \eqref{start}, in particular \eqref{start:slow}, and analyzed the effect of singularities of $\mathcal{S}_0$ on the slow flow restricted to the attracting part of the critical manifold. Takens carried out a local classification of the slow flow near singularities up to topological equivalence for $r \le 2$. Furthermore, Takens provided a list of ``normal forms'' for the singular limit of \eqref{start} near singularities for $r \le 3$. We refer to this as ``Takens' program'', which is in fact based on catastrophe theory, and serves as motivation for our analysis.

\subsection{Catastrophe theory framework}\label{sec:CT}
It will be convenient to briefly recall notions from catastrophe theory, for details we refer to \cite{michor1985cat,demazure2000,arnold1973normal}. 

We begin with some notations: Let $\mathcal{E}_n$ denote the local ring of germs (of smooth functions) at $0 \in \R^n$. Let $\mathfrak{m}_n$ denote the ideal of germs vanishing at 0. It follows that $\mathfrak{m}_n$ is the maximal ideal in $\mathcal{E}_n$, which means that there does not exist a proper ideal of $\mathcal{E}_n$ strictly containing $\mathfrak{m}_n$. Let $\mathfrak{m}_n^k$ denote the set of germs $f \in \mathfrak{m}_n$ such that the $(k-1)$-jet of $f$ vanishes, that is $\mathfrak{m}_n^k = \{ f \in \mathfrak{m}_n\, \rvert \,  j^{k-1} f = 0\}$. Let $\Delta(f)$ denote the Jacobi ideal of $f \in \mathfrak{m}_n^2$, which is the ideal generated by the partial derivatives of $f$ over $\mathcal{E}_n$, formally $\Delta(f) = \< \partial_1 f ,\dots, \partial_n f \>_{\mathcal{E}_n}$. 

Next, let the codimension of $f$ be defined by the dimension of $\mathfrak{m}_n / \Delta(f)$ as an $\R$-vector space. Further, two germs $f,g \in \mathfrak{m}_n$ are right-equivalent, if they agree up to a local coordinate change, that is, there exists a local diffeomorphism $\psi$ with $\psi(0) = 0$ such that $f = g \circ \psi$. 

Finally, let us recall the notion of unfoldings. An $r$-parameter unfolding $F$ of $f \in \mathfrak{m}_n$ is a germ of a smooth function $F\colon \R^n \times \R^r\to \R$, for which $F(\cdot,0) = f$. An $r$-parameter unfolding $F$ of $f$ is \emph{versal}, if any other $s$-parameter unfolding $G$ of $f$ can be expressed in terms of $F$ (``$G$ is induced by $F$''), that is, there exist smooth germs $\phi \colon (\R^{n+s},0) \to (\R^{n+r},0), \bar{\phi}\colon (\R^{s},0) \to (\R^{r},0), \mu\colon (\R^{s},0) \to (\R,0)$ such that $\phi(\cdot,0) = \id_{\R^{n}}$, $\pr_2 \circ \phi = \bar{\phi} \circ \pr_2$ and $G = F \circ \phi + \mu \circ \pr_2$. Here we denote by $\pr_2$ the natural projection onto the unfolding parameters, which discards the spatial variables. Two $r$-parameter unfoldings $F,G$ of $f$ are isomorphic, if $G$ is induced by $F$ and $\phi$ and $\bar{\phi}$ are local diffeomorphisms.
In fact, a germ $f \in \mathfrak{m}_n^2$ has a versal unfolding if and only if its codimension is finite \cite{michor1985cat}. A versal unfolding with $r$-parameters is universal, if the number $r$ is minimal. In fact, the minimal $r$ is given by the codimension of $f$. We point out that universal unfoldings of a fixed germ $f$ are not unique, for an example refer to appendix \ref{app:universal_unfolding}. Further, note that universal unfoldings are also called miniversal unfoldings in slightly different contexts.

A fundamental result in catastrophe theory is the classification up to right-equivalence of germs in $\mathfrak{m}_n^2$ with low codimension. Here we present a version of this result up to codimension 4, which is also known under the name ``Thom's seven elementary catastrophes''. Recall that a germ $f$ has a non-degenerate critical point at the origin if $\nabla f(0) = 0$ and the Hessian $\D^2 f(0)$ is invertible. Recall as well that such $f$ with non-degenerate critical point can locally be written in simple form by the Morse Lemma.

\begin{thm}[Thom's seven elementary catastrophes {{\cite{michor1985cat,arnold1973normal,broecker1975germs}}}]
	\label{thom}
	Let $f \in \mathfrak{m}_n^2$ with codimension $\le 4$. Then $f$ is either non-degenerate (i.e.\ Morse) or right-equivalent to one of the following germs, up to sign and up to addition of a non-degenerate quadratic form in other variables.
		\begin{table}[H]
			\centering
			\begin{tabular}{lllc}
				\toprule
				catastrophe & germ & universal unfolding & codimension \\
				\midrule
				fold & $x^3$ & $x^3 + ax$ & 1\\
				cusp & $x^4$ & $x^4 + ax^2 + bx$ & 2\\
				swallowtail & $x^5$ & $x^5 + ax^3 + bx^2 + cx$ & 3\\
				hyperbolic umbilic & $x^3 + y^3$ & $x^3 + y^3 + axy + bx + cy$ & 3\\
				elliptic umbilic & $x^3 - xy^2$ & $x^3 - xy^2 + a(x^2 + y^2) + bx + cy$ & 3\\
				butterfly & $x^6$ & $x^6 + ax^4 + bx^3 + cx^2 + dx$ & 4\\
				parabolic umbilic & $x^2y + y^4$ & $x^2y + y^4 + ax^2 + by^2 + cx + dy$ & 4\\
				\bottomrule
			\end{tabular}
			\caption{The seven elementary catastrophes. The third column contains a choice of universal unfolding for each corresponding germ in the second column.}
			\label{the7}
		\end{table}
\end{thm}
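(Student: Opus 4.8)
The plan is to follow the classical route: reduce to finite jets via the splitting lemma and finite determinacy, and then run a case analysis organized by the corank $m:=\dim\ker\D^2 f(0)$. Throughout I would use that right-equivalence preserves codimension, together with the \emph{splitting (generalized Morse) lemma}: if $f\in\mathfrak{m}_n^{2}$ and $\D^2 f(0)$ has rank $n-m$, then $f$ is right-equivalent to $g(x_1,\dots,x_m)\pm x_{m+1}^{2}\pm\cdots\pm x_n^{2}$ with $g\in\mathfrak{m}_m^{3}$, and the codimension of $f$ equals that of $g$. Thus it suffices to classify residual singularities $g\in\mathfrak{m}_m^{3}$ of codimension $\le 4$.

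First bound the corank. Since $g\in\mathfrak{m}_m^{3}$ one has $\Delta(g)\subseteq\mathfrak{m}_m^{2}$, so in $\mathfrak{m}_m/\Delta(g)$ the $m$ linear monomials survive and, of the $\binom{m+1}{2}$ quadratic monomials, at most $m$ are killed (the Jacobi ideal has $m$ generators); hence $\operatorname{codim}g\ge\binom{m+1}{2}>4$ whenever $m\ge 3$, leaving only $m\in\{0,1,2\}$, with $m=0$ the Morse case. For corank one, $g\in\mathfrak{m}_1^{3}$ equals $x^{k}u(x)$ with $u(0)\neq 0$ and $k\ge 3$; the substitution $x\mapsto x\,\lvert u(x)\rvert^{1/k}$ (solvable by the implicit function theorem) gives $g\sim\pm x^{k}$, and $\operatorname{codim}(\pm x^{k})=k-2$, so $k\in\{3,4,5,6\}$: fold, cusp, swallowtail, butterfly.

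For corank two I would look at the cubic form $C=j^{3}g$ in two variables and classify it up to linear equivalence over $\R$, obtaining the cases $C=0$; $C\sim x^{3}$ (a perfect cube); $C\sim x^{2}y$ (a square times a transverse linear form); and the two ``three distinct roots'' cases $C\sim x^{3}-xy^{2}$ (three distinct real linear factors) and $C\sim x^{3}+y^{3}\sim x(x^{2}+y^{2})$ (one real factor and an irreducible conjugate pair) --- this real dichotomy is precisely where the elliptic/hyperbolic split is born. A short Jacobi-ideal computation shows that $C=0$ and $C\sim x^{3}$ force $\operatorname{codim}g\ge 5$ (the smallest corank-two germ with cubic part $x^{3}$ is $x^{3}+y^{4}=E_6$, already of codimension $5$), so these are excluded. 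When $C\sim x^{3}\pm y^{3}$ it has an isolated critical point and one checks $\mathfrak{m}_2^{4}\subseteq\mathfrak{m}_2^{2}\,\Delta(C)$, so $g$ is $3$-determined and $g\sim C$: these are the hyperbolic umbilic $x^{3}+y^{3}$ and the elliptic umbilic $x^{3}-xy^{2}$, of codimension $3$. In the remaining case $C\sim x^{2}y$ one must show $g$ is right-equivalent to $x^{2}y+\phi(y)$ with $\phi\in\mathfrak{m}_1^{4}$; then $\phi\sim\pm y^{k}$ as in the corank-one argument, $\operatorname{codim}(x^{2}y+y^{k})=k$, so $k\in\{3,4\}$, recovering the umbilics for $k=3$ and the parabolic umbilic $x^{2}y+y^{4}$ for $k=4$. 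Finally, the universal unfoldings in the table are read off as $f+\sum_i c_i e_i$ with $\{e_i\}$ an $\R$-basis of $\mathfrak{m}_n/\Delta(f)$, e.g.\ $\{x,y,xy\}$ for $x^{3}+y^{3}$.

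The \emph{main obstacle} is the last reduction, in the case $C\sim x^{2}y$. Because the $x$-Hessian of $x^{2}y+\cdots$ is $2y$, which vanishes on $\{y=0\}$, one cannot simply complete the square in $x$ treating $y$ as a parameter; instead one needs the more delicate reduction lemma that splits off the $x$-variable by a $y$-dependent coordinate change valid in a full neighborhood of the origin (conveniently carried out in the completed local ring and then pushed down to a polynomial normal form using finite determinacy). A secondary, pervasive ingredient is setting up finite determinacy with explicit jet-order bounds --- via a Mather-type criterion such as $\mathfrak{m}_n^{k+1}\subseteq\mathfrak{m}_n\,\Delta(g)$ together with its relation to the codimension --- so that all the case distinctions above may legitimately be made on finite jets. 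A third, minor point is the real classification of binary cubic forms: routine in itself, but the precise place where the hyperbolic/elliptic distinction genuinely appears.
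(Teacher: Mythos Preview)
The paper does not give its own proof of this theorem: it is stated as a classical result with citations to \cite{michor1985cat,arnold1973normal,broecker1975germs}, and no proof environment follows it. So there is nothing in the paper to compare your proposal against.

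That said, your outline is the standard route taken in those references (splitting lemma to reduce to corank $\le 2$, finite determinacy to reduce to jets, real classification of binary cubics to separate the umbilics, and the $D_k$ reduction for the $x^2y$ case), and the steps you sketch are correct. Your identification of the main technical point --- the reduction of a corank-two germ with cubic part $x^2y$ to the form $x^2y+\phi(y)$ --- is accurate; this is exactly where the na\"ive ``complete the square in $x$'' fails because the $x$-Hessian degenerates on $\{y=0\}$, and one instead uses a formal (or Mather--Tougeron style) argument combined with finite determinacy. One minor remark: in your corank bound you write $\operatorname{codim} g\ge\binom{m+1}{2}$; the computation you give actually yields $m+\binom{m}{2}=\binom{m+1}{2}$, which is fine, but it is worth being explicit that the $m$ linear classes and the (at least) $\binom{m}{2}$ surviving quadratic classes are being added.
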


\begin{remark}
	Note that Theorem \ref{thom} says that for any germ $f$ with $f(0) = 0, \nabla f (0) = 0$ and codimension $\le 4$, we can find local coordinates $(x_1,\dots,x_n)$ centered at the origin such that $f$ is of the form
	\begin{equation}
		\begin{split}
			\pm h(x_1,\dots,x_k) + \sum_{i > k} \pm x_i^2,
		\end{split}
	\end{equation}
	where either $h$ is (up to right-equivalence) a germ appearing in the above list with $k \in \{1,2\}$, or $h = 0$ and $k = 0$.
\end{remark}

\subsection{Singularities of \eqref{start}}
In view of the above discussion, we propose the following natural definition to classify singularities of \eqref{start}. More precisely, we want to classify the singularities of the critical manifold $\mathcal{S}_0 = \{ \nabla_z V(z,\alpha) = 0 \}$ of \eqref{start} without considering any dynamics. Due to the scope of this paper, we are particularly interested in the catastrophes of codimension $3$, especially the hyperbolic umbilic singularity.

\begin{defn}[Singularities of the critical manifold of \eqref{start}]
	\label{sf:singularity}
	Consider the fast-slow system \eqref{start} with $z \in \R^n, \alpha \in \R^r$. We say that \eqref{start} has a $\ast$ singularity at $(\bar{z},\bar{\alpha}) \in \mathcal{S}_0$, if $\mathcal{S}_0$ is locally an $r$-dimensional manifold at $(\bar{z},\bar{\alpha})$ and the germ $f_{\bar{z},\bar{\alpha}} := V(\cdot + \bar{z},\bar{\alpha}) - V(\bar{z},\bar{\alpha}) \in \mathfrak{m}_n^2$ is right-equivalent to
	\begin{equation}
		\begin{split}
			\pm h(z_1,\dots,z_k) + \sum_{i > k} \pm z_i^2,
		\end{split}
	\end{equation}
	where $h$ is a germ for the $\ast$ catastrophe in table \ref{the7}, and $k \in \{1,2\}$ accordingly.
\end{defn}

To this definition, we shall give the following remarks.
\begin{remark}\leavevmode 

	\begin{enumerate}[(a),noitemsep]
		\item Definition \ref{sf:singularity} does not concern dynamics on or nearby $\mathcal{S}_0$.
		\item Normally hyperbolic points of $\mathcal{S}_0$ correspond to germs $f_{\bar{z},\bar{\alpha}}$ that are non-degenerate (i.e. Morse).
		\item If $r \le 4$, for generic $V$ the critical manifold is indeed $r$-dimensional and the only singularities that occur are the catastrophes in table \ref{the7} \cite{michor1985cat}. A similar statement holds for $r \le 5$ by extending table \ref{the7} to codimension 5 catastrophes \cite{arnold1973normal,michor1985cat}.
		\item The number $k$ is given by the corank of $f_{\bar{z},\bar{\alpha}}$, which is the corank of its Hessian at the origin.
		\item Definition \ref{sf:singularity} does not cover non-degeneracy conditions for the singularity. An example for this is the 1-d choice $V(z) = z^3$, which leads to the set $\{ z = 0\}$ of fold singularities.
		\item In the case that a choice of $V$ leads to a transcritical singularity or a pitchfork singularity, the critical manifold fails to be a manifold near the singularities.
		\item Finite determinacy of germs can lead to conditions for singularities on the level of jets. An example for this is the hyperbolic umbilic germ $f = x^3 + y^3$. One can check that this germ is 3-determined, which implies that any choice of $V$, such that $f_{\bar{z},\bar{\alpha}}$ has the same 3-jet as $f$ and the critical manifold is indeed locally a manifold, gives a hyperbolic umbilic singularity at $({\bar{z},\bar{\alpha}})$. Note that one can also take the 3-determined germ $f = x^3 + xy^2$ to represent the hyperbolic umbilic singularity, among many other possible choices.
	\end{enumerate}
\end{remark}

\subsection{The choice of \eqref{hyp_umb}}
\label{the_choice}
We are interested in the hyperbolic umbilic singularity, which is the first catastrophe in table \ref{the7}, for which two fast directions are needed. Our choice is motivated by Takens' work \cite[(4.10)]{takens1976} and we will argue similarly to Takens by giving the following ``derivation'' for the fast-slow system \eqref{hyp_umb}:

Let us call $V(z,\alpha)$ from \eqref{start} a (parameter-dependent) potential. The catastrophes of codimension 3 from Theorem \ref{thom} occur at isolated points for generic 3-parameter potentials. Thus we restrict to \eqref{start} with $\alpha = (a,b,c) \in \R^3$. There exists an open and dense set of potentials $\mathcal{J}$, such that $V \in \mathcal{J}$ in fact gives a versal unfolding of the singularity it unfolds and the critical manifold is indeed a 3-d critical manifold \cite[(9.17)]{michor1985cat}. Assume that such generic potential $V$ unfolds a hyperbolic umbilic singularity at the origin. Then the germ $f := V(\cdot,0) - V(0,0) \in \mathfrak{m}_n^2$ is right-equivalent to $g := x^3 + y^3 + \sum_{2<j\le n} \pm z_j^2$, where $z_j$ are, if $n > 2$, the remaining coordinates of $\R^n$. In other words, we have $g = f \circ \psi$ for a local diffeomorphism $\psi$. We remark that at this point we make a choice, namely, we choose the germ $x^3 + y^3$ to represent the hyperbolic umbilic. This choice is not canonical and it influences the family of fast subsystems we eventually obtain. For now we neglect the fast dynamics, but we will argue for our choice after Remark \ref{guckenheimer}.

The potential $V$ is (up to shift) a versal unfolding of $f$. A (uni-)versal unfolding of $g$ is given by $G := x^3 + y^3 + a xy + bx + cy + \sum_{2 < j \le n} \pm z_j^2$.  Then one can check that $V(\psi(\cdot),\cdot)$ is a versal unfolding of $g$ and any two versal unfoldings with the same number of parameters are isomorphic \cite{michor1985cat}. Thus there exists a local diffeomorphism $\phi \colon (\R^{n + 3},0) \to (\R^{n + 3},0)$ with $\phi(\cdot,0) = \psi$ and $\pr_2 \circ \phi = \bar{\phi} \circ \pr_2$ for a local diffeomorphism $\bar{\phi}\colon (\R^3,0) \to (\R^3,0)$, such that $V = G \circ \phi$ up to addition of a constant and a smooth function in the parameters. Hence there are local coordinates $(x,y,z_3,\dots,z_n,a,b,c)$ centered at 0, such that the critical manifold $\{ \nabla_z V = 0\} \subset \R^{n+3}$ of \eqref{start} near the origin can be written as the set satisfying:
\begin{equation}
	\begin{split}
		0 &= 3x^2 + ay + b  \\
		0 &= 3y^2 + ax + c  \\
		0 &= \pm2 z_3 \\
		&\vdots\\
		0 &= \pm2 z_n.
	\end{split}
\end{equation}
For the slow subsystem the $z_j$ directions are irrelevant. In the family of fast subsystems the $z_j$ directions are hyperbolic directions. Thus, by a center manifold argument, we discard the $z_j$ variables. This leads to the slow subsystem we are interested in. Again, we point out that this slow subsystem choice is carried out similarly in Takens' program \cite{takens1976}. Finally, we take the fast-slow system of the form \eqref{start}, which contains the above constructed slow subsystem.

For convenience we scale the germ $x^3 + y^3$ to $x^3/3 + y^3/3$ and hence consider a universal unfolding given by $V := x^3/3 + y^3/3 + axy + bx + cy$.
This leads to a fast-slow system \eqref{hyp_umb}, i.e.\ a fast-slow system of the form
\begin{equation}
	\begin{split}
		\label{hyp_umb2}
		\eps\dot{x} &= x^2 + ay + b + O(\eps) \\
		\eps\dot{y} &= y^2 + ax + c + O(\eps) \\
		\dot{a} &= g_a(x,y,a,b,c,\eps) \\
		\dot{b} &= g_b(x,y,a,b,c,\eps) \\
		\dot{c} &= g_c(x,y,a,b,c,\eps),
	\end{split}
\end{equation}
which precisely has, at the origin, a hyperbolic umbilic singularity in the sense of Definition \ref{sf:singularity}. 

\begin{remark}
We observe that \eqref{hyp_umb2} only has higher order terms multiplying $\varepsilon$. This is because we assume that the leading part in the fast variables comes from the universal unfolding of the hyperbolic umbilic singularity. In general, other higher-order terms do not preserve the hyperbolic umbilic singularity; see also remark \ref{guckenheimer} and the discussion following remark \ref{guckenheimer} below. Notice, however, that other higher-order terms would be irrelevant in the blow-up analysis of section \ref{sec:blowup}, as they would appear as higher-order terms in $\bar r$. 
Finally, it  would be interesting to investigate if \eqref{hyp_umb2} can serve as a fast-slow normal form.
\end{remark}

\begin{remark}
	By replacing the potential $V$ by $-V$, the attracting and repelling regions of the critical manifold swap their stability types. Therefore, on the level of fast-slow systems of the form \eqref{start}, the sign in front of $V$ cannot be neglected in general. An example for this is the cusp \cite{broer2013cusp,kojakhmetov2016cusp}. However, for the hyperbolic umbilic singularity, the setting of $-V$ is analogous to $V$, as will be apparent from the upcoming analysis.
\end{remark}

We remark that so far we neglected the effect of the above constructed local diffeomorphism on the fast dynamics, i.e.\ the family of fast subsystems \eqref{start:fast}, which is a family of gradient vector fields. First, an important remark about the fast subsystems of \eqref{hyp_umb2} seems to be in order. This remark addresses the fact that versality in the sense of catastrophe theory on the level of potential functions does in general not give versality in the sense of unfoldings of vector fields, i.e.\ on the level of flows of the associated gradient fields.

\begin{remark}[Versality in the sense of unfoldings of vector fields]
	\label{guckenheimer}
	The family of gradient systems $Z_{a,b,c}$ given by the family of fast subsystems of \eqref{hyp_umb2}, which is precisely the planar family \eqref{planar_fast}, does \emph{not} give a versal unfolding of the vector field $Z_{0,0,0} = x^2 \partial_x + y^2 \partial_y$ in the space of (gradient) vector fields. Here, we mean by versal unfolding of a vector field the following: Let $X_0$ be a vector field, then we call a family of vector fields $X_{\mu}$ containing $X_0$ versal if for any other family $Y_{\lambda}$ containing $X_0$, there exist a homeomorphism $h(\lambda) = \mu$ between the parameter spaces and a family of homeomorphisms $H_{\lambda}$, not necessarily depending continuously on $\lambda$, such that $Y_{\lambda}$ is topologically equivalent to $X_{h(\lambda)}$ via $H_{\lambda}$. The latter is also known as fiber topological equivalence \cite{kuznetsov2004}. The proof that $Z_{a,b,c}$ does not give a versal unfolding of $Z_{0,0,0}$ is given by Guckenheimer in \cite{guckenheimer1973}: small perturbations of $Z_{0,0,0}$ in the space of gradient vector fields produce phase portraits, which contain a saddle connection. However, the family $Z_{a,b,c}$ does not contain phase portraits with saddle connections, refer to the analysis of the fast subsystem in section \ref{sec:fast_subsystem}. Thus $Z_{a,b,c}$ cannot be versal in the above sense. In \cite{guckenheimer1973} Guckenheimer uses the name ``universality'' instead of versality. The minimal number of parameters for a versal unfolding of $Z_{0,0,0}$ in the above sense is 4, and a proof of this claim is contained in \cite{vegter1982}. The notion of a versal unfolding of a vector field given above is called an almost universal unfolding in \cite{vegter1982}, where ``almost'' refers to the fact that in this case $H_{\lambda}$ does not depend continuously on $\lambda$ in a neighborhood of the origin. 
\end{remark}

\begin{figure}[h]
	\begin{overpic}[width=.6\textwidth]{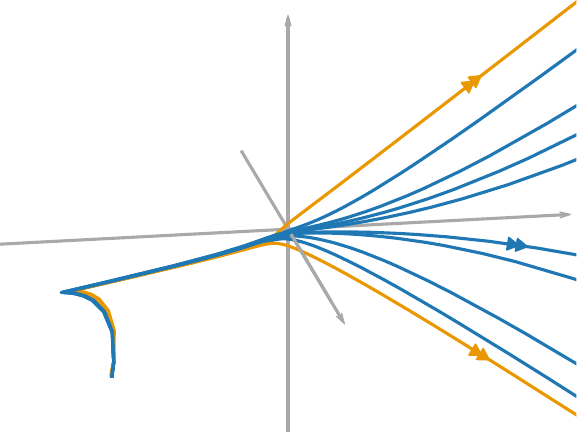}
		\put(97,40){\textcolor{gray}{$x$}}
		\put(46,71){\textcolor{gray}{$y$}}
		\put(61,17){\textcolor{gray}{$a$}}
	\end{overpic}
	\caption{Numerical integration of \eqref{start0} with $z = (x,y)$, $\alpha = (a,b,c)$, $V = y^3/3 + yx^2 + a(y^2 - x^2) + bx + cy$ and $g=(-1,1,2)$. According to Definition \ref{sf:singularity}, this system has a hyperbolic umbilic singularity at the origin, but corresponds to a different choice of germ representing a hyperbolic umbilic catastrophe, i.e.\ $y^3/3 + yx^2$, and an associated universal unfolding (see appendix \ref{app:universal_unfolding}). Ten initial conditions close to the point $(x,y,a,b,c) = (-1.5,-2,1,-2,-1)$ were chosen, which fan out near the origin. Similar to figure \ref{fig:main_thm}, most randomly sampled initial conditions close to this point would escape close to the two orange trajectories. Besides that, the fast approach towards attracting slow manifolds is visible here: the transition from fast to slow dynamics happens at the kink on the left.}
	\label{fig:alternative_unfolding}
\end{figure}

In the above derivation in section \ref{the_choice}, we have chosen the germ $x^3 + y^3$ (up to scaling) to represent the hyperbolic umbilic catastrophe, which leads to \eqref{hyp_umb2}, which is identical to \eqref{hyp_umb}. As already mentioned, in general this choice might affect the fast dynamics, more precisely the fast subsystems.
For example, we could have chosen the germ $y^3 + yx^2$, which is right-equivalent to $x^3+y^3$, see e.g. appendix \ref{app:universal_unfolding}. However, the associated gradient fields to these germs are not topologically equivalent: $x^2 \partial_x + y^2 \partial_y$ has two hyperbolic sectors and two parabolic sectors, whereas $2xy \partial_x + (3y^2 + x^2)\partial_y$ only has two hyperbolic sectors and no parabolic sectors \cite{takens1971manifolds,khesin1990}. However, by a result from Khesin \cite{khesin1990,khesin1986}, the family of fast subsystems of our chosen system \eqref{hyp_umb2} is generic in the following sense:

In \cite{khesin1990} it is shown that there are seven three-parameter families of gradient fields, whose fiber topological equivalence classes (up to orientation of the orbits) form an open and dense set in the space of three-parameter families of gradient fields, which have a degenerate critical point at the origin. Three out of these seven families unfold a hyperbolic umbilic singularity in the sense of catastrophe theory. From the upcoming analysis in section \ref{sec:fast_subsystem} it follows that the family of fast subsystems of \eqref{hyp_umb2} is equivalent to family number 5 of Theorem 1 in Khesin's article \cite{khesin1990}, since they topologically have the same bifurcation set and do not show saddle connections. Also see Corollary 2 in \cite{khesin1986}. The main point is that the fast subsystem of \eqref{hyp_umb2} is generic in the set of three-parameter families of gradient fields, which have a degenerate critical point at the origin. Further, the phase portraits in Khesin's family number 6 and family number 5 in \cite{khesin1990} differ only at parameter values zero. At vanishing parameters, the phase portraits of these two families differ precisely in the way the phase portraits of the gradients of the germs $x^3+y^3$ and $y^3 + yx^2$ above differ. Hence, the analysis of the fast subsystem in section \ref{sec:fast_subsystem} for parameters $(a,b,c) \neq (0,0,0)$ does also apply to Khesin's family number 6. Moreover, we conjecture that the choice of family number 6 (i.e.\ choosing the germ $y^3 + yx^2$ and a corresponding universal unfolding for $V$) would still result in a fanning-out behavior similar to Theorem \ref{thm:main_thm}. This conjecture is supported by a numerical simulation of \eqref{start0} with potential $V = y^3/3 + yx^2 + a(y^2 - x^2) + bx + cy$, shown in figure \ref{fig:alternative_unfolding}. Also see appendix \ref{app:universal_unfolding} for this potential. 

In summary, in this section we have argued and justified the choice of \eqref{hyp_umb} as an appropriate representative of any fast-slow system of the form \eqref{start0} near an isolated hyperbolic umbilic singularity. We now proceed with the singular limit analysis of the fast-slow system \eqref{hyp_umb}, or equivalently \eqref{hyp_umb_f}.

\section{Critical manifold and slow flow}
\label{sec:slow_singular_limit}

The slow subsystem of \eqref{hyp_umb} is given by
\begin{equation}
	\begin{split}
		\label{slow_sub}
		0 &= x^2 + ay + b  \\
		0 &= y^2 + ax + c  \\
		\dot{a} &= g_a(x,y,a,b,c,0) \\
		\dot{b} &= g_b(x,y,a,b,c,0) \\
		\dot{c} &= g_c(x,y,a,b,c,0).
	\end{split}
\end{equation}

\subsection{The critical manifold and its singularities}
The 3-dimensional critical manifold $\mathcal{S}_0$ of \eqref{hyp_umb} is given by
\begin{equation}
	\begin{split}
		\label{crit_mfld}
		\mathcal{S}_0 = \{ x^2 + ay + b = 0, y^2 + ax + c = 0 \}.
	\end{split}
\end{equation}
Since the fast variables of \eqref{hyp_umb} are of gradient type, the singular points of $\mathcal{S}_0$ are precisely the points, where $\D^2 V$ is not invertible. Explicitly, these are all points on $\mathcal{S}_0$, where
\begin{equation}
	\begin{split}
		\label{sing_cone}
		\det \D^2 V = \det \begin{bmatrix}
			2x & a \\
			a & 2y
		\end{bmatrix} = 4 xy - a^2 = 0.
	\end{split}
\end{equation}

The projection of the 3-manifold $\mathcal{S}_0$ onto the ambient $(x,y,a)$-coordinates is a bijection. This gives a parametrization for $\mathcal{S}_0$, given by
\begin{equation}
	\begin{split}
		\label{parametrization}
		\Psi\colon \R^3 \to \R^5,\, (x,y,a) \mapsto (x,y,a,b = -x^2-ay, c = -y^2-ax).
	\end{split}
\end{equation}
Via $\Psi$ the singular points \eqref{sing_cone} appear as a double-cone, see figure \ref{double_cone}.
\begin{figure}[h]
	\centering
	\begin{overpic}[width=.5\textwidth]{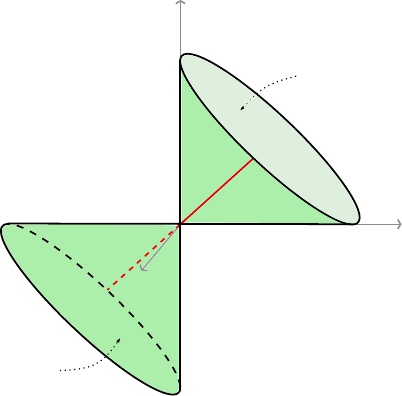}
		\put(8,5){$\mathcal{S}_0^{\txta}$}
		\put(76,78){$\mathcal{S}_0^{\txtr}$}
		\put(20,65){$\mathcal{S}_0^{\txts}$}
		\put(38,27){\textcolor{gray}{$a$}}
		\put(95,37){\textcolor{gray}{$x$}}
		\put(39,93){\textcolor{gray}{$y$}}
	\end{overpic}
	\caption{Stratification of $\mathcal{S}_0$: The critical manifold itself corresponds to the ambient $(x,y,a)$-space. The double cone determined by \eqref{sing_cone} represents the singular points of the critical manifold $\mathcal{S}_0$ via $\Psi$, compare to figure \ref{fig:main_thm_sing_limit}. The hyperbolic umbilic singularity is located at the origin. The 2-manifold given by the double cone away from the origin consists of a surface of fold points (green) and a line of cusp points (red). The attracting region $\mathcal{S}^{\txta}_0$ is the region inside the lower cone, the repelling region $\mathcal{S}^{\txtr}_0$ is the region inside the upper cone.}
	\label{double_cone}
\end{figure}
The following Lemma establishes the different stability regions $\mathcal{S}_0^{\txta},\mathcal{S}_0^{\txtr},\mathcal{S}_0^{\txts}$ of $\mathcal{S}_0$ with respect to the fast subsystem.
\begin{lemma}
	The critical manifold $\mathcal{S}_0$ has three connected normally hyperbolic parts, each having different stability properties with respect to the fast subsystem: An attracting part $\mathcal{S}_0^{\txta}$, a saddle-type part $\mathcal{S}_0^{\txts}$ and a repelling part $\mathcal{S}_0^{\txtr}$. Via $\Psi$, such regions are given by $\mathcal{S}^{\txta}_0 = \{4xy - a^2 > 0, x + y < 0\},\; \mathcal{S}^{\txts}_0 = \{4xy - a^2 < 0\},\textnormal{ and } \mathcal{S}^{\txtr}_0 = \{4xy - a^2 > 0, x + y > 0\}$.
\end{lemma}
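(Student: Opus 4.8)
The plan is to extract all stability information directly from the $2\times2$ Hessian $\D^2 V$ displayed in \eqref{sing_cone}, whose eigenvalues are real by Schwarz's theorem as already noted. Its characteristic polynomial is $\lambda^2 - \tr(\D^2 V)\,\lambda + \det(\D^2 V)$ with $\tr(\D^2 V) = 2(x+y)$ and $\det(\D^2 V) = 4xy - a^2$; concretely the eigenvalues are $\lambda_\pm = (x+y) \pm \sqrt{(x-y)^2 + a^2}$. Since $\Psi$ from \eqref{parametrization} is a diffeomorphism from $\R^3$ onto $\mathcal{S}_0$, it suffices to describe the relevant subsets of $\R^3$ in the $(x,y,a)$-coordinates and transport the conclusions through $\Psi$.

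First I would note that a point of $\mathcal{S}_0$ is normally hyperbolic exactly when both eigenvalues are nonzero, i.e.\ when $\det(\D^2 V) = 4xy - a^2 \neq 0$, which is precisely the complement in $\R^3$ of the double cone $\{4xy - a^2 = 0\}$. On this complement I split into cases by the sign of the determinant and, where it is positive, by the sign of the trace. If $4xy - a^2 < 0$ the two eigenvalues have opposite signs, so the point is of saddle type. If $4xy - a^2 > 0$ the eigenvalues share the sign of $x+y$; moreover $x+y$ cannot vanish there, since $4xy > a^2 \geq 0$ forces $xy > 0$ whereas $x+y = 0$ would give $xy = -x^2 \leq 0$. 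Hence the point is attracting when $x+y < 0$ and repelling when $x+y > 0$. This identifies, in the $(x,y,a)$-parametrization, the regions $\mathcal{S}_0^{\txta} = \{4xy - a^2 > 0,\; x+y < 0\}$, $\mathcal{S}_0^{\txts} = \{4xy - a^2 < 0\}$ and $\mathcal{S}_0^{\txtr} = \{4xy - a^2 > 0,\; x+y > 0\}$, and shows that, together with the singular cone, they partition $\R^3 \cong \mathcal{S}_0$ (in particular the set $\{\det > 0,\ \tr = 0\}$ is empty by the computation above).

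It remains to check that each of these three open sets is connected. Here I would pass to the linear coordinates $u = x+y$, $v = x-y$, in which $4xy - a^2 = u^2 - v^2 - a^2$, so the singular cone becomes the standard double cone $u^2 = v^2 + a^2$ with axis the $u$-axis. The set $\{u < 0,\; u^2 > v^2 + a^2\}$ retracts onto the ray $\{(u,0,0) : u < 0\}$ via $(u,v,a) \mapsto (u, tv, ta)$, $t \in [0,1]$ (which stays in the set since $u^2 > v^2 + a^2 \geq t^2 v^2 + t^2 a^2$), hence is connected; the repelling region is connected by the symmetric argument. For the saddle region $\{u^2 < v^2 + a^2\}$, any point is joined within the set to a point with $u = 0$ by sliding $u$ to $0$ (which only decreases $u^2$), and $\{u = 0,\; v^2 + a^2 > 0\}$ is a punctured plane, hence connected. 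Pushing these sets forward by the homeomorphism $\Psi$ yields the three connected normally hyperbolic strata of $\mathcal{S}_0$ with the asserted stability types. The only step that is not completely immediate is this last connectedness argument for the exterior of the double cone, which the explicit sliding map handles; everything else is a direct trace/determinant computation.
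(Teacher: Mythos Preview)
Your argument is essentially the paper's: both read off stability from the signs of $\det \D^2 V = 4xy - a^2$ and $\tr \D^2 V = 2(x+y)$. The paper's proof is two sentences and leaves implicit both the connectedness of the three regions and the fact that $\{4xy - a^2 > 0,\ x+y = 0\}$ is empty; you supply these explicitly, and your arguments for them are correct.

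One point worth flagging: your assignment $\mathcal{S}_0^{\txts} = \{4xy - a^2 < 0\}$ and $\mathcal{S}_0^{\txtr} = \{4xy - a^2 > 0,\ x+y > 0\}$ is the mathematically correct one (negative determinant forces eigenvalues of opposite sign, hence a saddle), and it is also the convention used everywhere else in the paper---see the caption of figure~\ref{double_cone} and Lemma~\ref{takens_desing}. The lemma as stated has the superscripts $\txtr$ and $\txts$ interchanged; this is a typo in the paper, and your version is what is intended.
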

\begin{proof}
	Recall that $\D^2 V$ can be viewed as linearization of the fast subsystem. The regions are determined by the signs of the eigenvalues of $\D^2 V$, which are determined by the signs of $\det \D^2 V = 4 xy - a^2$ and $\tr \D^2 V = 2 (x + y)$. This directly leads to the above regions.
\end{proof}

We now aim to classify the singular points of $\mathcal{S}_0$. Let $\pi$ be the natural projection $\pi \colon (x,y,a,b,c) \mapsto (a,b,c)$ onto the parameter subspace.
Then $\tilde{\pi} = \pi \circ \Psi$,
\begin{equation}
	\begin{split}
		\label{pi_tilde}
		\tilde{\pi}\colon (x,y,a) \mapsto (a,-x^2-ay,-y^2-ax)
	\end{split}
\end{equation}
gives a smooth map $\mathcal{S}_0 \to \R^3$ of 3-manifolds, which is the restriction of $\pi$ to $\mathcal{S}_0$ in the coordinates $\Psi$ from \eqref{parametrization}. In catastrophe theory $\tilde{\pi}$ is called the catastrophe map. The singular points of $\mathcal{S}_0$ correspond to singularities of $\tilde{\pi}$, that is, points at which the linearization is singular. Further, the set of singular points is generically stratified into submanifolds of singularities of the same type \cite{arnold2012singularities,boardman1967,zeeman1976,kojakhmetov2014constrained}. In our case, the singular points are stratified as follows:
\begin{lemma}
	\label{classif_singular_points}
	Besides the hyperbolic umbilic singularity at the origin, there is a line of cusp points given by $x = y = a/2$, and the remaining surface $4xy - a^2 = 0$ consists of fold points.
\end{lemma}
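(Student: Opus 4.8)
The statement is about classifying the singular set of the catastrophe map $\tilde\pi$ from \eqref{pi_tilde}, whose singular locus is the double cone $\{4xy - a^2 = 0\}$ in the $(x,y,a)$-parametrization. The standard route (following Boardman/Thom--Arnol'd singularity theory, as cited) is to compute the Thom--Boardman symbols stratum by stratum, but a more hands-on and self-contained approach suffices here since we already know from Definition \ref{sf:singularity} and the finite-determinacy remarks that the candidates are fold, cusp, and hyperbolic umbilic. So the plan is: (i) identify the kernel direction of $\D\tilde\pi$ at a generic point of the cone; (ii) test whether the second-order term along that kernel vanishes — this separates folds from higher singularities (cusps/umbilics); (iii) along the locus where it vanishes, read off whether the point is a cusp or the umbilic.

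\medskip

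\textbf{Step 1: Kernel of the linearization.} Compute $\D\tilde\pi$ at a point $(x,y,a)$ on the cone $4xy = a^2$. From \eqref{pi_tilde}, the Jacobian is
\begin{equation}
	\D\tilde\pi = \begin{bmatrix}
		0 & 0 & 1 \\
		-2x & -a & -y \\
		-a & -2y & -x
	\end{bmatrix},
\end{equation}
whose determinant is (up to sign) $4xy - a^2$, so it is singular exactly on the cone, as expected. Off the origin, the rank drops by one, and the kernel is spanned by a vector $v = (v_1, v_2, 0)$ with $2x\,v_1 + a\,v_2 = 0$ and $a\,v_1 + 2y\,v_2 = 0$; using $4xy = a^2$ these are consistent and give $v = (a, -2x, 0)$ (or equivalently $(-2y, a, 0)$, which is proportional when $a \neq 0$, and one must treat $a = 0$, i.e.\ $x = 0$ or $y = 0$, as a limiting case). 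One checks the corank is exactly $1$ at every cone point except the origin — this uses that the $2\times 2$ minors do not all vanish simultaneously away from $0$ — which already rules out the umbilic (corank $2$) everywhere except at the origin, consistent with item (d) of the Remark after Definition \ref{sf:singularity}.

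\medskip

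\textbf{Step 2: Fold vs.\ cusp test.} The point is a fold if and only if the restriction of $\tilde\pi$ to the cone, composed so as to measure the third-order behavior of $V$ along the kernel, is a Whitney fold — concretely, the condition is that the directional derivative of the function $4xy - a^2$ (defining the critical set) along the kernel direction $v$ be nonzero (this is the standard $A_2$ genericity condition for the fold, and its vanishing is the $A_{\geq 3}$ condition cutting out the cusp line). Compute $v \cdot \nabla(4xy - a^2) = (a, -2x, 0)\cdot(4y, 4x, -2a) = 4ay - 8x^2$. On the cone, $4ay - 8x^2 = 0 \iff ay = 2x^2 \iff$ (using $a^2 = 4xy$) one derives $x = y = a/2$. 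So: away from the line $\{x = y = a/2\}$ (and away from the origin), the cone consists of fold points; on that line the fold condition degenerates.

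\medskip

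\textbf{Step 3: Cusp along the degenerate line.} On the line $x = y = a/2$ (with $a \neq 0$), one must verify it is a genuine cusp and not something worse — i.e.\ check the next genericity condition ($A_3$, non-vanishing of the appropriate fourth-order term, equivalently that the kernel direction is not tangent to the cone-within-the-cone). This can be done either by a direct local coordinate change bringing $\tilde\pi$ to the normal form $(u,s,t)\mapsto(u^3 + su, s, t)$, or by invoking the finite-determinacy fact from the excerpt: the relevant germ $f_{\bar z,\bar\alpha}$ has the right $3$-jet (corank $1$, so after a Morse splitting it is $\pm z^3$ plus a quadratic form, and the unfolding parameters move transversally except along this line where exactly one extra degeneracy appears), hence is a cusp. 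The origin is treated separately: there the corank is $2$ and we already know from the construction of \eqref{hyp_umb} in section \ref{the_choice} (and Definition \ref{sf:singularity}) that it is the hyperbolic umbilic.

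\medskip

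\textbf{Main obstacle.} The computations in Steps 1–2 are routine linear algebra. The genuine work is Step 3: rigorously certifying that the degenerate line gives \emph{cusps} and nothing higher, and that no other degeneracies hide on the cone. The cleanest way to sidestep a messy normal-form computation is to lean on the generic-versality statement \cite[(9.17)]{michor1985cat} already quoted in section \ref{the_choice} — since $V$ is (generically, and by construction here) a versal unfolding, the stratification of the catastrophe map is the standard one for the hyperbolic umbilic, whose only singularities in a neighborhood are fold surface, cusp edge, and the umbilic point — so that the identification $\{x=y=a/2\}$ as the cusp line follows once we have located it via the vanishing of $4ay - 8x^2$ on the cone. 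I would present Steps 1–2 explicitly and invoke versality for Step 3, remarking that a direct normal-form verification is also possible but omitted.
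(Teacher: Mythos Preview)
Your approach is correct and essentially the same as the paper's: both identify the cusp locus as the Thom--Boardman stratum $\Sigma^{1,1}(\tilde\pi)$, which you compute via the equivalent condition that the kernel direction of $\D\tilde\pi$ be tangent to the singular set $\Sigma^1 = \{4xy - a^2 = 0\}$. The paper is terser --- it simply states that $\d\tilde\pi|_{\Sigma^1(\tilde\pi)}$ acquires a one-dimensional kernel along $x = y = a/2$ and cites the Thom--Boardman references --- whereas you carry out the kernel and directional-derivative computation explicitly and add a justification (versality) that the line is genuinely a cusp edge and not something higher, a point the paper leaves implicit.
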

\begin{proof}
	Let $\Sigma^{1}(\tilde{\pi})$ be the set of points, where the kernel of $\d\tilde{\pi}$ has dimension 1. Then $\Sigma^{1}(\tilde{\pi})$ is precisely the 2-manifold determined by $4xy - a^2 = 0$ and $(x,y) \neq 0$. Folds and cusps are contained in there.
	Restrict $\tilde{\pi}$ to $\Sigma^{1}(\tilde{\pi})$. Then $\d \tilde{\pi}\eval_{\Sigma^1(\tilde{\pi})}$ has a one dimensional kernel along $x = y = a/2$, which gives the cusp line. The situation is depicted in figure \ref{double_cone}. The notation $\Sigma^1(\cdot)$ refers to the (Thom-Boardman) symbols $\Sigma^{I}$, refer to \cite{arnold2012singularities,boardman1967}.
\end{proof}

\subsection{The slow flow and its desingularization}
\label{ss:sfdesing}
The right hand side $g$ of the slow subsystem \eqref{slow_sub} determines dynamics on the critical manifold $\mathcal{S}_0$ through the equations \eqref{crit_mfld} of the critical manifold. This dynamics is refered to as slow flow on $\mathcal{S}_0$. The goal of this section is to analyze this slow flow, more precisely its interaction with the hyperbolic umbilic point. Since we are intuitively dealing with a flow on the manifold $\mathcal{S}_0$, the natural approach would be to use charts and push forward this flow to euclidean space. However, we point out that the slow flow is in general not a flow in the usual sense of vector fields on manifolds. In \eqref{slow_sub}, the behavior of the slow flow is linked to the geometry of $\mathcal{S}_0$. More precisely, away from the singular points of $\mathcal{S}_0$, the slow flow behaves like a usual flow of a vector field on $\mathcal{S}_0$. But this behavior breaks down at the singular points of $\mathcal{S}_0$. For example, existence and uniqueness of integral curves might fail at singular points. The procedure to obtain integral curves for the slow flow on $\mathcal{S}_0$ was carried out by Takens in \cite{takens1976}. Let us outline this important procedure.

Recall that $g$ is the right hand side of \eqref{slow_sub}. A natural idea is to ``transform'' $g$ via $\tilde{\pi} \colon \mathcal{S}_0 \to \R^3$, which maps points in $\mathcal{S}_0$ parametrized by $(x,y,a)$ via $\Psi$ to their parameters $(a,b,c)$. Note that $\tilde{\pi}$ is a local diffeomorphism away from the singular points, which allows to pullback $g$. In fact, one can ``lift'' the slow flow $g$ to a vector field $Y$ on $\mathcal{S}_0$ formally given by
\begin{equation}
	\begin{split}
		\label{slow_flow_desing}
		Y = \det \d \tilde{\pi} \cdot (\d \tilde{\pi})^{-1} g(x,y,\tilde{\pi}(x,y,a)),
	\end{split}
\end{equation}
although $\d \tilde{\pi}$ is not invertible everywhere. The reason for this is that by Cramer's rule
\begin{equation}
	\begin{split}
		(\d \tilde{\pi})^{-1} = \frac{1}{\det \d \tilde{\pi}} \mathrm{adj}(\d \tilde{\pi}),
	\end{split}
\end{equation}
where $\mathrm{adj}(\cdot)$ denotes the transpose of the cofactor matrix. Therefore \eqref{slow_flow_desing} reduces to $Y = \mathrm{adj}(\d \tilde{\pi})g(x,y,\tilde{\pi}(x,y,a))$.
Away from singular points of $\mathcal{S}_0$, $Y$ is just a rescaling of the vector field $(\d \tilde{\pi})^{-1} g$. The factor $\det \d \tilde{\pi} = 4xy - a^2$ serves as desingularization, which leads to the term ``desingularized slow flow'' for $Y$. Note that $Y$ is smooth if $g$ is. It is important to note that by the desingularization factor $\det \d \tilde{\pi}$ in \eqref{slow_flow_desing}, the orientation of trajectories is reversed in the region $\{ \det \d \tilde{\pi} < 0\}$. This time reversion in the region $\{ \det \d \tilde{\pi} < 0\}$ on $\mathcal{S}_0$ must not be neglected, since only after this we obtain the correct flow on the critical manifold away from singular points. For illustrating examples of the desingularization procedure \eqref{slow_flow_desing} we refer to \cite{kojakhmetov2014constrained}.

In our case of the hyperbolic umbilic, \eqref{slow_flow_desing} leads to the following desingularized slow flow:
\begin{lemma}
	\label{takens_desing}
	The desingularized slow flow $Y$ for \eqref{hyp_umb} is given by
	\begin{equation}
		\begin{split}
			\label{sf:desing}
			\dot{x} &= a \tilde{g}_c(x,y,a) - 2y \tilde{g}_b(x,y,a) + (xa - 2y^2) \tilde{g}_a(x,y,a)\\
			\dot{y} &= a \tilde{g}_b(x,y,a) - 2x \tilde{g}_c(x,y,a) + (ya - 2x^2) \tilde{g}_a(x,y,a)\\
			\dot{a} &= (4xy - a^2) \tilde{g}_a(x,y,a),
		\end{split}
	\end{equation}
	where the time orientation of trajectories needs to be reversed in the saddle-type region $\mathcal{S}_0^\txts = \{ 4xy - a^2 < 0 \}$. The notation $\tilde{g}_\bullet(x,y,a) = g_\bullet(x,y,\tilde{\pi}(x,y,a),0) = g_\bullet (x,y,a,-x^2-ay,-y^2-ax,0)$ is used.
\end{lemma}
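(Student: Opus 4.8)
The plan is to obtain \eqref{sf:desing} by a direct matrix computation, using the reduction $Y = \mathrm{adj}(\d\tilde\pi)\, g(x,y,\tilde\pi(x,y,a),0)$ that was established in the discussion preceding the lemma (Cramer's rule cancels the spurious factor $1/\det\d\tilde\pi$ against the desingularization factor $\det\d\tilde\pi = 4xy-a^2$). First I would differentiate the catastrophe map $\tilde\pi$ from \eqref{pi_tilde} with respect to the coordinates $(x,y,a)$ on $\mathcal{S}_0$, obtaining
\[
	\d\tilde\pi = \begin{bmatrix} 0 & 0 & 1 \\ -2x & -a & -y \\ -a & -2y & -x \end{bmatrix},
\]
where the rows correspond to the slow variables $(a,b,c)$ and the columns to $(x,y,a)$; note in particular that the last column is $(1,-y,-x)^\top$ rather than a unit vector, reflecting the double role of $a$ as both a slow parameter and a chart coordinate on $\mathcal{S}_0$. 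Expanding the determinant along the first row gives $\det\d\tilde\pi = 4xy - a^2$, consistent with \eqref{sing_cone}.

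Next I would compute the adjugate, i.e.\ the transpose of the cofactor matrix built from the nine $2\times 2$ minors:
\[
	\mathrm{adj}(\d\tilde\pi) = \begin{bmatrix} ax - 2y^2 & -2y & a \\ ay - 2x^2 & a & -2x \\ 4xy - a^2 & 0 & 0 \end{bmatrix}.
\]
Multiplying this matrix by the column vector $(\tilde g_a, \tilde g_b, \tilde g_c)^\top = g(x,y,\tilde\pi(x,y,a),0)$ and reading off the three entries produces exactly the right-hand sides in \eqref{sf:desing}; the smoothness claim is immediate since $\mathrm{adj}(\d\tilde\pi)$ is polynomial in $(x,y,a)$ and $g$ is smooth by hypothesis. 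Finally, the assertion about reversing the time orientation on $\mathcal{S}_0^{\txts} = \{4xy - a^2 < 0\}$ is not part of this algebra but is precisely the sign bookkeeping already discussed before the lemma: there the multiplier $\det\d\tilde\pi$ used to clear the denominator is negative, so trajectories of $Y$ run opposite to the genuine slow flow on that stratum.

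I do not expect any serious obstacle; the lemma is essentially a $3\times 3$ adjugate computation followed by a substitution. The only places that require a little care are keeping the orderings of $g=(g_a,g_b,g_c)$ and of the components of $\tilde\pi$ consistent (so that $Y = \mathrm{adj}(\d\tilde\pi)g$ rather than some permutation thereof), and remembering that the time-reversal statement is a consequence of the desingularization convention, not something visible in the final formula.
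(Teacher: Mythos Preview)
Your proposal is correct and follows exactly the approach the paper indicates: the paper's proof consists of the single sentence ``Computation of $\mathrm{adj}(\d \tilde{\pi}) g(x,y,\tilde{\pi}(x,y,a),0)$,'' together with the remark that $\{\det \d\tilde\pi < 0\}$ coincides with $\mathcal{S}_0^{\txts}$. You have simply carried out that computation explicitly, and your Jacobian, adjugate, and final product all check out.
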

\begin{proof}
	Computation of $\mathrm{adj}(\d \tilde{\pi}) g(x,y,\tilde{\pi}(x,y,a),0)$. The region $\{\det \d \tilde{\pi} < 0 \}$ where the orientation of trajectories must be reversed coincides with the saddle-type region $\mathcal{S}^{\txts}_0$.
\end{proof}

Recall that $Y$ induces dynamics on $\mathcal{S}_0$. For the remainder of this article we let $Y$ denote the vector field \eqref{sf:desing}, where the orientation of integral curves in $\mathcal{S}_0^{\txts}$, i.e.\ outside of the double cone of singularities, is reversed. 

We are interested in the dynamics close to the hyperbolic umbilic singularity, located at the origin. From now on, let
\begin{equation}
	\begin{split}
		A_0 := g_a(0),\, B_0 := g_b(0),\, C_0 := g_c(0).
	\end{split}
\end{equation}
Note that $Y$ \eqref{sf:desing} has an equilibrium at the origin. We can classify this equilibrium of $Y$ according to the following lemma. This is similar to \cite{kojakhmetov2014constrained}.

\begin{figure}[h]
	\centering
	\begin{overpic}[width=.5\textwidth]{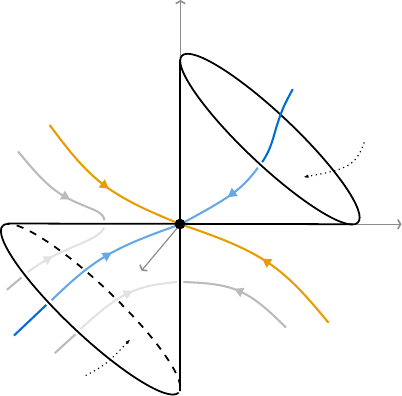}
		\put(31,30){\textcolor{gray}{$a$}}
		\put(95,37){\textcolor{gray}{$x$}}
		\put(39,93){\textcolor{gray}{$y$}}
		\put(15,4){$\mathcal{S}_0^{\txta}$}
		\put(89,66){$\mathcal{S}_0^{\txtr}$}
		\put(30,65){$\mathcal{S}_0^{\txts}$}
		\put(2,18){\textcolor{light-blue}{$\sigma$}}
	\end{overpic}
	\caption{Four trajectories of the desingularized slow flow $Y$ \eqref{sf:desing} approach the origin in the case $B_0, C_0 > 0$, which correspond to the 1-d stable/unstable manifolds at the origin before desingularization. In the vicinity of the origin one of such manifolds is contained in $\mathcal{S}_0^{\txta}$ (denoted by $\sigma$) and $\mathcal{S}_0^{\txtr}$, and the other in $\mathcal{S}_0^{\txts}$. Also compare to figure \ref{fig:main_thm_sing_limit}, where $\sigma$ appears as blue trajectory. A $1$-d center manifold, not shown, also exists and is tangent to the cone at the origin. In particular, the hyperbolic umbilic is not a funnel in the sense of Takens \cite{takens1976}, and thus we shall concentrate on tracking $\sigma$ and its perturbations. This property of the hyperbolic umbilic is evident in the blow-up analysis. Away from the center manifold, almost all trajectories of the slow flow leave $\mathcal{S}_0^{\txta}$ via folds (and cusps), as depicted by means of two trajectories in gray.}
	\label{desing_sketch}
\end{figure}

\begin{lemma}
	\label{sf:origin_lin}
	 For any $g = (g_a,g_b,g_c)$ in \eqref{hyp_umb}, the origin is a non-hyperbolic equilibrium of the desingularized slow flow $Y$ \eqref{sf:desing}. The linearization at the origin has eigenvalues $0$ and $\pm2\sqrt{B_0 C_0}$. Taking the time reversion in $\mathcal{S}_0^\txts$ into account, the local behavior near the origin is characterized by the following arrangement of invariant manifolds according to the values of $B_0$ and $C_0$:
	 \begin{align*}
	 		B_0,C_0 > 0: & \text{ two 1-d stable manifolds and 1-d center manifold}\\
			B_0,C_0 < 0: & \text{ two 1-d unstable manifolds and 1-d center manifold}\\
			B_0,C_0 \text{ have opposite sign}: & \text{ 1-d center manifold and rotational behavior around it}\\
			B_0 = 0 \text{ or } C_0 = 0: & \text{ degenerate case}
	 \end{align*}
	 The first three cases occur generically.
\end{lemma}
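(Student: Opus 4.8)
\emph{Proof sketch (plan).}
The plan is to reduce the lemma to the linearization of $Y$ at the origin, supplemented by an accounting of where the eigendirections lie relative to the double cone $\{4xy-a^2=0\}$, on which the time reversal prescribed in Lemma~\ref{takens_desing} takes effect.

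First I would compute the Jacobian $J := \D Y(0)$ from \eqref{sf:desing}. Each summand is either linear in $(x,y,a)$ with the factor $\tilde g_\bullet$ frozen at the origin, or carries one of the prefactors $xa-2y^2$, $ya-2x^2$, $4xy-a^2$ whose value and gradient both vanish at $0$; in particular $A_0=g_a(0)$ drops out, and one obtains
\[
	J = \begin{bmatrix} 0 & -2B_0 & C_0 \\ -2C_0 & 0 & B_0 \\ 0 & 0 & 0 \end{bmatrix},
	\qquad \det(J-\lambda\id) = -\lambda\bigl(\lambda^2 - 4B_0 C_0\bigr).
\]
Hence the spectrum is $\{0,\ \pm 2\sqrt{B_0 C_0}\}$; the zero eigenvalue is always present, so the origin is non-hyperbolic, and the stable, unstable and center manifold theorems produce local invariant manifolds tangent to the respective eigenspaces. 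Since $\ker J$ is spanned by $(B_0^2, C_0^2, 2B_0 C_0)$, which satisfies $4xy-a^2=0$, the center direction is tangent to the cone of singularities; the eigenvectors for $\pm 2\sqrt{B_0 C_0}$ lie in the plane $\{a=0\}$ and are proportional to $(\mp B_0,\ \sqrt{B_0 C_0},\ 0)$.

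Next I would run the case distinction on the sign of $B_0 C_0$, keeping the time reversal in $\mathcal{S}_0^{\txts} = \{4xy-a^2<0\}$ in view. If $B_0 C_0 > 0$ the nonzero eigenvalues are real and of opposite sign, so before desingularization $Y$ carries a unique $1$-d stable and a unique $1$-d unstable manifold. Along the line $t\mapsto t(-B_0,\sqrt{B_0 C_0},0)$ one computes $4xy-a^2 = -4B_0\sqrt{B_0 C_0}\,t^2$, which for $B_0,C_0>0$ is negative off the origin, so this manifold lies inside $\mathcal{S}_0^{\txts}$ and the reversal flips its stability; along $t\mapsto t(B_0,\sqrt{B_0 C_0},0)$ one has $4xy-a^2 = 4B_0\sqrt{B_0 C_0}\,t^2\ge 0$ and $x+y=(B_0+\sqrt{B_0 C_0})t$, so this manifold runs from $\mathcal{S}_0^{\txta}$ through the origin into $\mathcal{S}_0^{\txtr}$ and is unaffected by the reversal. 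For $B_0,C_0>0$ this yields two $1$-d stable manifolds --- the $\mathcal{S}_0^{\txta}$-branch of the second one being the trajectory $\sigma$ that approaches the hyperbolic umbilic from inside the attracting critical manifold --- plus the $1$-d center manifold; the sign-reversed computation gives, for $B_0,C_0<0$, two $1$-d unstable manifolds plus the center manifold. If $B_0 C_0 < 0$ the eigenvalues $\pm 2i\sqrt{\lvert B_0 C_0\rvert}$ are purely imaginary, the linearization restricted to the invariant plane $\{a=0\}$ has the ellipses $C_0 x^2 - B_0 y^2 = \text{const}$ as orbits, and together with the $1$-d center direction this is the asserted rotational behavior around the center manifold. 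Finally, if $B_0=0$ or $C_0=0$ then $J$ is nilpotent, all eigenvalues vanish and the equilibrium is fully degenerate --- the excluded case; the first three cases correspond to $g_b(0)g_c(0)\neq 0$, an open and dense, hence generic, condition.

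The main obstacle is the bookkeeping of the time reversal rather than the linear algebra: one has to verify that each nonzero eigendirection sits entirely on one side of the cone $\{4xy-a^2=0\}$ (so the reversal acts coherently on the corresponding invariant manifold) and then recombine the stability types --- this is exactly what turns ``one stable and one unstable manifold of $Y$'' into ``two stable manifolds'' of the desingularized slow flow when $B_0,C_0>0$. A secondary point is that for purely imaginary eigenvalues the higher-order terms are not controlled by the linearization, but since the lemma asserts only qualitative rotational behavior this is immaterial.
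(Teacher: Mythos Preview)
Your proof is correct and follows essentially the same approach as the paper: compute the Jacobian of $Y$ at the origin, read off the eigenvalues $0,\pm 2\sqrt{B_0C_0}$, locate the nonzero eigendirections relative to the cone $\{4xy-a^2=0\}$, and then apply the time reversal in $\mathcal{S}_0^{\txts}$ to convert the saddle (for $B_0C_0>0$) into ``two stable'' or ``two unstable'' behavior. Your treatment is in fact slightly more explicit than the paper's --- you compute $4xy-a^2$ along each eigendirection and note why $A_0$ drops out, whereas the paper simply records the eigenvectors and asserts the containment in $\mathcal{S}_0^{\txta}\cup\mathcal{S}_0^{\txtr}$ versus $\mathcal{S}_0^{\txts}$ --- but the underlying argument is the same.
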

\begin{proof}
	The linearization at the origin is given by
	\begin{equation}
		\begin{split}
			\begin{bmatrix}
				0 & -2 B_0 & C_0\\
				-2 C_0 & 0 & B_0\\
				0 & 0 & 0
			\end{bmatrix},
		\end{split}
	\end{equation}
	which has the eigenvalues $0, \pm 2\sqrt{B_0 C_0}$.
	In the case $B_0, C_0 > 0$ the eigenvalues and corresponding eigenvectors of the linearization are given by
	\begin{equation}
		\begin{split}
			0\colon  \left(\frac{B_0}{2C_0},\frac{C_0}{2B_0},1\right),\,
			-2 \sqrt{B_0C_0} \colon \left(\sqrt{B_0/C_0},1,0\right),\,
			2 \sqrt{B_0C_0} \colon \left(-\sqrt{B_0/C_0},1,0\right).
		\end{split}
	\end{equation}
	In the case $B_0, C_0 < 0$ we have
	\begin{equation}
		\begin{split}
			0\colon  \left(\frac{B_0}{2C_0},\frac{C_0}{2B_0},1\right),\,
			-2 \sqrt{B_0C_0} \colon \left(-\sqrt{B_0/C_0},1,0\right),\,
			2 \sqrt{B_0C_0} \colon \left(\sqrt{B_0/C_0},1,0\right).
		\end{split}
	\end{equation}
	Note the swapped eigenvectors corresponding to the latter two nonzero eigenvalues. This is due to the fact that $C_0 \sqrt{B_0/ C_0} = - \sqrt{B_0 C_0}$ if $B_0, C_0 < 0$. In the case $B_0, C_0 > 0$, there exists a 1-d stable manifold contained in $\mathcal{S}_0^{\txta} \cup \mathcal{S}_0^{\txtr}$ close to the origin. Further, there exists a 1-d unstable manifold contained in $\mathcal{S}_0^{\txts}$ close to the origin. Due to the time reversion in $\mathcal{S}_0^{\txts}$ we obtain four trajectories approaching the origin. See figure \ref{desing_sketch}.
	Similarly, we obtain four trajectories emanating from the origin if $B_0, C_0< 0$. Note that in both cases the 1-d center manifolds are ``tangent'' to the double cone surface at the origin.
\end{proof}

Lemma \ref{sf:origin_lin} topologically determines the slow flow dynamics away from the 1-d center manifold(s) in case of $B_0C_0 \neq 0$, close to the origin.
Motivated by Lemma \ref{sf:origin_lin}, we shall restrict the forthcoming analysis to the generic sub-case $B_0,C_0 > 0$. The simple reason for this is that the flow on the stable manifolds is directed towards the hyperbolic umbilic point in this case. In contrast, for $B_0,C_0 < 0$ 
there are no trajectories approaching the hyperbolic umbilic point, except possibly on the center manifold.
In the case of $B_0,C_0$ having opposite sign (i.e.\ $B_0 C_0 < 0$), we obtain a pair of conjugated purely imaginary eigenvalues at the hyperbolic umbilic. To desingularize this case, the approach of \cite{hayes2016geometric} might be applicable.

\begin{remark}[The non-hyperbolic equilibrium of $Y$ at the origin]
For any $g = (g_a,g_b,g_c)$ in \eqref{hyp_umb}, respectively \eqref{slow_sub}, the vector field $Y$ \eqref{sf:desing} has a non-hyperbolic equilibrium at the origin. We emphasize that this is an equilibrium of the desingularized slow flow $Y$ and not of the original system. Roughly speaking, this occurs due to two fast variables in \eqref{hyp_umb} with appropriate non-linear right-hand side for $\eps = 0$, in our case second-order polynomials. The equilibrium at the origin is due to the presence of $\mathrm{adj}(\d \tilde{\pi})$ in \eqref{slow_flow_desing} and the non-hyperbolicity is due to the quadratic factor $\det \d \tilde{\pi} = 4xy - a^2$ in the third equation of \eqref{sf:desing}, coming from the desingularization process.
This is in contrast to the case of folds, cusps and the swallowtails, where the desingularized slow flow is generically nonzero at the most degenerate singularity \cite{takens1976}. In the case of folded singularities \cite{szmolyan2001canards}, the desingularized slow flow generically has a hyperbolic equilibrium at the folded singularity.
\end{remark}

From now on, we assume $B_0,C_0 > 0$, if not stated otherwise. Note that by swapping the coordinate labels $b \leftrightarrow c$ and $x \leftrightarrow y$ in \eqref{hyp_umb}, we obtain \eqref{hyp_umb} with swapped $g_b \leftrightarrow g_c, f_x \leftrightarrow f_y$ and permuted first two arguments in these functions. This suggests that in the singular limit the case $g_b = g_c$ might play a special role. Indeed, if $g_b = g_c$ then $\{ x = y \}$ is invariant for \eqref{sf:desing}. We will see later that the case $B_0 = C_0$ will play a special role on the blow-up sphere.

Let $\sigma$ denote the unique trajectory, which approaches the origin off the center manifold in forward time from inside $\mathcal{S}_0^{\txta}$ sufficiently close to the origin. Refer to figure \ref{desing_sketch}. This trajectory will play an important role in the forthcoming analysis, as we aim to track it through the hyperbolic umbilic.

\section{Fast subsystem}
\label{sec:fast_subsystem}

In this section we analyze the fast subsystem of \eqref{hyp_umb}: its equilibria, their bifurcations and heteroclinic connections.
This analysis is naturally the next step to analyze the singular limit of a fast-slow system. The analysis will show the interplay of the fast fibers with the critical manifold $\mathcal{S}_0$, since equilibria of the fast subsystem are instances of $\mathcal{S}_0$.
Considering \eqref{start:fast}, we have the parameter space $(a,b,c)$ and the planar gradient system
\begin{equation}
	\begin{split}
		\label{planar_fast}
		x' &= x^2 + ay + b\\
		y' &= y^2 + ax + c.
	\end{split}
\end{equation}
Let $Z_{a,b,c}(x,y)$ denote the vector field determined by \eqref{planar_fast}. This system was already analyzed to certain extent in \cite{guckenheimer1973,vegter1982,khesin1990}.

\subsection{Equilibria and their bifurcations} The vector field $Z_{a,b,c}$ has at most four equilibria, since, for $a \neq 0$, they correspond to intersections of the two parabolas 
\begin{equation}
	\begin{split}
		\label{parabolas}
		0 &= x^2 + ay + b\\
		0 &= y^2 + ax + c.
	\end{split}
\end{equation}
One parabola is symmetric with respect to the $y$-axis, the other symmetric with respect to the $x$-axis. Depending on the sign of $a$, the parabolas either open right and up or open left and down. For $a=0$, each of the parabolas degenerates into a pair of parallel lines.

Linearization of $Z_{a,b,c}(x,y)$ gives $\det \mathrm{D}Z_{a,b,c}(x,y) = 4xy - a^2,\; \tr \mathrm{D} Z_{a,b,c}(x,y) = 2(x+y)$. Note that for each $a$, these two expressions determine the regions, in which the phase space $(x,y)$ of the vector field $Z_{a,b,c}(x,y)$ contains sinks, sources, saddles or fold/cusp bifurcations. These regions appear precisely as in taking a section $a = \text{const}$ through figure \ref{double_cone}. Also refer to figure \ref{4config}, in which these regions are separated by the green dashed curves.

\begin{figure}[h]
	\centering
	\begin{overpic}[width=\textwidth]{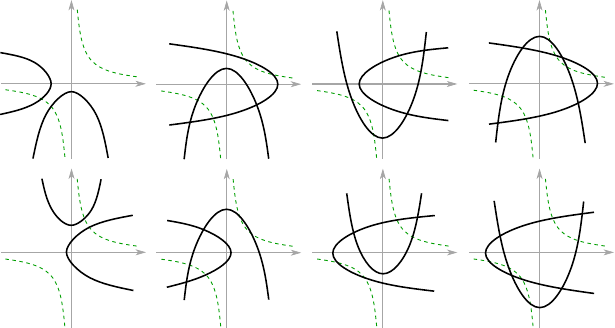}
		\put(0,50){\small (A)}
		\put(26,50){\small (B)}
		\put(51,50){\small (C)}
		\put(77,50){\small (D)}
		\put(0,1){\small (A)}
		\put(26,1){\small (B)}
		\put(51,1){\small (C)}
		\put(77,1){\small (D)}
	\end{overpic}
	\caption{The generic intersections of the two parabolas \eqref{parabolas} give the generic configurations (A),(B),(C),(D) of equilibria of $Z$. The dashed green curves indicate the fold curves for fixed $\abs{a} > 0$, which divide the plane in three regions: sinks occur in the lower left region, sources in the upper right region, saddles in between.}
	\label{4config}
\end{figure}

Thus, there are four generic configurations of the equilibria of $Z_{a,b,c}(x,y)$, which are shown in figure \ref{4config}: (A) no equilibria, (B) a saddle and a sink, (C) a saddle and a source and (D) two saddles, a source and a sink. Note that (B) can only occur for $a > 0$, while (C) can only occur for $a < 0$. The fold/cusp bifurcations, which happen upon variation of the parameters $a,b,c$, visually correspond to transitions between the configurations in figure \ref{4config}. That is, by suitably shifting the parabolas along their symmetry-axes by varying $b,c$ and changing the parabolas' ``width'' by varying $a$.

The following symmetries allow to simplify the study of the family of vector fields $Z_{a,b,c}$ and its bifurcations.

\begin{lemma}
\label{sym}
The family of vector fields $Z$, as defined in \eqref{planar_fast}, has the following symmetries:
\begin{itemize}
	\item[(S1)] $Z_{a,c,b} \circ A = A \circ Z_{a,b,c}$, where $A$ is reflection along $x = y$. That is, $Z_{a,b,c}$ and $Z_{a,c,b}$ are conjugate via $A$. In particular, the stability types of the reflected equilibria remain invariant.
	\item[(S2)] $Z_{-a,b,c} \circ R = Z_{a,b,c}$, where $R$ is rotation by $180^{\circ}$. Integral curves of $Z_{-a,b,c}$ are obtained from integral curves of $Z_{a,b,c}$ by rotation via $R$ and time reversion. In particular, under $a \mapsto -a$, the positions of equilibria of $Z$ are rotated by $R$ and saddles remain saddles, but sinks become sources and vice versa.
\end{itemize}
\end{lemma}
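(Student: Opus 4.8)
The plan is to verify the two claimed symmetries directly by computing the pushforward (or pullback) of the vector field $Z_{a,b,c}$ under the stated linear maps. Since $Z_{a,b,c}$ is given explicitly by the polynomial right-hand side \eqref{planar_fast}, this is a short calculation; the only care needed is to track how a linear change of coordinates $(x,y) \mapsto L(x,y)$ transforms a vector field, namely $(L_* Z)(w) = L \cdot Z(L^{-1} w)$, and to recall that reversing time negates the vector field.

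\textbf{Proof of (S1).} Let $A(x,y) = (y,x)$, which is an involution, so $A^{-1} = A$. Writing $Z_{a,b,c} = P\,\partial_x + Q\,\partial_y$ with $P(x,y) = x^2 + ay + b$ and $Q(x,y) = y^2 + ax + c$, I compute $A \circ Z_{a,b,c}$ evaluated at $A(x,y) = (y,x)$: this equals $A\bigl(P(x,y),Q(x,y)\bigr) = \bigl(Q(x,y),P(x,y)\bigr) = (y^2 + ax + c,\, x^2 + ay + b)$. On the other hand, $Z_{a,c,b}$ evaluated at $(y,x)$ has components $\bigl(y^2 + ax + c,\, x^2 + ay + b\bigr)$. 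These agree, which is exactly the identity $Z_{a,c,b} \circ A = A \circ Z_{a,b,c}$. Hence $A$ conjugates the two flows, reflected equilibria are in bijection, and since $A$ is a linear isomorphism the linearizations are conjugate, so stability types are preserved.

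\textbf{Proof of (S2).} Let $R(x,y) = (-x,-y)$, again an involution. I evaluate $Z_{a,b,c} \circ R$ at $(x,y)$, i.e. $Z_{a,b,c}(-x,-y)$, whose components are $\bigl((-x)^2 + a(-y) + b,\, (-y)^2 + a(-x) + c\bigr) = (x^2 - ay + b,\, y^2 - ax + c)$. Comparing with $Z_{-a,b,c}(x,y) = (x^2 - ay + b,\, y^2 - ax + c)$ shows $Z_{a,b,c} \circ R = Z_{-a,b,c}$, i.e. the stated relation (the two sides of the Lemma's displayed equation coincide after noting $R = R^{-1}$). To interpret this dynamically: if $\gamma(t)$ solves $\dot\gamma = Z_{a,b,c}(\gamma)$, set $\eta(t) = R(\gamma(-t))$; then $\dot\eta(t) = -R\,\dot\gamma(-t) = -R\, Z_{a,b,c}(\gamma(-t)) = -R\, Z_{a,b,c}(R\,\eta(t))$, and using $R\,Z_{a,b,c}\circ R = Z_{-a,b,c}$ together with $R^2 = \id$ one gets $\dot\eta = -Z_{-a,b,c}(\eta)$ — wait, the sign must be handled by noting $R$ is linear so it passes through the derivative and $R\cdot Z_{a,b,c}(R w) = (R_*Z_{a,b,c})(w)$; a cleaner route is $R_* Z_{a,b,c} = -Z_{-a,b,c}$ (the minus because $DR = -\id$ while the argument substitution $R^{-1}$ contributes the parameter flip), which is precisely the ``rotation plus time reversion'' statement. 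At equilibria this says positions are moved by $R$, and since $DR = -\id$ commutes with everything, $D Z_{-a,b,c}$ at the rotated point equals $-D Z_{a,b,c}$ at the original point up to conjugation, so eigenvalues flip sign: saddles (eigenvalues of opposite sign) stay saddles, while sinks and sources swap.

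The computations are entirely routine; the only genuine point of care — and the one I would flag as the ``main obstacle'' if any — is bookkeeping the effect of the orientation-reversing linear map $R$ in (S2), i.e.\ correctly combining the Jacobian factor $DR = -\id$ with the argument substitution so that the parameter flip $a \mapsto -a$ and the time reversal come out consistently. I would present this carefully (as above) so that the reader sees that ``$a \mapsto -a$'' is forced by the substitution while the time reversal is forced by $\det DR < 0$ in dimension one of the relevant scaling. Everything else is a direct substitution into \eqref{planar_fast}.
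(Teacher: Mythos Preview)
Your proposal is correct and takes essentially the same approach as the paper: both verify (S1) and (S2) by direct substitution into \eqref{planar_fast}, with the paper's proof being even terser (``(S1) follows by direct computation\dots for (S2), observe that since $R$ amounts to multiplication by $(-1)$, $Z_{-a,b,c}$ and $-Z_{a,b,c}$ are conjugate via $R$''). Your exposition of (S2) is a bit stream-of-consciousness (the ``wait'' and the remark about $\det DR<0$ are confused, since $\det DR=1$ in the plane; the time reversal comes simply from $DR=-\id$ giving $R_*Z_{a,b,c}=-Z_{-a,b,c}$), so tighten that before submission, but the underlying verification is right.
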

\begin{proof}
	(S1) follows by direct computation using $A(x,y) = (y,x)$. 
	For (S2), observe that since $R$ amounts to multiplication by $(-1)$, $Z_{-a,b,c}$ and $-Z_{a,b,c}$ are conjugate via $R$, which implies the statements.
\end{proof}
We will simply refer to the properties (S1) and (S2) from Lemma \ref{sym} by (S1) and (S2) in the remainder of this section.
Note how the symmetries appear in figure \ref{4config}: switching from the upper configuration (A) to the lower configuration (A) amounts to (S2), similarly for (D). Switching from the upper row to the lower row in case of (B) and (C) amounts to (S1).

Recall the catastrophe set or bifurcation set, which is given by projection of the set of singularities (represented by the double cone) onto the parameter space $(a,b,c)$.
By the symmetries (S1) and (S2) from Lemma \ref{sym}, the bifurcation set in the parameter space is symmetric to the planes $a = 0$ and $b = c$. Furthermore, the bifurcation set is qualitatively well known, see e.g.\ \cite{poston1996catastrophe,broecker1975germs}.
This leads to the qualitative bifurcation diagram depicted in figure \ref{bif_fast}. The bifurcation set divides the parameter space into four connected regions (A),(B),(C) and (D), which correspond to the generic configurations in figure \ref{4config}.
Topological equivalence of the associated vector fields $Z$ in these regions follows from the absence of saddle connections:
\begin{lemma}
	\label{saddle_connection}
	In each of the open and connected regions (A),(B),(C),(D) the family of vector fields $Z_{a,b,c}(x,y)$ has topologically equivalent flows.
\end{lemma}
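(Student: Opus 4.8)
The plan is to show that within each region (A), (B), (C), (D) no parameter value produces a saddle connection, and then invoke a standard structural stability argument for planar gradient vector fields. First I would recall the key structural fact: $Z_{a,b,c}$ is a gradient field (namely $Z_{a,b,c} = \nabla_z V(\cdot,a,b,c)$ with $V$ from \eqref{unfolding}), so it has no periodic orbits, no homoclinic loops, and the only possible obstruction to structural stability among systems with the same (hyperbolic) equilibrium configuration is a heteroclinic connection between two saddles. Hence, on each connected region, once we know all equilibria are hyperbolic and no saddle--saddle connection occurs, the flows are pairwise topologically equivalent: one can connect any two parameter values by a path inside the region, and along that path the phase portrait varies continuously with no bifurcation (no equilibrium is created/destroyed since we stay away from the fold/cusp set, no eigenvalue crosses the imaginary axis since the equilibria stay hyperbolic — this follows from $\det \D^2 V = 4xy - a^2$ and $\tr \D^2 V = 2(x+y)$ being nonzero at equilibria off the bifurcation set — and no connection appears), so by the $\lambda$-lemma / standard persistence of Morse--Smale gradient flows the topological type is locally constant, hence constant on the region.

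The heart of the matter is therefore to rule out saddle connections. In configurations (A), (B), (C) there is at most one saddle, so a saddle--saddle connection is impossible and there is nothing to check. The only nontrivial case is (D), which has exactly two saddles $p_1, p_2$ (together with one sink and one source). Here I would argue as follows. Since $Z_{a,b,c} = \nabla V$, the function $V$ is strictly increasing along nonconstant trajectories, so a connecting orbit from $p_1$ to $p_2$ would force $V(p_1) < V(p_2)$; by symmetry a connection in the other direction would force the reverse strict inequality, so at most one of the two can occur, and in particular a connection can only be destroyed, not ``rotated through,'' making the generic (connection-free) situation the stable one. To actually exclude the connection I would use the explicit geometry of the two parabolas \eqref{parabolas}: the equilibria are the (four) intersection points of $\{x^2 + ay + b = 0\}$ and $\{y^2 + ax + c = 0\}$, and one can locate the two saddles, the sink and the source along these curves using the sign of $\det \D Z = 4xy - a^2$ (the two branches of the hyperbola $xy = a^2/4$ separate sinks, saddles, sources as in figure \ref{4config}). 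One then shows the unstable manifold of each saddle is trapped by an invariant region — bounded by arcs of the nullcline parabolas $\{x^2+ay+b=0\}$, $\{y^2+ax+c=0\}$ and by the separating hyperbola — whose only equilibrium on the boundary-accessible side is the sink, not the other saddle; equivalently, one checks the sign of $Z_{a,b,c}$ along the relevant parabola arcs to see that trajectories cross them transversally in a fixed direction, funnelling the saddle separatrices toward the sink (resp.\ away from the source). The reflection symmetry (S1) $Z_{a,c,b} = A^{-1} \circ Z_{a,b,c} \circ A$ and the rotation symmetry (S2) $Z_{-a,b,c} = Z_{a,b,c} \circ R$ from Lemma \ref{sym} reduce the bookkeeping: it suffices to treat (D) for, say, $a>0$ and $b \ge c$, the remaining sub-cases following by applying $A$ and $R$.

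The main obstacle I anticipate is precisely this last step — producing the trapping regions for the saddle separatrices cleanly in case (D), i.e.\ verifying that the stable/unstable manifolds of the two saddles cannot meet for \emph{any} parameter value in the open region, rather than just for typical ones. The gradient structure is what saves us (it rules out the ``rotation'' of a connection as a parameter varies and gives the monotonicity of $V$), but one still has to do the planar phase-plane analysis: identify the nullclines, the invariant arcs, and the direction of the flow across them. I would organize this by using the four qualitative pictures in figure \ref{4config} as a guide, checking the flow direction on each nullcline arc between consecutive equilibria, and concluding that each saddle's unstable separatrices land in the basin of the unique sink while its stable separatrices come from the unique source (or from infinity), so that no saddle is connected to the other. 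Once the no-connection claim is in hand, the topological-equivalence conclusion on each region is routine from Peixoto-type structural stability for planar gradient systems on a disk, applied along an arc in the parameter region.
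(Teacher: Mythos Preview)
Your overall strategy matches the paper's: reduce via the gradient structure and Andronov--Pontryagin to the single nontrivial case (D), and then rule out saddle connections there. The paper, however, dispatches case (D) by a much sharper device than trapping regions. It invokes a result of Chicone \cite{chicone1979} that in a \emph{quadratic} planar gradient system any saddle connection must lie on a straight line. Assuming an invariant line $y = mx + d$ joining the two saddles (necessarily with $m<0$, as one reads off from figure \ref{4config}), one then looks at $p(x,y) = y' - m x' = y^2 - mx^2 + a(x - my) + c - mb$: the line would have to sit in $\{p=0\}$, and comparing $\nabla p$ along the line with the normal direction $(-m,1)$ forces two incompatible affine expressions for the multiplier $\lambda(x)$, contradicting $m<0$. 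This is a two-line algebraic contradiction once Chicone's lemma is in hand.

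Your phase-plane/trapping-region route is not wrong in spirit, but as you yourself flag, the hard part is exactly the step you leave unexecuted: producing, for \emph{every} parameter value in (D), explicit invariant regions bounded by nullcline arcs and the hyperbola $4xy=a^2$ that funnel each saddle's unstable separatrices to the sink and its stable separatrices from the source. This is delicate because the nullcline parabolas are not invariant curves (they are only places where one component of $Z$ vanishes), so building genuine trapping regions from them requires checking transversality arc by arc and ruling out that a separatrix slips through a corner; doing this uniformly over the three-parameter region is tedious and error-prone. The paper's approach trades all of that geometry for a single algebraic identity, at the cost of importing the (nontrivial but citable) Chicone result. If you want to avoid that citation, you could try to prove the straight-line fact directly for this specific family, which is still likely easier than the full trapping-region analysis.
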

\begin{proof}
	The vector field $Z_{a,b,c}(x,y)$ is a gradient. By the Andronov-Pontryagin criterion for planar structural stability we only need to consider the region (D), where a saddle to saddle connection is possible. But the vector fields $Z_{a,b,c}(x,y)$ in (D) do not show a saddle connection. This was already shown implicitly in \cite{guckenheimer1973}. Here we give an alternative argument, similar to \cite{khesin1990}: by \cite{chicone1979} saddle connections in quadratic planar gradient systems can only occur along straight lines. Assume that $Z_{a,b,c}(x,y)$ has an invariant straight line $y = mx + d$ connecting the two saddles. Note that necessarily $m < 0$, see figure \ref{4config}. Then the zero level set of $p(x,y) = y' - mx' = y^2 - mx^2 + a(x - my) + c -mb$ needs to contain $y = mx +d$ for all $x$-values between the saddles. The gradient $\nabla p$ vanishes in a single point, so away from this point $p(x,y) = 0$ determines a smooth curve. If the straight line and $p(x,y) = 0$ locally agree, there would exist a real valued function $\lambda(x)$, such that $\nabla p(x,mx + d) = \lambda(x) [-m \quad 1]$, the latter being the normal vector to the invariant line. This implies $\lambda(x) = 2x - a/m$ and $\lambda(x) = m (2x - a) + 2d$. But due to $m < 0$ this cannot hold, even locally.
\end{proof}

\begin{figure}[h]
	\centering
	\begin{overpic}[width=0.89\textwidth]{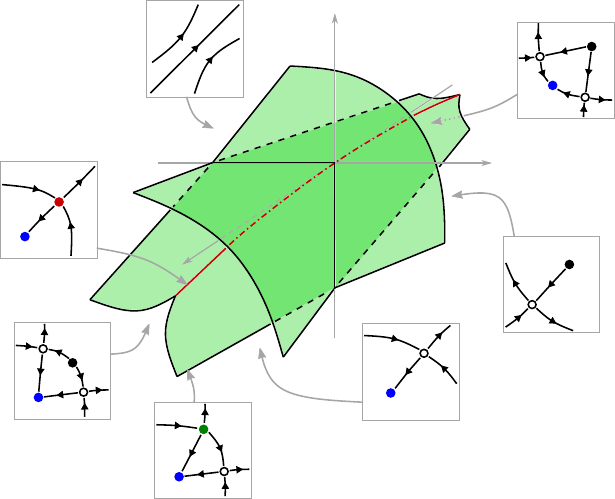}
		\put(28,40){\textcolor{gray}{$a$}}
		\put(81,54){\textcolor{gray}{$b$}}
		\put(55,80){\textcolor{gray}{$c$}}
		\put(0,56){\textcolor{dark-red}{cusp}}
		\put(35,17){\textcolor{dark-green}{fold}}
		\put(24,62){(A)}
		\put(59,30){(B)}
		\put(93,44){(C)}
		\put(2,30){(D)}
		\put(84,79){(D)}
	\end{overpic}
	\caption{Qualitative bifurcation diagram of $Z_{a,b,c}$ \eqref{planar_fast} in parameter space: The bifurcation set is given by projection of the singular points of $\mathcal{S}_0$ onto $(a,b,c)$, and corresponds to the hyperbolic umbilic point at the origin, a curve of cusp points (red) and two surfaces of fold points (green), which intersect along the negative $b$- and $c$-axes. The phase portraits show the qualitative behavior of the fast subsystem at bifurcations and between them. Folds are shown in green, cusps in red, sinks in blue, sources in black and saddles in white. Naturally, sinks/sources and saddles correspond to points in $\mathcal{S}_0^\txta$/$\mathcal{S}_0^\txtr$ and $\mathcal{S}_0^\txts$, respectively.}
	\label{bif_fast}
\end{figure}

\subsection{Feasible jumps}
We now aim to combine the analysis of the fast and slow subsystem away from the hyperbolic umbilic singularity. The goal is to see which concatenations of singular candidate trajectories are possible at all, due to the geometry of $\mathcal{S}_0$ and the fast subsystems. In the following we will focus on the possibilities at jump points, i.e. singular points, where singular candidate trajectories on the critical manifold ``jump'' onto the fast subsystem. As in lower dimensions, one expects that generic fold points indeed serve as jump points: that is, if the slow flow is transverse to the surface of fold points. Refer to the ``normal switching condition'' in \cite{kuehn2015multiple}. In fact, at the fold or cusp points, we can locally employ a center manifold reduction to reduce to one fast variable. This allows to directly apply the existing theory, but only locally. We now aim to obtain global insight. Since the fast subsystem is $2$-dimensional and the critical manifold is $3$-dimensional, a visualization of feasible jumps as in, e.g.\ \cite{kojakhmetov2016cusp}, is not directly applicable. Moreover, in case there is only one fast variable present, the effect of the fast subsystem for candidate trajectories is usually straight forward, since it amounts to projection along the fast fibers.
Here, the fast fibers are 2-d with dynamics determined by a gradient vector field. The essential question for jumps is: \emph{To which equilibria can the fast subsystem lead candidate trajectories, emanating from a fold or cusp point?}

To answer this question, we need to consider $Z_{a,b,c}(x,y)$ at parameter values $(a,b,c)$ that correspond to a fold or a cusp bifurcation. We then need to check which candidate trajectories, emanating from these ``static'' bifurcating equilibria are feasible. That is, we need to check the forward asymptotic behavior of trajectories, which are backward asymptotic to these bifurcation equilibria.
Consider the generic configurations (A),(B),(C),(D), which correspond to regions in parameter space as indicated in the bifurcation diagram in figure \ref{bif_fast}. These regions are separated by the bifurcation set. We call a fold bifurcation of type (A) $\leftrightarrow$ (B), if it occurs in $Z_{a,b,c}$ for parameter variation $(a,b,c)$ from region (A) to region (B) or vice versa. Similarly, we use this notation for all other bifurcations according to the bifurcation diagram in figure \ref{bif_fast}. Note that if $a = 0$ we can have two simultaneuous folds and two ``overlapping'' cusps at $a = b = c = 0$, i.e.\ the hyperbolic umbilic.

In the following we will encounter fast candidate trajectories, which ``escape'' towards infinity. To treat this behavior, let us setup a suitable ``exit'' section, which solutions pass before escaping. For the vector field $Z_{a,b,c}$ consider the section
\begin{equation}
	\begin{split}
		\label{fast_subsystem_exit}
		\Delta^{\ex} = \{ x + y = 2\nu \},
	\end{split}
\end{equation}
for $\nu > 0$.
Then there is an open neighbourhood $K$ of the origin in parameter $(a,b,c)$-space, such that all equilibria of $Z_{a,b,c}$ for $(a,b,c)\in K$ are located below $\Delta^{\ex}$. This assumption is merely to simplify the following statements. In the scope of the current section $\nu > 0$ can be chosen arbitrarily large. In this setting, we state the following Lemma, which constructs positively invariant regions, in which jumping candidate trajectories need to be contained. Eventually, this leads to feasible jumps along the fast subsystem.

\begin{lemma}
	\label{bif_transitions}
	Consider $Z_{a,b,c}$ at parameter values $(a,b,c)\in K$ contained in the bifurcation set. 
	\begin{enumerate}[(a)]
		\item If $(a\neq 0,b,c)$ does \textbf{not} correspond to a fold or cusp bifurcation $(B) \leftrightarrow (D)$, then any trajectory emanating from the bifurcation equilibrium arrives at the section $\Delta^{\ex}$.
		\item If $(a\neq 0,b,c)$ corresponds to a fold or cusp bifurcation $(B) \leftrightarrow (D)$, then every trajectory emanating from the bifurcation equilibrium either arrives at $\Delta^{\ex}$ or is forward asymptotic to a sink or a saddle.
		\item If $(a = 0, b ,c)$: Any trajectory emanating from a fold equilibrium located on the negative $x$- or $y$-axis arrives at $\Delta^{\ex}$. Any trajectory emanating from a fold equilibrium located on the positive $x$- or $y$-axis is forward asymptotic to the simultaneuous fold on the negative axis or arrives at $\Delta^{\ex}$. Any trajectory of $Z_{0,0,0}$ emanating from the origin arrives at $\Delta^{\ex}$.
		\item From $\Delta^{\ex}$ any trajectory escapes towards infinity and cannot connect to equilibria.
	\end{enumerate}
\end{lemma}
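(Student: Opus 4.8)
The statement is a case analysis of the forward behavior of trajectories of the planar gradient system $Z_{a,b,c}$ emanating from a bifurcation equilibrium. The unifying tool is the observation that $Z_{a,b,c}$ is a gradient flow for the potential $V(x,y)=\tfrac13 x^3+\tfrac13 y^3+axy+bx+cy$, hence has no periodic orbits and no homoclinic loops; every bounded forward orbit converges to an equilibrium, and every unbounded forward orbit must cross any ``outer'' section such as $\Delta^{\ex}=\{x+y=2\nu\}$ provided the equilibria all lie below it. So the whole argument reduces to: (i) controlling which equilibria an emanating trajectory could possibly limit to, and (ii) showing boundedness fails (i.e.\ the trajectory escapes) in all the claimed cases.

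First I would record the two basic sign facts from the linearization computed just before the lemma: $\det\mathrm{D}Z_{a,b,c}(x,y)=4xy-a^2$ and $\tr\mathrm{D}Z_{a,b,c}(x,y)=2(x+y)$. Thus in the region $\{x+y>0\}$ there are no sinks (any equilibrium there is a source or a saddle), and crucially, on the line $\Delta^{\ex}=\{x+y=2\nu\}$ one has $x'+y'=x^2+y^2+a(x+y)+b+c$; for $\nu$ large (or, after noting the equilibria lie in a fixed compact set, for the given $\nu$ once $(a,b,c)\in K$ small) this quantity is strictly positive on $\Delta^{\ex}$, so $\Delta^{\ex}$ is crossed transversally in the direction of increasing $x+y$ and no trajectory can return. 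This immediately gives part (d): past $\Delta^{\ex}$ the orbit stays in $\{x+y>2\nu\}$ where there are no equilibria at all (all equilibria are below $\Delta^{\ex}$ by the choice of $K$), and a gradient flow with no equilibria ahead and $V$ decreasing without bound must be unbounded. For parts (a), (b), (c) the structure is the same: a trajectory emanating from the bifurcating equilibrium leaves along its (one-dimensional, by the fold/cusp degeneracy) unstable-like center direction; I would use the explicit parabola geometry of figure \ref{4config} to see that this emanating branch moves into a region where $V$ has no other critical point below it along the flow, forcing the orbit either to be forward asymptotic to one of the remaining equilibria (necessarily a saddle or a sink, by the trace sign) or to escape and hence hit $\Delta^{\ex}$. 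The case distinction is exactly which configurations contain a downstream sink/saddle into whose basin the emanating branch can fall: only the $(B)\leftrightarrow(D)$ fold/cusp has a persisting sink (and a persisting saddle) positioned so that the vanishing pair's unstable branch can connect to it — giving (b) — whereas all other fold/cusp bifurcations on the set (the $(A)\leftrightarrow(B)$, $(A)\leftrightarrow(C)$, $(C)\leftrightarrow(D)$ types and the cusps between them) annihilate against a configuration with no suitably placed attractor, so the branch must reach $\Delta^{\ex}$, giving (a). Part (c), the degenerate line $a=0$, is handled by the same reasoning after noting the parabolas degenerate to parallel line pairs: the two simultaneous folds sit on the negative axes, the emanating branch from a negative-axis fold heads outward (escapes, hits $\Delta^{\ex}$), while from a positive-axis fold it can flow toward the simultaneous negative-axis fold or escape; the $Z_{0,0,0}$ statement follows from the explicit phase portrait of $x'=x^2,\ y'=y^2$, whose orbits all go to infinity into the first ``quadrant-like'' region and cross $\Delta^{\ex}$.

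The main obstacle I anticipate is not any single estimate but making the case (a)/(b) dichotomy airtight: one must verify, bifurcation type by bifurcation type, that the emanating branch of the disappearing equilibrium pair cannot be trapped by some equilibrium other than in the $(B)\leftrightarrow(D)$ situation, and that in the $(B)\leftrightarrow(D)$ situation exactly the sink or the persisting saddle (and nothing else) can capture it. This is where Lemma \ref{saddle_connection} is essential: since $Z_{a,b,c}$ never has a saddle-to-saddle connection, the unstable manifolds of the two saddles in configuration (D) either enter the sink's basin or run off to infinity, and there is no exotic blocking behavior. I would organize this by constructing, for each bifurcation type, an explicit positively invariant ``funnel'' region bounded by arcs of the two parabolas $\{x^2+ay+b=0\}$, $\{y^2+ax+c=0\}$ (on which the vector field points inward, by inspection of signs of $x'$ and $y'$) together with a piece of $\Delta^{\ex}$; the emanating branch enters this region and then, by Poincaré–Bendixson together with absence of periodic orbits, either converges to the unique attractor inside or exits through $\Delta^{\ex}$. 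The bookkeeping of these invariant regions across the four configurations, and the $a=0$ limiting case, is the bulk of the work; the dynamical input is entirely elementary once the gradient structure and the no-saddle-connection lemma are in hand.
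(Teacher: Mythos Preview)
Your proposal is correct and takes essentially the same approach as the paper: construct positively invariant ``funnel'' regions bounded by arcs of the two nullcline parabolas, then use the gradient structure / Poincar\'e--Bendixson to force emanating trajectories either to an equilibrium inside or out through $\Delta^{\ex}$, with the case distinction coming from which equilibria survive in each bifurcation type. The paper streamlines the bookkeeping by using the symmetries (S1), (S2) of Lemma~\ref{sym} to reduce to four representative phase portraits rather than treating each fold/cusp type separately; conversely, your direct computation $x'+y'=x^2+y^2+2a\nu+b+c\ge 2\nu^2+2a\nu+b+c>0$ on $\Delta^{\ex}$ (for $K$ small) is cleaner than the paper's workaround in Remark~\ref{rem:escape}. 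One small overstatement: Lemma~\ref{saddle_connection} is not actually needed here---the invariant-region argument already confines the emanating branch without having to rule out saddle connections.
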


\begin{proof}
	This follows by looking at the nullclines of $Z_{a,b,c}$, i.e.\ the configuration of the parabolas \eqref{parabolas}, for parameters in the bifurcation set. The proof is given by means of figure \ref{proof_figs}, which contains four qualitative phase portraits, to which all cases except $Z_{0,0,0}$ can be reduced. We explain the reduction and figure \ref{proof_figs}:
	Assume $a \neq 0$. Consider the phase portrait of $Z_{a,b,c}$ at parameter values as mentioned above. Due to (S1) from Lemma \ref{sym}, the argument for the folds reduces to four qualitatively distinct phase portraits corresponding to folds (A) $\leftrightarrow$ (B), (A) $\leftrightarrow$ (C), (B) $\leftrightarrow$ (D) and (C) $\leftrightarrow$ (D). By (S2), we can take the phase portrait for a fold (B) $\leftrightarrow$ (D) and obtain the phase portrait for a fold (C) $\leftrightarrow$ (D) by $180^{\circ}$ rotation and time reversion. Similarly for folds (A) $\leftrightarrow$ (B) and (A) $\leftrightarrow$ (C). Hence only two fold phase portraits contain all the information we need for the proof. In figure \ref{proof_figs}, these are the upper two phase portraits: Trajectories being backward asymptotic to the folds (green) must be contained in the violet regions, which are positively invariant. Trajectories being forward asymptotic to the folds must be contained in the orange regions, which are negatively invariant. After time reversion and $180^{\circ}$ rotation the orange region corresponds to a positively invariant region containing trajectories emanating from folds in reverse time. 
	\begin{center}
		\begin{overpic}[width=.8\textwidth]{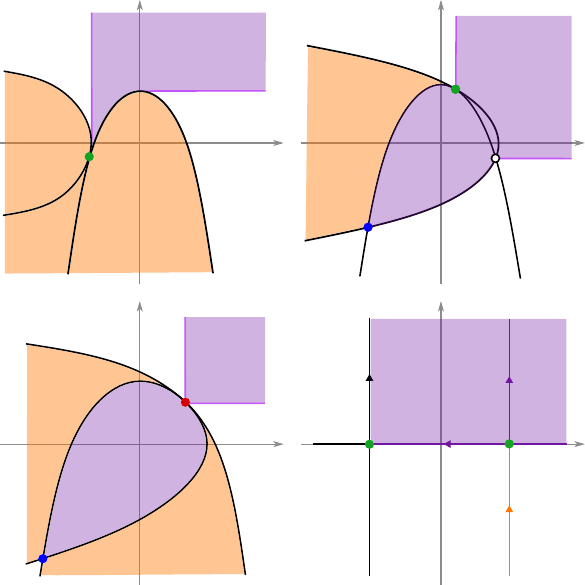}
			\put(98,21){\small \textcolor{gray}{$x$}}
			\put(98,73){\small \textcolor{gray}{$x$}}
			\put(46,21){\small \textcolor{gray}{$x$}}
			\put(46,73){\small \textcolor{gray}{$x$}}
			\put(21,98){\small \textcolor{gray}{$y$}}
			\put(21,47){\small \textcolor{gray}{$y$}}
			\put(72,98){\small \textcolor{gray}{$y$}}
			\put(72,47){\small \textcolor{gray}{$y$}}
			\put(38,53){\small $x' = 0$}
			\put(90,53){\small $x' = 0$}
			\put(43,1){\small $x' = 0$}
			\put(1,43){\small $y' = 0$}
			\put(1,89){\small $y' = 0$}
			\put(53,94){\small $y' = 0$}
			\put(10,84){\small $\rightarrow$}
			\put(5,95){\small $\nearrow$}
			\put(30,79.2){\small $\uparrow$}
			\put(34,65.5){\small $\uparrow$}
			\put(35,85){\small \textcolor{light-purple}{$\nearrow$}}
			\put(15.5,89){\small \textcolor{light-purple}{$\nearrow$}}
			\put(31,95){\small \textcolor{light-purple}{(A) $\leftrightarrow$ (B)}}
			\put(77.9,93){\small \textcolor{light-purple}{$\nearrow$}}
			\put(90,73.4){\small \textcolor{light-purple}{$\nearrow$}}
			\put(83.5,94.5){\small \textcolor{light-purple}{(B) $\leftrightarrow$ (D)}}
			\put(67,88.4){\small $\rightarrow$}
			\put(55,58.6){\small $\rightarrow$}
			\put(74,65){\small $\leftarrow$}
			\put(66.5,75){\small $\downarrow$}
			\put(70,94){\small $\nearrow$}
			\put(92,35){\small $\nearrow$}
			\put(92,13){\small $\nearrow$}
			\put(74,35){\small $\nwarrow$}
			\put(74,13){\small $\nwarrow$}
			\put(39,31.5){\small \textcolor{light-purple}{$\nearrow$}}
			\put(31.4,39){\small \textcolor{light-purple}{$\nearrow$}}
			\put(25,40){\small $\nearrow$}
			\put(5,58){\small $\nearrow$}
			\put(19,54.3){\small \textcolor{orange}{(A) $\leftrightarrow$ (C)}}
			\put(53,85){\small \textcolor{orange}{(C) $\leftrightarrow$ (D)}}
			\put(15,28){\small $\downarrow$}
			\put(15,38.6){\small $\rightarrow$}
			\put(39.5,14){\small $\uparrow$}
			\put(27.8,15){\small $\leftarrow$}
			\put(22,62){\small $\nwarrow$}
			\put(30,5){\small $\nwarrow$}
			\put(74,55){\small $\nwarrow$}
			\put(22,23){\small $\swarrow$}
			\put(74,75){\small $\swarrow$}
			\put(57,75){\small $\searrow$}
			\put(5,75){\small $\searrow$}
			\put(5,23){\small $\searrow$}
			\put(32,43){\small \textcolor{light-purple}{(B) $\leftrightarrow$ (D)}}
			\put(39.5,40){\small \textcolor{light-purple}{cusp}}
			\put(10,3){\small \textcolor{orange}{(C) $\leftrightarrow$ (D) cusp}}
		\end{overpic}
		
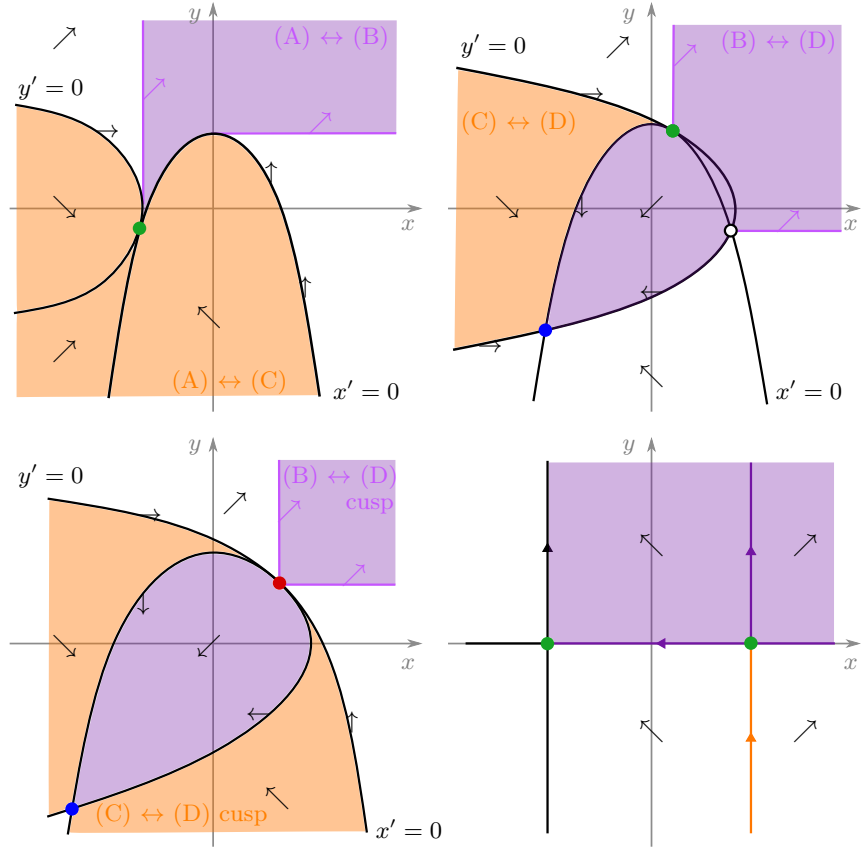
\captionof{figure}{Qualitative phase portraits for the proof of Lemma \ref{bif_transitions}: Up to (S1) and (S2), these four phase portraits qualitatively contain all relevant phase portraits of $Z_{a,b,c}$ at parameter values contained in the bifurcation set. The black parabolas (which degenerate into straight lines for $a = 0$) are the nullclines, the arrows indicate the signs of $x',y'$, i.e.\ the direction of the flow. The violet regions define positively invariant regions, in which trajectories emanating from the bifurcating equilibrium (fold in green or cusp in red) are contained. After time reversion the orange regions play a similar role. Folds are shown in green, cusps in red, sinks in blue, saddles in white. Recall that $\Delta^{\ex} = \{ x + y = 2\nu \}$.}
		\label{proof_figs}
	\end{center}
	The upper left portrait treats folds (A) $\leftrightarrow$ (B) with the violet positively invariant region and, via time reversion and $180^{\circ}$ rotation (S2), folds of type (A) $\leftrightarrow$ (C) with orange.  The upper right portrait treats (B) $\leftrightarrow$ (D) with the violet positively invariant region and (C) $\leftrightarrow$ (D) with the orange negatively invariant region, where again time reversion and $180^{\circ}$ rotation (S2) needs to be employed. Recall that (S2) turns a sink into a source.
	Two additional configurations of the nullclines lead to cusps, which can be reduced again by (S2) to a single phase portrait containing all the necessary information. This is the lower left phase portrait in figure \ref{proof_figs}. It treats cusps (B) $\leftrightarrow$ (D) with the violet region, cusps (C) $\leftrightarrow$ (D) with the orange region via (S2).
	The case $a = 0$ follows similarly, the nullcline parabolas just degenerate into (pairs of) straight lines. The case $Z_{0,0,0}$ follows easily. By (S1), the remaining cases with $a = 0$ can be reduced to a single phase portrait. This is the lower right portrait in figure \ref{proof_figs}, which does only show the violet and orange regions corresponding to the fold on the right hand side. Note that in $a = 0$ away from the origin two folds happen simultaneously. The case of $Z_{0,0,0}$, which is the fast subsystem at the hyperbolic umbilic itself, follows easily by looking at its phase portrait. Assertion (d) follows from Remark \ref{rem:escape}.
\end{proof}

\begin{remark}[The geometry near folds and cusps]
	\label{rem:foldcusp}
	Near the singularly perturbed cusp in dimension three \cite{broer2013cusp,kojakhmetov2016cusp} the critical manifold is ``S-shaped'' due to two intersecting fold curves. Further, along the fold curves, an attracting branch is ``folded over'' a repelling branch. Combined with the ``S-shaped'' critical manifold, this can lead to singular limit relaxation oscillation type behavior near the cusp. In our case, however, the above analysis shows that at folds a branch of $\mathcal{S}^{\txts}_0$ is folded over either a branch of $\mathcal{S}^{\txta}_0$ or $\mathcal{S}^{\txtr}_0$, given that $a \neq 0$. See for example figure \ref{bif_fast}. Moreover, at folds near the cusp curves, where $\mathcal{S}^{\txts}_0$ is folded over $\mathcal{S}^{\txta}_0$, there are only instances of $\mathcal{S}^{\txts}_0$ and $\mathcal{S}^{\txtr}_0$ present in the fast subsystem, except for the fold itself. Hence, at such folds we can in principle only jump to $\mathcal{S}^{\txts}_0$. These are folds of type (C) $\leftrightarrow$ (D), and Lemma \ref{bif_transitions} shows that the fast subsystem does not allow jumps to $\mathcal{S}^{\txts}_0$. Thus, a direct jump back to $\mathcal{S}_0$ is impossible, and therefore a singular limit relaxation oscillation behavior as in the 3-d cusp is impossible. In particular, a singular limit return mechanism from $\mathcal{S}^{\txta}_0$ back to $\mathcal{S}^{\txta}_0$ must go through the hyperbolic umbilic.
	At folds near the cusp curves, where $\mathcal{S}^{\txts}_0$ is folded over $\mathcal{S}^{\txtr}_0$, there are only instances of $\mathcal{S}^{\txts}_0$ and $\mathcal{S}^{\txta}_0$ present in the fast subsystem, except the fold itself. These are folds of type (B) $\leftrightarrow$ (D). Lemma \ref{bif_transitions} does not exclude jumps from these folds to $\mathcal{S}^{\txts}_0$ or $\mathcal{S}^{\txta}_0$.
\end{remark}

\begin{remark}[The exit section $\Delta^{\ex}$ and escape towards infinity]
	\label{rem:escape}
	The vector field $Z_{a,b,c}(x,y)$ might not be transverse to $\Delta^{\ex}$ at isolated points in the regions $x' < 0, y' > 0$ and $x' > 0, y' < 0$. However, this section still works as suitable ``exit'' section: Any section of the form $\{x = \text{const}\}$ or $\{ y = \text{const}\}$ is transverse to the flow in these regions, which we can place ``before'' $\Delta^{\ex}$. Together with $\Delta^{\ex}$ this then gives ``triangular''-like compact regions, which solutions need to exit in finite time through $\Delta^{\ex}$ by Poincar\'e-Bendixson. Further, figure \ref{proof_figs} shows that in this case trajectories can only escape towards infinity and cannot connect to any equilibrium. 
\end{remark}

\begin{remark}[Asymptotic estimates from the positively/negatively invariant regions]
	\label{kappa_a_asymptotics}
	By employing the nullclines and the direction of $Z_{a,b,c}(x,y)$ along the diagonal $x = y$, it is easy to obtain forward asymptotic estimates for trajectories, which jump onto the fast subsystem at folds.  At folds (C) $\leftrightarrow$ (D), any solution $(x,y)$ emanating from the corresponding fold equilibrium satisfies the following: if $b < c$: $x = O(\sqrt{y})$ while $y \to +\infty$, if $c < b$: $y = O(\sqrt{x})$ while $x \to +\infty$. At folds (A) $\leftrightarrow$ (B), any solution $(x,y)$ emanating from the fold equilibrium satisfies: if $b < c$: $y > x$ and $y \to +\infty$, if $c < b$: $x < y$ and $x \to +\infty$. In the upcoming blow-up analysis, these estimates can be employed to connect the corresponding orbits on the equator of the blow-up sphere to their ``exit equilibria'' in the charts $\kappa_{\pm a}$.
\end{remark}

Now, the jump possibilities given by Lemma \ref{bif_transitions} can be visualized by attaching the qualitative fast subsystems to the corresponding regions on the double cone from figure \ref{double_cone}, which represent the singularities of $\mathcal{S}_0$. This is shown in figure \ref{double_cone_fast}. In principle, this allows to concatenate candidate trajectories of the singular limit of system \eqref{hyp_umb}, if we add the desingularized slow flow $Y$ to the figure \ref{double_cone_fast} and respect the statements from Lemma  \ref{bif_transitions}. An example how to read figure \ref{double_cone_fast} and concatenate trajectories is given in the proof of Proposition \ref{prop:fast_jumps}.

\begin{figure}[h]
	\centering
	\begin{overpic}[width=0.85\textwidth]{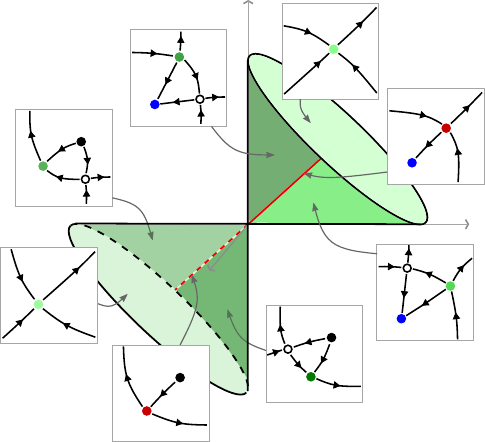}
		\put(45,34){\textcolor{gray}{$a$}}
		\put(96,47){\textcolor{gray}{$x$}}
		\put(54,90){\textcolor{gray}{$y$}}
		\put(32,25){\textcolor{blue}{$\mathcal{S}_0^{\txta}$}}
		\put(70,61){$\mathcal{S}_0^{\txtr}$}
		\put(37,55){\textcolor{gray}{$\mathcal{S}_0^{\txts}$}}
	\end{overpic}
	\caption{Qualitative fast subsystems corresponding to the singularities on the double cone for $a \neq 0$, each fast subsystem phase portrait is attached to a set of singularities on the cone. The $a$-axis is directed towards the observer. Cusp points are depicted in red, fold points in green, sinks in blue, sources in black and saddles in white. Recall that equilibria appearing in the fast subsystem correspond to instances of $\mathcal{S}_0^{\txta}, \mathcal{S}_0^{\txtr}, \mathcal{S}_0^{\txts}$ or singular points of $\mathcal{S}_0$, as indicated by the color coding. The phase portraits only contain few heteroclinic connections and ``escaping'' trajectories, but give a qualitatively complete picture according to Lemma \ref{bif_transitions}. For future reference, it is worth noting that the only trajectories that can jump towards $\mathcal{S}_0^\txta$ are those jumping at the upper cone.}
	\label{double_cone_fast}
\end{figure}

Roughly speaking, the feasible jumps from Lemma \ref{bif_transitions} and figure \ref{double_cone_fast} are summarized in the following proposition.
\begin{prop}[Feasible singular limit jumps]
	\label{prop:fast_jumps}
	The fast-slow system \eqref{hyp_umb} allows the following singular limit jump behavior via fast segments:
	\begin{enumerate}[(a)]
		\item It is impossible to jump via a fast candidate trajectory from folds or cusps enclosing $\mathcal{S}_0^{a}$ back onto the critical manifold $\mathcal{S}_0$.
		\item From folds enclosing $\mathcal{S}_0^\txtr$ for $a > 0$, there exist fast candidate trajectories jumping to $\mathcal{S}_0^{\txta}$ and $\mathcal{S}_0^{\txts}$.
		\item From cusps next to $\mathcal{S}_0^\txtr$ for $a > 0$, there exist fast candidate trajectories jumping to $\mathcal{S}_0^{\txta}$.
		\item From folds enclosing $\mathcal{S}_0^\txtr$ for $a = 0$, there exists a single fast candidate trajectory jumping to a fold enclosing $\mathcal{S}_0^{\txta}$.
		\item Any other jumping candidate trajectory escapes towards infinity and cannot arrive at $\mathcal{S}_0$.
	\end{enumerate} 
\end{prop}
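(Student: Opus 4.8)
The plan is to read the five assertions off figure~\ref{double_cone_fast} (the worked example announced in the text), using Lemma~\ref{bif_transitions} and Remark~\ref{rem:foldcusp} to control the forward behaviour of the jumping candidate trajectories. First I would record the dictionary between the geometry of the double cone and the four configurations (A)--(D) of $Z_{a,b,c}$: since $\det \D^2 V = 4xy-a^2$ and $\tr \D^2 V = 2(x+y)$, a sink (resp.\ saddle, resp.\ source) of a fast subsystem is an instance of $\mathcal{S}_0^{\txta}$ (resp.\ $\mathcal{S}_0^{\txts}$, resp.\ $\mathcal{S}_0^{\txtr}$); at a fold or cusp the nonzero eigenvalue is $\tr \D^2 V$, so a fold/cusp enclosing $\mathcal{S}_0^{\txta}$ has $\tr \D^2 V<0$ and one enclosing $\mathcal{S}_0^{\txtr}$ has $\tr \D^2 V>0$, and by the equilibrium counts of the configurations (A)--(D) (figure~\ref{4config}) a fold enclosing $\mathcal{S}_0^{\txtr}$ with $a>0$ is exactly a bifurcation of type $(B)\leftrightarrow(D)$ (a source colliding with a saddle, the sink surviving), a cusp enclosing $\mathcal{S}_0^{\txtr}$ with $a>0$ is a $(B)\leftrightarrow(D)$ cusp (the two saddles and the source coalescing, the sink surviving), whereas a fold or cusp enclosing $\mathcal{S}_0^{\txta}$ is never of type $(B)\leftrightarrow(D)$; for $a=0$ two folds occur simultaneously on opposite semi-axes of the degenerate fast coordinate, exactly as in Lemma~\ref{bif_transitions}(c). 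This matches the five clauses of the proposition with the five clauses of Lemma~\ref{bif_transitions}.

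For the impossibility statements (a) and (e) the argument is uniform. At a fold or cusp enclosing $\mathcal{S}_0^{\txta}$, Remark~\ref{rem:foldcusp} shows that besides the bifurcating equilibrium the fast subsystem contains only instances of $\mathcal{S}_0^{\txts}$ and $\mathcal{S}_0^{\txtr}$; no orbit is forward asymptotic to a source, and such a bifurcation is not of type $(B)\leftrightarrow(D)$, so Lemma~\ref{bif_transitions}(a) forces every emanating trajectory to $\Delta^{\ex}$ and then, by Lemma~\ref{bif_transitions}(d) and Remark~\ref{rem:escape}, to infinity, giving (a). Assertion (e) is the residual case: a jumping candidate not covered by (b)--(d) emanates from a bifurcation that is neither of type $(B)\leftrightarrow(D)$ nor one of the two simultaneous $a=0$ folds of Lemma~\ref{bif_transitions}(c), hence again reaches $\Delta^{\ex}$ and escapes.

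For the existence statements (b)--(d) the strategy is to exhibit the connecting orbits from the phase portraits of figure~\ref{proof_figs}. For (b), at a $(B)\leftrightarrow(D)$ fold ($a>0$) a \emph{second} saddle, belonging to $\mathcal{S}_0^{\txts}$, and the sink, belonging to $\mathcal{S}_0^{\txta}$, survive the collision; the bifurcating equilibrium has nonzero eigenvalue $\tr \D^2 V>0$, so its unstable set is $2$-dimensional, and from the upper-right panel of figure~\ref{proof_figs} this set meets both the basin of the sink and the escape region near $\Delta^{\ex}$; by connectedness together with the absence of saddle connections (Lemma~\ref{saddle_connection}) it must then also contain a separatrix landing on the surviving saddle. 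This gives the two candidate trajectories of (b). For (c), at a $(B)\leftrightarrow(D)$ cusp ($a>0$) the two saddles and the source all coalesce into the bifurcating equilibrium itself, so the only other equilibrium is the sink in $\mathcal{S}_0^{\txta}$; hence every emanating trajectory either reaches that sink or escapes to $\Delta^{\ex}$, and the lower-left panel of figure~\ref{proof_figs} shows the former occurs, giving a jump to $\mathcal{S}_0^{\txta}$ and no jump to $\mathcal{S}_0^{\txts}$. For (d), $a=0$ decouples \eqref{planar_fast} into $x'=x^2+b$ and $y'=y^2+c$; the fold enclosing $\mathcal{S}_0^{\txtr}$ sits at the equilibrium on the positive semi-axis of the degenerate coordinate, and since that coordinate satisfies an equation of the form $w'=w^2$ there, the only emanating orbit that does not blow up and reach $\Delta^{\ex}$ is the one on the invariant axis $w\equiv0$, which is precisely the heteroclinic to the simultaneous fold enclosing $\mathcal{S}_0^{\txta}$; combined with Lemma~\ref{bif_transitions}(c) this is (d), and uniqueness is immediate since $w'=w^2$ cannot carry $w$ back to $0$.

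The step I expect to be the main obstacle is the sharp dichotomy for the jump to $\mathcal{S}_0^{\txts}$: present in (b), absent in (c). Lemma~\ref{bif_transitions}(b) by itself only asserts that each emanating orbit either escapes or limits to a sink or a saddle, so in (b) one must actually produce an orbit whose $\omega$-limit is the surviving saddle --- equivalently, a branch of that saddle's stable manifold emanating from the fold equilibrium --- whereas in (c) one must rule this out. The conceptual resolution is the count above (at the fold a saddle survives separately, at the cusp both saddles are absorbed), but converting this into the existence claim in (b) still rests on the verification, from the nullcline and direction-field data in figure~\ref{proof_figs}, that the $2$-dimensional unstable set of the fold equilibrium genuinely meets both the sink's basin and the escape region; it is this verification, rather than any explicit computation, that carries the content of the proof.
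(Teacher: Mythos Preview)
Your proposal is correct and follows essentially the same route as the paper: both arguments reduce each assertion to the corresponding clause of Lemma~\ref{bif_transitions} via the dictionary between the cone geometry and the bifurcation types (A)--(D), and both ultimately read the existence claims off the phase portraits in figures~\ref{proof_figs} and~\ref{double_cone_fast}. Your treatment is in fact more explicit than the paper's in two places: the connectedness argument in (b) --- that the open connected unstable set of the fold equilibrium cannot be covered by the disjoint open sets ``basin of the sink'' and ``escape region'', forcing an orbit to the surviving saddle --- is a genuine addition over the paper's bare reference to the figure, and your decoupled computation for (d) makes the uniqueness of the heteroclinic transparent. Two minor remarks: the appeal to Lemma~\ref{saddle_connection} in (b) is not actually needed (the gradient structure already rules out homoclinics, and that is all the connectedness argument uses), and in (d) the fold enclosing $\mathcal{S}_0^{\txtr}$ sits on the positive semi-axis of the \emph{non}-degenerate coordinate, not the degenerate one; the argument itself is unaffected.
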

\begin{proof}
	(a): From figure \ref{4config} one can deduce that folds and in particular cusps next to $\mathcal{S}^{\txta}_0$ are of type (C) $\leftrightarrow$ (D), in particular for $a < 0$, compare to figure \ref{bif_fast}. Then Lemma \ref{bif_transitions} implies assertion (a), which is clarified by Remark \ref{rem:foldcusp}. In figure \ref{double_cone_fast} this is visualized as follows: Starting inside $\mathcal{S}^{\txta}_0$, we assume that a suitable slow flow brings us to a fold surrounding the attracting region. Then we look to the four attached fast subsystems, which qualitatively show the fast dynamics at these folds or cusps. In each of these portraits we need to follow trajectories emanating from the fold equilibrium (green) or cusp (red), which all escape towards infinity.\\
	(b) and (c): Similarly, this follows from Lemma \ref{bif_transitions} by checking that a heteroclinic connection from the fold equilibrium to the saddle indeed exists. In figure \ref{double_cone_fast}, this is visualized by considering the four fast subsystems attached to the upper cone. We see that in this case connections to the sink (blue) and saddle (white) exist for $a > 0$.\\
	(d): This occurs qualitatively in the setting of the lower right phase portrait of figure \ref{proof_figs}, which is (d) in Lemma \ref{bif_transitions}.\\
	(e): In the language of Lemma \ref{bif_transitions} this corresponds to arrival at the exit section $\Delta^{\ex} = \{ x+ y = 2\nu \}$. In the scope of this section $\nu > 0$ can be chosen arbitrarily.
\end{proof}

The fact that there cannot be jumps from fold points enclosing $\mathcal{S}_0^{\txta}$ back to $\mathcal{S}_0^{\txta}$ was already proven in \cite{kojakhmetov2014constrained}. However, note that the latter did not assume any dynamics in the fast fibers, since there Takens' setting of constrained differential equations was used.
Proposition \ref{prop:fast_jumps} shows that a return mechanism from $\mathcal{S}_0^{\txta}$ back to $\mathcal{S}_0^{\txta}$ solely via folds or cusps is impossible. However, via a singular canard through the hyperbolic umbilic point one can get a candidate trajectory from $\mathcal{S}_0^{\txta}$ to $\mathcal{S}_0^{\txtr}$ (or $\mathcal{S}_0^{\txts}$) and possibly jump back to $\mathcal{S}_0^{\txta}$ from the upper cone of singularities. This was already observed in Remark \ref{rem:foldcusp}. By Lemma \ref{sf:origin_lin} a singular canard through the hyperbolic umbilic cannot occur for $B_0 C_0 > 0$.

Moreover, the above results give the idea that generically the hyperbolic umbilic point will  behave ``like a catapult''. In particular, any trajectory jumping at a singularity seems to escape generically along the positive $x$- or $y$-direction. Note that for trajectories approaching folds on the upper cone from the saddle-type region $\mathcal{S}_0^\txts$ there is the possibility for a relaxation oscillation in the singular limit: Hitting a fold on the upper cone in $a > 0$, the trajectory might then jump to the saddle-type region ``on the other side'' of the cone, from where, upon hitting another fold on the front of the upper cone, the trajectory might jump back to the starting point. A rigorous analysis of these relaxation oscillations, i.e.\ the question wether they possibly persist for $\eps > 0$ is beyond the scope of this document. Once an orbit jumps onto the attracting part upon hitting a fold point, we observe the ``catapult behavior'' again, if we assume that the singularities serve indeed as jump points. Generically, a fold point is indeed a jump point. In general, there possibly exist folded singularities on the double cone, which can give rise to canard trajectories passing to nearby regions of $\mathcal{S}_0$ with distinct stability properties, see e.g.\ \cite{szmolyan2001canards,kuehn2015multiple}. 

Next, we are going to investigate the dynamics in the vicinity of the hyperbolic umbilic point by means of a blow-up, which is the main part of the present article.

\section{Blow-up analysis}
\label{sec:blowup}

We are interested in the behavior of attracting slow manifolds of \eqref{hyp_umb} in a vicinity of the hyperbolic umbilic point for small $\eps > 0$. We investigate \eqref{hyp_umb} by means of the following vector field $X$, which is of the form
\begin{equation}
	\begin{split}
		\label{hyp_umb_exp}
		x' &= x^2 + ay + b + \eps f_x(x,y,a,b,c,\eps)\\
		y' &= y^2 + ax + c + \eps f_y(x,y,a,b,c,\eps)\\
		a' &= \eps g_a(x,y,a,b,c,\eps)\\
		b' &= \eps g_b(x,y,a,b,c,\eps)\\
		c' &= \eps g_c(x,y,a,b,c,\eps)\\
		\eps' &= 0.
	\end{split}
\end{equation}
We assume that the functions $f_x,f_y,g_a,g_b,g_c$ are sufficiently smooth. Recall that we use the notation $A_0 = g_a(0), B_0 = g_b(0), C_0 = g_c(0)$. According to the hypothesis of Theorem \ref{thm:main_thm}, and motivated by Lemma \ref{sf:origin_lin}, we shall be concerned with the case $B_0,C_0 > 0$. 

\subsection{Preparations for blowing up}
In the following we blow-up the vector field $X$ at the origin. That is, we ``replace'' the origin by the 5-d sphere $\mathbb S^5$. First, we set up the notation that is heavily used in the following analysis. With these we also briefly recall the blow-up method, see \cite{hjk2021survey,kuehn2015multiple} for further details. 
More precisely, we transform the vector field $X$ \eqref{hyp_umb_exp} by the weighted (quasi-homogeneous) blow-up transformation
\begin{equation}
	\begin{split}
		\label{global_blowup}
		\Phi\colon &B:= \mathbb S^5 \times [0,\tilde{r}) \subset \R^7 \to \R^6\\
		&(\bar{x},\bar{y},\bar{a},\bar{b},\bar{c},\bar{\eps},\bar{r}) \mapsto (\bar{r}\bar{x},\bar{r}\bar{y},\bar{r}\bar{a},\bar{r}^{2}\bar{b},\bar{r}^{2}\bar{c}, \bar{r}^{3}\bar{\eps}),
	\end{split}
\end{equation}
where $\bar{x}^{2} + \bar{y}^2 + \bar{a}^{2} + \bar{b}^{2} + \bar{c}^{2} + \bar{\eps}^2 = 1$. The map $\Phi$ is a diffeomorphism for $\bar{r} > 0$, to where we can pullback the vector field $X$. There exists a sufficiently smooth extension $\bar{X}$ on $B$, such that the pushforward of $\bar{X}$ is $X$, i.e.\ $\Phi_{\ast} \bar{X} = X$. Note that by this procedure, we have ``blown up'' the hyperbolic umbilic singularity at the origin to a sphere $\mathbb S^5 \times \{0\}$. Typically, by blowing up a sufficiently degenerate singularity, $\bar{X}$ vanishes on the blow-up sphere $ \{\bar{r} = 0\} \simeq \mathbb S^5$. The key point of the blow-up method is that one aims to rescale $\bar{X}$ (dividing out a common factor) in such a way, that distinct (semi-)hyperbolic equilibria on $\{\bar{r} = 0\}$ appear. By this, the orbit structure of $\bar{X}$ in $\{\bar{r} > 0\}$ remains unchanged. Often this rescaling is taken out locally, i.e.\ by studying $\bar{X}$ locally in charts. The general goal is then to infer the dynamics in $\{\bar{r} > 0\}$ close to the sphere from the dynamics on the invariant blow-up sphere $\{\bar{r} = 0\}$. 

In the following we refer to $\mathbb S^5$ as the blow-up sphere and identify it with $\mathbb S^5 \times \{0\} \subset \mathbb S^5 \times [0,\tilde{r}) = B$, where we call $B$ the blow-up space. Note that the blow-up space and the blow-up sphere are (sub-)manifolds. To work on them in a convenient way one usually introduces charts, in which $\Phi$ takes the form of a ``directional blow-up''. 

Let us denote these ``usual'' charts by $\kappa_{\pm \bullet}$, where $\bullet$ corresponds to a coordinate direction. Then
\begin{equation}
	\begin{split}
		\Phi_{\pm \bullet} := \Phi \circ \kappa_{\pm \bullet}^{-1}
	\end{split}
\end{equation}
is simply given by putting $\bar{\bullet} = \pm 1$ in \eqref{global_blowup} and renaming the variables accordingly. This notation comes from the fact that in $\kappa_{\pm \bullet}$ the blow-up space $B\rvert_{\pm \bar{\bullet} > 0}$ is visible. Let us give a brief example to clarify this notation: The chart $\kappa_{-y}$ is precisely designed in such a way, that the blow-up $\Phi$ in chart $\kappa_{-y}$ is given by
\begin{equation}
	\begin{split}
		\Phi_{-y} \colon (x_1,r_1,a_1,b_1,c_1,\eps_1) \mapsto (r_1 x_1, - r_1, r_1 a_1, r_1^{2} b_1, r_1^{2} c_1, r_1^{3} \eps_1),
	\end{split}
\end{equation}
where $(x_1,r_1,a_1,b_1,c_1,\eps_1)$ are the local coordinates determined by $\kappa_{-y}$. Note how $\Phi_{-y}$ is obtained from $\Phi$ \eqref{global_blowup} by putting $\bar{y} = -1$ and relabelling the coordinates.

\begin{remark}
	The blow-up exponents in \eqref{global_blowup} are chosen in such a way, that they match the quasi-homogeneity of $X$ in the fast variables and allow for rescaling in the slow variables. For example, in the chart $\kappa_{- y}$, all terms in the first component in the pullback of $X$ via $\Phi_{-y}$ (i.e.\ the equation for $x_1'$) share the same power of $r_1$ as factor. This allows to rescale and obtain nontrivial dynamics on $\{ \bar{r} = 0 \}$. Moreover, with this exponent choice, the critical manifold of $X$ appears as 2-submanifold on the blow-up sphere $\{ \bar{r} = 0 \}$.
\end{remark}

We aim to study $\bar{X}$ by means of the directional blow-ups $\Phi_{\pm \bullet}$ in the charts $\kappa_{\pm \bullet}$. For reference, we state all blow-up maps $\Phi_{\bullet}$ that are to be used in the forthcoming analysis. This also fixes the naming conventions for the coordinates determined by the charts $\kappa_\bullet$. The blow-up maps we are going to use explicitly are:
\begin{equation}\label{eq:maps}
\begin{aligned}
		\Phi_{-y}\colon& x = r_1 x_1, &&y = -r_1, &&a = r_1 a_1,&& b = r_1^2 b_1,\, c = r_1^{2} c_1,\, \eps = r_1^3 \eps_1;\\
		\Phi_{\eps}\colon& x = r_2 x_2, &&y = r_2 y_2, &&a = r_2 a_2,&& b = r_2^2 b_2,\, c = r_2^{2} c_2,\, \eps = r_2^3;\\
		\Phi_{a}\colon& x = r_3 x_3, &&y = r_3 y_3, &&a = r_3,&& b = r_3^2 b_3,\, c = r_3^{2} c_3,\, \eps = r_3^3 \eps_3;\\
		\Phi_{-a}\colon& x = r_4 x_4, &&y = r_4 y_4, &&a = -r_4,&& b = r_4^2 b_4,\, c = r_4^{2} c_4,\, \eps = r_4^3 \eps_4;\\
		\Phi_{\ex}\colon& x = r_5(1 + x_5), &&y = r_5(1-x_5), &&a = r_5 a_5,&& b = r_5^2 b_5, \, c = r_5^2 c_5,\, \eps = r_5^3 \eps_5.
\end{aligned}
\end{equation}
We emphasize the slightly ``unusual'' blow-up $\Phi_{\ex}$, which is roughly obtained by including a $45^{\circ}$ degree rotation in the fast variables. For further details on $\Phi_{\ex}$ refer to the exit chart analysis in section \ref{s:exit_chart}. 

We now give an overview of the forthcoming blow-up analysis, which also describes which charts/blow-ups we will use. The goal is to understand the dynamics of the fast-slow vector field $X$ close to the origin. Notice that $X$ has a manifold of equilibria $\mathcal{S}_0 \times \{\eps = 0\}$, i.e., the critical manifold. By Fenichel theory \cite{fenichel1979,jones1995} compact submanifolds of the normally hyperbolic regions of $\mathcal{S}_0$ perturb for sufficiently small $\eps > 0$ to (non-unique) invariant slow manifolds, on which the flow converges to the slow flow on the critical manifold for $\eps \to 0$. These invariant slow manifolds appear in sections $\eps = \text{constant}$ of $4$-dimensional center manifolds along $\mathcal{S}_0^{\txta} \times \{\eps = 0\}, \mathcal{S}_0^{\txtr} \times \{\eps = 0\}$ and $\mathcal{S}_0^{\txts} \times \{\eps = 0\}$. Trajectories, which approach slow manifolds, will generically approach slow manifolds obtained as perturbations of the attracting region $\mathcal{S}_0^{\txta}$. Refer to figure \ref{double_cone} for the region $\mathcal{S}_0^{\txta}$. In the blow-up of $X$, the hyperbolic umbilic singularity is ``replaced'' by the $5$-d sphere $\mathbb S^5 \simeq B\rvert_{\{\bar{r} = 0\}}$. In all of the charts, we will see certain regions of phase space and away from $\{\bar{r} = 0\}$ all objects, e.g. invariant manifolds or the critical manifold, appear in a locally diffeomorphic manner. In the singular limit $\eps = 0$, the blow-up space is restricted to $\{\bar{\eps} = 0 \}$, and the blow-up sphere $\mathbb S^5$ reduces to $B\rvert_{\{\bar{\eps} = 0,\bar{r} = 0\}} \simeq \mathbb{S}^4$, to which we will refer as equator of the blow-up sphere. 

\begin{remark}
	The following notational convention will be used: objects in original coordinates, which are visible in a blow-up $\Phi_{\bullet}$ will be indexed by the corresponding index used for the new coordinates in $\Phi_{\bullet}$. For example, $\mathcal{S}_0^{\txta}$ seen in $\Phi_{-y}$ (i.e.\ the blow-up in $\kappa_{-y}$) will be denoted by $\mathcal{S}_{0,1}^{\txta}$. Similarly, the section $\Delta^{\ex}$ seen in $\Phi_{\ex}$ will be denoted by $\Delta^{\ex}_5$. However, we will always simply use the variable $t$ to denote time (i.e.\ the variable to parametrize integral curves) in each of the blown-up vector fields and we remark that the meaning of $t$ will be clear from the context.
\end{remark}

In the following we present the analysis in the entry chart $\kappa_{-y}$, the rescaling chart (also called central chart) $\kappa_{\eps}$ and the exit chart $\kappa_{\ex}$. The charts $\kappa_{\pm a}$ are only relevant in section \ref{sec:further} and are not used to prove Theorem \ref{thm:main_thm}. However, in \eqref{eq:maps}, $\Phi_{\pm a}$ appear before the exit chart blow-up $\Phi_{\ex}$, since in principle one would study transitions from the entry chart $\kappa_{-y}$ to $\kappa_{\pm a}$ in order to analyze the dynamics away from the hyperbolic umbilic singularity.
To cover all transitions onto the fast regime in a single exit chart, we use a slightly ``unusual'' chart $\kappa_{\ex}$, which includes, up to scaling, a $45^{\circ}$ rotation of the fast variables of $X$. In our notation, $\kappa_{\ex}$ intuitively corresponds to ``$\kappa_{x+y}$''. The corresponding blow-up map $\Phi_{\ex}$ can then also be roughly understood as blowing up $X$ ``in direction of $x+y$''. This choice is motivated by \eqref{fast_subsystem_exit} and Lemma \ref{bif_transitions}. After the analysis in the three charts $\kappa_{-y}$, $\kappa_{\eps}$, $\kappa_{\ex}$ we are in position to prove Theorem \ref{thm:main_thm}.
Finally, we also present a brief analysis of the dynamics on the blow-up sphere in the two charts $\kappa_{\pm a}$ in section \ref{sec:further}, which are useful to understand the desingularized slow flow near the hyperbolic umbilic singularity and the flow on the blow-up sphere close to the equator $\{\bar{\eps} = \bar{r} = 0\}$.

\subsection{Analysis in the entry chart $\kappa_{-y}$}
\label{sec:entry}

Blowing up the vector field $X$ \eqref{hyp_umb_exp} by $\Phi_{-y}$, as defined in \eqref{eq:maps}, and desingularizing accordingly by dividing out the common factor $r_1$, we obtain the blow-up in $\kappa_{-y}$:
\begin{equation}
	\begin{split}
		\label{blowup_-y}
		x_1' &=  x_1^2 - a_1 + b_1 + x_1 (1+ a_1 x_1 + c_1) + r_1\eps_1 (f_x + x_1 f_y) \\
		r_1' &= -r_1(1 + a_1 x_1 + c_1) - r_1^2\eps_1 f_y \\
		a_1' &= a_1(1 + a_1 x_1 + c_1) + r_1\eps_1 (g_a + a_1 f_y)\\
		b_1' &= 2b_1 (1 + a_1 x_1 + c_1) + \eps_1 g_b + 2r_1 \eps_1 b_1 f_y\\
		c_1' &= 2c_1 (1 + a_1 x_1 + c_1) + \eps_1 g_c + 2r_1 \eps_1 c_1 f_y\\
		\eps_1' &= 3 \eps_1(1 + a_1 x_1 + c_1) + 3r_1\eps_1^2 f_y.
	\end{split}
\end{equation}
In \eqref{blowup_-y} we use the shorthand $f_x $ for $ f_x \circ \Phi_{-y}$, similarly for $f_y,g_a,g_b,g_c$.
Recall that dividing out the common factor $r_1$ amounts to a time reparametrization of trajectories away from the blow-up sphere.
The system \eqref{blowup_-y} has invariant subspaces $\{r_1 = 0\}, \{\eps_1 = 0\}$ and $\{r_1 = 0, \eps_1 = 0\}$. 
The subspace $\{r_1 = 0\}$ corresponds to dynamics on the blowup sphere $\mathbb S^5$. Analogously, the subspace $\{\eps_1 = 0\}$ corresponds to the singular limit and the subspace $\{r_1 = 0, \eps_1 = 0\}$ to the part of the $4$-dimensional equator of $\mathbb S^5$, where $\bar{y} < 0$ holds. Note the constant of motion $r_1^3\eps_1$, induced by the blown-up invariant sets $\{\eps = \text{const}\}$ of $X$.

In $\{r_1 = 0, \eps_1 = 0\}$ the system \eqref{blowup_-y} reduces to
\begin{equation}
	\begin{split}
		x_1' &=  x_1^2 + (1+ a_1 x_1 + c_1) x_1 - a_1 + b_1  \\
		a_1' &= a_1(1 + a_1 x_1 + c_1)\\
		b_1' &= 2b_1 (1 + a_1 x_1 + c_1) \\
		c_1' &= 2c_1 (1 + a_1 x_1 + c_1),
	\end{split}
\end{equation}
which has two isolated equilibria
\begin{equation}
	\begin{split}
		\label{q1q2}
		q_1 &:= (x_1 = 0, a_1 = 0, b_1 = 0, c_1 = 0 )\\
		q_2 &:= (x_1 = -1, a_1 = 0, b_1 = 0, c_1 = 0 )
	\end{split}
\end{equation}
and a 2-manifold of equilibria determined by
\begin{equation}
	\begin{split}
		\label{s_on_sphere}
		x_1^2 + b_1 - a_1 &= 0\\
		1 + a_1 x_1 + c_1 &= 0.
	\end{split}
\end{equation}

In $\{\eps_1 = 0\}$ the system \eqref{blowup_-y} simplifies to
\begin{equation}
	\begin{split}
		\label{-y_singular_limit}
		x_1' &=  x_1^2 + (1+ a_1 x_1 + c_1) x_1 - a_1 + b_1 \\
		r_1' &= -r_1(1 + a_1 x_1 + c_1) \\
		a_1' &= a_1(1 + a_1 x_1 + c_1) \\
		b_1' &= 2b_1 (1 + a_1 x_1 + c_1) \\
		c_1' &= 2c_1 (1 + a_1 x_1 + c_1) .
	\end{split}
\end{equation}
In addition to the isolated equilibria $q_1,q_2$ on the blow-up sphere, system \eqref{-y_singular_limit} has a 3-manifold of equilibria
\begin{equation}
	\begin{split}
		\label{E1}
		\mathcal{S}_{0,1} = \{ x_1^2 + b_1 - a_1 = 0,\, 1 + a_1 x_1 + c_1 = 0 \},
	\end{split}
\end{equation}
where the 2-manifold determined by \eqref{s_on_sphere} appears as slice $\mathcal{S}_{0,1}\eval_{r_1 = 0}$. Via $\Phi_{-y}$ the manifold $\mathcal{S}_{0,1}\eval_{r_1 > 0}$ corresponds to the critical manifold $\mathcal{S}_0\eval_{y < 0}$. One should recall that the hyperbolic umbilic point is an object of codimension 3 in the critical manifold. In blown-up coordinates, it corresponds to the intersection of the 3-dimensional critical manifold with the 5-dimensional blow-up sphere $\mathbb S^5$, which gives a 2-dimensional manifold. This 2-manifold will appear in multiple charts and is contained in the singular limit. That is, it is contained in the equator $\{\bar{\eps} = 0, \bar{r} = 0\} \simeq \mathbb S^4$ of the blow-up sphere $\mathbb S^5$, which precisely is given by \eqref{s_on_sphere} in the chart $\kappa_{-y}$. The manifold $\mathcal{S}_{0,1}$ is a graph over the $(x_1,r_1,a_1)$-coordinates. 

\begin{figure}
	\centering
	\begin{overpic}[width=.65\textwidth]{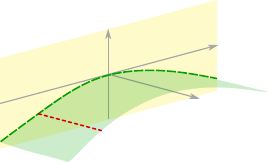}
		\put(60,10){$\mathcal{S}_{0,1}^{\txta}$: attracting}
		\put(88,38){$\mathcal{S}_{0,1}^{\txts}$: saddle-type}
		\put(76,23){\textcolor{gray}{$r_1$}}
		\put(83,44){\textcolor{gray}{$a_1$}}
		\put(39,51){\textcolor{gray}{$x_1$}}
		\put(10,5){\rotatebox{-16}{\textcolor{dark-green}{folds}}}
		\put(25,16){\rotatebox{-16}{\textcolor{dark-red}{cusps}}}
		\put(3,34){\rotatebox{15}{hyperbolic umbilic}}
	\end{overpic}
	\caption{Stratification of the blown-up critical manifold in chart $\kappa_{-y}$, given by Lemma \ref{E1_regions}.}
	\label{fig:critical_mfld_-y}
\end{figure}

\begin{lemma}
	\label{E1_regions}
	The manifold $\mathcal{S}_{0,1}\eval_{r_1 \ge 0}$ is subdivided into an attracting region $\mathcal{S}_{0,1}^{\txta} := \{4x_1 + a_1^2 < 0, r_1 > 0\}$, a saddle-type region $\mathcal{S}_{0,1}^{\txts} := \{4x_1 + a_1^2 > 0, r_1 > 0\}$, a fold surface given by $\{4x_1 + a_1^2 = 0, r_1 > 0\}$, which contains the cusp line $\{x_1 = -1, a_1 = -2, r_1 > 0\}$. The 2-manifold $\mathcal{S}_{0,1}\eval_{r_1 = 0}$ represents the blown-up hyperbolic umbilic singularity.
\end{lemma}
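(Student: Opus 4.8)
The plan is to transfer the stratification of $\mathcal{S}_0\eval_{y<0}$ — already established in the normally hyperbolic stratification lemma following \eqref{sing_cone} and in Lemma \ref{classif_singular_points} — to the chart $\kappa_{-y}$ through the directional blow-up $\Phi_{-y}$, using that $\Phi_{-y}$ is a diffeomorphism off $\{r_1=0\}$ and that desingularizing \eqref{blowup_-y} (dividing by the common factor $r_1$) only reparametrizes time by the positive factor $1/r_1$ there.

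First I would record that $\mathcal{S}_{0,1}$ is a smooth manifold: solving the defining equations \eqref{E1} for $b_1$ and $c_1$ gives $b_1 = a_1 - x_1^2$ and $c_1 = -1 - a_1 x_1$, so $\mathcal{S}_{0,1}\eval_{r_1\ge 0}$ is the graph of a smooth map over the half-space $\{(x_1,r_1,a_1) : r_1\ge 0\}$, hence a smooth $3$-manifold with boundary $\mathcal{S}_{0,1}\eval_{r_1=0}$. The boundary slice is exactly \eqref{s_on_sphere}, which by the discussion preceding the lemma is the intersection of the $3$-dimensional critical manifold $\mathcal{S}_0$ with the blow-up sphere $\mathbb S^5$, i.e.\ the blown-up hyperbolic umbilic point; this takes care of the last assertion of the statement.

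Next I would use the blow-up itself. For $r_1>0$ the map $\Phi_{-y}$ restricts to a diffeomorphism $\mathcal{S}_{0,1}\eval_{r_1>0}\to\mathcal{S}_0\eval_{y<0}$, and since dividing \eqref{blowup_-y} by $r_1$ rescales time by $1/r_1>0$, the stability type of $\mathcal{S}_{0,1}\eval_{r_1>0}$ with respect to the fast directions coincides, point by point, with that of $\mathcal{S}_0\eval_{y<0}$, i.e.\ it is governed by the signs of the eigenvalues of the Hessian $\D^2 V$. A direct substitution of $\Phi_{-y}$ gives $\det\D^2 V = 4xy - a^2 = -r_1^2(4x_1 + a_1^2)$ and $\tr\D^2 V = 2(x+y) = 2r_1(x_1-1)$ along $\mathcal{S}_{0,1}$. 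Hence on $\{r_1>0\}$: where $4x_1 + a_1^2 < 0$ one has $\det\D^2 V > 0$, and since then $x_1 < -a_1^2/4 < 1$ forces $\tr\D^2 V < 0$, both eigenvalues are negative, giving the attracting region $\mathcal{S}_{0,1}^{\txta}$; where $4x_1 + a_1^2 > 0$ one has $\det\D^2 V < 0$, so the eigenvalues have opposite sign, giving the saddle-type region $\mathcal{S}_{0,1}^{\txts}$ (the repelling part of $\mathcal{S}_0$ requires $x,y>0$, hence does not meet $\{y<0\}$ and is invisible in $\kappa_{-y}$); and $\{4x_1 + a_1^2 = 0\}$ is the non-hyperbolic locus, which by Lemma \ref{classif_singular_points} consists of fold points away from the cusp curve. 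Finally, the cusp curve $\{x=y=a/2\}$ of $\mathcal{S}_0$ pulls back under $\Phi_{-y}$ (where $y=-r_1$) to $r_1 x_1 = -r_1$ and $r_1 a_1 = -2r_1$, i.e.\ to $\{x_1=-1,\,a_1=-2,\,r_1>0\}$ (with $b_1=c_1=-3$ by \eqref{E1}), and $4(-1)+(-2)^2=0$ places it on the fold surface, as claimed.

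The only step I expect to need a careful word is the assertion that desingularization leaves the signs of the eigenvalues transverse to $\mathcal{S}_{0,1}$ unchanged for $r_1>0$ — this is what makes the stratification of $\mathcal{S}_{0,1}\eval_{r_1>0}$ literally inherited from that of $\mathcal{S}_0$ rather than requiring a separate computation on the blow-up. Everything else is substitution into $\Phi_{-y}$ together with Lemmas on $\mathcal{S}_0$. As an alternative route, one could instead linearize the singular-limit system \eqref{-y_singular_limit} directly along the equilibrium manifold \eqref{E1} and read off the sign of $4x_1+a_1^2$ from the resulting transverse eigenvalues, but the diffeomorphism argument is considerably shorter.
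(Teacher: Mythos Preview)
Your proof is correct and follows the same idea as the paper's proof, which reads in its entirety: ``Lemma \ref{classif_singular_points} blown up by $\Phi_{-y}$. See figure \ref{fig:critical_mfld_-y}.'' You have simply spelled out what that sentence means: pull the stratification of $\mathcal{S}_0$ through the diffeomorphism $\Phi_{-y}\rvert_{r_1>0}$ and check the resulting inequalities in chart coordinates, which you do correctly (including the observation that the repelling region is absent since $y=-r_1<0$).
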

\begin{proof}
	Lemma \ref{classif_singular_points} blown up by $\Phi_{-y}$. See figure \ref{fig:critical_mfld_-y}.
\end{proof}

For the following it is convenient to rectify the critical manifold $\mathcal{S}_{0,1}$ by introducing a coordinate transformation $T \colon x_1 = x_1, r_1 = r_1, a_1 = a_1, u = x_1^2 + b_1 - a_1, v = 1 + a_1x_1 + c_1, \eps_1 = \eps_1$. Applying this transformation to \eqref{blowup_-y} leads to
\begin{equation}
	\begin{split}
		\label{entry:rectification}
		x_1' &= u + x_1 v + r_1 \eps_1 (f_x + x_1 f_y)\\
		r_1' &= -r_1 v - r_1^2 \eps_1 f_y\\
		a_1' &= a_1 v + r_1 \eps_1 (g_a + a_1 f_y)\\
		u' &= 2x_1 u + a_1 v + 2uv + \eps_1 g_b + O(r_1 \eps_1)\\
		v' &= a_1 u -2v + 2v^2 + \eps_1 g_c + O(r_1 \eps_1)\\
		\eps_1' &= 3 \eps_1 v + 3r_1 \eps_1^2 f_y.
	\end{split}
\end{equation}
Again, we use the shorthand $f_x = f_x \circ \Phi_{-y} \circ T^{-1}$, similarly for $f_x,f_y,g_a,g_b,g_c$. Each of the $O(r_1\eps_1)$ terms is of the form $r_1 \eps_1 \tilde{f}$, for a sufficiently smooth function $\tilde{f}$.
The effect of $T$ is that the critical manifold $\mathcal{S}_{0,1}$ in \eqref{entry:rectification} is now given by $\{u = 0, v= 0, \eps_1 = 0\}$.

\begin{lemma}
	\label{lemma:lin_along_critical_manifold}
	Along $\mathcal{S}_{0,1}^{\txta}$ there exist 4-dimensional attracting center manifolds $W^c(\mathcal{S}_{0,1}^{\txta})$, which can be extended to $r_1 = 0$. Via $\Phi_{-y}$ the center manifolds along $\mathcal{S}_0^{\txta} \times \{ \eps = 0\}$ (in original coordinates) correspond to instances of $W^c(\mathcal{S}_{0,1}^{\txta})\rvert_{r_1 > 0}$. The attracting slow manifolds obtained by Fenichel theory coincide with intersections of the form $W^c(\mathcal{S}_{0,1}^{\txta}) \cap \{ r_1^3 \eps_1 = \text{const} \}$.
\end{lemma}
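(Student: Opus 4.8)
The plan is to read off the splitting transverse to $\mathcal{S}_{0,1}$ from the rectified system \eqref{entry:rectification} and then invoke standard center manifold theory together with the dictionary furnished by $\Phi_{-y}$. First I would linearize the right-hand side of \eqref{entry:rectification} along $\mathcal{S}_{0,1}=\{u=v=\eps_1=0\}$ in the ordering $(x_1,r_1,a_1\mid u,v,\eps_1)$. Since every term in \eqref{entry:rectification} is a multiple of one of $u$, $v$, $\eps_1$, the first three columns of the Jacobian (the $\partial_{x_1},\partial_{r_1},\partial_{a_1}$-derivatives) vanish along $\mathcal{S}_{0,1}$, so the Jacobian is block upper triangular $\begin{pmatrix}0&B\\0&C\end{pmatrix}$ with $3\times3$ blocks, and
\begin{equation*}
	C=\begin{pmatrix} 2x_1 & a_1 & \ast\\ a_1 & -2 & \ast\\ 0 & 0 & 0\end{pmatrix},
\end{equation*}
where the entries $\ast$, which tend to $B_0$ and $C_0$ as $r_1\to0$, do not enter the characteristic polynomial $\det(C-\lambda I)=-\lambda\big[(2x_1-\lambda)(-2-\lambda)-a_1^2\big]$. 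Hence the spectrum transverse to $\mathcal{S}_{0,1}$ is $\{0,\mu_1,\mu_2\}$, where $\mu_1,\mu_2$ are the real eigenvalues of $\left(\begin{smallmatrix}2x_1&a_1\\ a_1&-2\end{smallmatrix}\right)$; its determinant $-(4x_1+a_1^2)$ and trace $2(x_1-1)$ are respectively positive and negative precisely on $\mathcal{S}_{0,1}^{\txta}=\{4x_1+a_1^2<0\}$ (there $x_1\le-a_1^2/4\le0$), so $\mu_1,\mu_2<0$ along $\mathcal{S}_{0,1}^{\txta}$. Adding the three zero eigenvalues contributed by the tangent directions of $\mathcal{S}_{0,1}$, the full linearization along $\mathcal{S}_{0,1}^{\txta}$ has a $4$-dimensional center eigenspace, a $2$-dimensional stable eigenspace, and no unstable directions.

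Given this, I would obtain the $4$-dimensional attracting center manifolds $W^c(\mathcal{S}_{0,1}^{\txta})$ from the center manifold theorem applied over compact subsets of $\mathcal{S}_{0,1}^{\txta}$: the one transverse center direction (the zero eigenvalue of $C$) thickens $\mathcal{S}_{0,1}^{\txta}$ to a $4$-dimensional locally invariant manifold, which is attracting since $\mu_1,\mu_2<0$ are uniformly bounded away from $0$ on compacta and there is no unstable direction. The extension to $\{r_1=0\}$ is possible because the right-hand side of \eqref{entry:rectification} is smooth across $r_1=0$, the set $\{u=v=\eps_1=0\}$ is still a manifold of equilibria there, and the spectral picture persists: at $r_1=0$ the block $C$ has the same characteristic polynomial, hence still one zero and two negative eigenvalues on $\{4x_1+a_1^2<0\}$. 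Thus the construction carries over to a neighbourhood, within $\{r_1\ge0\}$, of any compact piece of $\{4x_1+a_1^2<0,\ r_1=0\}$.

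For the remaining two claims I would use that $\Phi_{-y}$ is a diffeomorphism on $\{r_1>0\}$ mapping $\mathcal{S}_{0,1}\eval_{r_1>0}$ onto $\mathcal{S}_0\eval_{y<0}$, and that dividing out the common factor $r_1$ is a strictly positive rescaling of time on $\{r_1>0\}$, hence preserves orbits and invariant manifolds. Consequently, a $4$-dimensional attracting center manifold of $X$ along $\mathcal{S}_0^{\txta}\times\{\eps=0\}$ (restricted to $y<0$) pulls back under $\Phi_{-y}$ to a $4$-dimensional attracting center manifold of \eqref{blowup_-y} along $\mathcal{S}_{0,1}^{\txta}\eval_{r_1>0}$, i.e.\ to an instance of $W^c(\mathcal{S}_{0,1}^{\txta})\eval_{r_1>0}$, modulo the usual non-uniqueness of center manifolds. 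Finally, $r_1^3\eps_1$ is a constant of motion of \eqref{blowup_-y} corresponding under $\Phi_{-y}$ to $\eps$, and by Fenichel theory the attracting slow manifold $\mathcal{S}^{\txta}_\eps$ of $X$ for fixed small $\eps$ is the slice $\{\eps=\text{const}\}$ of the center manifold along $\mathcal{S}_0^{\txta}\times\{\eps=0\}$; pulling this back yields $W^c(\mathcal{S}_{0,1}^{\txta})\cap\{r_1^3\eps_1=\text{const}\}$, as asserted.

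The computation of the linearization is routine; I expect the only real care to be needed in invoking the theorems near the boundary $r_1=0$ --- selecting compact subdomains on the blow-up sphere on which the center manifold theorem applies uniformly --- and in matching the (non-unique) center manifolds and the Fenichel slow manifolds consistently across the two coordinate systems.
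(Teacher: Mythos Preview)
Your proposal is correct and follows essentially the same approach as the paper: compute the linearization of \eqref{entry:rectification} along $\{u=v=\eps_1=0\}$, identify the four zero eigenvalues and the two nontrivial eigenvalues of the $2\times2$ block $\left(\begin{smallmatrix}2x_1&a_1\\ a_1&-2\end{smallmatrix}\right)$, check their signs via trace and determinant on $\mathcal{S}_{0,1}^{\txta}$, and use the constant of motion $r_1^3\eps_1=\eps$ to identify the Fenichel slow manifolds as slices. Your treatment is in fact more explicit than the paper's on the block-triangular structure, the extension to $r_1=0$, and the dictionary via $\Phi_{-y}$.
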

\begin{proof}
	Linearization of \eqref{entry:rectification} along $\mathcal{S}_{0,1} \simeq \{\eps_1 = 0, u = 0, v = 0\}$ is given by the matrix
	\begin{equation}
		\begin{split}
			\label{entry_lin}
			\begin{bmatrix}
				0 & 0 & 0 & 1 & x_1 & O(r_1)\\
				0 & 0 & 0 & 0 & -r_1 & O(r_1^2)\\
				0 & 0 & 0 & 0 & a_1 & O(r_1)\\
				0 & 0 & 0 & 2x_1 & a_1 & B_0+ O(r_1)\\
				0 & 0 & 0 & a_1 & -2 & C_0+ O(r_1)\\
				0 & 0 & 0 & 0 & 0 & 0
			\end{bmatrix}.
		\end{split}
	\end{equation}
	The matrix \eqref{entry_lin} has a quadruple zero eigenvalue and two further eigenvalues $\lambda_{1,2} = x_1 - 1 \pm \sqrt{ (1 - x_1)^2 + 4 x_1 + a_1^2}$. Notice that $\lambda_{1,2} < 0$ along $\mathcal{S}_{0,1}^{\txta}$ and $\lambda_1 > 0 > \lambda_2$ along $\mathcal{S}_{0,1}^{\txts}$.
	Note that along the fold surface given by $\left\{4x_1 + a_1^2 = 0\right
	\}$ we have $\lambda_1 = 0, \lambda_2 = 2(x_1 - 1)$, while $x_1 \le 0$. Since $\eps = r_1^{3}\eps_1$ is a constant of motion in $\kappa_{-y}$, the (blown-up) attracting slow manifolds are contained in intersections of the form $W^c(\mathcal{S}_{0,1}^{\txta}) \cap \{ r_1^3 \eps_1 = \text{const} \}$.
\end{proof}

\begin{remark}
	The equilibria $q_1, q_2$ from \eqref{q1q2} arise from interaction with the fast subsystem.
	For further illustration, refer to figure \ref{fast_blowup} and the explanation next to \eqref{fast_restricted}.
\end{remark}

\subsection{Blow-up of the desingularized slow flow $Y$}
\label{sec:desing_sf_blowup}

The desingularized slow flow $Y$ \eqref{sf:desing} constitutes a singular limit slow flow on the critical manifold $\mathcal{S}_0$. It is worth recalling the necessary time reversion for $Y$ in the saddle region $\mathcal{S}_0^\txts$ due to the desingularization process. A natural idea is to incorporate $Y$ into our blow-up analysis, i.e.\ the blow-up of the fast formulation $X$ \eqref{hyp_umb_exp}. By this we aim to study the singular limit interaction of $Y$ with the blown-up hyperbolic umbilic singularity. That is, we will ``embed'' $Y$ into the blow-up of $X$. Note that this step was also implicitly carried out in the analysis of a fold singularity or cusp singularity \cite{krupa2001extending,broer2013cusp,kojakhmetov2016cusp}. However, in these cases, the desingularized slow flow was regular and therefore considerably simpler to embed into the blow-up analysis. In contrast to this, we are dealing with a desingularized slow flow $Y$, which has a non-hyperbolic equilibrium at the hyperbolic umbilic singularity. Even in the studies of folded singularities \cite{szmolyan2001canards,mitry2017folded,de2021canard}, these appear as hyperbolic singularities of the desingularized slow flow.

Recall the blow-up transformation $\Phi$ given by \eqref{global_blowup}. Let $\Theta\colon \mathbb S^2 \times [0,\bar{r}) \to \R^3, (\bar{x},\bar{y},\bar{a},\bar{r}) \mapsto (\bar{r}\bar{x},\bar{r}\bar{y},\bar{r}\bar{a})$ be the blow-up arising from $\Phi$ restricted to the $(x,y,a)$-coordinates. Define the blown-up $\bar{Y}$ of the desingularized slow flow $Y$ by
\begin{equation}
	\begin{split}
		\label{slow_flow_complete_blowup}
		\Theta_{\ast}\bar{Y} = Y.
	\end{split}
\end{equation}
In fact, $\bar{Y}$ has nontrivial dynamics on the blow-up sphere $\mathbb S^2$ without employing a time rescaling. Note that one can use the charts $\kappa_{\pm x, \pm y, \pm a}$ to study $\bar{Y}$, as this just amounts to blowing up $Y$ by the corresponding directional blow-ups induced by $\Theta$. We denote the directional blow-ups induced by $\Theta$ with the same subscripts, i.e.\ $\Theta_{\bullet}$.

Since $Y$ determines the slow flow on the critical manifold $\mathcal{S}_0$, in blown-up coordinates $\bar{Y}$ determines the slow flow on the blown-up critical manifold in each of the charts $\kappa_{\pm x, \pm y, \pm a}$.
By this we mean the following: Looking at the blow-up of $X$ in chart $\kappa_\bullet$, i.e.\ blowing up $X$ with $\Phi_{\bullet}$, the critical manifold appears as a manifold of equilibria. The slow flow on this manifold of equilibria is then determined by the blow-up of $Y$ by $\Theta_{\bullet}$. Note that since the critical manifold is parametrized by $(x,y,a)$-coordinates, the coordinates used for parametrizing the critical manifold in the chart $\kappa_{\bullet}$ match with the coordinates used for blowing up $Y$ by $\Theta_{\bullet}$. This directly allows to ``embed'' the blown-up slow flow $\bar{Y}$ in each of the above charts into the blow-up of $X$.
For example, in the chart $\kappa_{-y}$, we have that the critical manifold appears as the manifold $\mathcal{S}_{0,1}$, which is parametrized by the coordinates $(x_1,r_1,a_1)$. These are precisely the coordinates, which are used for blowing up $Y$ by $\Theta_{-y}$. We will start with the analysis of $\bar{Y}$ in $\kappa_{-y}$ in this section. Similarly, one can employ the other charts to complete the analysis of $\bar{Y}$. Eventually, this leads to a blown-up version of figure \ref{desing_sketch}, i.e.\ a ``usual'' blow-up analysis of the vector field $\bar{Y}$, with time reversion in $\mathcal{S}_0^\txts$. A sketch of the flow of $\bar{Y}$ is shown in figure \ref{sf:desing_complete_blowup}.

To embed $Y$ into $\bar{X}$ in chart $\kappa_{-y}$, we take the desingularized slow flow $Y$ \eqref{sf:desing} and transform it by
\begin{equation}
	\begin{split}
		\label{restricted_blow-up}
		\Theta_{-y}\colon\, x =r_1 x_1,\, y = -r_1,\, a = r_1 a_1.
	\end{split}
\end{equation}
This gives
\begin{equation}
	\begin{split}
		\label{sf:-y} 
		\dot{x}_1 &= a_1 \tilde{g_c} - 2 \tilde{g_b} + r_1(a_1 x_1 - 2)\tilde{g_a} + x_1 J\\
		\dot{r}_1 &= - r_1 J\\
		\dot{a}_1 &= a_1 J - r_1(4x_1 + a_1^2)\tilde{g_a},
	\end{split}
\end{equation}
where $J = J(x_1,r_1,a_1) = a_1 \tilde{g}_b - 2x_1 \tilde{g}_c - r_1(a_1 + 2x_1^2)\tilde{g}_a$ and the shorthand $\tilde{g}_\bullet$ for $\tilde{g}_\bullet \circ \Theta_{-y}$ is used with $\tilde{g}_\bullet$ defined in \eqref{sf:desing}.
System \eqref{sf:-y} represents a vector field on the manifold $\mathcal{S}_{0,1}$ in the coordinates $(x_1,r_1,a_1)$ and, simultaneously, a ``usual'' blow-up of $Y$ in negative $y$-direction.
The invariant set $\{r_1 = 0 \}$ of \eqref{sf:-y} corresponds to the blown-up hyperbolic umbilic point. Refer to figure \ref{fig:critical_mfld_-y}, in which \eqref{sf:-y} now induces a flow.
Recall that for $Y$ we need to reverse the time orientation of trajectories in the saddle-type region. This we have to take into account for $\bar{Y}$ as well: In \eqref{sf:-y} we reverse the time-orientation of orbits in the saddle-type region $\left\{4x_1 + a_1^2 > 0\right\}$, now including $r_1 = 0$, in order to obtain a consistent flow.
\begin{remark}
	Another way to motivate the time reversion in $\{r_1 = 0, 4x_1 + a_1^2 > 0\}$ will be apparent from the charts $\kappa_{\pm a}$ in section \ref{sec:further}. In these charts, the regions $\{r_1 = 0, a_1 > 0, 4x_1 + a_1^2 > 0\}$ and $\{r_1 = 0, a_1 < 0, 4x_1 + a_1^2 > 0\}$ appear as saddle-type regions of 2-d critical manifolds. Hence, its desingularized slow flow undergoes a time reversion in such regions.
\end{remark}

Recall from our notation that $\tilde{g}_a(0) = A_0$, $\tilde{g}_b(0) = B_0$ and $\tilde{g}_c(0) = C_0$.
The vector field \eqref{sf:-y} has three equilibria in $\{r_1 = 0 \}$, which are located at
\begin{equation}
	\begin{split}
		\label{zetas}
		\zeta_1 &:= (x_1 = -B_0^2/C_0^2, a_1 = -2B_0/C_0),\\ \zeta_{2,3} &:= (x_1 = \mp \sqrt{B_0/C_0}, a_1 = 0).
	\end{split}
\end{equation}
Note that $\zeta_{2}$ and $\zeta_3$ only exist if $B_0,C_0$ have identical sign.
The equilibrium point $\zeta_1$ lies on $\left\{4x_1 + a_1^2 = 0\right\}$, which is the extension of the fold surface from Lemma \ref{E1_regions} to $\left\{r_1 = 0\right\}$. In case of $B_0 = C_0 \neq 0$, $\zeta_1$ extends the cusp line to $\left\{r_1 = 0\right\}$. In the vicinity of $\zeta_2$ lies the attracting part of the (blown-up) critical manifold $\mathcal{S}_{0,1}^{\txta}$, while $\zeta_3$ lies next to the saddle-type part $\mathcal{S}_{0,1}^{\txts}$. See figure \ref{fig:slow_flow_-y}. 
We now analyze in further detail the equilibria $\zeta_{1,2,3}$, since they organize the flow.

\begin{lemma}
	\label{sf:-y:analysis}
	Let $B_0,C_0 > 0$. With reversed time orientation in $\left\{4x_1 + a_1^2 > 0\right\}$, the points $\zeta_{2,3}$ are hyperbolic equilibria with the following stability:
	In the invariant plane $\{r_1 = 0\}$ $\zeta_{2,3}$ are sources. At each of $\zeta_{2,3}$ there exist unique 1-d stable manifolds transverse to the plane $\{r_1 = 0\}$. If the time reversion is neglected, the $x_1$-axis is invariant and establishes a heteroclinic connection from $\zeta_2$ to $\zeta_3$.
\end{lemma}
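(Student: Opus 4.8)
The plan is to linearise the vector field \eqref{sf:-y} at $\zeta_2$ and $\zeta_3$, exploit the block structure coming from the invariant plane $\{r_1 = 0\}$, and then read off the stated stability after accounting for the time reversion in $\{4x_1 + a_1^2 > 0\}$. The guiding observation is that $\zeta_{2,3}$ both lie on $\{r_1 = 0\}$, where $\Theta_{-y}$ collapses every point to the origin, so that $\tilde g_a = A_0$, $\tilde g_b = B_0$, $\tilde g_c = C_0$ are constants and $J|_{r_1 = 0} = a_1 B_0 - 2 x_1 C_0$; consequently everything near these equilibria is governed by $B_0, C_0$ only.

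First I would treat the dynamics inside $\{r_1 = 0\}$. Restricting \eqref{sf:-y} to $\{r_1 = 0\}$ gives the planar field $\dot x_1 = 2B_0 + a_1 C_0 + x_1 J$, $\dot a_1 = a_1 J$ with $J = a_1 B_0 - 2 x_1 C_0$ (the $\tilde g_a$-contributions drop out because they carry a factor $r_1$), which makes it manifest that $\zeta_{2,3}$, with $a_1 = 0$ and $x_1^2 = B_0/C_0$, are equilibria. Its linearisation at $(x_1,a_1) = (\pm\sqrt{B_0/C_0},0)$ is upper triangular with diagonal entries $-4C_0 x_1$ and $-2C_0 x_1$; since $B_0,C_0 > 0$, both eigenvalues are positive at $\zeta_2$ (where $x_1 < 0$) and negative at $\zeta_3$ (where $x_1 > 0$). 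As a whole neighbourhood of $\zeta_2$ lies in $\{4x_1 + a_1^2 < 0\}$ (no reversal) while a whole neighbourhood of $\zeta_3$ lies in $\{4x_1 + a_1^2 > 0\}$ (reversal), in both cases $\zeta_{2,3}$ become sources for the in-plane flow with the reversal convention.

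Next I would add the transverse direction. From $\dot r_1 = -r_1 J$ one reads off $\partial_{x_1}\dot r_1 = \partial_{a_1}\dot r_1 = 0$ at $r_1 = 0$, so the full $3\times 3$ linearisation at $\zeta_{2,3}$ is block upper triangular, the extra eigenvalue being $-J|_{\zeta_{2,3}} = 2x_1 C_0$, i.e.\ $-2\sqrt{B_0 C_0}$ at $\zeta_2$ and $+2\sqrt{B_0 C_0}$ at $\zeta_3$; after the reversal at $\zeta_3$ this eigenvalue equals $-2\sqrt{B_0 C_0}$ at both points. In particular all three eigenvalues are nonzero, so $\zeta_{2,3}$ are hyperbolic, with a two-dimensional unstable and a one-dimensional stable direction. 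Since the transverse eigenvalue differs from both in-plane eigenvalues $-4C_0 x_1, -2C_0 x_1$ (they would coincide only if $x_1 = 0$), its eigenvector has a nonzero $r_1$-component, so the one-dimensional stable manifold provided by the stable manifold theorem is unique and transverse to $\{r_1 = 0\}$. For the heteroclinic statement I would drop the reversal and note that $\{r_1 = 0, a_1 = 0\}$ is invariant for \eqref{sf:-y} (there $\dot r_1 = 0$ and $\dot a_1 = 0$), with induced scalar equation $\dot x_1 = 2B_0 - 2C_0 x_1^2 = -2C_0(x_1 - \sqrt{B_0/C_0})(x_1 + \sqrt{B_0/C_0})$, which is strictly positive for $x_1 \in (-\sqrt{B_0/C_0}, \sqrt{B_0/C_0})$; hence the open segment of the $x_1$-axis between $\zeta_2$ and $\zeta_3$ is a single orbit, backward asymptotic to $\zeta_2$ and forward asymptotic to $\zeta_3$.

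The computations are routine; the one point that needs care is the bookkeeping of the time reversion. One must check that neither $\zeta_2$ nor $\zeta_3$ lies on the fold surface $\{4x_1 + a_1^2 = 0\}$ — which holds because $4x_1 + a_1^2 = \mp 4\sqrt{B_0/C_0} \neq 0$ there — so that the relevant (reversed, respectively non-reversed) vector field is $C^\infty$ on a whole neighbourhood of each equilibrium and the linearised analysis applies verbatim. This is also precisely why the heteroclinic is an orbit only of the non-reversed field: it crosses $\{4x_1 + a_1^2 = 0\}$ at $x_1 = 0$, where the reversal would make the direction of motion jump.
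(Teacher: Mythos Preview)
Your proof is correct and follows essentially the same route as the paper: linearise \eqref{sf:-y} at $\zeta_{2,3}$, read off the eigenvalues $-4C_0x_1,\ -2C_0x_1$ in $\{r_1=0\}$ and $2C_0x_1$ transverse to it, and then account for the time reversal in $\{4x_1+a_1^2>0\}$. Your treatment is in fact slightly more explicit than the paper's, in that you spell out why the stable eigenvector must have a nonzero $r_1$-component and why the $x_1$-axis segment is a genuine heteroclinic orbit (only for the non-reversed field, since it crosses the fold surface at $x_1=0$).
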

\begin{proof}
	Linearization of \eqref{sf:-y} at a point $(x_1,r_1 = 0,a_1)$ leads to
	\begin{equation}
		\begin{split}
			\label{sf:lin}
			\begin{bmatrix}
				B_0 a_1 - 4 C_0 x_1 & * & C_0 + x_1 B_0\\
				0 & -B_0 a_1 + 2 C_0 x_1 & 0\\
				-2 C_0 a_1 & * & 2B_0 a_1 - 2 C_0 x_1
			\end{bmatrix}.
		\end{split}
	\end{equation}
	The stars $*$ denote unspecified entries. At $\zeta_2$ this leads to eigenvalues $4\sqrt{B_0C_0}, 2 \sqrt{B_0C_0}$ in the invariant plane $\{r_1 = 0\}$ and $-2\sqrt{B_0C_0}$ with eigenvector transverse to $\{r_1 = 0\}$. At $\zeta_3$ there are eigenvalues $-4\sqrt{B_0C_0}, -2 \sqrt{B_0C_0}$ in the invariant plane $\{r_1 = 0\}$ and $2\sqrt{B_0C_0}$ with eigenvector transverse to it. Taking into account the reversed time orientation for $4x_1 + a_1^2 > 0$ implies the assertions. The hetercolinic connection along the invariant $x_1$-axis to $\zeta_3$ is clear. 
\end{proof}

\begin{figure}[h]
	\centering
	\begin{overpic}[width = 0.8\textwidth]{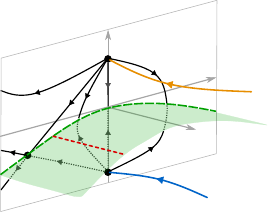}
		\put(9,25){$\zeta_1$}
		\put(42,60){$\zeta_3$}
		\put(42,11){$\zeta_2$}
		\put(80,12){$\mathcal{S}_{0,1}^{\txta}$}
		\put(85,55){$\mathcal{S}_{0,1}^{\txts}$}
		\put(74,28){\textcolor{gray}{$r_1$}}
		\put(78,53){\textcolor{gray}{$a_1$}}
		\put(35,64){\textcolor{gray}{$x_1$}}
		\put(62,7){\textcolor{light-blue}{$\sigma_1$}}
		\put(1,54){\rotatebox{13}{\textcolor{gray}{$\mathcal{S}_{0,1}\rvert_{r_1 = 0}$}}}
	\end{overpic}
	\caption{Sketch of the blown-up desingularized slow flow \eqref{sf:-y} for $B_0 > C_0  > 0$ close to $\mathcal{S}_{0,1}\rvert_{r_1 = 0}$. Time reversion is taken into account above the green surface of fold points, that is in $\mathcal{S}_{0,1}^{\txts}$. The black trajectories indicate the flow on the invariant plane $\{r_1 = 0\} \simeq \mathcal{S}_{0,1}\rvert_{r_1 = 0}$, containing $\zeta_{1,2,3}$. The trajectory $\sigma_1$ (the blown-up version of $\sigma$) approaches $\zeta_2$ from inside $\mathcal{S}_{0,1}^{\txta}$ and the orange trajectory reaches $\zeta_3$ from inside $\mathcal{S}_{0,1}^{\txts}$, see Lemma \ref{sf:origin_lin}. The center manifold based at $\zeta_1$ is not depicted.}
	\label{fig:slow_flow_-y}
\end{figure}

Recall the trajectory $\sigma$ from Lemma \ref{sf:origin_lin}, which represents the stable manifold of the origin in the attracting part of $\mathcal{S}_0$.
Let $\sigma_1$ denote $\sigma$ in the present blown-up coordinates.

\begin{remark}
	Recall figure \ref{desing_sketch}. Via $\Theta_{-y}$ the equilibrium $\zeta_2$ serves as arrival point for the trajectory $\sigma$ at the blown-up hyperbolic umbilic. The equilibrium $\zeta_{3}$ serves as arrival point for the unique trajectory, which approaches the hyperbolic umbilic point from the saddle-type region $y < 0$. Similarly, there will exist two more equilibria $\zeta_4$ and $\zeta_5$ of $\bar{Y}$ on the blow-up sphere, which correspond to the two other trajectories, which arrive the hyperbolic umbilic from $y > 0$.
\end{remark}

Refer to figure \ref{fig:slow_flow_-y} for a sketch of the flow of \eqref{sf:-y} close to the invariant plane $\{r_1 = 0\}$. Note that Lemma \ref{sf:-y:analysis} shows that the trajectory $\sigma_1$ is transverse to the plane $\{r_1 = 0\}$. This implies that $\sigma_1$ is transverse to $\{ r_1 = \nu \}$, for sufficiently small $\nu > 0$.

There exists the third equilibrium $\zeta_1$ in $\{r_1 = 0\}$, which lies on $4x_1 + a_1^2 = 0$, the extension of the fold surface to $r_1 = 0$. Employing the linearization \eqref{sf:lin} gives

\begin{lemma}
	Let $B_0 C_0 \neq 0$. In $\{r_1 = 0\}$, without taking the time reversion into account, $\zeta_1$ has eigenvalues $\pm 2\sqrt{B_0 C_0}$. Hence $\zeta_1$ is a saddle if $B_0 C_0 > 0$ and $\zeta_1$ is a center if $B_0 C_0 < 0$. There exists a 1-d center manifold at $\zeta_1$, transverse to $\{r_1 = 0\}$.
\end{lemma}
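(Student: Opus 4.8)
The plan is a direct linearization at $\zeta_1$ followed by an application of the center manifold theorem. First I would substitute the coordinates of $\zeta_1$, namely $x_1=-B_0^2/C_0^2$ and $a_1=-2B_0/C_0$ (with $r_1=0$), into the Jacobian \eqref{sf:lin}. The decisive observation is that the middle row --- the one governing the $r_1$-direction --- vanishes identically at $\zeta_1$: its diagonal entry $-B_0a_1+2C_0x_1$ equals $2B_0^2/C_0-2B_0^2/C_0=0$, while its off-diagonal entries are already zero in \eqref{sf:lin}. Hence the $3\times3$ Jacobian at $\zeta_1$ has a zero row, so $0$ is an eigenvalue and the $r_1$-direction will turn out to be the central one.

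Next I would restrict to the invariant plane $\{r_1=0\}$ (invariant because $\dot r_1=-r_1 J$), whose tangent space $\operatorname{span}(\partial_{x_1},\partial_{a_1})$ is invariant under the linearization; the restricted linear part is the $2\times2$ block of \eqref{sf:lin} obtained by deleting the $r_1$-row and column. At $\zeta_1$ this block reads
\begin{equation*}
\begin{bmatrix} 2B_0^2/C_0 & (C_0^3-B_0^3)/C_0^2\\ 4B_0 & -2B_0^2/C_0 \end{bmatrix},
\end{equation*}
with trace $0$ and determinant $-4B_0C_0$, hence characteristic polynomial $\lambda^2-4B_0C_0$ and eigenvalues $\lambda_\pm=\pm2\sqrt{B_0C_0}$. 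For $B_0C_0>0$ these are real of opposite sign, so $\zeta_1$ is a hyperbolic saddle inside $\{r_1=0\}$; for $B_0C_0<0$ they form the conjugate imaginary pair $\pm2\mathrm i\sqrt{-B_0C_0}$, so the linearization is a center. (As in Lemma \ref{sf:origin_lin}, ``center'' here refers to the elliptic linear type; since the subsequent analysis only uses $B_0,C_0>0$, I would not pursue a genuine nonlinear center. Note also that the time reversion in $\{4x_1+a_1^2>0\}$ merely swaps $\lambda_+\leftrightarrow\lambda_-$ and hence does not alter this classification.)

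For the transverse center manifold, the full Jacobian at $\zeta_1$ has eigenvalues $0$ (from the zero row) and $\pm2\sqrt{B_0C_0}$ (from the invariant block, consistent with the trace being $0$). Its one-dimensional kernel cannot lie in $\operatorname{span}(\partial_{x_1},\partial_{a_1})$, since that $2\times2$ block is invertible ($\det=-4B_0C_0\neq0$); hence the $0$-eigenvector has nonzero $r_1$-component and is transverse to $\{r_1=0\}$. The center manifold theorem, applicable since \eqref{sf:-y} is smooth, then yields a one-dimensional, locally invariant center manifold through $\zeta_1$ tangent to this eigenvector, hence transverse to $\{r_1=0\}$, which completes the proof. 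I expect no genuine obstacle; the only point requiring attention is the vanishing of the $r_1$-row at $\zeta_1$, which is precisely what forces the transverse direction to be central rather than hyperbolic.
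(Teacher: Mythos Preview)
Your proof is correct and follows the same approach the paper indicates (the paper simply states ``Employing the linearization \eqref{sf:lin} gives'' and leaves the computation implicit). Your detailed verification of the entries at $\zeta_1$, the trace/determinant computation for the $2\times2$ block, and the transversality argument for the kernel are exactly what is needed to flesh out the paper's one-line justification.
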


In fact, in the charts $\kappa_{\mp a}$, there appear fast-slow systems on the blow-up sphere, in which $\zeta_1$ is a folded saddle, respectively a folded center. For more details on the folded singularities arising in the blow-up analysis we refer to section \ref{sec:further}. 

\begin{remark}
	For the following analysis we are interested in the flow close to $\zeta_2$, since the trajectory $\sigma_1$ arrives there. In particular, we will not investigate the role of the center manifold at $\zeta_1$ further.
\end{remark}

Until this point, we have analyzed the blow-up $\bar{Y}$ in the chart induced by $\kappa_{-y}$. For a full picture of $\bar{Y}$ it remains to study $\bar{Y}$ in other charts to obtain the complete dynamics of $\bar{Y}$ close to the blow-up sphere $\mathbb{S}^2$. In $\kappa_{-x}$ (and similarly in $\kappa_{x}$ or $\kappa_{y}$) the findings are similar to $\kappa_{-y}$ and we omit the presentation here. 

\begin{remark}
	The heteroclinic connections from $\zeta_{2,3}$ to $\zeta_1$ as shown in figure \ref{fig:slow_flow_-y} are implied by the analysis in chart $\kappa_{-a}$ by Lemma \ref{kappa_-a_heterocl}. In particular, the stable/unstable manifolds of $\zeta_1$ in $\{r_1 = 0\}$ are transverse to the extended fold surface for $B_0 \neq C_0$.
\end{remark}

Employing several charts to analyze $\bar{Y}$, we obtain the complete blow-up of $\bar{Y}$, as visualized in figure \ref{sf:desing_complete_blowup}. Once again, we emphasize the time reversion in the saddle-type region, which needs to be taken into account. The qualitative flow in figure \ref{sf:desing_complete_blowup} is implied by the analysis in the charts $\kappa_{\pm a}$ in section \ref{sec:further}, in particular \ref{ss:further:y}. On the blow-up sphere there exist three further equilibria which we denote by $\zeta_{4}$, $\zeta_5$ and $\zeta_6$. The equilibria $\zeta_{5,6}$ are analogous to $\zeta_{2,1}$, but are located next to the repelling region of the critical manifold. The role of the equilibrium $\zeta_4$ is analogous to $\zeta_3$. We remark that figure \ref{sf:desing_complete_blowup} is the blown-up version of figure \ref{desing_sketch}.

\begin{figure}[h]
	\centering
	\begin{overpic}[width=.6\textwidth]{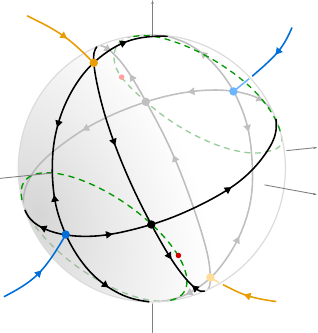}
		\put(8,10){\textcolor{light-blue}{$\sigma$}}
		\put(90,37){\textcolor{gray}{$-\bar{a}$}}
		\put(93,51){\textcolor{gray}{$\bar{x}$}}
		\put(48,97){\textcolor{gray}{$\bar{y}$}}
		\put(46.5,36.5){$\zeta_1$}
		\put(21,32){$\zeta_2$}
		\put(22,80){$\zeta_4$}
		\put(44,73){\textcolor{light-gray}{$\zeta_6$}}
		\put(63.5,22){\textcolor{light-gray}{$\zeta_3$}}
		\put(66.5,67.5){\textcolor{light-gray}{$\zeta_5$}}
	\end{overpic}
	\caption{Complete blow-up $\bar{Y}$ of the desingularized slow flow $Y$ \eqref{sf:desing} for $B_0 > C_0 > 0$ at the origin, with time reversion in the saddle-type region taken into account. The green dashed curves depict the fold curves, which bound the attracting/repelling/saddle-type regions on the blow-up sphere. We remark that this figure shows a blown-up version of figure \ref{desing_sketch}, but for visualization reasons the negative $a$-direction points here towards the observer. On the blow-up sphere $\mathbb{S}^2$, only trajectories contained in the stable/unstable manifolds of the folded saddles $\zeta_1,\zeta_6$ and trajectories on the equator $\bar{a} = 0$ are depicted. These organize the flow on $\mathbb{S}^2$. For further details refer to section \ref{sec:further}, in particular \ref{ss:further:y}. The red points depict the cusp points, i.e.\ where the cusp curves touch the blow-up sphere.}
	\label{sf:desing_complete_blowup}
\end{figure}

\begin{remark}
	The idea of the following analysis is to track small perturbations of the trajectory $\sigma \simeq \sigma_1$ for small $\eps > 0$ through the blow-up. The importance of $\sigma \simeq \sigma_1$ is that in the case $B_0,C_0 > 0$ it approaches the blow-up sphere at $\zeta_2$ from inside $\mathcal{S}^{\txta}_0$, see figures \ref{fig:slow_flow_-y} and \ref{sf:desing_complete_blowup}. Consequently, we will focus on a neighborhood of $\zeta_2$ for the transition into the rescaling chart $\kappa_{\eps}$. Further, we remark that in principle trajectories might arrive at $\zeta_1$ through a center manifold from inside $\mathcal{S}^{\txta}_0$. Since we focus on a neighborhood of $\zeta_2$ this has no effect on our analysis and we do not investigate a center manifold at $\zeta_1$ further. However, we note that Lemma \ref{zeta1} shows that there do not exist trajectories on the blow-up sphere emanating from $\zeta_1$ into the rescaling chart.
\end{remark}

\subsection{Transition from $\kappa_{-y}$ to $\kappa_{\eps}$}
\label{sec:entry_trans}

Recall that we are interested in the evolution of trajectories in attracting slow manifolds close to the hyperbolic umbilic point. From Lemma \ref{lemma:lin_along_critical_manifold}, these trajectories are contained in the center manifold $W^c(\mathcal{S}^{\txta}_{0,1})$ . In the singular limit, the only slow flow trajectory reaching the hyperbolic umbilic is $\sigma$, which we denote by $\sigma_1$ in blown-up coordinates $\kappa_{-y}$. From the previous section we know that $\sigma_1$ reaches the blow-up sphere at $\zeta_2$. Thus, $\zeta_2$ will be our ``base point'' to study transitions inside $W^c(\mathcal{S}^{\txta}_{0,1})$ into the rescaling chart $\kappa_{\eps}$. In other words, we aim to track the flow close to $\zeta_2$ in $W^c(\mathcal{S}_{0,1}^{\txta})$.

\begin{lemma}
	\label{lemma:CM_red}
	Let $\tilde{x}_1 = x_1 + \sqrt{B_0/C_0}$, such that $\zeta_2$ is translated to the origin.
	Then the flow in $W^c(\mathcal{S}_{0,1}^{\txta})\rvert_{\eps_1 > 0}$ close to $\zeta_2 = (\tilde{x}_1 = 0, a_1 = 0, r_1 = 0, \eps_1 = 0)$ is determined up to orbital equivalence by
	\begin{equation}
		\begin{split}
			\label{CM_red2}
			\tilde{x}_1' &= 2 \tilde{x}_1 + \Lambda\\
			a_1' &= a_1 + r_1 \frac{\tilde{g_a}}{G} + r_1^3 \eps_1 f_1\\
			r_1' &= - r_1\\
			\eps_1' &= 3 \eps_1,
		\end{split}
	\end{equation}
	where $\Lambda=\Lambda(\tilde x_1, a_1,r_1,\eps_1)$ satisfies $\Lambda(0) = \partial_{\tilde{x}_1}\Lambda(0) = 0$, $\tilde{g}_a$ is a shorthand recycled notation for $\tilde{g}_a \circ \Theta_{-y}$, where $\tilde{g}_a$ is defined in \eqref{sf:desing}, and $G$ is a sufficiently smooth function with $G(0)>0$.
\end{lemma}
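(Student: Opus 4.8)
The plan is a center-manifold reduction near $\zeta_2$ followed by two positive time rescalings; no coordinate change beyond the given translation $\tilde x_1=x_1+\sqrt{B_0/C_0}$ is needed. By Lemma~\ref{lemma:lin_along_critical_manifold} there is a $4$-dimensional attracting center manifold $W^c(\mathcal S_{0,1}^{\txta})$, extending to $\{r_1=0\}$ and containing $\mathcal S_{0,1}$. Evaluating \eqref{entry_lin} at $\zeta_2$ (i.e.\ $x_1=-\sqrt{B_0/C_0}$, $a_1=r_1=0$) gives a quadruple zero eigenvalue together with the strictly negative eigenvalues $-2$ and $-2\sqrt{B_0/C_0}$, so near $\zeta_2$ the manifold $W^c$ is a graph $(u,v)=(h_u,h_v)(\tilde x_1,r_1,a_1,\eps_1)$ over the center coordinates. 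First I would substitute this graph into \eqref{entry:rectification} to obtain the reduced vector field on $W^c$.

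The structural key is that $W^c\cap\{\eps_1=0\}$ equals the critical manifold $\{u=v=0\}$, so $h_u=\eps_1\hat h_u$ and $h_v=\eps_1\hat h_v$ with $\hat h_u,\hat h_v$ smooth (Hadamard). Plugging $v=\eps_1\hat h_v$ into \eqref{entry:rectification} one reads off, on $W^c$,
\[
r_1'=-r_1\eps_1 G,\qquad \eps_1'=3\eps_1^2 G,\qquad a_1'=\eps_1\big(a_1 G+r_1 g_a\big),\qquad x_1'=\eps_1\big(\hat h_u+x_1\hat h_v+r_1(f_x+x_1 f_y)\big),
\]
where $G:=\hat h_v+r_1 f_y$ and $g_a,f_x,f_y$ are restricted to $W^c$. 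Thus the reduced field equals $\eps_1$ times a smooth field; dividing by $\eps_1$ (a positive rescaling on $\{\eps_1>0\}$) and then by $G$ turns it into $r_1'=-r_1$, $\eps_1'=3\eps_1$, $a_1'=a_1+r_1 g_a/G$, $\tilde x_1'=(\hat h_u+x_1\hat h_v+r_1(f_x+x_1 f_y))/G$. Because $g_a$ restricted to $W^c$ differs from $\tilde g_a\circ\Theta_{-y}$ only by a term of order $r_1^2\eps_1$ (coming from the $u,v$-corrections and the $\eps=r_1^3\eps_1$ slot in $\Phi_{-y}\circ T^{-1}$), the $a_1$-equation becomes $a_1'=a_1+r_1\tilde g_a/G+r_1^3\eps_1 f_1$ with smooth $f_1$, which is the claimed form.

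It then remains to identify the $\tilde x_1$-equation and to verify $G(0)>0$. For this I would compute $H_u:=\hat h_u|_{r_1=\eps_1=0}$ and $H_v:=\hat h_v|_{r_1=\eps_1=0}$ from the $O(\eps_1)$-part of the $u'$- and $v'$-equations of \eqref{entry:rectification}: at $r_1=0$ all original coordinates vanish, so $g_b,g_c$ on $W^c$ reduce to $B_0,C_0$, and one obtains the linear system
\[
\begin{bmatrix}2x_1 & a_1\\ a_1 & -2\end{bmatrix}\begin{bmatrix}H_u\\ H_v\end{bmatrix}=-\begin{bmatrix}B_0\\ C_0\end{bmatrix},
\]
whose matrix has determinant $-4x_1-a_1^2>0$ on $\mathcal S_{0,1}^{\txta}$ and is hence invertible. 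Solving and evaluating at $\zeta_2$ gives $H_u(\zeta_2)=\tfrac12\sqrt{B_0C_0}$ and $H_v(\zeta_2)=C_0/2$, so $G(\zeta_2)=C_0/2>0$ (whence $G>0$ near $\zeta_2$ by continuity); moreover $H_u+x_1H_v$ vanishes at $\zeta_2$ and $\partial_{x_1}(H_u+x_1H_v)|_{\zeta_2}=C_0$. Dividing by $G(\zeta_2)=C_0/2$ yields $\tilde x_1'=2\tilde x_1+\Lambda$ with $\Lambda(0)=\partial_{\tilde x_1}\Lambda(0)=0$ (and, as a byproduct, $\zeta_2$ becomes a hyperbolic equilibrium of \eqref{CM_red2} with eigenvalues $2,1,-1,3$).

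The hard part is the center-manifold step itself: $\zeta_2$ is genuinely degenerate (four zero eigenvalues) and sits on the chart boundary $\{r_1=0\}$, so one must ensure that the manifold from Lemma~\ref{lemma:lin_along_critical_manifold} extends smoothly across $\zeta_2$ and can be chosen to contain $\mathcal S_{0,1}$ — it is exactly this inclusion that forces $h_u,h_v=O(\eps_1)$ and legitimises factoring out $\eps_1$. The remainder is the bookkeeping of the leading $\eps_1$-order of the invariance equations; it is routine but must be carried out carefully enough to pin down the coefficient $2$ and the sign of $G(0)$.
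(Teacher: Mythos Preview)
Your proof is correct and follows essentially the same route as the paper's: a center-manifold graph $(u,v)=\eps_1(\hat h_u,\hat h_v)$ over the center coordinates, substitution into \eqref{entry:rectification}, and division by $\eps_1 G$ with $G=\hat h_v+r_1 f_y$. The only cosmetic difference is in how the leading coefficients $\hat h_u(\zeta_2)=\tfrac12\sqrt{B_0C_0}$ and $\hat h_v(\zeta_2)=C_0/2$ are extracted: the paper reads them off from tangency of $W^c$ to the center eigenspace $E^c$ of \eqref{entry_lin} and then invokes the invariance equation with a linear ansatz for the $\tilde x_1$-coefficient, while you solve the $O(\eps_1)$-level of the invariance equation directly as the $2\times2$ linear system you wrote down---these are equivalent computations, and your version has the advantage of also yielding $\partial_{x_1}H_u|_{\zeta_2}=C_0/2$, $\partial_{x_1}H_v|_{\zeta_2}=0$ in one stroke, which pins down the coefficient $2$ transparently.
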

\begin{proof}
	Consider the system \eqref{entry:rectification} with its linearization \eqref{entry_lin}. The critical manifold is given by $\{u = v = \eps_1 = 0\}$. Close to $\zeta_2$ the center manifold $W^c(\mathcal{S}_{0,1}^{\txta})$ is given as a graph $(u,v) = h(\tilde{x}_1,a_1,r_1,\eps_1) = (h_1,h_2)$. It follows that the map $h$ factors $\eps_1$. Further $h$ is tangent to the center eigenspace $E^c$ of the linearization \eqref{entry_lin} at $\zeta_2$. The eigenspace $E^c$ is given as the graph of the linear function $(u,v) = (\eps_1 \sqrt{B_0C_0}/2, \eps_1 C_0/2)$. This implies that $h_2$ is of the form $h_2 = \eps_1 \tilde{h}_2$, with $\tilde{h}_2(\zeta_2) = C_0/2 > 0$. We now plug $(u,v) = h$ into \eqref{entry:rectification}, where we first translate $\zeta_2$ to the origin, to obtain the flow in $W^c(\mathcal{S}_{0,1}^{\txta})$ close to $\zeta_2$. This leads to a vector field of the form
	\begin{equation}
		\begin{split}
			\label{vf:proof_red}
			\tilde{x}_1' &= (\tilde{x}_1 - \sqrt{B_0/C_0})(\eps_1 \tilde{h}_2 + r_1 \eps_1 f_y) + \eps_1 \tilde{h}_1 + r_1 \eps_1 f_x \\
			a_1' &= a_1 (\eps_1 \tilde{h}_2 + r_1 \eps_1 f_y) + r_1 \eps_1 g_a \\
			r_1' &= - r_1 (\eps_1 \tilde{h}_2 + r_1 \eps_1 f_y)\\
			\eps_1' &= 3 \eps_1 (\eps_1 \tilde{h}_2 + r_1 \eps_1 f_y).
		\end{split}
	\end{equation}
	For clarity we omit all function arguments.
	For $\eps_1 > 0$ the factor $\eps_1( \tilde{h}_2 + r_1 f_y)$ is positive sufficiently close to $\zeta_2$. Therefore we divide \eqref{vf:proof_red} by $\eps_1( \tilde{h}_2 + r_1 f_y)$. 
	\begin{equation}
		\begin{split}
			\label{vf:proof_red2}
			\tilde{x}_1' &= (\tilde{x}_1 - \sqrt{B_0/C_0}) + \frac{\tilde{h}_1}{\tilde{h}_2 + r_1 f_y} + r_1\frac{f_x}{\tilde{h}_2 + r_1 f_y} \\
			a_1' &= a_1 + r_1 \frac{g_a}{\tilde{h}_2 + r_1 f_y} \\
			r_1' &= - r_1 \\
			\eps_1' &= 3 \eps_1 .
		\end{split}
	\end{equation}
	Note that this uncovers a flow in the critical manifold $W^c(\mathcal{S}_{0,1}^{\txta})\rvert_{\eps_1 = 0}$ close to $\zeta_2$. In fact, we must recover, up to orbital equivalence, the desingularized slow flow \eqref{sf:-y} near $\zeta_2$ for $\eps_1 = 0$. We want to simplify \eqref{vf:proof_red2} further. First, note that by the shorthand $g_a$ we actually mean the function $(g_a \circ \Phi_{-y} \circ T^{-1})(\tilde{x}_1 - \sqrt{B_0/C_0},r_1,a_1,u,v,\eps_1)$, where further $u = \eps_1 \tilde{h}_1$ and $v = \eps_1 \tilde{h}_2$ are plugged in. Written out, this equals $g_a(r_1x_1,-r_1,r_1a_1,r_1^2(\eps_1 \tilde{h}_1 - x_1^2 + a_1),r_1^2(\eps_1 \tilde{h}_2 - 1 - a_1 x_1), r_1^3 \eps_1)$.
	By splitting off $g_a\rvert_{\eps = 0}$ in original coordinates by a Taylor argument, this can be rewritten as $(\tilde{g_a} \circ \Theta_{-y})(x_1,r_1,a_1) + r_1^3 \eps_1 f_1$, where $f_1$ is a sufficiently regular function (which is bounded in a neighborhood of the origin in original coordinates) and $\tilde{g}_a(x,y,a) = g_a(x,y,a,-x^2-ay,-y^2-ax,0)$. Let $G = \tilde{h}_2 + r_1 f_y$, where $f_y$ is a shorthand for $(f_y \circ \Phi_{-y} \circ T^{-1})(x_1,r_1,a_1,\eps_1 \tilde{h}_1,\eps_1 \tilde{h}_2,\eps_1)$. For the first equation, the constant term cancels out with $\tilde{h}_1(0)/\tilde{h}_2(0)$ and employing the invariance equation for the graph of the center manifold with the ansatz $\tilde{h}_1 = \sqrt{B_0C_0}/2 + \xi_1 \tilde{x}_1 + \dots$ and $\tilde{h}_2 = C_0/2 + \xi_2 \tilde{x}_1 + \dots$ implies the assertion.
\end{proof}

Note that the linearization of \eqref{CM_red2} at $\zeta_2$ has resonant eigenvalues $2,1,-1,3$. Let $W^{c}(\zeta_2)$ denote the intersection of (any instance of) $W^c(\mathcal{S}_{0,1}^{\txta})$ with the blow-up sphere $\{ r_1 = 0\}$ in $\kappa_{-y}$.

\begin{lemma}
	 The branch $\eps_1 > 0$ of the 3-dimensional attracting center manifold $W^{c}(\zeta_2)$ is unique.
\end{lemma}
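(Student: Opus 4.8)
The plan is to recognise $W^{c}(\zeta_2)\rvert_{\eps_1>0}$ as a branch of the \emph{unstable} manifold of $\zeta_2$ for the rescaled system \eqref{CM_red2}, for which uniqueness is automatic, and to bridge the gap between the rescaled picture and the original one by a backward-boundedness argument. Concretely, I would first record the spectrum of $\zeta_2$ as an equilibrium of \eqref{blowup_-y} (equivalently \eqref{entry:rectification}): from \eqref{entry_lin} evaluated at $\zeta_2$ one reads off the quadruple zero eigenvalue together with $\lambda_{1,2}=-2$ and $-2\sqrt{B_0/C_0}$, both negative under the standing assumption $B_0,C_0>0$; restricting to the invariant blow-up sphere $\{r_1=0\}$ removes one zero, leaving the spectrum $\{0,0,0,\lambda_1,\lambda_2\}$ with $\lambda_{1,2}<0$. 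The decisive feature is that $\zeta_2$ has \emph{no} unstable eigenvalue, neither in the six- nor in the five-dimensional picture, and that $W^{c}(\zeta_2)=W^{c}(\mathcal{S}_{0,1}^{\txta})\cap\{r_1=0\}$ is a local centre manifold of $\zeta_2$ for the restriction of \eqref{blowup_-y} to $\{r_1=0\}$.

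Second, I would invoke the reduction principle: there is a neighbourhood $U$ of $\zeta_2$ such that every orbit of the restriction of \eqref{blowup_-y} to $\{r_1=0\}$ which stays in $U$ for all $t\le 0$ lies on \emph{every} local centre manifold of $\zeta_2$; this uses precisely the absence of unstable directions (so that the centre-unstable manifold coincides with the centre manifold), and follows from the invariance equation by a Gronwall estimate on the distance to the graph. In particular such an orbit lies on $W^{c}(\zeta_2)$, irrespective of which instance of the non-unique centre manifold $W^{c}(\mathcal{S}_{0,1}^{\txta})$ was chosen. Conversely, using \eqref{CM_red2}: its restriction to $\{r_1=0\}$ reads $\tilde{x}_1'=2\tilde{x}_1+\Lambda$, $a_1'=a_1$, $\eps_1'=3\eps_1$, which makes $\zeta_2$ a hyperbolic source there, while the full \eqref{CM_red2} has $r_1'=-r_1$; hence any orbit through a point of $W^{c}(\zeta_2)$ with $0<\eps_1\ll 1$ close to $\zeta_2$ converges to $\zeta_2$ in backward time, and thus stays in $U$, and no orbit with $r_1\neq 0$ can. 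Combining the two inclusions, $W^{c}(\zeta_2)\cap\{\eps_1>0\}\cap U$ equals the set of \emph{all} orbits of the restriction of \eqref{blowup_-y} to $\{r_1=0\}$ with $\eps_1>0$ that stay in $U$ for $t\le 0$, which is manifestly independent of the chosen centre manifold. Finally, in the coordinates $(\tilde{x}_1,a_1,r_1,\eps_1)$ of \eqref{CM_red2} this set is precisely the branch $\{r_1=0,\,\eps_1>0\}$ of the unstable manifold $W^{u}(\zeta_2)$: the unstable eigenspace of $\zeta_2$ for the resonant eigenvalues $2,1,-1,3$ is $\{r_1=0\}$, and $\{r_1=0\}$ is invariant, so $W^{u}(\zeta_2)$ is the $3$-dimensional manifold $\{r_1=0\}$ near $\zeta_2$, which is unique by the unstable manifold theorem.

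The main obstacle is the careful bookkeeping at $\eps_1=0$: the orbital equivalence producing \eqref{CM_red2} rescales time by a factor vanishing at $\eps_1=0$, so one must check that on the open region $\{\eps_1>0\}$ it is a genuine, orbit-preserving reparametrization, so that the properties ``stays in $U$ for $t\le 0$'' and ``lies on $W^{u}(\zeta_2)$'' transport faithfully between the degenerate original system and the hyperbolic rescaled one. One should also phrase everything with a half-neighbourhood, closed towards $\{\eps_1=0\}$, so that the boundary stratum does not interfere, and take the neighbourhoods in $\kappa_{-y}$ small enough that the estimates derived from \eqref{CM_red2} apply. All remaining ingredients -- the spectrum of $\zeta_2$, the source structure in \eqref{CM_red2}, and the identification of the unstable eigenspace -- are already contained in \eqref{entry_lin}, \eqref{CM_red2} and Lemma \ref{sf:-y:analysis}, so only routine verification is left.
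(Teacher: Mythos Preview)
Your proposal is correct and follows essentially the same approach as the paper. The paper's proof is a terse three-line version of your argument: $\zeta_2$ is a source in $W^{c}(\zeta_2)\rvert_{\eps_1\ge 0}$ (your source structure from \eqref{CM_red2}), $W^{c}(\zeta_2)$ is attracting (your observation that $\lambda_{1,2}<0$ and there are no unstable directions), and then the ``asymptotic rate fibration with base being the center manifold'' gives uniqueness --- which is exactly your backward-boundedness characterisation phrased in terms of the stable foliation. Your additional recasting as the unstable manifold of the rescaled hyperbolic system \eqref{CM_red2} and your explicit care about the degenerate time-rescaling at $\eps_1=0$ are valid elaborations, but the core mechanism is identical.
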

\begin{proof}
	By Lemma \ref{lemma:CM_red}, the equilibrium $\zeta_2$ is a source in $W^{c}(\zeta_2)\rvert_{\eps_1 \ge 0}$. Moreover, $W^{c}(\zeta_2)$ is attracting. Then, the asymptotic rate fibration with base being the center manifold implies uniqueness of the branch for $\eps_1 > 0$.
\end{proof}

$W^{c}(\zeta_2)$ contains the ``singular continuations'' of $\sigma_1$, which start at $\zeta_2$ and emanate on the blow-up sphere into the rescaling chart $\kappa_{\eps}$. In general, we are interested in the transition from trajectories in attracting slow manifolds starting at $\Delta^{\en}$ close to the hyperbolic umbilic point. The section $\Delta^{\en}$ appears in $\kappa_{-y}$ as
\begin{equation}
	\begin{split}
		\Delta_1^{\en} = \{ r_1 = \nu \}.
	\end{split}
\end{equation}
The attracting slow manifolds are contained in $W^c(\mathcal{S}^{a}_{0,1})$ and their evolution close to $\zeta_2$ is determined by \eqref{CM_red2}. To enter the rescaling chart $\kappa_\eps$ we set up a section
\begin{equation}
	\begin{split}
		\Delta^{\en \to \eps}_1 = \{ \eps_1 = \delta \},
	\end{split}
\end{equation}
for small $\delta > 0$, and aim to study a transition of the form $\Delta_1^{\en} \cap W^c(\mathcal{S}^{a}_{0,1})\rvert_{\eps_1 > 0} \to \Delta^{\en \to \eps}_1$ near $\zeta_2$ via the flow of \eqref{CM_red2}. Note that $\zeta_2$ is a source on the blow-up sphere and the $r_1$-direction is decreasing. The eigenvalue of the linearization of \eqref{CM_red2} on the blow-up sphere transverse to the equator is 3, so the expansion is strongest in this direction. Since the 1-d stable manifold of $\zeta_2$ is given by $\sigma_1 \subset \{ \eps_1 = 0\}$, we expect that initial conditions close to $\sigma_1$ will lead to solution trajectories with least blow-up. Recall that we want to see the evolution of slow manifolds \emph{near} the hyperbolic umbilic, which is located in $a = 0$ in original coordinates. In fact, also from the analysis in the rescaling chart it will be clear that we need control on the variable $a_1$.

Consider a transition map $\Pi_1 \colon I_1 \subset \Delta_1^{\en} \cap W^c(\mathcal{S}^{a}_{0,1})\rvert_{\eps_1 > 0} \to \Delta^{\en \to \eps}_1$. Take an initial condition $(\tilde{x}_i,a_i,\nu,\eps_i)$. From \eqref{CM_red2} we have that $r_1(t) = \nu e^{-t}, \eps_1(t) = \eps_i e^{3t}$, where we use $t$ to indicate the time parametrization in the current chart. This gives the transition time $T = \log(\delta/\eps_i)/3$ for $\Pi_1$. Assume for a moment that the function $g_a\rvert_{\eps = 0}$ factors the variable $a$. Then we have

\begin{lemma}
	\label{lemma:CM_trans1}
	Assume that $g_a\rvert_{\eps = 0} = a \cdot \bar{g}_a$ for some sufficiently smooth $\bar{g}_a$. Then the transition map $\Pi_1 \colon I_1 \subset \Delta_1^{\en} \cap W^c(\mathcal{S}^{a}_{0,1})\rvert_{\eps_1 > 0} \to \Delta^{\en \to \eps}_1$ is of the form
	\begin{equation}
		\begin{split}
			\Pi_1 \colon \begin{bmatrix}
				x_i\\ a_i\\ \nu\\ \eps_i
			\end{bmatrix} \mapsto
			\begin{bmatrix}
				\Pi_{1 x_1}(x_i,a_i,\nu,\eps_i)\\[2ex] 
				\left(\dfrac{\delta}{\eps_i}\right)^{1/3} O(a_i) + O\left(\eps_i^{2/3} \log \eps_i\right)\\[2ex] 
				\nu \left(\dfrac{\eps_i}{\delta}\right)^{1/3}\\[2ex]
				\delta
			\end{bmatrix}
		\end{split}
	\end{equation}
\end{lemma}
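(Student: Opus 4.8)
The plan is to integrate the reduced system \eqref{CM_red2} explicitly in the $r_1$ and $\eps_1$ coordinates, and then treat the $\tilde{x}_1$ and $a_1$ equations as (nonautonomous, once $r_1(t)$ and $\eps_1(t)$ are substituted) linear-plus-higher-order equations to which variation of constants applies. First I would record, as already done in the excerpt, that $r_1(t) = \nu e^{-t}$ and $\eps_1(t) = \eps_i e^{3t}$, so the transition time to reach $\{\eps_1 = \delta\}$ is $T = \tfrac13\log(\delta/\eps_i)$; this immediately gives the $r_1$-component $\nu(\eps_i/\delta)^{1/3}$ and the trivial $\eps_1$-component $\delta$ in the stated formula. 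The two remaining components, $\Pi_{1x_1}$ and the $a_1$-component, are what require work.

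For the $a_1$-equation, under the hypothesis $g_a\rvert_{\eps=0} = a\cdot\bar g_a$ the term $r_1 \tilde g_a/G$ in \eqref{CM_red2} factors $a_1$: writing $\tilde g_a \circ \Theta_{-y}$ and recalling that $\Theta_{-y}$ sends $a \mapsto r_1 a_1$, one sees $r_1\,\tilde g_a/G = a_1\cdot r_1^2\, \psi(\tilde x_1, a_1, r_1)$ for a smooth bounded $\psi$ — the key point being that the $a$-factor becomes an $a_1$-factor and pulls out an extra $r_1$. Hence the $a_1$-equation reads $a_1' = a_1\big(1 + r_1^2\psi\big) + r_1^3\eps_1 f_1$. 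Substituting $r_1(t)=\nu e^{-t}$, $\eps_1(t) = \eps_i e^{3t}$, the coefficient $1 + r_1^2\psi$ is $1 + O(e^{-2t})$, so its integral over $[0,T]$ is $T + O(1)$, giving an integrating factor comparable to $e^{T} = (\delta/\eps_i)^{1/3}$ up to a bounded multiplicative factor. The forcing term $r_1^3\eps_1 f_1 = \nu^3 \eps_i f_1(\cdots)$ is $O(\eps_i)$ uniformly along the orbit, and variation of constants then produces a contribution $e^{T}\int_0^T e^{-s}O(\eps_i)\,ds = O(\eps_i\, e^{T}) = O(\eps_i^{2/3})$ — actually with a logarithm absorbed, since $T = O(\log(1/\eps_i))$, one gets the stated $O(\eps_i^{2/3}\log\eps_i)$. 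The homogeneous part carries $a_i$ to $(\delta/\eps_i)^{1/3}O(a_i)$. Adding these gives exactly the second component of $\Pi_1$.

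For the $\tilde x_1$-component one uses $\tilde x_1' = 2\tilde x_1 + \Lambda$ with $\Lambda(0) = \partial_{\tilde x_1}\Lambda(0) = 0$; since $\Lambda$ vanishes to second order in $\tilde x_1$ (and to first order in $(a_1, r_1, \eps_1)$), one can write $\Lambda = \tilde x_1\cdot O(\tilde x_1, a_1, r_1, \eps_1) + O(a_1, r_1, \eps_1)\cdot(\text{linear in }a_1, r_1,\eps_1)$, but the honest statement is just that $\Pi_{1x_1}$ is some smooth function of $(x_i, a_i, \nu, \eps_i)$, which is all the lemma asserts — no explicit form is claimed, so I would simply invoke smooth dependence of the flow on initial conditions and note that the map is well-defined on a suitable open subset $I_1$ of the center manifold (where $\tilde x_1$ stays small and the divisor $\eps_1(\tilde h_2 + r_1 f_y)$ used in Lemma \ref{lemma:CM_red} stays positive). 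The main obstacle is bookkeeping the estimates along the orbit uniformly: one must check that $\tilde x_1(t)$, $a_1(t)$ remain in the neighborhood where the center-manifold reduction \eqref{CM_red2} is valid for the entire (long, $O(\log(1/\eps_i))$) transition time, which forces the restriction of initial conditions to a shrinking set $I_1$ — this is precisely the mechanism, flagged in the sketch of the proof of Theorem \ref{thm:main_thm}, by which $I_\eps$ collapses to a point as $\eps\to 0$. Making the constants in the $O(\cdot)$ terms uniform over $I_1$, and verifying that the higher-order remainder $r_1^3\eps_1 f_1$ genuinely contributes only at order $\eps_i^{2/3}\log\eps_i$ and not larger, is the delicate part; everything else is integration of an explicitly solvable linear skeleton.
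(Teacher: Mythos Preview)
Your approach is essentially the same as the paper's: both integrate $r_1,\eps_1$ explicitly and handle the $a_1$-equation by variation of constants, the paper using the ansatz $a_1(t)=e^{t}(a_i+z_a(t))$ and then a Gr\"onwall argument for the cross term $e^{-t}r_1^2 a_1\bar g_a$, while you fold the $r_1^2\psi$ correction into the integrating factor directly. Note that your own computation already yields $O(\eps_i^{2/3})$ for the forcing contribution (since $\int_0^T e^{-s}\,ds$ is bounded), so the hedge about ``absorbing a logarithm'' is unnecessary --- the stated $O(\eps_i^{2/3}\log\eps_i)$ is simply not sharp.
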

\begin{proof}
	Let $a_1(t) = e^{t}(a_i + z_a(t))$, such that $z_a(0) = 0$. It follows from \eqref{CM_red2} that $z_a' = e^{-t} r_1^2 a_1 \bar{g}_a + r_1^3\eps_1 f_1$, for sufficiently smooth functions $\bar{g}_a,f_1$. This implies
	\begin{equation}
		\begin{split}
			z_a(T) = \int_0^T \nu^2 e^{-2t} a_i \bar{g}_a \d t + \int_0^T \nu^2 e^{-2t} z_a \bar{g}_a \d t + \int_0^T \eps_i \nu^3 f_1 \d t,
		\end{split}
	\end{equation}
	where the functions are evaluated along the corresponding integral curve. Recall that $\bar{g}_a,f_1$ are bounded close to the blow-up sphere and that $T = \log(\delta/\eps_i)/3$. The last integral is of order $O(\eps_i \log \eps_i )$. The first integral is of order $O(a_i)$. Application of the Grönwall inequality gives that the second integral is also $O(a_i)$. Hence $a_1(T)$ is of the form $a_1(T) = (\frac{\delta}{\eps_i})^{1/3}(a_i + O(a_i) + O(\eps_i \log \eps_i))$.
\end{proof}

In the general case that $g_a\rvert_{\eps = 0}$ does not factor the variable $a$, we have the rough estimate $a_1(T) = (\frac{\delta}{\eps_i})^{1/3}(a_i + O(\nu))$ in context of the above Lemma.

\begin{remark}
The assumption that $g_a\rvert_{\eps = 0} = a \cdot \bar{g}_a$ allows to control $a_1(T)$ solely in terms of $a_i$ and $\eps_i$. In particular, Lemma \ref{lemma:CM_trans1} implies that the domain $I_1$ of $\Pi_1$ must be of width $O(\eps_i^{1/3})$ in $a_1$-direction, to have $a_1(T)$ small. That is, for small, fixed $\eps_i$, the domain $I_1\rvert_{\eps_i}$ is a thin (but bounded) strip of width $O(\eps_i^{1/3})$. In the singular limit, it contains the trajectory $\sigma_1$, which lies in the invariant plane $\{ a_1 = 0\}$ in this case. In the general setting, i.e.\ without the assumption $g_a\rvert_{\eps = 0} = a \cdot \bar{g}_a$, the domain $I_1$ needs to be chosen sufficiently small in order to guarantee that $a_1(T)$ is small. We do not have an estimate of the size of the domain with respect to the initial conditions in this case.  We shall choose $I_{1}$ sufficiently small in section \ref{sec:main_proof} by a time reversion argument. In the singular limit, $I_1$ coincides with the single point $p := \sigma_1 \cap \Delta^{\en}_1$. 
\end{remark}

\subsection{Analysis in the rescaling chart $\kappa_{\eps}$}
\label{sec:rescaling}

Blowing up $X$ via $\Phi_{\eps}$ and rescaling time accordingly leads to
\begin{equation}
	\begin{split}
		\label{blowup_eps}
		x_2' &= x_2^2 + a_2 y_2 + b_2 + O(r_2)\\
		y_2' &= y_2^2 + a_2 x_2 + c_2 + O(r_2)\\
		a_2' &= A_0 r_2 + O(r_2^2)\\
		b_2' &= B_0 + O(r_2)\\
		c_2' &= C_0 + O(r_2)\\
		r_2' &= 0.
	\end{split}
\end{equation}

In $\{r_2 = 0\}$ the vector field \eqref{blowup_eps} reduces to a coupled system of two Riccati-type equations
\begin{equation}
	\begin{split}
		\label{riccati_on_sphere}
		x_2' &= x_2^2 + a_2 y_2 + b_2 \\
		y_2' &= y_2^2 + a_2 x_2 + c_2 \\
		a_2' &= 0\\
		b_2' &= B_0 \\
		c_2' &= C_0,
	\end{split}
\end{equation}
whose phase space is foliated into invariant sets $\{a_2 = \text{const}\}$. Note that $a_2 = a_1 \eps_1^{-1/3}$, which gives the invariant sets on the blow-up sphere in $\kappa_{-y}$, also visible in \eqref{blowup_-y}.
Further note that the sets $\{C_0 b_2 - B_0 c_2 = \text{const}\}$ are invariant. Recall the attracting center manifold $W^c(\zeta_2)$ from the previous section. We aim to follow trajectories in $W^{c}(\zeta_2)\rvert_{\eps_1 > 0}$, which emanate from $\zeta_2$ on the blow-up sphere into the rescaling chart. We expect that these orbits will transition onto the fast regime, that is, connect to the appropriate equilibria ``on the other side'' of the blow-up sphere. In $\{a_2 = 0\}$ the coupling of \eqref{riccati_on_sphere} is absent. For $a_2$ close to 0 the flow of \eqref{riccati_on_sphere} is a regular perturbation of the flow in $\{a_2 = 0\}$. The full flow \eqref{blowup_eps} is in turn a regular perturbation of \eqref{riccati_on_sphere}. Therefore, we begin with the analysis of the invariant set $\{a_2 = 0\}$ of \eqref{riccati_on_sphere}. Note that this essentially leads to a pair of decoupled Riccati equations, as they appear in the rescaling chart for the non-degenerate fold \cite{krupa2001extending}. The task is now to identify the trajectories being backward asymptotic to $\zeta_2$ and to determine their forward asymptotics.

For $a_2 = 0$ the vector field \eqref{riccati_on_sphere} is of product structure $(Z_1,Z_2)$, where both $Z_i$ are vector fields on $\R^2$. In fact, each $Z_i$ has a unique ``dividing solution'', denote it by $\gamma_i$ for the moment, which is an integral curve dividing the phase plane in two classes of solutions with different asymptotic behavior, itself having distinct behavior \cite{krupa2001extending,mishchenko1980}. Only combinations of these $\gamma_i$  lead to trajectories being backward asymptotic to $\zeta_2$, as shall be apparent from the proof of Proposition \ref{res:unique_gamma} below.

\begin{remark}\label{rem:qs}
	To state the following result in a convenient way, we anticipate the main objects visible in the exit chart $\kappa_{\ex}$. These are three equilibria denoted by $q_4,q_5,q_6$, which are located on the equator of the blow-up sphere. As will turn out later, they arise from the interaction of the fast subsystem with the hyperbolic umbilic singularity and will serve as transition points onto the fast regime. Refer to section \ref{s:exit_chart}, in particular figure \ref{fast_blowup}.
\end{remark}

\begin{prop}
	\label{res:unique_gamma}
	Let $B_0, C_0 > 0$. In $\{a_2 = 0\}$ the vector field \eqref{riccati_on_sphere} has a 1-parameter family of integral curves $\gamma_s$, $s \in \R$, given by 
	\begin{equation}
	\begin{split}
		\label{the_family_gamma}
		\gamma_s(t) = (\gamma_{x_2}(t),\gamma_{y_2}(t+s), a_2 = 0, B_0 t, C_0 (t + s)),
	\end{split}
\end{equation}
	which are the only trajectories in $\{a_2 = 0\}$ backward asymptotic to $\zeta_2$. In \eqref{the_family_gamma}, $\gamma_{x_2}$ is given by \eqref{d_infty_parabola} and $\gamma_{y_2}$ by \eqref{gamma_y2}. The family $\gamma_s$ determines an invariant 2-manifold $\Gamma$ in $\{r_2 = a_2 = 0\}$.
	The trajectories $\gamma_s$ have the following forward asymptotic behavior:
	\begin{itemize}
		\item For $s > s_0$, $\gamma_s$ is forward asymptotic to $q_4$,
		\item $\gamma_{s_0}$ is forward asymptotic to $q_5$,
		\item for $s < s_0$, $\gamma_s$ is forward asymptotic to $q_6$,
	\end{itemize}
	where $q_4,\, q_5,$ and $q_6$ are equilibria visible in the chart $\kappa_\ex$ (see Remark \ref{rem:qs}), $s_0 = (B_0^{-1/3} - C_0^{-1/3})z_0$, and $z_0<0$ is the first zero of the Airy function $\Ai$ of the first kind, see e.g.\ \cite[9.9(i)]{NIST:DLMF}.
\end{prop}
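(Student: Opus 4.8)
The plan is to exploit the product structure of \eqref{riccati_on_sphere} when $a_2=0$: the system decouples into two scalar Riccati equations
\begin{equation*}
	x_2' = x_2^2 + B_0 t + (\text{initial shift}), \qquad y_2' = y_2^2 + C_0 t + (\text{initial shift}),
\end{equation*}
once we use that $b_2' = B_0$ and $c_2' = C_0$ force $b_2 = B_0 t + b_2(0)$ and $c_2 = C_0 t + c_2(0)$. First I would analyze a single scalar Riccati equation $w' = w^2 + \beta t$ with $\beta>0$: after the scaling $w = \beta^{1/3}\tilde w$, $t = \beta^{-1/3}\tilde t$ it becomes the canonical $\tilde w' = \tilde w^2 + \tilde t$, which is linearized by $\tilde w = -\varphi'/\varphi$ into the Airy equation $\varphi'' + \tilde t\,\varphi = 0$. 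The general solution is $\varphi = \alpha\Ai(-\tilde t) + \beta\Bi(-\tilde t)$; recalling the asymptotics of $\Ai,\Bi$ (see \cite{NIST:DLMF}), one identifies the unique ``dividing solution'' $\gamma$ as the one with $\varphi \propto \Ai(-\tilde t)$, since this is the unique (up to scaling) solution of the Airy equation decaying as $\tilde t\to+\infty$; any other choice has $\varphi$ growing like $\Bi(-\tilde t)$ forward, giving $\tilde w\to +\infty$ in finite time (blow-up to $q_4$-type behavior), while backward, $\tilde t\to-\infty$, the first zero $z_0<0$ of $\Ai$ produces a pole of $\tilde w$, i.e. the dividing solution is backward asymptotic to the relevant equilibrium at $-\infty$ on the equator. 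This gives the explicit formulas for $\gamma_{x_2}$ (eq.\ \eqref{d_infty_parabola}) and $\gamma_{y_2}$ (eq.\ \eqref{gamma_y2}), and shows each is the \emph{unique} trajectory in its plane that is backward asymptotic to the source component.

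Next I would assemble the 2-manifold $\Gamma$. Since $a_2=0$ makes \eqref{riccati_on_sphere} a product, a trajectory of the full system is backward asymptotic to $\zeta_2 = (-\sqrt{B_0/C_0}, 0, \dots)$ \emph{iff} each component is backward asymptotic to the corresponding equilibrium point of $Z_i$, which by the scalar analysis forces the $x_2$-component to be $\gamma_{x_2}$ (no freedom) and the $y_2$-component to be a time-translate $\gamma_{y_2}(\,\cdot\,+s)$ of the $y_2$-dividing solution (one-parameter freedom, because the $Z_2$ system is autonomous only after absorbing $c_2(0)$ into a time shift — the shift $s$ precisely parametrizes the choice of $c_2(0)$ relative to $b_2(0)$, consistently with the invariant $C_0 b_2 - B_0 c_2 = \text{const}$). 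This yields the family \eqref{the_family_gamma}; since it is a graph over $(t,s)$ mapping into the $5$-dimensional phase space inside the invariant slice $\{r_2 = a_2 = 0\}$ and depends smoothly and injectively on $(t,s)$, its image is an immersed invariant $2$-manifold $\Gamma$.

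Finally, the forward-asymptotic trichotomy. The $x_2$-component, being the dividing solution $\gamma_{x_2}$, has a fixed forward behavior. The $y_2$-component is $\gamma_{y_2}(t+s)$; if the time shift $s$ is such that, relative to $\gamma_{x_2}$, the Airy-pole of $\gamma_{y_2}$ occurs \emph{before} that of $\gamma_{x_2}$, then $y_2\to+\infty$ first and the trajectory escapes toward the exit equilibrium $q_4$ (dominant $y$-escape); if it occurs \emph{after}, then $x_2\to+\infty$ first, giving $q_6$ (dominant $x$-escape); the borderline value, where the two dividing solutions have their poles balanced, gives $q_5$. Matching the pole locations of the two rescaled Airy solutions gives exactly $s_0 = (B_0^{-1/3} - C_0^{-1/3})z_0$, where $z_0$ is the first negative zero of $\Ai$: the pole of the $x_2$-dividing solution backward is at $t = B_0^{-1/3}z_0$ and of the $y_2$-dividing solution at $t+s = C_0^{-1/3}z_0$, so balancing the forward escapes (equivalently, aligning the "reference points" of the two dividing solutions) forces $s = s_0$. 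The identification of $q_4, q_5, q_6$ themselves as the actual $\omega$-limits requires the exit-chart picture (Remark \ref{rem:qs}, section \ref{s:exit_chart}): one checks that the three escape directions on the equator of $\mathbb S^5$ correspond precisely to these three equilibria, which is why the statement references them.

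The main obstacle I expect is the last step — rigorously pinning the forward $\omega$-limits of $\gamma_s$ to the specific equilibria $q_4,q_5,q_6$, rather than merely showing ``$x_2$ and/or $y_2\to+\infty$.'' This requires carrying the escaping trajectories of \eqref{blowup_eps} into the exit chart $\kappa_{\ex}$ and verifying that the blown-up coordinates land in the basins of $q_4,q_5,q_6$; in particular one must control how the finite-time blow-up of the Riccati solutions translates, under $\Phi_\eps^{-1}\circ\Phi_{\ex}$, into convergence to hyperbolic equilibria on the sphere, and handle the borderline case $s=s_0$ where the trajectory must be shown to converge exactly to the (hyperbolic-in-the-relevant-directions) equilibrium $q_5$ rather than overshooting. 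The scalar Riccati/Airy computations, by contrast, are classical and follow the fold analysis of \cite{krupa2001extending,mishchenko1980} essentially verbatim.
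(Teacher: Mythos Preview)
Your approach is essentially that of the paper: decouple into two scalar Riccati equations, linearize via $w = -\varphi'/\varphi$ to the Airy equation, identify each dividing solution as the $\Ai$-only choice, assemble the product family $\gamma_s$, and read off the forward trichotomy by comparing the finite-time poles. You also correctly anticipate that the identification of $q_{4,5,6}$ as the actual $\omega$-limits is done in the exit chart via the coordinate $x_5 = (x_2 - y_2)/(x_2 + y_2)$.

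However, your description of the dividing solution has the time direction reversed throughout. For $\varphi(\tilde t) = \Ai(-\tilde t)$, the argument $-\tilde t \to +\infty$ (where $\Ai$ decays exponentially) corresponds to $\tilde t \to -\infty$, i.e.\ \emph{backward}; forward ($\tilde t \to +\infty$) both $\Ai(-\tilde t)$ and $\Bi(-\tilde t)$ oscillate. The first zero $z_0 < 0$ of $\Ai$ occurs at $-\tilde t = z_0$, i.e.\ at $\tilde t = -z_0 > 0$, so the pole of the dividing solution is in \emph{forward} time, not backward. This matters for the identification: the dividing solution is singled out because as $t \to -\infty$ it alone has $x_2 \to -\infty$ along $b_2 = -x_2^2$ (the critical-manifold relation \eqref{critical_mfld_backward_asymptotic_in_-x_-y} pulled back to $\kappa_\eps$), whereas any $\Bi$-admixture gives $x_2 \to +\infty$ backward. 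The paper makes this explicit via the backward asymptotic expansion and then checks $\gamma_{x_2}/(-\gamma_{y_2}) \to -\sqrt{B_0/C_0}$ to confirm the limit is $\zeta_2$ rather than some other equator point. Likewise your pole locations should read $M = -z_0 B_0^{-1/3} > 0$ and $N = -z_0 C_0^{-1/3} > 0$; with the correct signs the balance $s_0 = N - M = (B_0^{-1/3} - C_0^{-1/3})z_0$ comes out as stated. Once the directions are straightened out your argument coincides with the paper's.
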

\begin{proof}
	In $\{a_2 = 0\}$, system \eqref{riccati_on_sphere} reduces to two decoupled Riccati equations, as they appear in the rescaling chart of the regular fold \cite{krupa2001extending}.
	First, consider
	\begin{equation}
		\begin{split}
			\label{one_riccati}
			x_2' &= x_2^2 + b_2\\
			b_2' &= B_0.
		\end{split}
	\end{equation}
	In \cite[Chapter II, 9.]{mishchenko1980} the asymptotic behavior of a Riccati equation is studied by reduction to a Bessel equation. Here, we choose to reduce to an Airy equation.
	Letting $x_2 = - \frac{u'}{u}$ and $b_2 = B_0 t$ transforms \eqref{one_riccati} into
	\begin{equation}
		\begin{split}
			u'' &= - B_0 t u\\
			t' &= 1,
		\end{split}
	\end{equation}
	which has the two linearly independent solutions $\Ai(-B_0^{1/3} t), \Bi(-B_0^{1/3} t)$. Hence, we obtain the general solution for \eqref{one_riccati} given by
	\begin{equation}
		\label{x2omegas}
		x_2(t) = \frac{B_0^{1/3} \omega_1 \Ai'(-B_0^{1/3} t) + B_0^{1/3} \omega_2 \Bi'(-B_0^{1/3} t)}{ \omega_1 \Ai(-B_0^{1/3} t) +  \omega_2 \Bi(-B_0^{1/3} t)},
	\end{equation}
	where $\omega_1$ and $\omega_2$ are constants that do not vanish simultaneously. It is convenient to re-write the solutions \eqref{x2omegas} in terms of a single parameter $d \in (-\infty,\infty]$ as
	\begin{equation}
		\begin{split}
			\label{riccati_solution}
			x_2(t)&= \begin{cases}
				{\tilde{B}} \frac{d \Ai'(-\tilde{B} t) + \Bi'(-\tilde{B} t)}{d \Ai(-\tilde{B} t) + \Bi(- \tilde{B} t)}, & d<\infty,\\
				{\tilde{B}} \frac{\Ai'(-\tilde{B}t)}{\Ai(-\tilde{B}t)}, & d = +\infty,
			\end{cases}
		\end{split}
	\end{equation}
	where $d=\frac{\omega_1}{\omega_2}$, and $\tilde{B} = B_0^{1/3}$.
	Each choice of $d$ defines a curve with asymptotes at certain values of $t$, due to the zeros of the denominator. The solutions \eqref{riccati_solution} are sketched in figure \ref{ricc_plot}. The case $d = +\infty$ corresponds to the so called ``dividing solution''.
\begin{figure}[h]
	\centering
	\begin{overpic}[width=0.7\textwidth]{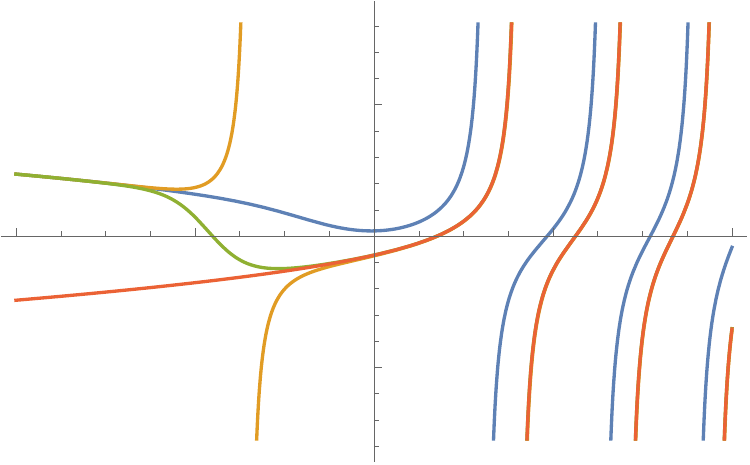}
		\put(98,33){\textcolor{gray}{$t$}}
		\put(45,58){\textcolor{gray}{$x_2$}}
		\put(35,3){\rotatebox{87}{\tiny $d = -100$}}
		\put(29,48){\rotatebox{87}{\tiny $d = -100$}}
		\put(61,53){\rotatebox{87}{\tiny $d = 1$}}
		\put(2,20){\rotatebox{5}{\tiny $d = +\infty$}}
		\put(26,34){\rotatebox{-42}{\tiny $d = 500$}}
	\end{overpic}
	\caption{Qualitative plot of the solutions \eqref{riccati_solution} to the Riccati equation \eqref{one_riccati} with $t = b_2 / B_0$ and $d = -100,1,500,+\infty$. Any solution is defined for almost every $t \in \R$, countably many vertical asymptotes arise from the properties of the Airy functions. Note the appearance of the parabola $B_0 t = - x_2^2$, which opens to the left and which all solutions approach as $t \to -\infty$.}
	\label{ricc_plot}
\end{figure}
	By Lemma \ref{coordinate_changes}, we write the equations of the critical manifold from $\kappa_{-y}$ (and similarly in $\kappa_{-x}$) in $\kappa_\eps$-coordinates, which yields
	\begin{equation}
		\begin{split}
			\label{critical_mfld_backward_asymptotic_in_-x_-y}
			b_2 &= -a_2y_2 - x_2^2\\
			c_2 &= -a_2x_2 - y_2^2.
		\end{split}
	\end{equation}
	Again by Lemma \ref{coordinate_changes}, solutions of \eqref{riccati_on_sphere} are backward asymptotic to $\mathcal{S}_{0,1}\eval_{r_1 = 0}$ in $\kappa_{-y}$ if and only if they asymptotically satisfy \eqref{critical_mfld_backward_asymptotic_in_-x_-y} for $y_2 \to -\infty$. 
	We seek for solutions in $\{a_2 = 0\}$ being backward asymptotic to $\zeta_2$. Hence, we seek solutions of \eqref{riccati_on_sphere} satisfying $b_2 =- x_2^2$ as $x_2 \to -\infty$ and $c_2 =- y_2^2$ as $y_2 \to -\infty$ asymptotically in backward time. The requirement $b_2 =- x_2^2$ as $x_2 \to -\infty$ is only satisfied by the ``dividing solution'', i.e. \eqref{riccati_solution} with $d=+\infty$:\\
	The Airy functions $\Ai, \Bi$ do not have zeros on the positive half-line and have asymptotic expansions \cite[9.7(ii)]{NIST:DLMF}. Employing such asymptotic expansions, it follows that $x_2 \to -\infty$ in \eqref{riccati_solution} as $t \to -\infty$ only in the case $d = +\infty$: for $t \to -\infty$ one has
\begin{equation}
	\begin{split}
		\label{d_infty_parabola}
		\gamma_{x_2}(t) := {\tilde{B}} \frac{\Ai'(-\tilde{B}t)}{\Ai(-\tilde{B}t)} \sim - \sqrt{-B_0 t},
	\end{split}
\end{equation}
so $\gamma_{x_2}$ is backward asymptotic to the branch $x_2 < 0$ of the parabola $b_2 = B_0 t = -x_2^2$, see figure \ref{ricc_plot}. 
	Note that solutions obtained from \eqref{riccati_solution}, which are defined on finite time intervals $(t_1,t_2)$, may also show $x_2 \to -\infty$ for $t \downarrow t_1$. However, in this case, $b_2 \not\to - \infty$, so these solutions cannot satisfy $b_2 =- x_2^2$ as $x_2 \to -\infty$.\\
	An identical analysis as above applies to the second half of \eqref{riccati_on_sphere} in $\{a_2 = 0\}$, which is the system
	\begin{equation}
		\begin{split}
			\label{2nd_riccati}
			y_2' &= y_2^2 + c_2\\
			c_2' &= C_0.
		\end{split}
	\end{equation}
	The ``dividing solution'' for \eqref{2nd_riccati} is given by
\begin{equation}
	\begin{split}
		\label{gamma_y2}
		\gamma_{y_2}(t) := {\tilde{C}} \frac{\Ai'(-\tilde{C}t)}{\Ai(-\tilde{C}t)},
	\end{split}
\end{equation}
where $\tilde{C} = C_0^{1/3}$. By the product structure of \eqref{riccati_on_sphere} in $\{a_2 = 0\}$ we obtain all integral curves built from the ``dividing solutions'' $\gamma_i$ essentially by putting $(\gamma_1(t),\gamma_2(t+s))$ for a free parameter $s \in \R$.
Thus, we obtain a 1-parameter family of solutions in $\{a_2 = 0\}$, determined by
\begin{equation}
	\begin{split}
		\gamma_s(t) = (\gamma_{x_2}(t),\gamma_{y_2}(t+s), a_2 = 0, B_0 t, C_0 (t + s)),
	\end{split}
\end{equation}
where $s \in \R$. The above argument then implies that the trajectories $\gamma_s$ are precisely the integral curves, which are backward asymptotic to the critical manifold. Then, employing asymptotic expansions of the Airy functions \cite[9.7(ii)]{NIST:DLMF} and Lemma \ref{coordinate_changes}, one checks that
\begin{equation}
	\begin{split}
		\frac{\gamma_{x_2}(t)}{- \gamma_{y_2}(t+s)} \to - \sqrt{B_0/C_0},
	\end{split}
\end{equation}
as $t \to -\infty$. This implies that the trajectories $\gamma_s$ are precisely the solutions of \eqref{riccati_on_sphere} in $\{a_2 = 0\}$, which are backward asymptotic to $\zeta_2$.

What remains to be checked is the forward asymptotics of the trajectories $\gamma_s$. We will employ $\kappa_{\eps \to \ex}$ from Lemma \ref{coordinate_changes} and the fact that the Airy function $\Ai$ does have a largest zero \cite[9.9(i)]{NIST:DLMF}, which we denote by $z_0$. Let $M,N$ be the first zeros of the denominators of $\gamma_{x_2}$ and $\gamma_{y_2}$, in the sense that $\Ai(-B_0^{1/3}t)$ is defined on $(-\infty,M)$ and $\Ai(-C_0^{1/3}t)$ is defined on $(-\infty,N)$, both domains maximal. Thus $z_0 = -B_0^{1/3}M = -C_0^{1/3}N$. Then $\gamma_{x_2}$ is defined on $(-\infty,M)$ and $\gamma_{y_2}(\cdot + s)$ is defined on $(-\infty,N_s)$, where $N_s = N - s$. Since $\Ai$ is positive before its first zero $z_0$ and $\Ai'(z_0) > 0$, it follows that $\gamma_{x_2}(t) \to +\infty$ for $t \uparrow M$ and $\gamma_{y_2}(t+s) \to +\infty$ for $t \uparrow N_s$. Each $\gamma_s(t)$ is defined for all $t < \min \{M,N_s\}$.
The ratio of $M$ and $N_s$ determines the forward asymptotics of $\gamma_s$. 
The relevant component of the coordinate change $\kappa_{\eps \to \ex}$ is
\begin{equation}
	\begin{split}
		\label{x5_change}
		x_5 = \frac{x_2 - y_2}{x_2 + y_2},
	\end{split}
\end{equation}
since by Lemma \ref{coordinate_changes} all other components converge to zero along every $\gamma_s$ in forward time. By \eqref{the_qs} we have that $q_4$ corresponds to $x_5 = -1$, $q_5$ to $x_5 = 0$ and $q_6$ to $x_5 = 1$. Consider the three cases $M > N_s$, $M < N_s$ and $M = N_s$:\\

If $M > N_s$ then $\gamma_{y_2}(t+s)$ blows up as $t \uparrow N_s$, so
\begin{equation}
	\begin{split}
		x_5 = \left(\frac{\gamma_{x_2}(t)}{\gamma_{y_2}(t+s)} - 1\right)\left(\frac{\gamma_{x_2}(t)}{\gamma_{y_2}(t+s)} + 1\right)^{-1} \to -1 \quad\text{for } t \uparrow N_s < M.
	\end{split}
\end{equation}
If $M < N_s$ then $\gamma_{x_2}(t)$ blows up as $t \uparrow M$, so
\begin{equation}
	\begin{split}
		x_5 = \left( 1 - \frac{\gamma_{y_2}(t+s)}{\gamma_{x_2}(t)}\right)\left(1 + \frac{\gamma_{y_2}(t+s)}{\gamma_{x_2}(t)}\right)^{-1} \to 1 \quad\text{for } t \uparrow M < N_s.
	\end{split}
\end{equation}
If $M = N_s \iff s = N - M$, both $\gamma_{x_2}(t) \to +\infty$ and $\gamma_{y_2}(t + s) \to +\infty$ as $t \uparrow M$. In the case of $B_0 = C_0$, and therefore $s = 0$, we directly see from \eqref{x5_change} that $\gamma_0$ is forward asymptotic to $q_5$. To treat the general case $s = N - M$, consider
\begin{equation}
	\begin{split}
		x_5 = \left(\frac{\gamma_{x_2}(t)}{\gamma_{y_2}(t+N-M)} - 1\right)\left(\frac{\gamma_{x_2}(t)}{\gamma_{y_2}(t+N-M)} + 1\right)^{-1}.
	\end{split}
\end{equation}
To find the limit of $\frac{\gamma_{x_2}(t)}{\gamma_{y_2}(t+N-M)}$ as $t \uparrow M$, note that a first order Taylor expansion gives $\Ai(-\tilde{B}t) \sim -\tilde{B}\Ai'(z_0)$ as $t \to M$. Similarly $\Ai(-\tilde{C}(t+N-M)) \sim -\tilde{C}\Ai'(z_0)$ as $t \to M$. This implies that $\frac{\gamma_{x_2}(t)}{\gamma_{y_2}(t+N-M)} \to 1$. Hence, $\gamma_{N-M}$ is forward asymptotic to $q_5$, and we let $s_0 = N - M$.
\end{proof}

\begin{remark}
	\label{remark_asymptotics}
	Solutions $(x_2,y_2,b_2,c_2)$ in $\{r_2 = 0\}$ need to asymptotically satisfy  $b_2 = -a_2y_2 - x_2^2, c_2 = -a_2x_2 - y_2^2$ in order to be asymptotic to the critical manifold. Further they need to show $y_2 \to -\infty$ in order to arrive at the equator in $\kappa_{-y}$, $x_2 \to -\infty$ to arrive at the equator in $\kappa_{-x}$, et cetera. Suppose that such solutions are asymptotic to slow flow equilibria $\zeta_{2,3,4,5}$ or fast subsystem equilibria $q_{1,2,3,4,5,6}$ (in fact, there do not exist solutions of \eqref{riccati_on_sphere} being asymptotic to $\zeta_{1,6}$, see Remark \ref{rem:resc:kappa_a}). It is then possible to determine the forward/backward limits of all solutions in $\{r_2 = 0, a_2 = 0\}$ by proceeding similarly to the proof of Proposition \ref{res:unique_gamma}.
\end{remark}

\begin{remark}
	\label{rem:resc:kappa_a}
	At this point we remark that the charts $\kappa_{\pm a}$ can be used to supply an analysis of the flow of \eqref{riccati_on_sphere} for sufficiently large $\abs{a_2}$. More precisely, the fast-slow systems \eqref{fs:-a} and \eqref{fs:a} in section \ref{sec:further} are instances of \eqref{riccati_on_sphere} for large $\abs{a_2}$. In fact, in the singular limit analysis of \eqref{fs:-a} and \eqref{fs:a}, Remark \ref{kappa_a_asymptotics} can be employed to show that the relevant candidate trajectories coming from $\zeta_2$ are jumping and arriving to $q_4$ or $q_6$, depending on the location of the jump. Moreover, in $\kappa_{\pm a}$ one sees that there do not exist trajectories of \eqref{riccati_on_sphere} being forward/backward asymptotic to $\zeta_1$, see e.g.\ Lemma \ref{zeta1}. An analogous statement can be obtained for $\zeta_6$.
\end{remark}

The family $\gamma_s$ from Proposition \ref{res:unique_gamma} gives all trajectories in $W^c(\zeta_2)\rvert_{a_2 = 0}$, which enter the rescaling chart $\kappa_\eps$ and are backward asymptotic to $\zeta_2$. Recall that $W^c(\zeta_2)$ is contained in the blow-up sphere. The section $\Delta^{-y \to \eps}_1$, which appears in $\kappa_{\eps}$ as $\Delta^{-y \to \eps}_2 = \{ y_2 = - \delta^{-1/3} \}$, has transverse intersection with $\Gamma$, where $\Gamma$ is the invariant manifold determined by the family $\gamma_s$.

For $a_2$ sufficiently close to zero, the flow of \eqref{riccati_on_sphere} is a regular perturbation of the flow in $\{ a_2 = 0\}$. Perturbations of $\gamma_s$ give a 2-parameter family of integral curves following $\Gamma$ closely through the rescaling chart. Furthermore, the $a_2$-perturbations of any $\gamma_s$ are backward asymptotic to $\zeta_2$: Since these regular perturbations are $O(a_2)$-uniformly close to the unperturbed solutions on any finite time interval, the $a_2$-perturbations must also be contained in $W^c(\zeta_2)$, because in backward time they cannot get exponentially repelled from $W^c(\zeta_2)$. Moreover, by Lemma \eqref{lemma:CM_red}, the equilibrium $\zeta_2$ appears in $W^c(\zeta_2)$ as source, which implies the backward asymptotic behavior of the perturbations.
Let $\hat{\Gamma}$ denote the invariant 3-manifold obtained by regular $a_2$-perturbations of trajectories in $\Gamma$ and note that $\hat{\Gamma} = \{ r_2 = 0, \abs{a_2} < L \}$ for some small $L>0$.
For trajectories in the submanifold $\Gamma \subset \hat{\Gamma}$ we have established the forward asymptotic behavior. The forward asymptotic behavior of trajectories in $\hat{\Gamma}$ will be established in the exit chart $\kappa_{\ex}$ after Lemma \ref{lemma:transverse}.

For small $r_2 > 0$ the full system \eqref{blowup_eps} is again a regular perturbation of the flow in $\{r_2 = 0\}$, i.e.\ \eqref{riccati_on_sphere}. In particular, trajectories in $\hat{\Gamma}$ get regularly perturbed for small $r_2 > 0$. These are the trajectories in attracting slow manifolds close to the hyperbolic umbilic. We will track these further in the exit chart $\kappa_\ex$. Recall from Lemma \ref{coordinate_changes} that we can change coordinates to $\kappa_{\ex}$, as soon as $x_2 + y_2 > 0$.

We conclude this section with a simple statement on the structure of a regular transition map $\Pi_2$ from a suitable subset of $\Delta_2^{-y \to \eps}$ to (one choice of) suitable sections close to the equilibria $q_{4,5,6}$, which will be specified later.

\begin{lemma}
	\label{resc:trans}
	 Let $V \subset \Delta_2^{-y \to \eps}$ be an open set corresponding to initial conditions obtained from $r_2$-perturbations of $\hat{\Gamma}$ and let $\Omega$ be any hypersurface, which intersects all trajectories with initial condition in $V$. Then the transition $\Pi_2 \colon V \to \Omega$ is of the form
\begin{equation}
	\begin{split}
		\label{map:rescaling_transition}
		\Pi_2 \colon \begin{bmatrix}
			x_j\\
			-\delta^{-1/3}\\
			a_j\\
			b_j\\
			c_j\\
			r_j
		\end{bmatrix}
		\mapsto
		\begin{bmatrix}
			x_e\\
			y_e\\
			0\\
			b_e\\
			c_e\\
			0
		\end{bmatrix} + 
		\begin{bmatrix}
			O(a_j)\\
			O(a_j)\\
			a_j\\
			O(a_j)\\
			O(a_j)\\
			0
		\end{bmatrix} +
		\begin{bmatrix}
			O(r_j)\\
			O(r_j)\\
			O(r_j)\\
			O(r_j)\\
			O(r_j)\\
			r_j
		\end{bmatrix},
	\end{split}
\end{equation}
where $a_j$ and $r_j$ are sufficiently small and the map $(x_j,-\delta^{-1/3},b_j,c_j) \mapsto (x_e,y_e,b_e,c_e)$ is determined by the flow in $\Gamma$, its integral curves are given by \eqref{the_family_gamma}.
\end{lemma}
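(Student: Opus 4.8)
The plan is to read this off as a smooth-dependence (regular perturbation) statement, organised around two conservation laws of the blown-up system \eqref{blowup_eps}: the coordinate $r_2$ is a first integral of \eqref{blowup_eps} (since $r_2'=0$), and $a_2$ is a first integral of its restriction to the invariant hyperplane $\{r_2=0\}$, i.e.\ of \eqref{riccati_on_sphere} (since there $a_2'=0$). Hence, writing an initial condition in $V$ as $(x_j,-\delta^{-1/3},a_j,b_j,c_j,r_j)$, one has $r_2(t)\equiv r_j$ along the whole trajectory, and $a_2(t)\equiv a_j$ whenever $r_j=0$; this already accounts for the last entries of the two correction columns in \eqref{map:rescaling_transition}, and, since $a_2'=A_0r_2+O(r_2^2)$, for the $O(r_j)$ drift of the $a$-component when $r_j\neq 0$.

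\emph{Base map.} First set $a_j=r_j=0$. By construction $V$ consists of small $a_2$- and $r_2$-perturbations of $\Gamma\cap\Delta_2^{-y\to\eps}$, so for such a point the trajectory through it lies in $\{r_2=a_2=0\}$ and is one of the curves $\gamma_s$ of Proposition \ref{res:unique_gamma}, the parameters being recovered from $(b_j,c_j)$ via $t=b_j/B_0$, $s=c_j/C_0-b_j/B_0$ (consistently, $x_j=\gamma_{x_2}(t)$). Following $\gamma_s$ forward to its first intersection with $\Omega$ — which occurs at a finite time by hypothesis, necessarily before the finite-time blow-up of $\gamma_s$ at $\min\{M,N_s\}$ (notation of Proposition \ref{res:unique_gamma}) — yields a point $(x_e,y_e,0,b_e,c_e,0)$, with vanishing $a$- and $r$-components because those coordinates are conserved along the base flow. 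This is precisely the map $(x_j,-\delta^{-1/3},b_j,c_j)\mapsto(x_e,y_e,b_e,c_e)$ in the statement.

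\emph{Regular perturbation.} Fix a relatively compact $V'\Subset V$; the $O(\cdot)$-estimates need only be established there. Every trajectory from $V'$ meets $\Omega$, and the relevant forward segments of the base flow carry no equilibria (each $\gamma_s$ is a genuine non-constant curve), so the corresponding first-hit times are bounded above on $V'$ by some $T_{\max}<\infty$; after shrinking $V'$ so that the crossing at $\Omega$ is transverse, this first-hit time depends smoothly on the base initial condition by the implicit function theorem. Everything therefore takes place on the compact interval $[0,T_{\max}]$, on which the standard theorem on $C^r$-dependence of solutions on initial conditions and parameters applies: at $r_2=0$ the flow of \eqref{riccati_on_sphere} depends smoothly on the invariant parameter $a_2$, so the solution through $(x_j,-\delta^{-1/3},a_j,b_j,c_j,0)$ equals the base solution plus a $C^r$ term of size $O(a_j)$, with $a$-component identically $a_j$; turning on $r_2=r_j$, the full system \eqref{blowup_eps} is a smooth perturbation of \eqref{riccati_on_sphere} in the invariant parameter $r_2$, so the solution differs from the $r_j=0$ one by a $C^r$ term of size $O(r_j)$, with $r$-component identically $r_j$. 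Composing with the (smooth) first-hit map at $\Omega$ and expanding to first order about $(a_j,r_j)=(0,0)$ gives exactly \eqref{map:rescaling_transition}. (Whether or not $g_a\rvert_{\eps=0}$ factors $a$ is irrelevant here, since $a_2$ is being treated as an independent small parameter.) The coordinate changes needed to read $\Omega$, the curves $\gamma_s$ and $\zeta_2$ in a common chart are supplied by Lemma \ref{coordinate_changes}.

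\emph{Main obstacle.} The only real point requiring care is the control in the previous paragraph of the transition time together with the well-posedness and regularity of the first-hit map under the deliberately weak hypothesis on $\Omega$ (merely that it meets every trajectory from $V$), given that the curves $\gamma_s$ escape to infinity in finite time, so that $\Omega$ must be crossed within a bounded, uniform time window; passing to a relatively compact $V'$ with transverse crossings handles this. Everything else is textbook smooth dependence on initial conditions and parameters, combined with the two conservation laws for $r_2$ and $a_2$.
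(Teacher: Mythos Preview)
Your proposal is correct and takes essentially the same approach as the paper, whose proof consists of the single line ``By the above considerations and regular perturbation theory.'' You have simply unpacked this: the two conservation laws for $r_2$ and $a_2$, the base flow along $\gamma_s\subset\Gamma$, and smooth dependence on the parameters $(a_j,r_j)$ over a bounded time interval are exactly the ``above considerations'' the paper invokes, and your care about the finite transition time and the first-hit map at $\Omega$ makes explicit what the paper leaves implicit.
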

\begin{proof}
	By the above considerations and regular perturbation theory.
\end{proof}

\begin{remark}
	$C_0 b_2 - B_0 c_2$ is a constant of motion for \eqref{riccati_on_sphere}. In fact, for $\gamma_s \in \Gamma$ we have $C_0 b_2 - B_0 c_2 = - B_0 C_0 s$. That is, the $b_2$- and $c_2$-component of the initial condition of $\gamma_s$ determine $s$ and therefore its forward limit. Hence $b_j$ and $c_j$ in \eqref{map:rescaling_transition} determine the family member $\gamma_s$, from which the transition \eqref{map:rescaling_transition} is a regular perturbation.
\end{remark}

\subsection{Analysis in the exit chart $\kappa_{\ex}$}
\label{s:exit_chart}

Let us briefly explain how the chart $\kappa_{\ex}$ and its blow-up $\Phi_{\ex}$ are obtained. Recall that $\Phi_{\ex}$ is defined by
\begin{equation}
	\begin{split}
		\label{phi_ex}
		\Phi_{\ex}\colon x = r_5(1 + x_5),\, y = r_5(1-x_5), \,a = r_5 a_5,\, b = r_5^2 b_5, \, c = r_5^2 c_5,\, \eps = r_5^3 \eps_5.
	\end{split}
\end{equation}
Let $\tilde{x} = \frac{x-y}{2}$ and $\tilde{y} = \frac{x+y}{2}$. Up to scaling the coordinates, this is a $45^\circ$ rotation. In these new coordinates $\tilde{x},\tilde{y}$, the map $\Phi_{\ex}$ is a usual weighted blow-up in direction of $\tilde{y}$. Further, note that \eqref{phi_ex} implies $\frac{x-y}{2} = r_5x_5$ and $\frac{x+y}{2} = r_5$.
We can also transform the original vector field $X$ \eqref{hyp_umb_exp} according to $\tilde{x} = \frac{x-y}{2}$ and $\tilde{y} = \frac{x+y}{2}$.
The first two components of $X$ then transform to
\begin{equation}
	\begin{split}
		\label{tilde}
		\tilde{x}' &= 2\tilde{x}\tilde{y} - a\tilde{x} + \frac{b-c}{2} + O(\eps)\\
		\tilde{y}' &= \tilde{x}^2 + \tilde{y}^2 + a\tilde{y} + \frac{b+c}{2} + O(\eps).
	\end{split}
\end{equation}
For $\eps = 0$, this is a gradient vector field in the fast variables $(\tilde{x},\tilde{y})$ arising from the potential
\begin{equation}
	\label{alternative_potential}
	\begin{split}
		\tilde{V} = \frac{1}{3}\tilde{y}^3 + \tilde{y}\tilde{x}^2 + \tilde{a}(\tilde{y}^2 - \tilde{x}^2) + \tilde{b}\tilde{x} + \tilde{c}\tilde{y},
	\end{split}
\end{equation}
with parameters $\tilde{a} = \frac{a}{2}, \tilde{b} = \frac{b-c}{2}$ and $\tilde{c} = \frac{b+c}{2}$. In fact, the potential $\tilde{V}$ is another universal unfolding of the hyperbolic umbilic catastrophe. This is shown in appendix \ref{app:universal_unfolding}.
With this universal unfolding $\tilde{V}$, the double cone of singularities is given by $\tilde{y}^2 = \tilde{a}^2 + \tilde{x}^2$, i.e.\ the rotational axis of the double cone is the $\tilde{y}$-axis.

\begin{remark}
	For the prior analysis we have chosen the universal unfolding $V$ leading to \eqref{hyp_umb}, since the expressions of the desingularized slow flow and the vector field in the rescaling chart are more convenient to analyze.
\end{remark}

Blowing up $X$ with $\Phi_{\ex}$ and dividing out a common factor $r_5$ leads to
\begin{equation}
	\begin{split}
		\label{vf:exit_chart}
		x_5' &= 2x_5 - a_5x_5 + \frac{b_5 - c_5}{2} - x_5 F + r_5\eps_5\frac{f_x - f_y}{2}\\
		r_5' &= r_5 F\\
		a_5' &= -a_5 F + r_5\eps_5 g_a\\
		b_5' &= - 2b_5 F + \eps_5 g_b\\
		c_5' &= - 2c_5 F + \eps_5 g_c\\
		\eps_5' &= - 3\eps_5 F,
	\end{split}
\end{equation}
where $F = F(x_5,r_5,a_5,b_5,c_5,\eps_5) = 1 + x_5^2 + a_5 + \frac{b_5 + c_5}{2} + r_5\eps_5 \frac{f_x + f_y}{2}$ and the shorthand $f_x$ for $f_x \circ \Phi_{\ex}$ is used, similarly for $f_y,g_a,g_b,g_c$.

The subspaces $\{r_5 = 0\}$ and $\{\eps_5 = 0\}$ are invariant. Furthermore, $\{r_5 = 0, a_5 = 0\}$, $\{\eps_5 = 0, a_5 = 0\}$, $\{\eps_5 = 0, b_5 = 0\}$ and $\{ \eps_5 = 0, c_5 = 0\}$ (and any of their intersections) are also invariant.

Consider \eqref{vf:exit_chart} restricted to the invariant subspace $\{r_5 = 0, \eps_5 = 0\}$. Similarly to the entry charts, this leads to a 3-manifold of equilibria determined by
\begin{equation}
	\begin{split}
		1 + x_5^2 + a_5 + \frac{b_5 + c_5}{2} &= 0\\
		2x_5 - a_5x_5 + \frac{b_5 - c_5}{2} &= 0,
	\end{split}
\end{equation}
which is the critical manifold appearing in the exit chart. Besides that, there are three isolated equilibria 
\begin{equation}
	\begin{split}
		\label{the_qs}
		q_4 &= (x_5 = -1, a_5 = 0, b_5 = 0, c_5 = 0),\\
		q_5 &= (x_5 = 0, a_5 = 0, b_5 = 0, c_5 = 0),\\
		q_6 &= (x_5 = 1, a_5 = 0, b_5 = 0, c_5 = 0),
	\end{split}
\end{equation}
which are precisely the equilibria on the blow-up sphere arising from the fast subsystem.

We now further motivate the role of the equilibria $q_{4,5,6}$. Recall the fast subsystem obtained from $X$ \eqref{hyp_umb_exp} by letting $\eps = 0$. Trajectories in the fast subsystem can only approach the origin in forward or backward time if $a = b = c = 0$. Hence, in the singular limit, the fast dynamics interacting with the hyperbolic umbilic singularity are given by the system
\begin{equation}
	\begin{split}
		\label{fast_restricted}
		x' &= x^2\\
		y' &= y^2,
	\end{split}
\end{equation}
where the hyperbolic umbilic point appears as nilpotent equilibrium. Also refer to figure \ref{fig:main_thm_sing_limit}, which contains the phase portrait of \eqref{fast_restricted}. The blow-up of \eqref{fast_restricted} is contained in the blow up of the vector field $X$ \eqref{hyp_umb_exp} by restricting to the subspace $\{\bar{a} = \bar{b} = \bar{c} = \bar{\eps} = 0\}$. In $\kappa_{\ex}$ this corresponds to the invariant subspace $\{\eps_5 = 0, a_5 = 0, b_5 = 0, c_5 = 0\}$, which we will analyze shortly in the following. 

The blow-up of \eqref{fast_restricted} is visualized in figure \ref{fast_blowup}. From Proposition \ref{res:unique_gamma} we know that the family of integral curves $\gamma_{s}$ in $\{r_2 = a_2 = 0\} \subset \kappa_{\eps}$ connects in forward time to $q_{4,5,6}$. More precisely, only $\gamma_{s_0}$ connects to $q_5$, whereas the remaining members of the family $\gamma_s$ connect to either $q_4$ or $q_6$. In the singular limit, from $q_{4,5,6}$, such trajectories continue onto the fast regime and away from the blow-up sphere.
This motivates that trajectories of $X$, which are contained in attracting slow manifolds and come close to the hyperbolic umbilic, will transition onto the fast regime near the equilibria $q_{4,5,6}$. The saddle $q_5$ leads to trajectories fanning out in-between the positive $x$- and $y$-axis. The main goal of this section is to investigate this transition away from the blow-up sphere.

\begin{figure}[h]
	\centering
	\begin{overpic}[width = 0.5\textwidth]{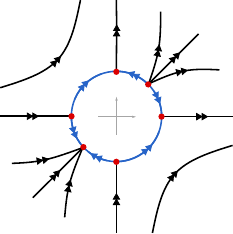}
		\put(34,49){$q_3$}
		\put(38,39){$q_2$}
		\put(48,35){$q_1$}
		\put(62,49){$q_6$}
		\put(59,59){$q_5$}
		\put(48,63){$q_4$}
		\put(55,45){\textcolor{gray}{$\bar{x}$}}
		\put(45,55){\textcolor{gray}{$\bar{y}$}}
	\end{overpic}
	\caption{Blow-up of \eqref{fast_restricted}, i.e. the fast subsystem interacting with the hyperbolic umbilic point. The blow-up sphere $\mathbb S^5$ restricted to $\bar{a} = \bar{b} = \bar{c} = \bar{\eps} = 0$ appears as $\mathbb{S}^1$ (blue), with six equilibria $q_{1,2,3,4,5,6}$ (red), where $q_{4,5,6}$ are visible in $\kappa_{\ex}$, $q_{1,2}$ appear in $\kappa_{-y}$ and $q_{2,3}$ in $\kappa_{-x}$. Note that the unstable manifolds at $q_{4,5,6}$ also appear in this figure. As we show in this section, in the singular limit the trajectories coming from the rescaling chart $\kappa_\eps$ escape towards the fast regime along the aforementioned unstable manifolds.}
	\label{fast_blowup}
\end{figure}

\begin{remark}
	Due to the rotated choice of $\Phi_{\ex}$ \eqref{phi_ex}, the three equilibria $q_{4,5,6}$ and all ``fast escape directions'' are visible in the single chart $\kappa_{\ex}$.
\end{remark}

We briefly look at the linearization of \eqref{vf:exit_chart} at $q_{4,5,6}$.
This directly leads to the following result.
\begin{lemma}
	\label{exit_lin_at_qs}
	At $q_4,q_5,q_6$, the resonant linearizations of \eqref{vf:exit_chart} are of the following upper-triangular form:
	\begin{align*}
		\text{At }q_4,q_6\colon \begin{bmatrix}
			 -2 & 0 & & & & \\
			 & 2 & & & * & \\
			 & &  -2 & & & \\
			 & & & -4 & & \\
			 & & & & -4 & \\
			 & & & & & -6
		\end{bmatrix},
		\text{ at }q_5\colon \begin{bmatrix}
			 1 & 0 & & & & \\
			 & 1 & & & * & \\
			 & &  -1 & & & \\
			 & & & -2 & & \\
			 & & & & -2 & \\
			 & & & & & -3
		\end{bmatrix}.
	\end{align*}
\end{lemma}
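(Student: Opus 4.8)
The plan is to prove the statement by a single direct linearization of \eqref{vf:exit_chart}; no auxiliary result beyond the definition of the blow-up and of the equilibria is needed. First I would record the location of the three points: by \eqref{the_qs}, each of $q_4,q_5,q_6$ lies in the invariant set $\{r_5=\eps_5=0\}$ and satisfies $a_5=b_5=c_5=0$. Hence the scalar factor $F$ appearing in \eqref{vf:exit_chart} reduces to $F=1+x_5^2$, which equals $2$ at $q_4$ and $q_6$ (where $x_5=\pm1$) and equals $1$ at $q_5$ (where $x_5=0$).

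Next I would cut down the computation by noting which terms in the right-hand side of \eqref{vf:exit_chart} can contribute to the Jacobian at a point with $r_5=\eps_5=0$. Every summand of the form $r_5\eps_5(\cdots)$, and likewise the summand $r_5\eps_5 g_a$ in the $a_5$-equation, has vanishing first derivatives there, so it disappears from the linearization. The only cross terms that survive are $\eps_5 g_b$ and $\eps_5 g_c$: since $\Phi_{\ex}$ of \eqref{phi_ex} collapses $\{r_5=0\}$ to the origin, differentiating these in $\eps_5$ produces the entries $g_b(0)=B_0$ and $g_c(0)=C_0$ in the $\eps_5$-column of the $b_5$- and $c_5$-rows. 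Differentiating the remaining, polynomial-type, terms and evaluating at $q_j$ then yields, in the coordinate order $(x_5,r_5,a_5,b_5,c_5,\eps_5)$, a matrix whose diagonal is
\begin{equation}
	\left(2-F-2x_5^2,\ F,\ -F,\ -2F,\ -2F,\ -3F\right),
\end{equation}
and whose remaining nonzero entries occur only in the first row (from $\partial_{a_5}x_5'$, $\partial_{b_5}x_5'$ and $\partial_{c_5}x_5'$) and in the last column (the $B_0,C_0$ just found). All of these lie strictly above the diagonal, so the matrix is already upper-triangular in this ordering; inserting $F=2$, $x_5=\pm1$ (respectively $F=1$, $x_5=0$) reproduces exactly the two matrices displayed in the statement, with $*$ denoting the strictly-upper-triangular part. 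Finally, for the word ``resonant'', one reads off the integer relations $(-2)=2+(-4)$, $(-4)=2+(-6)$, $(-6)=(-2)+(-4)$ among the eigenvalues at $q_{4,6}$, and $(-1)=1+(-2)$, $(-2)=1+(-3)$, $(-3)=(-1)+(-2)$ among those at $q_5$.

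There is no genuine obstacle here; the statement is a bookkeeping lemma. The only points that call for a little care are the consistent use of $r_5=\eps_5=a_5=b_5=c_5=0$ in order to discard the mixed terms, and the correct identification of the $\eps_5$-column entries $B_0,C_0$ coming from $\eps_5 g_b$ and $\eps_5 g_c$; I would stress that these off-diagonal entries do not influence the diagonal, hence not the eigenvalues, which is precisely what the subsequent center-manifold analysis near $q_{4,5,6}$ relies on.
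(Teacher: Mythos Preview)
Your proposal is correct and follows exactly the approach the paper takes: the paper treats this lemma as an immediate consequence of linearizing \eqref{vf:exit_chart} at $q_{4,5,6}$ and does not spell out the computation. Your explicit identification of the diagonal $(2-F-2x_5^2,\,F,\,-F,\,-2F,\,-2F,\,-3F)$, the vanishing of the mixed $r_5\eps_5$-terms, and the survival of $B_0,C_0$ in the $\eps_5$-column is precisely the bookkeeping the paper has in mind.
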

The equilibria $q_{4,5,6}$ are hyperbolic, but have resonant eigenvalues. Note that in the invariant subspace $\{ a_5 = b_5 = c_5 = \eps_5 = 0\}$ the system \eqref{vf:exit_chart} reduces to \begin{equation}
		\begin{split}
			\label{exit_plane}
			x_5' &= x_5 (1 - x_5^2)\\ 
			r_5' &= r_5 (1 + x_5^2).
		\end{split}
	\end{equation}
	The vector field \eqref{exit_plane} is precisely a directional blow-up of the fast subsystem, in fact blown up by $\Phi_{\ex}$. Its flow is visualized in figure \ref{fig:exit:setting}. Note that the flow of \eqref{exit_plane} appears also in figure \ref{fast_blowup} in the region $\{ \bar{x} + \bar{y} > 0 \}$.

An immediate consequence is the following.
\begin{lemma}
	\label{exit_lin_2}
	The unique 2-d unstable manifold at $q_5$ is given by the plane $\{x_5 \in (-1,1),a_5 = b_5 = c_5 = \eps_5 = 0\}$. The unstable manifold at $q_4$ is given by the line $\{x_5 = -1,a_5 = b_5 = c_5 = \eps_5 = 0\}$, and the unstable manifold at $q_6$ is given by the line $\{x_5 = 1,a_5 = b_5 = c_5 = \eps_5 = 0\}$. The parts of these unstable manifolds with $r_5 > 0$ correspond, via $\Phi_{\ex}$, to the positive $y$-axis and to the positive $x$-axis. The equilibria $q_{4,6}$ appear in $\{ r_5 = 0 \}$ as sinks. The 4-d stable manifold $W^{s}(q_5)$ of $q_5$ is contained in $\{r_5 = 0\}$ and is transverse to the $x_5$-axis in $\{r_5 = 0\}$. The equilibria $q_{4}$ and $q_6$ are sinks in $\{r_5 = 0\}$.
\end{lemma}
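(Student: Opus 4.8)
The plan is to read everything off the linearizations recorded in Lemma~\ref{exit_lin_at_qs} together with the invariant-subspace structure of \eqref{vf:exit_chart} noted just above, and then to upgrade eigenspace information to genuine invariant manifolds via the (un)stable manifold theorem. The key preliminary observation is that the coordinate plane $P:=\{a_5=b_5=c_5=\eps_5=0\}$ is invariant, being the intersection of the invariant sets $\{\eps_5=0\}$, $\{\eps_5=0,a_5=0\}$, $\{\eps_5=0,b_5=0\}$, $\{\eps_5=0,c_5=0\}$, and that on $P$ the vector field reduces to \eqref{exit_plane}. At $q_5$, Lemma~\ref{exit_lin_at_qs} exhibits exactly two positive eigenvalues $1,1$ with eigendirections spanning the $(x_5,r_5)$-plane, i.e.\ $T_{q_5}P=E^u(q_5)$; since $P$ is a two-dimensional invariant manifold tangent to $E^u(q_5)$, uniqueness of the unstable manifold forces $W^u_{\mathrm{loc}}(q_5)=P$ near $q_5$. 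Flowing forward inside $P$ and using \eqref{exit_plane} — where $x_5=\pm1$ are the adjacent equilibria $q_6,q_4$ and $r_5'=r_5(1+x_5^2)$ — the orbit of any point of $P$ with $x_5\in(-1,1)$ stays in that strip, while points with $\abs{x_5}\ge1$ are not backward asymptotic to $q_5$; hence $W^u(q_5)=\{x_5\in(-1,1)\}\cap P$, which is the asserted plane.

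For $q_4$ and $q_6$, Lemma~\ref{exit_lin_at_qs} shows a single positive eigenvalue $(+2)$, in the $r_5$-direction, so $\dim W^u=1$. The lines $\ell_-:=\{x_5=-1\}\cap P$ and $\ell_+:=\{x_5=1\}\cap P$ are invariant, since $x_5=\pm1$ are equilibria of the $x_5$-equation in \eqref{exit_plane}; each is one-dimensional, passes through $q_4$ resp.\ $q_6$, and is tangent there to the $r_5$-axis $=E^u$. Therefore $W^u(q_4)=\ell_-$ and $W^u(q_6)=\ell_+$. Applying $\Phi_{\ex}$ from \eqref{phi_ex} to $\ell_-\cap\{r_5>0\}$ gives $x=r_5(1+x_5)=0$, $y=r_5(1-x_5)=2r_5>0$, $a=b=c=\eps=0$, i.e.\ the positive $y$-axis; applying it to $\ell_+\cap\{r_5>0\}$ gives the positive $x$-axis.

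For the remaining assertions I would pass to the invariant hyperplane $\{r_5=0\}$. Dropping the $r_5$-direction from the linearizations in Lemma~\ref{exit_lin_at_qs}, $q_4$ and $q_6$ acquire eigenvalues $-2,-2,-4,-4,-6$, all negative, so they are sinks of $\eqref{vf:exit_chart}\rvert_{\{r_5=0\}}$. For $q_5$ the restricted linearization has eigenvalues $1,-1,-2,-2,-3$, hence a $4$-dimensional stable eigenspace $\mathrm{span}\{\partial_{a_5},\partial_{b_5},\partial_{c_5},\partial_{\eps_5}\}$ and a $1$-dimensional unstable direction $\partial_{x_5}$. Since $r_5'=r_5F$ with $F(q_5)=1>0$, any orbit with $r_5\neq0$ has $\abs{r_5}$ strictly increasing near $q_5$ and so cannot converge to $q_5$; thus $W^s(q_5)\subset\{r_5=0\}$, it is $4$-dimensional, hence codimension one in $\{r_5=0\}$, and is tangent at $q_5$ to $\mathrm{span}\{\partial_{a_5},\partial_{b_5},\partial_{c_5},\partial_{\eps_5}\}$, which is complementary to $T_{q_5}(x_5\text{-axis})=\mathrm{span}\{\partial_{x_5}\}$. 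Therefore $W^s(q_5)$ meets the $x_5$-axis transversally inside $\{r_5=0\}$ (and only at $q_5$, as no other point of that axis is forward asymptotic to $q_5$ by \eqref{exit_plane}).

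The argument is largely routine once the linear picture is in hand, so the only points that require care are the bookkeeping ones: verifying that the listed invariant subspaces intersect as claimed so that $P$, $\ell_\pm$ and $\{r_5=0\}$ are indeed invariant; checking via \eqref{exit_plane} that the forward flow does not leave the strip $x_5\in(-1,1)$, which is what pins $W^u(q_5)$ down to exactly this strip rather than something larger; and the monotonicity-of-$r_5$ argument that forces $W^s(q_5)\subset\{r_5=0\}$ and hence its codimension. None of these is deep, but together they are what converts Lemma~\ref{exit_lin_at_qs} into the global statement.
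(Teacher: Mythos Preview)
Your proof is correct and follows essentially the same approach as the paper's, which is extremely terse: it merely points to \eqref{exit_plane} for the unstable manifolds, to \eqref{phi_ex} for the identification with the coordinate axes, and to Lemma~\ref{exit_lin_at_qs} for $W^s(q_5)$ and its transversality. You have simply unpacked these references, and your additional observations---the monotonicity of $r_5$ via $r_5'=r_5F$ to force $W^s(q_5)\subset\{r_5=0\}$, and the explicit backward-asymptotic analysis of \eqref{exit_plane} to pin down the strip $x_5\in(-1,1)$---are the natural details behind the paper's one-line justifications.
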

\begin{proof}
	The unstable manifolds follow from \eqref{exit_plane}. The identification of the axes follows from \eqref{phi_ex}. Lemma \ref{exit_lin_at_qs} gives $W^{s}(q_5)$ and its transversality.
\end{proof}

We now briefly look at the singular limit dynamics on the blow-up sphere in the exit chart. Restrict the blown-up vector field \eqref{vf:exit_chart} to $\{r_5 = 0, \eps_5 > 0\}$. There we can identify the objects from the analysis in $\kappa_\eps$. Recall that the family of integral curves $\gamma_s$ in the rescaling chart forms an invariant 2-manifold $\Gamma$ on the blow-up sphere. The manifold $\Gamma$ is contained in $\{r_2 = 0, a_2 = 0\}$, which corresponds to $\{r_5 = 0, a_5 = 0, \eps_5 > 0\}$ in the exit chart. In the following, we focus on the dynamics close to $q_{4,5,6}$ and $\Gamma$.

\begin{figure}[h]
	\centering
	\begin{overpic}[width=.8\textwidth]{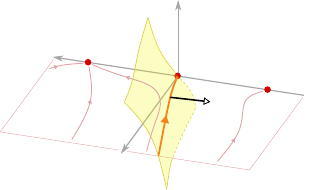}
		\put(66,26.5){$v$}
		\put(53,22){\textcolor{orange}{$\gamma_{s_0}$}}
		\put(28,42){$q_6$}
		\put(56.5,37){$q_5$}
		\put(84,33){$q_4$}
		\put(19,28){\textcolor{rosapastell}{$\gamma_{s < s_0}$}}
		\put(77,20){\textcolor{rosapastell}{$\gamma_{s > s_0}$}}
		\put(5,19){\textcolor{rosapastell}{$\Gamma$}}
		\put(16,43){\textcolor{gray}{$x_5$}}
		\put(36,9){\textcolor{gray}{$\eps_5$}}
		\put(57.5,57){\textcolor{gray}{$a_5 = a_2 \eps_5^{1/3}$}}
		\put(33.5,48){\textcolor{ly}{$W^{s}(q_5)$}}
	\end{overpic}
	\caption{Sketch of the transverse intersection of $\Gamma$ and $W^{s}(q_5)$ inside $\{r_5 = 0, \eps_5 > 0\}$ close to $q_5$. The vector $v(t)$ is a tangent vector of $\Gamma$ located along $\gamma_{s_0}$, being a derivative of the family $\gamma_s$ in direction of the family parameter $s$.}
	\label{fig:eps_to_ex}
\end{figure}

The $x_5$-axis, which is invariant under the flow of \eqref{vf:exit_chart}, serves up to $q_{4,6}$ as unstable manifold for $q_5$. The stable manifold $W^s(q_5)$ at $q_5$ is 4-dimensional and transverse to the $x_5$-axis in $\{r_5 = 0\}$. 
The manifold $W^{s}(q_5)$ is a hypersurface in $\{r_5 = 0\}$ and separates the dynamics between the sinks $q_4$ and $q_6$. The intersection of $\Gamma$ and $W^{s}(q_5)$ is nonempty, due to the existence of $\gamma_{s_0}$. More precisely, we have the following lemma.
\begin{lemma}
	\label{lemma:transverse}
	The manifolds $\Gamma$ and $W^{s}(q_5)$ intersect transversely in $\{r_5 = 0, \eps_5 > 0\}$ near $q_5$. 
\end{lemma}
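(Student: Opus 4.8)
The plan is to show that at points of $\gamma_{s_0}$ close to $q_5$ the tangent space $T\Gamma$ contains a direction not lying in $TW^{s}(q_5)$; since $\dim\Gamma+\dim W^{s}(q_5)=2+4$ exceeds $\dim\{r_5=0,\eps_5>0\}=5$ by exactly one, this single missing direction is precisely what transversality asks for. First I would parametrise $\Gamma$ near $q_5$ by $(t,s)\mapsto\gamma_s(t)$, read in $\kappa_\ex$-coordinates through the coordinate change $\kappa_{\eps\to\ex}$ of Lemma \ref{coordinate_changes} (valid once $x_2+y_2>0$); this map is an immersion because $\partial_t\gamma_s$ has $b_2$-component $B_0\neq 0$ whereas $\partial_s\gamma_s$ has $b_2$-component $0$. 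Thus $T\Gamma=\operatorname{span}\{\partial_t\gamma_s,\,v\}$ with $v(t):=\partial_s\gamma_s(t)\rvert_{s=s_0}$, the variation vector drawn in Figure \ref{fig:eps_to_ex}. Because $\gamma_{s_0}\subset W^{s}(q_5)$ and $W^{s}(q_5)$ is flow-invariant, $\partial_t\gamma_{s_0}\in TW^{s}(q_5)$, so transversality at a point $p\in\gamma_{s_0}$ is equivalent to $v\notin T_pW^{s}(q_5)$. Moreover $v$ solves the variational equation along $\gamma_{s_0}$ and $TW^{s}(q_5)$ is invariant under the linearised flow, so it is enough to check $v\notin TW^{s}(q_5)$ at one point of $\gamma_{s_0}$; I would do this in the limit $t\to M^-$, i.e.\ as $\gamma_{s_0}(t)\to q_5$, where $-B_0^{1/3}M=-C_0^{1/3}N=z_0$ is fixed by the largest zero of $\Ai$ as in the proof of Proposition \ref{res:unique_gamma}.

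Next I would pin down $W^{s}(q_5)$ near $q_5$. By Lemma \ref{exit_lin_at_qs} the linearisation at $q_5$ is upper triangular with the $x_5$-equation decoupled at first order; restricted to the invariant subspace $\{r_5=0\}$ (coordinates $(x_5,a_5,b_5,c_5,\eps_5)$) it has diagonal $(1,-1,-2,-2,-3)$. Hence in $\{r_5=0\}$ the only unstable direction at $q_5$ is the $x_5$-axis — the unstable manifold of Lemma \ref{exit_lin_2} — and $W^{s}(q_5)\cap\{r_5=0\}$ is, near $q_5$, a graph $x_5=h(a_5,b_5,c_5,\eps_5)$ with $h(0)=0$, $Dh(0)=0$. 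Consequently $TW^{s}(q_5)$ at $q_5$ contains the $a_5,b_5,c_5,\eps_5$ directions but not the $x_5$-direction, and any vector whose direction tends to the $x_5$-direction as its base point tends to $q_5$ is, sufficiently near $q_5$, transverse to $TW^{s}(q_5)$, so that together they span the full $5$-dimensional space.

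It then remains to compute the limiting direction of $v$. Along $\gamma_{s_0}$ the two Airy quotients $\gamma_{x_2}(t)$ and $\gamma_{y_2}(t+s_0)$ blow up at the \emph{same} time $t=M$ (this is why $\gamma_{s_0}$, and no other member, reaches $q_5$: $s_0=N-M$). Using $\Ai(-B_0^{1/3}t)\sim B_0^{1/3}\Ai'(z_0)(M-t)$ and $\Ai'(-B_0^{1/3}t)\sim\Ai'(z_0)$ near a simple zero (with $\Ai''(z_0)=z_0\Ai(z_0)=0$), one gets $\gamma_{x_2}(t)=\tfrac{1}{M-t}+O(M-t)$, $\gamma_{y_2}(t+s_0)=\tfrac{1}{M-t}+O(M-t)$ and $\gamma_{y_2}'(t+s_0)=\tfrac{1}{(M-t)^{2}}+O(1)$. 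Since $\partial_s\gamma_s=(0,\gamma_{y_2}'(t+s),0,0,C_0,0)$ in $\kappa_\eps$-coordinates, transforming via $x_5=\tfrac{x_2-y_2}{x_2+y_2}$, $a_5=\tfrac{2a_2}{x_2+y_2}$, $b_5=\tfrac{4b_2}{(x_2+y_2)^{2}}$, $c_5=\tfrac{4c_2}{(x_2+y_2)^{2}}$, $\eps_5=\tfrac{8}{(x_2+y_2)^{3}}$ and expanding near $q_5$ yields
\[
	v(t)=\Big(-\tfrac{1}{2(M-t)}+O(M-t),\ 0,\ O(M-t),\ O(M-t),\ O\big((M-t)^{2}\big)\Big)
\]
in coordinates $(x_5,a_5,b_5,c_5,\eps_5)$, so the $x_5$-component dominates and $v(t)/|v(t)|$ converges to the $x_5$-direction as $t\to M^-$. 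By the previous paragraph $v(t)\notin T_{\gamma_{s_0}(t)}W^{s}(q_5)$ for $t$ near $M$, which is the claimed transversality; by invariance of $\Gamma$ and $W^{s}(q_5)$ it then holds along all of $\gamma_{s_0}$ visible in $\kappa_\ex$.

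The conceptual core, and what makes the statement true, is that the $y_2$-blow-up time $N_s=N-s$ of $\gamma_s$ depends on the family parameter with nonzero rate $\partial_sN_s=-1$: $W^{s}(q_5)$ collects precisely the orbits whose two fast components blow up simultaneously, and varying $s$ desynchronises the blow-ups, forcing the crossing to be transverse. I expect the only genuinely technical step to be the Airy asymptotics needed to extract the leading behaviour of $v$; this is routine given the expansions \cite[9.7(ii), 9.9(i)]{NIST:DLMF} already used for Proposition \ref{res:unique_gamma}. A minor point to address is the finite smoothness of $W^{s}(q_5)$ at the resonant node $q_5$, but this is harmless since only its $C^{1}$ tangency to $\{x_5=0\}$ at $q_5$ enters the argument.
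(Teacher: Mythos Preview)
Your proof is correct and follows essentially the same route as the paper's: compute the variation vector $v=\partial_s\gamma_s\rvert_{s=s_0}$, push it forward via $\kappa_{\eps\to\ex}$, and show that its limiting direction as $t\uparrow M$ is the $x_5$-axis, which is complementary to $T_{q_5}W^{s}(q_5)$ by Lemma~\ref{exit_lin_2}. One small slip that does not affect the argument: the off-diagonal entries in the linearisation at $q_5$ (from \eqref{vf:exit_chart} one finds $\partial_{b_5}x_5'=\tfrac12$ and $\partial_{c_5}x_5'=-\tfrac12$ at $q_5$) mean the stable eigenspace is not $\{x_5=0\}$, so $Dh(0)\neq 0$ in general --- but you only use that $E^s$ is transverse to the $x_5$-axis, which holds regardless.
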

\begin{proof}
	Consider the tangent space of $\Gamma$ along $\gamma_{s_0}$, denoted by $T_{\gamma_{s_0}}\Gamma$. Let $v(t) = \partial_s\rvert_{(t,s_0)} \gamma_{s} \in T_{\gamma_{s_0}(t)}\Gamma$, which is the tangent vector of $\Gamma$ at $\gamma_{s_0}(t)$ given by the derivative of $\gamma_s$ in direction of the family parameter $s$. For simplicity, denote the $\kappa_\eps$-components of $\gamma_{s_0}(t)$ by $(x_2,y_2,0,b_2,c_2)$. Transforming $v$ into $\kappa_\ex$-coordinates results in
	\begin{equation}
		\begin{split}
			\left( -2x_2 \frac{y_2^2 + c_2}{(x_2 + y_2)^2},0, -8b_2 \frac{y_2^2 + c_2}{(x_2 + y_2)^3}, -8c_2\frac{y_2^2 + c_2}{(x_2+ y_2)^{3}} + \frac{4C_0}{(x_2 + y_2)^{2}} , -24\frac{y_2^2 + c_2}{(x_2 + y_2)^4}\right).
		\end{split}
	\end{equation}
	We know that $\gamma_{s_0}$ converges to $q_5$ for $t \uparrow M$. As $t \uparrow M$ we have $x_2,y_2 \to +\infty$, $x_2/y_2 \to 1$ and $b_2,c_2$ converge to finite values. Scaling $v$ by the scalar $1/x_2$ does not affect the direction of $v$ close to $q_5$. Employing $x_2,y_2 \to +\infty$ and $x_2/y_2 \to 1$ implies that $x_2^{-1} v \to (-1/2,0,0,0,0)$ for $t \uparrow M$, which is a vector along the $x_5$-axis. In $\{ r_5 = 0\}$, the manifold $W^{s}(q_5)$ is of codimension 1  and transverse to the $x_5$-axis. Thus, sufficiently close to $q_5$, the sum of the tangent spaces of $\Gamma$ and $W^{s}(q_5)$ span $\{ r_5 = 0\}$.
\end{proof}
Recall that the invariant 3-manifold $\hat{\Gamma}$ is obtained by perturbation of any $\gamma_s$ to small $\abs{a_2}$.
By transversality, the intersection of $\Gamma$ and $W^{s}(q_5)$ perturbs to the leaves $\{r_2 = 0, a_2 = \text{const}\}$ for $a_2$ close to zero, that is, to $\hat{\Gamma}$. 
Since $q_4$ and $q_6$ are hyperbolic sinks in $\{r_5 = 0\}$, the perturbations of $\gamma_s$ for $s \neq s_0$ in $\hat{\Gamma}$ are also forward asymptotic to $q_4$ or $q_6$, respectively. The perturbations of $\gamma_{s_0}$ must be contained in $W^{s}(q_5)$, due to transversality. In particular, the intersection of $\hat{\Gamma}$ and $W^s(q_5)$ is an invariant 2-manifold, which we denote by $\hat{\Gamma}_{s_0}$.
In summary, the perturbed trajectories in $\hat{\Gamma}$ inherit their forward asymptotic behavior from $\Gamma$.

Recall from the rescaling chart that for small $r_2 > 0$, we want to track perturbations of trajectories in $\hat{\Gamma}$. The manifold $\hat{\Gamma}$ is foliated by trajectories that emanate from $\zeta_2$, and converge to the three equilibria $q_{4,5,6}$. The evolution of the perturbations of $\hat{\Gamma}$ through the chart $\kappa_{\ex}$ will clearly be affected by the hyperbolic, resonant equilibria $q_{4,5,6}$. 

\begin{figure}[h]
	\centering
	\begin{overpic}[width=.65\textwidth]{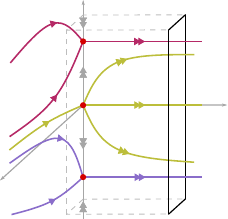}
		\put(84,87){$\Delta^{\ex}_5$}
		\put(97,45){\textcolor{gray}{$r_5$}}
		\put(30,94){\textcolor{gray}{$x_5$}}
		\put(3,15){\textcolor{gray}{$\eps_5$}}
		\put(39,74){$q_6$}
		\put(39,15){$q_4$}
		\put(39,46){$q_5$}
		\put(54,91){\textcolor{gray}{$K$}}
		\put(11,59){\textcolor{q6red}{$\hat{\Gamma}_{s < s_0}$}}
		\put(0,28){\textcolor{q5yellow}{$\hat{\Gamma}_{s_0}$}}
		\put(13,15){\textcolor{q4violet}{$\hat{\Gamma}_{s > s_0}$}}
		\put(91,77){\textcolor{gray}{$\simeq$ $x$-axis}}
		\put(91,17.4){\textcolor{gray}{$\simeq$ $y$-axis}}
	\end{overpic}
	\caption{Sketch of the dynamics near $q_{4,5,6}$, in which the coordinates $a_5,b_5,c_5$ are suppressed. In the $(r_5,x_5)$-plane the flow \eqref{exit_plane} is shown, i.e.\ $a_5,b_5,c_5$ are set to zero. The $(\eps_5,x_5)$-plane represents the 5-d invariant set $\{r_5 = 0\}$, in which trajectories in $\hat{\Gamma}$ approach $q_{4,5,6}$, as sketched.}
	\label{fig:exit:setting}
\end{figure}

The setup of the equilibria $q_{4,5,6}$ is shown in figure \ref{fig:exit:setting}. We are interested in the transition of perturbed trajectories from $\hat{\Gamma}$ away from the blow-up sphere to the section
\begin{equation}
	\begin{split}
		\label{resc:to_ex}
		\Delta_5^{\ex} = \{ r_5 = \nu \}
	\end{split}
\end{equation}
close to $q_{4,5,6}$ via the flow of \eqref{vf:exit_chart}. Due to hyperbolicity of $q_{4,5,6}$ the qualitative behavior of the flow close to these equilibria is organized by the stable and unstable manifolds. In particular, we expect trajectories starting with small $r_5 > 0$ to escape from the saddles $q_{4,5,6}$ close to the unstable manifolds contained in the $(x_5,r_5)$-plane. That is, close to the singular limit flow \eqref{exit_plane}. The key difference between $q_5$ and $q_{4,6}$ is that at $q_5$ a second unstable direction is present, namely $x_5$.
Therefore, we expect $q_5$ to have the effect of fanning out trajectories in-between $x_5 = \pm 1$, i.e.\ the positive $x$- and $y$-axis. Furthermore, we expect that most trajectories will transition to the fast regime close to $q_{4,6}$ and escape along the positive $x$- and $y$-axis, since on the blow-up sphere $\hat{\Gamma}_{s_0}$ is of codimension one in $\hat{\Gamma}$. Also refer to figure \ref{fig:exit:setting}.

The relevant vector field for the transition is \eqref{vf:exit_chart}. The value of the function $f_x + f_y$ is constant in $\{r_5 = 0\}$. Hence, the function $F = 1 + x_5^2 + a_5 + (b_5 + c_5)/2 + r_5\eps_5 (f_x + f_y)/2$ is positive close to the $x_5$-axis. Dividing out $F$ gives, close to the $x_5$-axis, an orbitally equivalent vector field of the form
\begin{equation}
	\begin{split}
		\label{F_divided_out}
		x_5' &= -x_5 + \frac{2x_5 - a_5x_5 + (b_5 - c_5)/2}{F} + r_5\eps_5 f_1\\
		r_5' &= r_5 \\
		a_5' &= -a_5 + r_5\eps_5 g_a/F\\
		b_5' &= - 2b_5 + \eps_5 g_b/F\\
		c_5' &= - 2c_5 + \eps_5 g_c/F\\
		\eps_5' &= - 3\eps_5 ,
	\end{split}
\end{equation}
for a sufficiently regular function $f_1$. As we have seen above, trajectories in $\hat{\Gamma} \subset \{ r_5 = 0\}$ converge to one of $q_{4,5,6}$. We now aim to track $r_2$-perturbations of trajectories in $\hat{\Gamma}$ via the flow of \eqref{F_divided_out}. 

Let us choose a compact box $\tilde{K}$ about the $x_5$-axis, such that $q_{4,5,6}$ are contained in its interior. On $\tilde{K}$ the functions $g_a/F,g_b/F,g_c/F$ are bounded. Inside $\tilde{K}$, consider another box of the form 
\begin{equation}
	K = \{ x_5 \in [-\Xi,\Xi], r_5 \in [0,\nu], \eps_5 \in [0,\delta], a_5 \in [-a_M,a_M], b_5 \in [-b_M,b_M], c_5 \in [-c_M,c_M] \}
\end{equation}
We can find sufficiently small $\nu,\delta,a_M,b_M,c_M > 0$ such that integral curves of \eqref{F_divided_out} can exit $K$ only through $\{r_5 = \nu\}$ or $\{ x_5 = \pm \Xi \}$. Note that the $x_5$-axis is invariant and the flow on it is directed towards $q_{4,6}$ for $\abs{x_5} > 1$. Hence we can even find small $\nu,\delta,a_M,b_M,c_M > 0$ and $\Xi > 1$ such that the flow can exit $K$ only through $\{r_5 = \nu\}$.

Next, consider a trajectory with initial condition $(x_k,r_k,a_k,b_k,c_k,\eps_k) \in K$. This trajectory needs to exit $K$ through $\Delta^{\ex}_5 = \{r_5 = \nu\}$. Assume $r_k \in (0,\nu)$. We have from \eqref{F_divided_out} that $r_5(t) = r_k e^{t},\; \eps_5(t) = \eps_k e^{-3t}$. This implies a travel duration $T = \log(\frac{\nu}{r_k})$ to $\Delta^{\ex}_5$. Let us now propose the ansatz
\begin{equation}
	\begin{split}
		a_5(t) &= e^{-t}(a_i + z_a(t)),\\
		b_5(t) &= e^{-2t}(b_i + z_b(t)),\\
		c_5(t) &= e^{-2t}(c_i + z_c(t)),
	\end{split}
\end{equation}
with $z_a(0) = z_b(0) = z_c(0) = 0$. It follows that 
\begin{equation}
	\begin{split}
		z_a' &= e^{-t} r_k \eps_k \frac{g_a}{F},\\
		z_b' &= e^{-t} \eps_k \frac{g_b}{F},\\
		z_c' &= e^{-t} \eps_k \frac{g_c}{F},
	\end{split}
\end{equation}
where the functions are evaluated along the corresponding trajectory. Since
\begin{equation}
	\begin{split}
		z_a(T) = r_k \eps_k \int_0^T   \frac{g_a}{F}e^{-t} \d t,
	\end{split}
\end{equation}
we directly obtain $z_a(T) = O(r_k \eps_k)$. Similarly
\begin{equation}
	\begin{split}
		z_b(T) = \eps_k \int_0^{T} \frac{g_b}{F}e^{-t} \d t,
	\end{split}
\end{equation}
which is bounded as $r_k \downarrow 0$ and its bound scales with factor $\eps_k$. In fact, we can write $g_b/F = B_0/F_1 + r_5 f_2$, where $F_1 = F\rvert_{r_5 = 0}$ and $f_2$ is a sufficiently regular function. This leads, for $r_k \to 0$, to
\begin{equation}
	\begin{split}
		z_b(T) = \eps_k \int_0^{T}  \frac{B_0}{F_1} e^{-t} \d t + \eps_k r_k \int_0^{T} \frac{f_2}{F} \d t = O(1) + O(r_k \log r_k).
	\end{split}
\end{equation}
A similar calculation holds for $z_c(T)$. Note that by the above considerations, we obtain the structure of \emph{any} transition in $K$ to $\Delta^{\ex}_5$ in the variables $r_5,a_5,b_5,c_5,\eps_5$. We are still missing the $x_5$-direction, so we proceed as follows: In $\{a_5 = b_5 = c_0 = \eps_5 = 0\}$ the vector field \eqref{F_divided_out} reduces to
\begin{equation}
	\begin{split}
		\label{x-dir}
		x_5' = \frac{x_5(1-x_5^2)}{1 + x_5^2},
	\end{split}
\end{equation}
which is separable and can be integrated by employing a partial fraction decomposition. Let $\bar{x}(t,x_k)$ denote the integral curve of \eqref{x-dir} with $\bar{x}(0,x_k) = x_k$. Clearly $\bar{x}(t,-x_k) = - \bar{x}(t,x_k)$, and $\bar{x}$ is explicitly given by
\begin{equation}
	\begin{split}
		\bar{x}(t,x_k) = \begin{cases}
			\dfrac{\abs{1-x_k^2}}{2\abs{x_k}}e^{-t} + \sqrt{\dfrac{\abs{1-x_k^2}^2}{4\abs{x_k}^2}e^{-2t} + 1}, &\quad x_k > 1\\[2ex]
			1, &\quad x_k = 1\\[2ex]
			-\dfrac{\abs{1-x_k^2}}{2\abs{x_k}}e^{-t} + \sqrt{\dfrac{\abs{1-x_k^2}^2}{4\abs{x_k}^2}e^{-2t} + 1}, &\quad x_k \in (0,1)\\[2ex]
			0, &\quad x_k = 0.
		\end{cases}
	\end{split}
\end{equation}
For the $x_5'$ equation in \eqref{F_divided_out} we take the ansatz $x_5(t) = \bar{x}(t) + z_x(t)$ with $z_x(0) = 0$. Rewriting the first equation of \eqref{F_divided_out} to
\begin{equation}
	\begin{split}
		x_5' = -x_5 + \frac{2x_5 - a_5 x_5 + (b_5 - c_5)/2}{1 + x_5^2 + a_5 + (b_5 + c_5)/2} + r_5\eps_5 f_2,
	\end{split}
\end{equation}
for some sufficiently regular $f_2$, and employing a Taylor expansion at $(x_5,0,0,0,0,0)$ in $\{r_5 = \eps_5 = 0\}$ leads to the equation $z_x' = a_5 f_3 + b_5 f_4 + c_5 f_5 + r_5\eps_5 f_2$, where $f_3,f_4,f_5$ are sufficiently regular functions. Hence
\begin{equation}
	\begin{split}
		z_x(T) = \int_0^T a_5 f_3 \d t + \int_0^T b_5 f_4 \d t + \int_0^T c_5 f_5 \d t + \int_0^T r_5 \eps_5 f_2 \d t,
	\end{split}
\end{equation}
where the functions are evaluated along the corresponding integral curve. Bounding the functions, plugging in $a_5(t) = e^{-t}(a_k + z_a(t)), b_5(t), c_5(t), r_5(t), \eps_5(t)$ and, where necessary, integrating by parts, shows that $z_x(T) = O(r_k\eps_k,a_k,b_k,c_k,\eps_k)$. Thus, with the analysis performed above, we obtain the following result that summarizes the transition in the present chart $\kappa_\ex$.
\begin{prop}
	\label{ex:trans}
	Any transition $\Pi_5\colon K \to \Delta_5^{\ex} = \{ r_5 = \nu\}$ via the flow of \eqref{F_divided_out} in $K$ is of the form
	\begin{equation}
		\begin{split}
			\begin{bmatrix}
				x_k\\[2ex]
				r_k\\[2ex]
				a_k\\[2ex]
				b_k\\[2ex]
				c_k\\[2ex]
				\eps_k
			\end{bmatrix} \mapsto
			\begin{bmatrix}
				\Pi_{x_5}(x_k) + O(r_k\eps_k,a_k,b_k,c_k,\eps_k)\\[1ex]
				\nu\\[1ex]
				\dfrac{r_k}{\nu}a_k + O(r_k^2\eps_k)\\[2ex]
				\left(\dfrac{r_k}{\nu}\right)^2(b_k + O(\eps_k))\\[2ex]
				\left(\dfrac{r_k}{\nu}\right)^2(c_k + O(\eps_k))\\[2ex]
				\left(\dfrac{r_k}{\nu}\right)^3\eps_k
			\end{bmatrix},
		\end{split}
	\end{equation}
	where the $O(\eps_k)$ terms are bounded as $r_k \to 0$, $\Pi_{x_5}(-x_k) := - \Pi_{x_5}(x_k)$ and
	\begin{equation}
		\begin{split}
			\label{Pix5}
			\Pi_{x_5}(x_k) = \begin{cases}
				\frac{\abs{1-x_k^2}}{2\abs{x_k}}\frac{r_k}{\nu} + \sqrt{\frac{\abs{1-x_k^2}^2}{4\abs{x_k}^2}(\frac{r_k}{\nu})^2 + 1}, & x_k > 1\\
				1, & x_k =  1\\
				- \frac{\abs{1-x_k^2}}{2\abs{x_k}}\frac{r_k}{\nu} + \sqrt{\frac{\abs{1-x_k^2}^2}{4\abs{x_k}^2}(\frac{r_k}{\nu})^2 + 1}, & x_k \in (0,1)\\
				0, & x_k = 0.
			\end{cases}
		\end{split}
	\end{equation}
\end{prop}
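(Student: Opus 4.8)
The plan is to derive $\Pi_5$ by solving the orbitally equivalent system \eqref{F_divided_out} component by component, exploiting the fact that its $r_5$- and $\eps_5$-equations are exactly linear, that its $a_5,b_5,c_5$-equations have a diagonal contractive linear part, and that the $x_5$-equation restricted to the invariant plane $\{a_5 = b_5 = c_5 = \eps_5 = 0\}$ is separable and solvable in closed form. First I would fix the box $K$ and record that, for sufficiently small $\nu,\delta,a_M,b_M,c_M$ and suitably large $\Xi > 1$, the flow of \eqref{F_divided_out} cannot leave $K$ through any face other than $\Delta_5^{\ex} = \{r_5 = \nu\}$: the $x_5$-axis is invariant with flow \eqref{exit_plane}, which points inward (towards $q_{4,6}$) for $\abs{x_5} > 1$, and the equations for $a_5,b_5,c_5,\eps_5$ have negative linear part, so these coordinates are contracted along the flow; here one fixes $a_M,b_M,c_M,\delta,\nu$ first and then chooses $\Xi > 1$. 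From \eqref{F_divided_out} one has exactly $r_5(t) = r_k e^{t}$ and $\eps_5(t) = \eps_k e^{-3t}$, hence the transition time to $\Delta^{\ex}_5$ is $T = \log(\nu/r_k)$ and $e^{-T} = r_k/\nu$, which already gives the $r_5$- and $\eps_5$-components of $\Pi_5$ as $\nu$ and $(r_k/\nu)^3 \eps_k$.

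Next I would treat $a_5,b_5,c_5$ with the ansatz $a_5(t) = e^{-t}(a_k + z_a(t))$, $b_5(t) = e^{-2t}(b_k + z_b(t))$, $c_5(t) = e^{-2t}(c_k + z_c(t))$ with $z_\bullet(0)=0$, so that $z_a' = r_k\eps_k\,e^{-t}g_a/F$, $z_b' = \eps_k\,e^{-t}g_b/F$, $z_c' = \eps_k\,e^{-t}g_c/F$ with right-hand sides evaluated along the trajectory. Since $g_a/F,g_b/F,g_c/F$ are bounded on $K$ and $\int_0^T e^{-t}\,\d t \le 1$, integration gives $z_a(t) = O(r_k\eps_k)$ and $z_b(t),z_c(t) = O(\eps_k)$ uniformly in $t \le T$, with constants staying bounded as $r_k \downarrow 0$ (splitting $g_b/F = B_0/F_1 + r_5 f_2$ with $F_1 = F\rvert_{r_5=0}$ refines the estimate but does not change its order). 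Evaluating the ansatz at $t = T$ and using $e^{-T} = r_k/\nu$ yields $a_5(T) = \tfrac{r_k}{\nu}a_k + O(r_k^2\eps_k)$ and $b_5(T) = (r_k/\nu)^2(b_k + O(\eps_k))$, $c_5(T) = (r_k/\nu)^2(c_k + O(\eps_k))$.

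For the $x_5$-component, the invariant plane $\{a_5=b_5=c_5=\eps_5=0\}$ carries \eqref{x-dir}, $x_5' = x_5(1-x_5^2)/(1+x_5^2)$, which I would integrate by a partial-fraction decomposition to obtain the closed form $\bar{x}(t,x_k)$ displayed in the text, noting the symmetry $\bar{x}(t,-x_k) = -\bar{x}(t,x_k)$; evaluating at $t = T$ produces $\Pi_{x_5}$ exactly as in \eqref{Pix5}. For the full equation I would then use $x_5(t) = \bar{x}(t,x_k) + z_x(t)$ with $z_x(0) = 0$; Taylor expanding the rational nonlinearity of the first equation of \eqref{F_divided_out} about $(x_5,0,0,0,0,0)$ in $\{r_5 = \eps_5 = 0\}$ gives $z_x' = a_5 f_3 + b_5 f_4 + c_5 f_5 + r_5\eps_5 f_2$ for sufficiently regular $f_i$, and bounding the $f_i$ on $K$ together with the decay estimates for $a_5,b_5,c_5$ from the previous step and $r_5\eps_5 = r_k\eps_k e^{-2t}$ — integrating by parts in the $b_5,c_5$ terms to exploit their $e^{-2t}$ factor — gives $z_x(T) = O(r_k\eps_k,a_k,b_k,c_k,\eps_k)$, again with constants bounded as $r_k \downarrow 0$. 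Collecting the six components yields the asserted form of $\Pi_5$.

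The step that needs the most care is the verification that $K$ is forward invariant except through $\{r_5 = \nu\}$, so that the transition time $T$ is finite and attained before any other face is reached: this is where the contraction in the $a_5,b_5,c_5,\eps_5$ directions must be shown to dominate the transverse growth of $x_5$ away from its invariant axis, and where the order of choosing the box parameters (the $a_M,b_M,c_M,\delta,\nu$ before $\Xi$) is essential. Everything else is a routine Gr\"onwall-type bookkeeping, made transparent by the exactly linear $r_5$- and $\eps_5$-equations and the diagonal, contractive linear part in $a_5,b_5,c_5$ inherited from the resonant eigenvalues of Lemma \ref{exit_lin_at_qs}.
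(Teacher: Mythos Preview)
Your proposal is correct and follows essentially the same approach as the paper: the paper's proof is precisely the analysis preceding the statement of the proposition, and it uses the same exact linear equations for $r_5,\eps_5$, the same ans\"atze $a_5(t)=e^{-t}(a_k+z_a(t))$ etc.\ for the contractive directions, the same closed-form integration of \eqref{x-dir} by partial fractions for $\bar{x}$, and the same Taylor-expansion-plus-integration-by-parts argument for $z_x$. The only cosmetic difference is that the paper does not insist on a particular order for choosing the box parameters (either order works: fixing $\Xi>1$ first and then shrinking the other parameters is equally valid, since the inward flow on the $x_5$-axis at $\abs{x_5}>1$ persists under small perturbation).
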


\subsection{Proof of Theorem \ref{thm:main_thm}}
\label{sec:main_proof}
In the following proof we collect the results from sections \ref{sec:entry} to \ref{s:exit_chart} and ``glue'' them together. Recall that we essentially work inside the continuation of $W^c(\mathcal{S}^{\txta}_{0,1})$, which is a 4-dimensional invariant manifold containing the 3-d attracting slow manifolds.
\begin{proof}[Proof of Theorem \ref{thm:main_thm}]
We begin by choosing an open set of initial conditions $V_1 \subset \Delta^{-y \to \eps} \cap W^c(\mathcal{S}^{\txta}_{0,1})\rvert_{r_1 > 0}$. The analysis in the rescaling chart dictates, which trajectories with initial condition in $V_1$ we are able to track. After shrinking $V_1$ suitably, we can assume the following two assertions: First, the flow from Lemma \ref{lemma:CM_red} takes $V_1$ in backward time to $\Delta^{\en}_1$. Secondly, the set $V_2 = \kappa_{-y \to \eps}(V_1) \subset \Delta^{-y \to \eps}_2$ is lead by the flow of \eqref{blowup_eps} along the invariant 3-manifold $\hat{\Gamma} \subset \{ r_2 = 0 \}$ through the rescaling chart close to $q_{4,5,6}$. Refer to Lemma \ref{resc:trans} and before for the definition of $\hat{\Gamma}$. The trajectories in $\hat{\Gamma}$ are forward asymptotic to one of the hyperbolic saddles $q_{4,5,6}$. Hence, by choosing $r_2 > 0$ sufficiently small (i.e.\ shrinking $V_1$ accordingly), we can assume that $\Pi_2$ maps $V_2$ to (faces of) $K$. Here $\Pi_2$ is the transition map from Lemma \ref{resc:trans} and $K$ the compact box defined for Proposition \ref{ex:trans} in which the transition map $\Pi_5$ is defined. 
Recall that $V_1$ lies in $W^c(\mathcal{S}^{\txta}_{0,1})$, and hence we can let $V_1$ flow in backward time via \eqref{CM_red2} to $\Delta^{\en}_1 \cap W^{c}(\mathcal{S}^{\txta}_{0,1})$. This gives the domain $I_1$ for the transition $\Pi_1$ inside the center manifold. In backward time \eqref{CM_red2} leads to exponential contraction towards the 1-d unstable manifold, which is given by $\sigma_1 \subset \mathcal{S}^{\txta}_{0,1}$. Notice that $I_1$ then degenerates to the single point $p = \sigma_1 \cap \Delta^{\en}_1$ for $\eps_1 \to 0$.

We now distinguish three regions in $I_1$, which give qualitatively different jumping behavior. Recall that $\hat{\Gamma}_{s_0} := \hat{\Gamma} \cap W^{s}(q_5)$ is an invariant 2-manifold on the blow-up sphere, consisting of perturbations of $\gamma_{s_0}$. Similarly, let $\hat{\Gamma}_{s > s_0}$ and $\hat{\Gamma}_{s < s_0}$ denote the invariant submanifolds of the 3-manifold $\hat{\Gamma}$ consisting of perturbations of some $\gamma_{s > s_0}$ or $\gamma_{s < s_0}$, respectively. For small $r_2 > 0$ the manifold $\hat{\Gamma}_{s_0}$ gets perturbed away from the blow-up sphere to an invariant 3-manifold, let us denote it by $\Sigma$. Recall that $W^c(\mathcal{S}^{\txta}_{0,1})$ is 4-dimensional. Then $V_2 \subset \Delta^{-y \to \eps}_2 \cap W^c(\mathcal{S}^{\txta}_{0,1})$ is a 3-manifold. Consequently $\Sigma$ intersects $V_2$ in a 2-manifold. Limiting to small $r_2 > 0$, $\Sigma \cap V_2$ splits $V_2$ into two disjoint open subsets $U^{q_4}$ and $U^{q_6}$, where $U^{q_4}$ contains perturbations of $\hat{\Gamma}$ that connect to $q_4$. Furthermore, let $U^{q_5}$ denote a (small) neighborhood of $\Sigma \cap V_2$ in $V_2$. The sets $U^{q_4},\, U^{q_5},\, U^{q_6}$ are an open cover of $V_2$ and correspond to distinct jumping behaviors. The set $U^{q_5}$ leads to trajectories jumping and fanning out close to $q_5$, whereas $U^{q_4}$ and $U^{q_6}$ lead to trajectories jumping close to $q_4$, respectively $q_6$, with contraction towards the 1-d unstable manifolds $W^{u}(q_4)$ (the $y$-axis in original coordinates), and $W^{u}(q_6)$ (the $x$-axis in original coordinates) respectively. 

Next, we let the aforementioned three regions $U^{q_4}$,  $U^{q_5}$, and  $U^{q_6}$ flow in backward time to $I_1$, which gives three open sets $R^{q_4},R^{q_5},R^{q_6}$ covering $I_1$. In backward time, i.e.\ $\Pi_1^{-1}$, the flow contracts towards $\sigma_1$. Hence, in the singular limit $\eps_1 \to 0$, the set $I_{1}$ collapses in wedge-like fashion into the single point corresponding to $\sigma_1 \cap \Delta^{\en}$: by this we mean that $\eps$-slices through $I_1$ converge in Hausdorff distance to the single point, after discarding the $\eps$-direction.
Similar statements holds for any of the 3-manifolds $R^{q_4},R^{q_5},R^{q_6}$. We refer to figure \ref{fig:complete_transition} for an overview sketch of the situation. Finally, we ``blow-down'' by considering the transition map $\hat{\Pi}\colon I \to \Delta^{\ex}$ defined by
\begin{equation}
	\begin{split}
		\label{trans_complete}
		\hat{\Pi} = \Phi_{\ex} \circ \Pi_{5} \circ \kappa_{\eps \to \ex} \circ \Pi_2 \circ \kappa_{-y \to \eps} \circ \Pi_1 \circ \Phi_{-y}^{-1},
	\end{split}
\end{equation} 
where $I = \Phi_{-y}(I_1)$ is a 3-manifold. By construction the map $\hat{\Pi}$ is the transition map from $\Delta^{\en}$ to $\Delta^{\ex}$ along the flow of $X$ given by  \eqref{hyp_umb_exp}. Now let $I_{\eps}$ denote a slice through $I$ for $\eps = \text{const}$. Then $\Pi$ from Theorem \ref{thm:main_thm} is given by $\hat{\Pi}\rvert_{I_{\eps}}$ and $I_\eps$ is a 2-manifold. Clearly, we discard the $\eps$-direction when we take slices at fixed value of $\eps$. It follows that $I_\eps$ converges in Hausdorff distance to a single point. Each $I_{\eps}$ also contains the corresponding $\eps$-slices through $R^{q_4},R^{q_5},R^{q_6}$. Let us denote these slices by $L^{y}_{\eps}, R_{\eps}$ and $L^{x}_{\eps}$, such that $L^{y}_{\eps}$ corresponds to an $\eps$-slice through $R^{q_5}$, similarly $R_{\eps}$ to an $\eps$-slice through $R^{q_5}$ and $L^{x}_\eps$ to an $\eps$-slice through $R^{q_6}$. We refer to $L^{x}_\eps$ and $L^{y}_\eps$ as the lateral regions, which correspond to initial conditions escaping along the $x$-axis, respectively the $y$-axis. The sets $L^{x}_\eps$ and $L^{y}_\eps$ are disjoint, and $I_{\eps} = L^{x}_\eps \cup R_{\eps} \cup L^{y}_\eps$, where all sets are open. 

Recall that $J$ is the closed line segment defined by $\Delta^{\ex} \cap \{ x \ge 0, y \ge 0, a = b = c = 0\}$ and let $j_x$, respectively $j_y$, denote its points of intersection with the $x$-axis, respectively $y$-axis. We need to argue that $\Pi(L^{x}_\eps) \to j_x$, $\Pi(L^{y}_\eps) \to j_y$ and $\Pi(R_{\eps}) \to J$ in Hausdorff distance for $\eps \to 0$. Let us assume for a moment that the scalings $a = O(\eps^{1/3}),\, b = O(\eps^{2/3}),\, c = O(\eps^{2/3})$ for $(x,y,a,b,c) \in \Pi(I_\eps)$ as $\eps \to 0$ hold. Then the assertions follow from the definition of the sets $U^{q_4},\, U^{q_5},\, U^{q_6}$ and from the fact that $\Pi$ is continuous. Indeed, $U^{q_5}$ is a neighborhood of $\Sigma \cap V_2$ in $V_2$ and the structure of the transition map $\Pi_5$ in Proposition \ref{ex:trans}, in particular \eqref{Pix5} for $r_k \to 0$, imply the assertions.

\begin{figure}[h]
	\begin{overpic}[width=.85\textwidth]{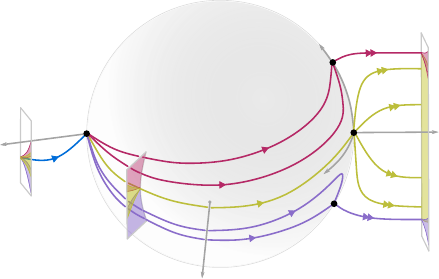}
		\put(25,55){\textcolor{gray}{$\mathbb{S}^{5}$}}
		\put(74,54){\textcolor{gray}{$x_5$}}
		\put(99,30){\textcolor{gray}{$r_5$}}
		\put(48,0){\textcolor{gray}{$r_2$}}
		\put(0,33){\textcolor{gray}{$r_1$}}
		\put(72,22){\textcolor{gray}{$\eps_5$}}
		\put(15,26){\textcolor{light-blue}{$\sigma_1$}}
		\put(7,39){\textcolor{light-gray}{$\Delta^{\en}$}}
		\put(29,30){\textcolor{light-gray}{$\Delta^{-y \to \eps}$}}
		\put(95,58){\textcolor{light-gray}{$\Delta^{\ex}$}}
		\put(16,35){$\zeta_2$}
		\put(76,13){$q_4$}
		\put(83,31){$q_5$}
		\put(78,48){$q_6$}
		\put(52,23.8){\textcolor{q6red}{$\hat{\Gamma}_{s < s_0}$}}
		\put(60,19.8){\textcolor{q5yellow}{$\hat{\Gamma}_{s_0}$}}
		\put(49,12.5){\textcolor{q4violet}{$\hat{\Gamma}_{s > s_0}$}}
		\put(8,28.5){\textcolor{q6red}{$R^{q_6}$}}
		\put(0,26){\textcolor{q5yellow}{$R^{q_5}$}}
		\put(8,21){\textcolor{q4violet}{$R^{q_4}$}}
		\put(33,24){\textcolor{q6red}{$U^{q_6}$}}
		\put(24,14){\textcolor{q5yellow}{$U^{q_5}$}}
		\put(24,10){\textcolor{q4violet}{$U^{q_4}$}}
	\end{overpic}
	\caption{Rough sketch of the transition $\Pi_{5} \circ \kappa_{\eps \to \ex} \circ \Pi_2 \circ \kappa_{-y \to \eps} \circ \Pi_1$ for initial conditions in $I_1 = R^{q_4} \cup R^{q_5} \cup R^{q_6}$ in blow-up space ``inside'' the continuation of the 4-d invariant manifold $W^c(\mathcal{S}^{\txta}_{0,1})$. We remark that the dimensions of the sketched objects have been reduced for visualization reasons. The colored regions inside the sections indicate the 3-manifolds $R^{q_4},R^{q_5},R^{q_6} \subset \Delta^{\en}$, respectively $U^{q_4}, U^{q_5}, U^{q_6} \subset \Delta^{-y \to \eps}$, and their evolution steered by singular limit candidate trajectories up to $\Delta^{\ex}$. Note that the fanning-out behavior of $R^{q_5}$ is visible. Slices of the form $\eps = \text{const}$ through $R^{q_5}$ give $R_{\eps}$ in Theorem \ref{thm:main_thm}. The sketched coordinate axes correspond by abuse of notation to the euclidean coordinates transported to blow-up space via the charts.}
	\label{fig:complete_transition}
\end{figure}

Next, we prove the scalings $a = O(\eps^{1/3})$, $b = O(\eps^{2/3})$ and $c = O(\eps^{2/3})$ for $(x,y,a,b,c) \in \Pi(I_{\eps})$. Observe that Proposition \ref{ex:trans} already suggests these scalings. However, one should note at this point, that in general $\Pi_2$ from Lemma \ref{resc:trans} passes $a_2$ unaffected to leading order through the rescaling chart. This implies that in the exit chart, the scaling of $a_1$, caused by $\Pi_1$ in the entry chart, remains present. The structure of \eqref{CM_red2} suggests that $\Pi_{1a}(x_i,a_i,\nu,\eps_i) = O(\eps_i^{-1/3})$, which would cancel out with the leading order scaling of $\Pi_5$. Thus, heuristically, we should not see any a priori scaling in the $a$-variable. However, in our setting of $\Pi(I_{\eps})$, we expect to see ``artificial scalings'' caused by our choice of domain $I_{\eps}$, which shrinks to a single point $p$ for $\eps \to 0$. Indeed, as we will shortly show, the scalings $a = O(\eps^{1/3})$, $b = O(\eps^{2/3})$ and $c = O(\eps^{2/3})$ appear because of our choice of initial conditions in $I_{\eps}$.

We build the composition $\Pi_5 \circ \kappa_{\eps \to \ex} \circ \Pi_2 \colon V \to \Delta^{\ex}_5$ to see the scalings formally. From Lemma \ref{resc:trans} we get $\Pi_2(x_j,-\delta^{-1/3},a_j,b_j,c_j,r_j) = (x_e + O(a_j,r_j),y_e + O(a_j,r_j),a_j + O(r_j),b_e + O(a_j,r_j), c_e + O(a_j,r_j), r_j) \in \kappa_{\ex \to \eps}(\partial K \setminus \{ r_5 = \nu \})$.
According to Lemma \ref{coordinate_changes} we can write $ x_e + y_e + O(a_j,r_j) = 2\eps_k^{-1/3}$ for $\eps_k \in (0,\delta]$, which implies
\begin{equation}
	\begin{split}
		(\kappa_{\eps \to \ex} \circ \Pi_2)(x_j,-\delta^{-1/3},a_j,b_j,c_j,r_j) = 
		\begin{bmatrix}
			\frac{1}{2}\eps_k^{1/3}(x_e - y_e + O(a_j,r_j))\\[1ex]
			r_j \eps_k^{-1/3}\\[1ex]
			\eps_k^{1/3}(a_j + O(r_j))\\[1ex]
			\eps_k^{2/3}(b_e + O(a_j,r_j))\\[1ex]
			\eps_k^{2/3}(c_e + O(a_j,r_j))\\[1ex]
			\eps_k
		\end{bmatrix}.
	\end{split}
\end{equation}
Applying Proposition \ref{ex:trans} leads to $\Pi_5 \circ \kappa_{\eps \to \ex} \circ \Pi_2 \colon V_2 \to \Delta^{\ex}_5$ given by
\begin{equation}
	\begin{split}
		\label{final:transition25}
		\begin{bmatrix}
			x_j\\
			-\delta^{-1/3}\\
			a_j\\
			b_j\\
			c_j\\
			r_j
		\end{bmatrix} \mapsto
		\begin{bmatrix}
			\ast\\
			\nu\\
			\frac{r_j}{\nu}(a_j + O(r_j^2))\\[1ex]
			(\frac{r_j}{\nu})^2 (b_e + O(a_j,r_j) + O(\eps_k^{2/3}))\\[1ex]
			(\frac{r_j}{\nu})^2 (c_e + O(a_j,r_j) + O(\eps_k^{2/3}))\\[1ex]
			(\frac{r_j}{\nu})^3
		\end{bmatrix},
	\end{split}
\end{equation}
where the $O(\eps_k^{2/3})$ terms are bounded as $r_k \to 0$ (and $r_j \to 0$) and we leave the entry $\ast$ unspecified. First, recall that $r_j = \nu \eps_i^{1/3} = \eps^{1/3}$. Observe that $a_j$ appears in the right hand side of \eqref{final:transition25}, so in general can possibly affect the scalings if $a_j$ blows up for $\eps \to 0$. In our setting, however, we have chosen the domain $I_\eps$ in such a way, that we only encounter small, in fact bounded values of $a_j$ as $\eps \to 0$. This implies the scalings in $a$, $b$ and $c$ as claimed in Theorem \ref{thm:main_thm}.

It remains to consider the case that $g_a$ is factored by $a$. For this case we have Lemma \ref{lemma:CM_trans1}, which implies
\begin{equation}
	\begin{split}
		a_j = \delta^{-1/3} \Pi_{1a}(x_i,a_i,\nu,\eps_i) = \eps_i^{-1/3} O(a_i) + O(\eps_i^{2/3} \log \eps_i).
	\end{split}
\end{equation}
This implies that $I_1\rvert_{\eps_i}$ is of width $a_i = O(\eps_i^{1/3})$ as $\eps_i \to 0$. Thus, for the case of $g_a$ being factored by $a$, this gives that $I_\eps$ is a thin strip of width $O(\eps^{1/3})$ in $a$-direction, for $\eps \to 0$.
\end{proof}

\section{Further observations} 
\label{sec:further}

In this last section we briefly present further observations, obtained by analyzing the blow-up $\bar{X}$ of \eqref{hyp_umb_exp} in the two charts $\kappa_{\pm a}$. The main observation is that in the blow-up $\bar{X}$, one generically encounters fast-slow systems on the blow-up sphere with folded saddles or folded centers. These organize the flow on the blow-up sphere close to the equator $\bar{\eps} = 0$.

To formulate the observations, we look at $\bar{X}$ in the two charts $\kappa_{\pm a}$. The blow-up of $X$ \eqref{hyp_umb_exp} in the chart $\kappa_a$ is, after suitable time rescaling, given by
\begin{equation}
	\begin{split}
		\label{blowup_a}
		x_3' &= x_3^2 + y_3 + b_3  + O(r_3 \eps_3)\\
		y_3' &= y_3^2 + x_3 + c_3 + O(r_3 \eps_3)\\
		r_3' &= A_0 r_3^2\eps_3 + O(r_3^3\eps_3)\\
		b_3' &= B_0\eps_3 + O(r_3 \eps_3)\\
		c_3' &= C_0 \eps_3 + O(r_3 \eps_3)\\
		\eps_3' &= - 3 A_0 r_3 \eps_3^2 + O(r_3^2 \eps_3^2).
	\end{split}
\end{equation}

Similarly, in the chart $\kappa_{-a}$,  we obtain the blown-up vector field
\begin{equation}
	\begin{split}
		\label{blowup_-a}
		x_4' &= x_4^2 - y_4 + b_4  + O(r_4^2 \eps_4)\\
		y_4' &= y_4^2 - x_4 + c_4  + O(r_4^2 \eps_4)\\
		r_4' &= - A_0 r_4^2\eps_4 + O(r_4^3\eps_4)\\
		b_4' &= B_0\eps_4 + O(r_4 \eps_4)\\
		c_4' &= C_0 \eps_4 + O(r_4 \eps_4)\\
		\eps_4' &=  3 A_0 r_4 \eps_4^2 + O(r_4^2 \eps_4^2).
	\end{split}
\end{equation}
We restrict the analysis of \eqref{blowup_a} and \eqref{blowup_-a} to the invariant blow-up sphere, i.e.\ we put $r_3 = 0$ and $r_4 = 0$, respectively. For brevity we give our observations mainly for \eqref{blowup_-a} and only indicate the similar statements for \eqref{blowup_a}. In $\{r_4 = 0\}$ \eqref{blowup_-a} simplifies to
\begin{equation}
	\begin{split}
		\label{fs:-a}
		x_4' &= x_4^2 - y_4 + b_4  \\
		y_4' &= y_4^2 - x_4 + c_4  \\
		b_4' &= B_0\eps_4 \\
		c_4' &= C_0 \eps_4 \\
		\eps_4' &=  0,
	\end{split}
\end{equation}
which is a fast-slow system near the equator $\{ \bar{\eps} = 0\} \cap \mathbb{S}^5$, i.e.\ for small $\eps_4 > 0$.
The 2-dimensional critical manifold of \eqref{fs:-a} is given by $E_4 := \{x_4^2 - y_4 + b_4 = 0,\, y_4^2 - x_4 + c_4 = 0\}$, which is the intersection of the blown-up critical manifold of $\bar{X}$ with the blow-up sphere in $\kappa_{-a}$. Via $\kappa_{-y \to -a}$ the manifold $\mathcal{S}_{0,1}\eval_{r_1 = 0,a_1 < 0}$ corresponds to $E_4\eval_{y_4 < 0}$, and so in particular, if $B_0 C_0 > 0$, the folded saddle $\zeta_1$ from \eqref{zetas} is present in \eqref{fs:-a}. Furthermore, we have $\eps_4 = - a_2^{-3}$, so the invariant sets $\{r_2 = 0, a_2 = \text{const}\}$ of the coupled Riccati system \eqref{riccati_on_sphere} in $\kappa_{\eps}$ appear in form of the fast-slow system \eqref{fs:-a} in $\kappa_{-a}$.
The invariant sets $\{ \eps_4 = \text{const} \}$ for \eqref{fs:-a} imply that there do not exist trajectories in $\{ r_2 = 0\}$, which approach in forward or backward time points on the half-equator $\{ \bar{\eps} = 0, \bar{a} < 0 \} \cap \mathbb{S}^5$. In particular, this implies

\begin{lemma}
	\label{zeta1}
	Let $B_0 C_0 > 0$.
	Then no integral curve of \eqref{riccati_on_sphere} is forward or backward asymptotic to $\zeta_1$.
\end{lemma}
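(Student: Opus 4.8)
The plan is to exhibit a constant of motion of \eqref{riccati_on_sphere} that is finite along every integral curve but diverges at $\zeta_1$. First I would locate $\zeta_1$ on the blow-up sphere. By \eqref{zetas} and \eqref{sf:-y} the point $\zeta_1$ has coordinates $(x_1,r_1,a_1)=(-B_0^2/C_0^2,\,0,\,-2B_0/C_0)$ in the chart $\kappa_{-y}$, so it lies in $\mathcal S_{0,1}\eval_{r_1=0}$, which by the discussion following \eqref{E1} is contained in the equator $\{\bar\eps=0,\,\bar r=0\}\simeq\mathbb S^4$ of $\mathbb S^5$; in particular $\bar\eps(\zeta_1)=0$. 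Reading this point back onto $\mathbb S^5$ through $\kappa_{-y}$, its $\bar a$-coordinate is a positive multiple of $a_1=-2B_0/C_0$, hence $\bar a(\zeta_1)<0$ since $B_0,C_0>0$. Consequently $\zeta_1$ is visible in $\kappa_{-a}$ but not in $\kappa_\eps$, and a trajectory can only be asymptotic to it by approaching the equator.

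Next, recall that \eqref{riccati_on_sphere} is the restriction of \eqref{blowup_eps} to $\{r_2=0\}$, which sits in the part $\{\bar\eps>0\}$ of the blow-up sphere covered by $\kappa_\eps$; since the equator $\{\bar\eps=0\}$ is invariant, every integral curve of \eqref{riccati_on_sphere} stays in $\{\bar\eps>0\}$. From the definition of $\Phi_\eps$ in \eqref{eq:maps} together with \eqref{global_blowup} one gets $\bar a=a_2\bar\eps^{1/3}$ on this set, so $\bar a^3/\bar\eps\equiv a_2^3$, and since $a_2'=0$ this is a constant of motion for \eqref{riccati_on_sphere}, taking a fixed finite value on any fixed integral curve. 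On the other hand, if some integral curve were forward or backward asymptotic to $\zeta_1$, then along it $\bar\eps\to 0^+$ while $\bar a\to\bar a(\zeta_1)<0$, whence $\bar a^3/\bar\eps\to-\infty$, contradicting finiteness of $a_2^3$. This rules out asymptotics to $\zeta_1$ for every integral curve, regardless of the sign of $a_2$.

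The same obstruction has a geometric reading in the chart $\kappa_{-a}$, which is where the paper's narrative places it: for $a_2<0$ the relation $\eps_4=-a_2^{-3}>0$ from \eqref{eq:maps} is preserved along the whole trajectory because $\eps_4'=0$ in \eqref{fs:-a}, whereas $\eps_4(\zeta_1)=0$; the analogous argument in $\kappa_a$ yields the corresponding non-existence statement for $\zeta_6$ alluded to in Remark \ref{rem:resc:kappa_a}. The only real work is the bookkeeping of the first paragraph---identifying $\zeta_1$ as an equatorial point with $\bar a(\zeta_1)<0$ and confirming the scaling $\bar a=a_2\bar\eps^{1/3}$ from the blow-up map---and making sure that ``asymptotic to $\zeta_1$'' is interpreted as convergence on the compact blow-up space $B$, where $\zeta_1$ is not seen in $\kappa_\eps$. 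I do not anticipate any genuine obstacle beyond this; the dynamical content is the trivial observation that a conserved quantity of \eqref{riccati_on_sphere} blows up at $\zeta_1$.
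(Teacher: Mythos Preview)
Your argument is correct and is essentially the paper's own proof: the paper observes just before the lemma that the invariant foliation $\{\eps_4=\text{const}\}$ of \eqref{fs:-a} prevents trajectories on $\{r_2=0\}$ from reaching the half-equator $\{\bar\eps=0,\bar a<0\}$, and your conserved quantity $\bar a^3/\bar\eps=a_2^3$ is exactly this invariant read back through $\kappa_{\eps\to -a}$ (since $\eps_4=(-a_2)^{-3}$). One cosmetic point: you justify $\bar a(\zeta_1)<0$ by writing ``since $B_0,C_0>0$'', but the hypothesis is $B_0C_0>0$; the conclusion still holds because $a_1=-2B_0/C_0<0$ whenever $B_0/C_0>0$, so just replace that phrase.
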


In case of $B_0 C_0 < 0$, the folded equilibrium $\zeta_1$ is a folded center and visible in $\kappa_{a}$ instead of $\kappa_{-a}$. Hence, to obtain a similar statement in this case, we need to employ \eqref{blowup_a} in $\{r_3 = 0\}$, which explicitly is the fast-slow system
\begin{equation}
	\begin{split}
		\label{fs:a}
		x_3' &= x_3^2 + y_3 + b_3  \\
		y_3' &= y_3^2 + x_3 + c_3 \\
		b_3' &= B_0\eps_3 \\
		c_3' &= C_0 \eps_3 \\
		\eps_3' &= 0,
	\end{split}
\end{equation}
for small $\eps_3 > 0$. A similar argument then implies that Lemma \ref{zeta1} also holds for $B_0 C_0 < 0$. In fact, in each of these cases, a second folded equilibrium is present, which we denote by $\zeta_6$. That is, $\zeta_6$ is the folded singularity of \eqref{fs:a} in case of $B_0 C_0 > 0$, but in case of $B_0 C_0 < 0$ $\zeta_6$ appears as folded singularity of \eqref{fs:-a}. Analogous results to Lemma \ref{zeta1} hold for $\zeta_6$ instead of $\zeta_1$ if $B_0 C_0 \neq 0$.

\subsection{Folded singularities in $\bar{X}$ on the blow-up sphere}
The fast-slow systems \eqref{fs:-a} and \eqref{fs:a} appear in the blow-up $\bar{X}$ in the charts $\kappa_{\mp a}$ on the blow-up sphere near the equator $\{\bar{\eps} = 0\} \cap \mathbb{S}^5$. In fact, the coupled Riccati system \eqref{riccati_on_sphere} on the blow-up sphere in $\kappa_{\eps}$ corresponds to \eqref{fs:-a} for $a_2 < 0$ and to \eqref{fs:a} for $a_2 > 0$. As noted already, \eqref{fs:-a} and \eqref{fs:a}, have folded singularities. Here, we present a brief analysis of \eqref{fs:-a} and comment on the analogous results for \eqref{fs:a} afterwards.

The critical manifold $E_4$ of \eqref{fs:-a} is parametrized by $(x_4,y_4)$ and has fold points along $4 x_4 y_4 = 1$ and a cusp point located at $x_4 = y_4 = -1/2$, which is implied by Lemma \ref{classif_singular_points} restricted to $\{ a = -1 \}$.
The desingularized slow flow of \eqref{fs:-a} is given by
\begin{equation}
	\begin{split}
		\label{fs:-a:sf:desing}
		U \colon
		\begin{bmatrix}
			\dot{x}_4\\ \dot{y}_4
		\end{bmatrix} &=
		\begin{bmatrix}
			0 & -2B_0\\
			-2 C_0 & 0
		\end{bmatrix}
		\begin{bmatrix}
			x_4\\ y_4
		\end{bmatrix}
		- \begin{bmatrix}
			C_0\\ B_0
		\end{bmatrix}
	\end{split}
\end{equation}
with reversed time orientation of orbits in the saddle-type region $\{ 4x_4 y_4 < 1\}$. The desingularized slow flow $U$ is analyzed in the following Lemma. A sketch of the phase portrait is given in figure \ref{sf:-a:portrait}.

\begin{lemma}
	\label{lemma:-a:sf}
	Assume $B_0C_0\neq0$. The desingularized slow flow $U$ \eqref{fs:-a:sf:desing} of \eqref{fs:-a} has a single folded singularity. If $B_0 C_0 > 0$, a folded saddle is located on the negative branch of $4 x_4 y_4 = 1$, which corresponds to $\zeta_1$. There exists a singular canard passing from $\mathcal{S}_0^{\txta}$ to $\mathcal{S}_0^{\txts}$ and a singular faux canard passing from $\mathcal{S}_0^{\txts}$ to $\mathcal{S}_0^{\txta}$. In particular, if $B_0 = C_0$, $\zeta_1$ is located at the cusp point. If $B_0 C_0 < 0$, a folded center is located on the positive branch of $4 x_4 y_4 = 1$, which corresponds to $\zeta_6$ in this case.
\end{lemma}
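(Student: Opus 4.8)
The plan is to treat \eqref{fs:-a:sf:desing} as a concrete planar affine vector field and analyze it by the standard theory of folded singularities (see \cite{szmolyan2001canards,wechselberger2012apropos}). First I would locate the equilibria of the \emph{projectivized/desingularized} slow flow $U$ on the fold curve $\{4x_4y_4 = 1\}$. A folded singularity occurs precisely where the (unreduced) slow-flow direction is tangent to the fold curve, equivalently where the desingularized field $U$ vanishes. Setting the right-hand side of \eqref{fs:-a:sf:desing} to zero gives the linear system $-2B_0 y_4 = C_0$, $-2C_0 x_4 = B_0$, i.e. $x_4 = -B_0/(2C_0)$, $y_4 = -C_0/(2B_0)$. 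One checks directly that $4x_4y_4 = 4\cdot\frac{B_0C_0}{4B_0C_0} = 1$, so this point indeed lies on the fold curve, and it is the unique zero of $U$ since the coefficient matrix $\begin{bmatrix} 0 & -2B_0 \\ -2C_0 & 0\end{bmatrix}$ is invertible when $B_0C_0\neq0$. When $B_0 = C_0$ this point is $x_4 = y_4 = -1/2$, which is exactly the cusp point from Lemma \ref{classif_singular_points} restricted to $a=-1$. The identification of this folded singularity with $\zeta_1$ (resp. $\zeta_6$) follows from the coordinate change $\kappa_{-y\to -a}$ (resp. $\kappa_{-y\to a}$) together with the formula \eqref{zetas}: the point $\zeta_1 = (x_1 = -B_0^2/C_0^2, a_1 = -2B_0/C_0)$ maps under $\kappa_{-y\to -a}$ to $(x_4,y_4)$ with $x_4 = x_1/\abs{a_1} = -B_0/(2C_0)$ and $y_4 = -1/\abs{a_1} = -C_0/(2B_0)$, matching the computed equilibrium.

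Next I would classify the folded singularity by linearizing $U$ at this point. Since $U$ is affine, its linearization is just the constant matrix $\begin{bmatrix} 0 & -2B_0 \\ -2C_0 & 0\end{bmatrix}$, with eigenvalues $\pm 2\sqrt{B_0C_0}$ — consistent with the earlier computation in Lemma \ref{sf:origin_lin} and the linearization \eqref{sf:lin} at $\zeta_1$. If $B_0C_0 > 0$ the eigenvalues are real with opposite sign, so $U$ has a saddle at the folded point; by definition this is a \emph{folded saddle}. One then determines on which branch of $\{4x_4y_4 = 1\}$ it sits: since $x_4y_4 = 1/4 > 0$ and $x_4, y_4$ both have the sign of $-B_0/C_0 = -\operatorname{sgn}(B_0C_0)\cdot(\text{positive})$; when $B_0C_0 > 0$ both coordinates are negative, placing the point on the negative branch (the branch with $x_4,y_4 < 0$), which is the branch adjacent to $\mathcal{S}_0^{\txta}$. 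If $B_0C_0 < 0$ the eigenvalues are purely imaginary, so $U$ has a center: this is a \emph{folded center}, and now $x_4,y_4 > 0$, so it lies on the positive branch; by the remarks preceding the lemma this folded equilibrium is $\zeta_6$ in that case.

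For the canard statements in the folded-saddle case, I would invoke the standard singular-canard description at a folded saddle \cite{szmolyan2001canards,wechselberger2012apropos}: the two eigendirections of the saddle of $U$, when the time reversion in the saddle-type region $\{4x_4y_4 < 1\}$ is undone, give rise to two special slow orbits that cross the fold curve — the \emph{singular canard} (crossing from the attracting sheet $\mathcal{S}_0^{\txta}$ to the saddle-type sheet $\mathcal{S}_0^{\txts}$) and the \emph{singular faux canard} (crossing in the opposite direction). Concretely, one computes the two eigenvectors $v_\pm$ of $\begin{bmatrix} 0 & -2B_0 \\ -2C_0 & 0\end{bmatrix}$, namely $v_\pm = (\pm\sqrt{B_0/C_0},1)$ as in Lemma \ref{sf:origin_lin}, checks that each is transverse to the fold curve $\{4x_4y_4 = 1\}$ at the folded point (the fold curve has tangent proportional to $(-x_4, y_4)$ there, and transversality is the condition $B_0\neq C_0$ — at $B_0 = C_0$ the folded saddle degenerates onto the cusp, which is the special sub-case noted in the statement), and identifies which eigen-orbit enters $\mathcal{S}_0^{\txta}$ and which enters $\mathcal{S}_0^{\txts}$ by checking the stability sign of the corresponding eigenvalue against the direction of the reduced flow on each sheet. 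The main obstacle — really the only non-routine point — is making the canard/faux-canard assignment precise and orientation-consistent: one must carefully track the time reversal in $\{4x_4y_4 < 1\}$ and verify that exactly one canard and one faux canard emerge (rather than, say, two canards), which amounts to checking the signs of $\lambda_\pm = \pm 2\sqrt{B_0C_0}$ relative to the inward/outward orientation of $U$ on the two sheets meeting at the fold; the remaining items (location, cusp degeneration at $B_0 = C_0$, folded-center case) are immediate from the linear-algebra computations above.
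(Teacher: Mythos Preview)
Your proposal is correct and follows essentially the same route as the paper: locate the unique equilibrium of the affine field $U$, verify it lies on the fold curve $4x_4y_4=1$ (on the negative branch when $B_0C_0>0$, at the cusp when $B_0=C_0$), compute the eigenvalues $\pm 2\sqrt{B_0C_0}$ to classify saddle versus center, and read off the canard/faux-canard pair from the eigenvectors and their transversality to the fold curve together with the time reversion in $\{4x_4y_4<1\}$. Your write-up is in fact slightly more explicit than the paper's proof in verifying the identification with $\zeta_1$ via $\kappa_{-y\to -a}$ and in flagging the orientation bookkeeping for the canard/faux-canard assignment, but the underlying argument is the same.
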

\begin{proof}
	The equilibrium of $U$ is located at $x_4 = -\frac{B_0}{2C_0},\, y_4 = - \frac{C_0}{2B_0}$, which is on the lower fold curve if $B_0,C_0$ have the same sign. If $B_0 = C_0$ it coincides with the cusp point. Eigenvalues of the linearization are $\pm 2\sqrt{B_0C_0}$, the situation partly resembles Lemma \ref{sf:origin_lin}. For $B_0,C_0 > 0$, the corresponding eigenvectors are \begin{equation}
		\begin{split}
			-2\sqrt{B_0C_0}\colon (\sqrt{B_0/C_0}, 1 ), \quad 2 \sqrt{B_0C_0}\colon (-\sqrt{B_0C_0},1).
		\end{split}
	\end{equation}
	For $B_0,C_0 < 0$ the eigenvectors swap, similar to Lemma \ref{sf:origin_lin}.
	An eigenvector is tangent to the fold curves if and only if $B_0 = C_0$. This is precisely the case when the folded saddle coincides with the cusp point. If $B_0,C_0 > 0$ and $B_0 \neq C_0$, the stable/unstable manifolds of $\zeta_1$ are transverse to the fold curve. Finally, since $U$ is a desingularized slow flow, time reversion has to be applied in the saddle-type region $\{4x_4 y_4 < 1\}$.
\end{proof}

\begin{figure}[h]
	\centering
	\begin{overpic}[width=0.8\textwidth]{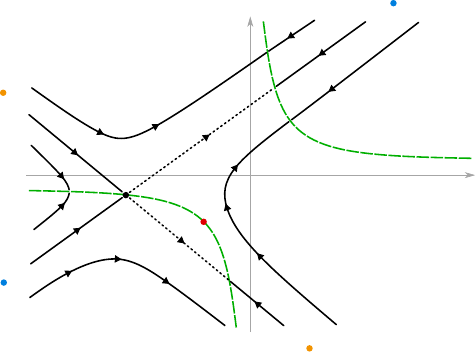}
		\put(26,28){$\zeta_1$}
		\put(100,34){\textcolor{gray}{$x_4$}}
		\put(48,70){\textcolor{gray}{$y_4$}}
		\put(13,9){attracting}
		\put(80,50){repelling}
		\put(70,20){saddle-type}
		\put(2,16){$\zeta_2$}
		\put(61,2){$\zeta_3$}
		\put(2,51){$\zeta_4$}
		\put(78,71){$\zeta_5$}
	\end{overpic}
	\caption{Phase portrait of the desingularized slow flow $U$ \eqref{fs:-a:sf:desing} on the critical manifold of \eqref{fs:-a} with reversed time orientation in the saddle-type region and $B_0 > C_0 > 0$. The fold curves are indicated dashed green, the red point represents the cusp. In case of $C_0 > B_0 > 0$, the folded saddle $\zeta_1$ is located on the branch ``below'' the cusp point.}
	\label{sf:-a:portrait}
\end{figure}

A similar analysis as above holds for the fast-slow system \eqref{fs:a}. In particular, the desingularized slow flow of \eqref{fs:a} on the corresponding critical manifold $E_3 = \{x_3^2 + y_3 + b_3 = 0, y_3^2 + x_3 + c_3 = 0\}$ is given by
\begin{equation}
	\begin{split}
		\label{fs:a:sf:desing}
		Q\colon
		\begin{bmatrix}
			\dot{x}_3\\ \dot{y}_3
		\end{bmatrix} &=
		\begin{bmatrix}
			0 & -2B_0\\
			-2 C_0 & 0
		\end{bmatrix}
		\begin{bmatrix}
			x_3\\ y_3
		\end{bmatrix}
		+ \begin{bmatrix}
			C_0\\ B_0
		\end{bmatrix},
	\end{split}
\end{equation}
where the time orientation of orbits in the saddle type region $\{4x_3y_3 < 1\}$ needs to be reversed. Lemma \ref{lemma:-a:sf} holds mutatis mutandis for the desingularized slow flow $Q$ \eqref{fs:a:sf:desing} of \eqref{fs:a}. In particular, if $B_0 C_0 > 0$, $Q$ has a folded saddle corresponding to $\zeta_6$ on the upper fold curve. If $B_0 C_0 < 0$, then $Q$ has a folded center located on the lower fold curve, which corresponds to $\zeta_1$ in this case. A phase portrait of $Q$ for the case $C_0 > B_0 > 0$ is given in figure \ref{portrait_kappa_a}.
\begin{figure}[h]
	\centering
	\begin{overpic}[width=0.8\textwidth]{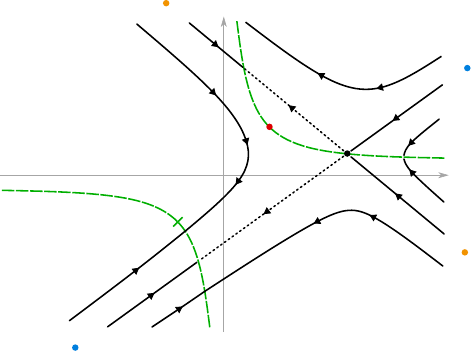}
		\put(5,16){attracting}
		\put(73,64){repelling}
		\put(20,45){saddle-type}
		\put(95,34){\textcolor{gray}{$x_3$}}
		\put(44,72){\textcolor{gray}{$y_3$}}
		\put(73,45){$\zeta_6$}
		\put(18,2){$\zeta_2$}
		\put(36,71){$\zeta_4$}
		\put(95,23){$\zeta_3$}
		\put(96,57){$\zeta_5$}
	\end{overpic}
	\caption{Phase portrait of the desingularized slow flow $Q$ \eqref{fs:a:sf:desing} on the critical manifold of \eqref{fs:a} with reversed time orientation in the saddle-type region and $B_0 > C_0 > 0$. The fold curves are indicated dashed green, the red point represents the cusp. In case of $C_0 > B_0 > 0$, $\zeta_6$ is located on the branch ``above'' the cusp point.}
	\label{portrait_kappa_a}
\end{figure}

\subsection{Dynamics of $\bar{Y}$ on the blow-up sphere}
\label{ss:further:y}

The vector fields $U$ \eqref{fs:-a:sf:desing} and $Q$ \eqref{fs:a:sf:desing} play not only an important role in the analysis of the fast-slow systems \eqref{fs:-a} and \eqref{fs:a}, being their desingularized slow flows on the critical manifold $E_4$, respectively $E_3$. In fact, $U$ also appears by blowing up the complete desingularized slow flow $Y$ \eqref{sf:desing} via
\begin{equation}
	\begin{split}
		\label{phi_-a_restriction}
		\Theta_{-a}\colon x = r_4 x_4, y = r_4 y_4, a = -r_4,
	\end{split}
\end{equation}
and restricting to the blow-up sphere $\{r_4 = 0\}$. Note that \eqref{phi_-a_restriction} is just the restriction of $\Phi_{-a}$ to the coordinates $(x_4,y_4,r_4)$. In other words, $U$ naturally appears in the blow-up $\bar{Y}$, in the restricted chart $\kappa_{-a}$. Similarly $Q$ arises in the blow-up of $Y$ in the restricted chart $\kappa_{a}$.

We summarize this, in case of $U$, in the following remark.

\begin{remark}
	\label{comm_diag}
	Let $(\Theta_{-a})_{\ast} \bar{Y}^{-a} = Y$, where $Y$ is the desingularized slow flow of $X$ and $\Theta_{-a}$ as in \eqref{phi_-a_restriction}. Then $U = \bar{Y}^{-a}\rvert_{r_4 = 0}$. Denote the vector field \eqref{blowup_-a} by $\bar{X}^{-a}$, such that $\bar{X}^{-a}\rvert_{r_4 = 0}$ gives the fast-slow system \eqref{fs:-a}. Then we have a commutative diagram:
		\begin{center}
			\begin{tikzcd}[row sep=large, column sep=5cm]
				X \arrow{r}{\text{slow flow desingularization}} \arrow{d}[left]{\text{blow-up via $\Phi_{-a}$}} & Y \arrow{d}{\text{blow-up via $\Theta_{-a}$}} \\
							\bar{X}^{-a}\rvert_{r_4 = 0}	\arrow{r}{\text{slow flow desingularization}} & \bar{Y}^{-a}\rvert_{r_4 = 0} = U
			\end{tikzcd}
		\end{center}
\end{remark}

As should be clear from the preceeding observations, one can employ $U$ and $Q$ to obtain the flow of the blown-up desingularized slow flow $\bar{Y}$ on the blow-up sphere. That is, in the following we show how the flow on the blow-up sphere in figure \ref{sf:desing_complete_blowup} was obtained. We consider the case $B_0, C_0 > 0$. The folded saddle of $U$ corresponding to $\zeta_1$ organizes the dynamics on the critical manifold $E_4$. Moreover, it organizes the dynamics of $\bar{Y}$ on one half of the 2-d blow-up sphere, where $\bar{Y}$ is the suitably rescaled blown-up desingularized slow flow \eqref{slow_flow_complete_blowup}. In other words, by the following Lemma we obtain the dynamics on the half-sphere $\bar{a} < 0$ in figure \ref{sf:desing_complete_blowup}, if we take the time reversion in the saddle-type region into account.

\begin{lemma}
	\label{kappa_-a_heterocl}
	Let $B_0, C_0 > 0$. Any trajectory of $U$ has asymptotic behavior as indicated by $\zeta_{2,3,4,5}$ in figure \ref{sf:-a:portrait}. That is, neglecting the time reversion for $U$ in the saddle-type region, the stable manifold of $\zeta_1$ is asymptotic to $\zeta_2$ for $y_4 \to -\infty$ and asymptotic to $\zeta_5$ for $y_4 \to +\infty$. The unstable manifold of $\zeta_1$ is asymptotic to $\zeta_4$ for $y_4 \to +\infty$ and $\zeta_3$ for $y_4 \to -\infty$. This determines the asymptotic behavior of any trajectory of $U$.
\end{lemma}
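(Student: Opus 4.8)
The plan is to analyze the affine planar vector field $U$ from \eqref{fs:-a:sf:desing} directly, using its explicit linear structure, and then carefully account for the time reversion in the saddle-type region $\{4x_4y_4<1\}$ in order to match the invariant manifolds of the folded saddle $\zeta_1$ to the equilibria $\zeta_{2,3,4,5}$ on the equator. First, I would record that $U$ is an affine system whose linear part is the constant matrix $\begin{bmatrix}0 & -2B_0\\ -2C_0 & 0\end{bmatrix}$ with eigenvalues $\pm 2\sqrt{B_0C_0}$; hence, before desingularization, $\zeta_1$ is a genuine saddle, and (as already computed in Lemma \ref{lemma:-a:sf}) its stable and unstable eigendirections are $(\sqrt{B_0/C_0},1)$ and $(-\sqrt{B_0/C_0},1)$ respectively. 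Since the system is affine with an invertible linear part, the stable and unstable manifolds of $\zeta_1$ are exactly the two straight lines through $\zeta_1$ in these eigendirections; trajectories on them are half-lines running to infinity. The key computational point is then to determine \emph{which} branch of each line escapes to $y_4\to +\infty$ versus $y_4\to -\infty$, and to verify that each such branch is asymptotic, as it escapes, to the corresponding $\zeta_j$ on the equator of the blow-up sphere in the chart $\kappa_{-a}$.

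Next I would identify $\zeta_{2,3,4,5}$ concretely in the $\kappa_{-a}$-picture. Under $\Theta_{-a}$ (equivalently, by the coordinate change $\kappa_{-y\to -a}$), the equator points where the desingularized slow flow on the critical manifold $E_4$ has equilibria-at-infinity are precisely the directions $(x_4:y_4)$ along which an invariant half-line of $U$ escapes — these are the blown-up versions of $\zeta_{2,3,4,5}$ already described after Lemma \ref{sf:-y:analysis} and in the remark following it. So the statement reduces to: the stable half-line of $\zeta_1$ going to $y_4\to -\infty$ has slope matching $\zeta_2$, the other stable half-line ($y_4\to +\infty$) matches $\zeta_5$, the unstable half-line going to $y_4\to +\infty$ matches $\zeta_4$, and the one going to $y_4\to -\infty$ matches $\zeta_3$. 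For this I would compute, along each eigenline $y_4 = m(x_4 - x_4^{\zeta_1}) + y_4^{\zeta_1}$, the limiting ratio $x_4/y_4 \to 1/m$ as $|y_4|\to\infty$, and compare with the slopes of the four equator equilibria obtained from the $\kappa_{-y}$-analysis (where $\zeta_2,\zeta_3$ sit at $a_1=0$ with $x_1 = \mp\sqrt{B_0/C_0}$, cf.\ \eqref{zetas}) transported by $\kappa_{-y\to -a}$. Since $m = \pm 1$ only for the degenerate direction, and since the slopes are algebraic in $\sqrt{B_0/C_0}$, this is a short matching of formulas; I would also use the sign of the flow along each half-line (determined by the sign of the eigenvalue and the direction of the eigenvector) to fix which half-line is which.

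Then I would put the time reversion back in. The saddle-type region for $U$ is $\{4x_4y_4<1\}$, which on each eigenline intersects it in a ray; the point is that each eigenline of $\zeta_1$ crosses from $\{4x_4y_4>1\}$ to $\{4x_4y_4<1\}$ exactly at $\zeta_1$ itself (because $\zeta_1$ lies on the fold curve $4x_4y_4=1$ and, by Lemma \ref{lemma:-a:sf}, the eigendirections are transverse to the fold curve when $B_0\neq C_0$, and tangent only in the cusp case $B_0=C_0$, which can be handled by a limiting argument or treated separately). Hence each stable/unstable half-line lies entirely in one of the two regions: the branch of $W^s(\zeta_1)$ inside $\{4x_4y_4>1\}$ keeps its orientation and is truly a stable branch, while the branch inside $\{4x_4y_4<1\}$ has its orientation reversed and therefore becomes an outgoing branch — so after reversion the picture near $\zeta_1$ is that of Figure \ref{sf:-a:portrait}, with the singular canard and faux canard as stated. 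The global conclusion — that \emph{every} trajectory of $U$ has asymptotics dictated by $\zeta_{2,3,4,5}$ — then follows from a Poincaré--Bendixson-type argument on the (desingularized) plane: $U$ has a single equilibrium $\zeta_1$, its four invariant half-lines separate the plane into four open sectors, and in each sector the (reversed-time, where applicable) flow is monotone and carries trajectories out to infinity in the direction bounding that sector. The main obstacle I expect is precisely the bookkeeping of the time reversion across the fold curve — making sure the half-line/sector decomposition is compatible with switching orientation only in $\{4x_4y_4<1\}$, and that no trajectory can cross the fold curve away from $\zeta_1$ in a way that would glue two half-sectors (it cannot, because the fold curve $4x_4y_4=1$ is itself not invariant and the desingularized flow is transverse to it except at $\zeta_1$). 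Everything else is elementary linear ODE analysis plus the coordinate identifications already established in Lemma \ref{coordinate_changes} and the $\kappa_{-y}$ analysis.
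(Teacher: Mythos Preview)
Your core approach matches the paper's almost exactly: since $U$ is affine with a saddle at $\zeta_1$, the stable and unstable manifolds are the two straight lines $x_4 = \pm\sqrt{B_0/C_0}\,y_4 + \text{const}$ through $\zeta_1$, and one then reads off their asymptotic directions at infinity via the coordinate changes of Lemma~\ref{coordinate_changes} to identify them with $\zeta_{2,3,4,5}$. The paper then simply notes that, because $U$ is linear, these two lines partition the plane into four sectors and every orbit is a heteroclinic connection whose asymptotics are inherited from the bounding lines. Your first two paragraphs reproduce this; no Poincar\'e--Bendixson argument is needed, linearity does the work.

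Your third paragraph, however, is both unnecessary and contains an incorrect claim. It is unnecessary because the lemma is \emph{stated} ``neglecting the time reversion'', so nothing about the saddle-type region needs to enter the proof. It is incorrect where you assert that ``each eigenline of $\zeta_1$ crosses from $\{4x_4y_4>1\}$ to $\{4x_4y_4<1\}$ exactly at $\zeta_1$ itself'' and hence ``each stable/unstable half-line lies entirely in one of the two regions''. This is false: substituting $x_4 = \sqrt{B_0/C_0}\,y_4 + c$ into $4x_4y_4=1$ gives a quadratic in $y_4$ with two real roots of opposite sign, so the stable line meets the fold hyperbola once on each branch. Geometrically, the stable line runs from the attracting region (third quadrant, $y_4\to-\infty$) through the saddle-type region near the axes into the repelling region (first quadrant, $y_4\to+\infty$); it does not stay in a single region. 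This does not damage the proof of the lemma as stated, but you should drop that part of the argument.
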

\begin{proof}
	Neglect the time reversion for $U$ in the saddle-type region. The stable and unstable manifolds of $\zeta_1$ are given by straight lines of the form
	\begin{equation}
		\begin{split}
			\label{s_u_mflds}
			x_4 = \pm \sqrt{B_0/C_0} y_4 + \text{const}.
		\end{split}
	\end{equation}
	Employing the appropriate coordinate changes from Lemma \ref{coordinate_changes} to \eqref{s_u_mflds}, one obtains the asymptotics in the assertion. Since $U$ is a linear system, the saddle $\zeta_1$ determines the forward and backward asymptotic behavior of all orbits. Hence, in each of the four sectors determined by the stable and unstable manifolds, any trajectory constitutes a heteroclinic connection with the corresponding behavior.
\end{proof}

A similar analysis applies to $Q$. Combining the phase portraits of $U$ and $Q$ leads to the qualitative flow of the blown-up desingularized slow flow $\bar{Y}$ on the 2-d blow-up sphere: The case $B_0 > C_0 > 0$ of $\bar{Y}$ visualized in figure \ref{sf:desing_complete_blowup} is obtained by glueing the phase portraits from figures \ref{portrait_kappa_a} and \ref{sf:-a:portrait}. That is, each desingularized slow flow in $\kappa_{\pm a}$ determines the flow on the half-spheres $\{\bar{a} > 0\}$ and $\{ \bar{a}<0 \}$. The glueing takes place along the equator, on which heteroclinic orbits run. These appear in figure \ref{fig:slow_flow_-y} along the $x_1$-axis, similarly in $\kappa_{-x},\kappa_x,\kappa_y$.

\begin{remark}
	Restricted to the slow flow coordinates $(x,y,a)$, the charts $\kappa_{\pm a}$ in fact correspond to gnomonic projections, through which great circles on the blow-up sphere correspond to straight lines. Therefore the (un-)stable manifolds of $\zeta_{1,6}$ correspond to great circles, as depicted in figure \ref{sf:desing_complete_blowup}.
\end{remark}

\clearpage

\appendix

\section{$\quad$}

\subsection{Takens' constrained differential equations}
\label{app:takens}

Here we give Takens' definition of constrained differential equations and their solutions, taken from \cite{takens1976}. After this, we briefly compare Takens' notions to the singular limit of \eqref{start}.

\begin{defn}[{{\cite{takens1976}}}]
	Let $\pi\colon E \to B$ be a smooth fiber bundle. A constrained differential equation on $E$ is a pair $(V,X)$ with smooth potential $V\colon E \to \R$ and smooth vector field $X \colon E \to TB$ with $X(e) \in T_{\pi(e)}B$ such that
	\begin{enumerate}[a),noitemsep]
		\item $V$ restricted to any fiber is proper and bounded from below,
		\item the set $S_V = \{ e \in E \,\rvert\, V\rvert_{\pi^{-1}(\pi(e))} \text{ has a critical point in } e \}$ is locally compact in the following sense: for each compact $K \subset B$ the set $S_V \cap \pi^{-1}(K)$ is compact.
	\end{enumerate}
	Further, define
	\begin{align*}
		S_{V,\text{min}} = \{ e \in S_V\, \rvert\, \text{ the Hessian of } V\rvert_{\pi^{-1}(\pi(e))} \text{ is positive (semi-)definite at } e \}.
	\end{align*}
\end{defn}

\begin{defn}[{{\cite{takens1976}}}]
	Let $(V,X)$ be a constrained differential equation on $E$ as above. A curve $\gamma \colon (\alpha,\beta) \to E$ is a solution of $(V,X)$ if
	\begin{enumerate}[i),noitemsep]
		\item $\gamma(t_0^{+}) := \lim_{t \downarrow t_0} \gamma(t)$ and $\gamma(t_0^{-}) := \lim_{t \uparrow t_0} \gamma(t)$ exist for all $t_0 \in (\alpha,\beta)$ and satisfy $\pi(\gamma(t_0^{+})) = \pi(\gamma(t_0^{-}))$ and $\gamma(t_0^{+}),\gamma(t_0^{-}) \in S_{V,\text{min}}$, 
		\item for all $t \in (\alpha,\beta)$, $X(\gamma(t^{-}))$ is the left derivative of $\pi \circ \gamma$ and $X(\gamma(t^{+}))$ is the right derivative of $\pi \circ \gamma$ at $t$,
		\item whenever $\gamma(t^{-}) \neq \gamma(t^{+})$ for some $t \in (\alpha,\beta)$, there is a curve in $\pi^{-1}(\pi(\gamma(t^{+})))$ from $\gamma(t^{-})$ to $\gamma(t^{+})$ along $V$ decreases monotonically.
	\end{enumerate}
\end{defn}

Let us briefly compare Takens' notion of a constrained differential equation to the singular limit of \eqref{start}: The fibers of the bundle can be viewed as the fast subsystems. $S_{V,\text{min}}$ can be viewed as the attracting region of the critical manifold with its surrounding singularities. Takens' solutions on $S_{V,\text{min}}$ are essentially determined by the vector field $X$. This is similar to the slow flow of the slow subsystem of \eqref{start}, which is well defined in regular points of the critical manifold since the critical manifold is a graph over the slow variables. Note that in the slow subsystem of \eqref{start} we allow dynamics on the full critical manifold, not only the attracting region and nearby singularities. When jumps occur in Takens' setting, they do so ``infinitely fast'' (producing discontinuities) along a fiber, such that the potential decreases along these jumps. In contrast to this, \eqref{start} has dynamics in the fast fibers, in fact gradient dynamics arising from a potential. The gradient dynamics force jumps along paths of steepest descent/ascent, depending on the used sign.

\subsection{Coordinate changes between the charts $\kappa_{\bullet}$}

Let $\kappa_{\bullet \to \circ}$ denote the coordinate change from $\kappa_{\bullet}$ to $\kappa_{\circ}$, defined on the overlap of the involved charts. That is, $\kappa_{\bullet \to \circ} = \kappa_{\circ} \circ \kappa_{\bullet}^{-1}$. The overlap is apparent from our notation: For example, to switch from $\kappa_{-y}$ to $\kappa_\eps$, we need $\eps_1 > 0$ in $\kappa_{-y}$. The formula for $\kappa_{-y \to \eps}$ can then be computed by relating $\Phi_{-y}$ and $\Phi_{\eps}$. The chart $\kappa_{\ex}$ plays an exceptional role and intuitively corresponds to the ``direction $x + y$''. For example, to apply $\kappa_{\eps \to \ex}$ we need points in the overlap with $x_2 + y_2 > 0$ in $\kappa_\eps$. For convenience of the reader, the following Lemma states the relevant coordinate changes, most of them used extensively and without reference throughout the analysis in section \ref{sec:blowup}.

\begin{lemma}
	\label{coordinate_changes}
	The relevant coordinate changes $\kappa_{\bullet \to \circ}$ are given by the following maps:
	\begingroup
	\allowdisplaybreaks
	\begin{align*}
			\kappa_{-y \to \eps} &\colon \begin{cases}
				x_2 &=  x_1 \eps_1^{-1/3}\\
				y_2 &=  - \eps_1^{-1/3}\\
				a_2 &= a_1 \eps_1^{-1/3}\\
				b_2 &= b_1 \eps_1^{-2/3}\\
				c_2 &= c_1 \eps_1^{-2/3}\\
				r_2 &= r_1 \eps_1^{1/3},
			\end{cases}
			&&\kappa_{-y \to -x} \colon \begin{cases}
				r_0 &= - r_1 x_1\\
				y_0 &= x_1^{-1}\\
				a_0 &= a_1 (-x_1)^{-1}\\
				b_0 &= b_1 (-x_1)^{-2}\\
				c_0 &= c_1 (-x_1)^{-2}\\
				\eps_0 &= \eps_1 (-x_1)^{-3},
			\end{cases}  \\
			\kappa_{-y \to a} &\colon \begin{cases}
				x_3 &= x_1 a_1^{-1}\\
				y_3 &= -a_1^{-1}\\
				r_3 &= r_1a_1\\
				b_3 &= b_1a_1^{-2}\\
				c_3 &= c_1a_1^{-2}\\
				\eps_3 &= \eps_1a_1^{-3}
			\end{cases}
			&&\kappa_{-y \to -a} \colon \begin{cases}
				x_4 &= x_1 (-a_1)^{-1}\\
				y_4 &= a_1^{-1}\\
				r_4 &= -r_1 a_1\\
				b_4 &= b_1(-a_1)^{-2}\\
				c_4 &= c_1(-a_1)^{-2}\\
				\eps_4 &= \eps_1(-a_1)^{-3}
			\end{cases}\\
			\kappa_{\eps \to a} &\colon \begin{cases}
				x_3 &= x_2 a_2^{-1}\\
				y_3 &= y_2 a_2^{-1}\\
				r_3 &= r_2 a_2\\
				b_3 &= b_2 a_2^{-2}\\
				c_3 &= c_2 a_2^{-2}\\
				\eps_3 &= a_2^{-3}
			\end{cases}
			&&\kappa_{\eps \to -a} \colon \begin{cases}
				x_4 &= x_2 (-a_2)^{-1}\\
				y_4 &= y_2 (-a_2)^{-1}\\
				r_4 &= -r_2 a_2\\
				b_4 &= b_2(-a_2)^{-2}\\
				c_4 &= c_2(-a_2)^{-2}\\
				\eps_4 &= (-a_2)^{-3}
			\end{cases}\\
			\kappa_{\eps \to -y} &\colon \begin{cases}
				x_1 &= x_2 (-y_2)^{-1}\\
				r_1 &= -r_2 y_2\\
				a_1 &= a_2(-y_2)^{-1}\\
				b_1 &= b_2 (-y_2)^{-2}\\
				c_1 &= c_2 (-y_2)^{-2}\\
				\eps_1 &= (-y_2)^{-3},
				\end{cases}
			&&\kappa_{\eps \to \ex} \colon \begin{cases}
				x_5 &= (x_2 - y_2)(x_2 + y_2)^{-1}\\
				r_5 &= r_2(x_2 + y_2)/2 \\
				a_5 &= 2a_2(x_2 + y_2)^{-1}\\
				b_5 &= 4b_2(x_2 + y_2)^{-2}\\
				c_5 &= 4c_2(x_2 + y_2)^{-2}\\
				\eps_5 &= 8(x_2 + y_2)^{-3}
			\end{cases}\\
			\kappa_{a \to \ex} &\colon \begin{cases}
				x_5 &=(x_3-y_3)(x_3+y_3)^{-1}\\
				r_5 &=r_3(x_3+y_3)/2\\
				a_5 &=2(x_3+y_3)^{-1}\\
				b_5 &=4b_3(x_3+y_3)^{-2}\\
				c_5 &=4c_3(x_3+y_3)^{-2}\\
				\eps_5 &= 8\eps_3(x_3+y_3)^{-3}
			\end{cases}
			 &&\kappa_{-a \to \ex} \colon \begin{cases}
				x_5 &= (x_4 - y_4)(x_4 + y_4)^{-1}\\
				r_5 &= r_4(x_4 + y_4)/2\\
				a_5 &= 2 (x_4 + y_4)^{-1}\\
				b_5 &= 4b_4(x_4 + y_4)^{-2}\\
				c_5 &= 4 c_4(x_4 + y_4)^{-2}\\
				\eps_5 &= 8\eps_4(x_4 + y_4)^{-3}
			\end{cases}
	\end{align*}%
	\endgroup
\end{lemma}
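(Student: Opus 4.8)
The plan is to compute each coordinate change directly from its definition $\kappa_{\bullet\to\circ}=\kappa_\circ\circ\kappa_\bullet^{-1}$, exploiting the fact that both directional blow-ups $\Phi_\bullet=\Phi\circ\kappa_\bullet^{-1}$ and $\Phi_\circ=\Phi\circ\kappa_\circ^{-1}$ arise from the single global blow-up $\Phi$ of \eqref{global_blowup}; hence, as maps into the space $\R^6$ of original coordinates $(x,y,a,b,c,\eps)$, they must agree on the region where both charts are defined. Concretely, on this overlap one sets $\Phi_\bullet(\text{source coordinates})=\Phi_\circ(\text{target coordinates})$ and solves the resulting elementary algebraic system for the target coordinates in terms of the source ones. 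The solution is unique and smooth precisely on the overlap, which — by the notational convention explained before the Lemma — is the locus where the relevant barred $\mathbb{S}^5$-coordinate is nonzero (and, for the exit chart, where in addition $\bar x+\bar y>0$, i.e. $x_\bullet+y_\bullet>0$); this well-definedness is automatic since $\Phi$ is a diffeomorphism for $\bar r>0$.

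First I would treat the ``standard'' directional charts $\kappa_{\pm x,\pm y,\pm a,\eps}$. The pattern is always the same two steps: equate one homogeneity relation to express the ratio of the two $r$-coordinates, then back-substitute into the remaining five components. For example, for $\kappa_{-y\to\eps}$ one equates the $\eps$-components, $r_1^3\eps_1=r_2^3$, which on $\eps_1>0$ gives $r_2=r_1\eps_1^{1/3}$; substituting into $\Phi_{-y}=\Phi_\eps$ then yields $x_2=x_1\eps_1^{-1/3}$, $y_2=-\eps_1^{-1/3}$, $a_2=a_1\eps_1^{-1/3}$, $b_2=b_1\eps_1^{-2/3}$, $c_2=c_1\eps_1^{-2/3}$, exactly as asserted. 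The same procedure produces $\kappa_{-y\to-x}$, $\kappa_{-y\to\pm a}$, $\kappa_{\eps\to\pm a}$ and $\kappa_{\eps\to-y}$; the exponents $-1/3,-2/3,-3$ appearing throughout are just the quasi-homogeneity weights $1,1,1,2,2,3$ of \eqref{global_blowup} reorganized.

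Next I would handle the exit chart. Since $\Phi_\ex$ is, up to a factor of two in the scaling, the directional blow-up in the rotated coordinate $\tilde y=\tfrac{x+y}{2}$, one has $x_\bullet+y_\bullet=(x+y)/r_\bullet$ in any chart $\kappa_\bullet$ with $\bullet\in\{\eps,a,-a\}$, so equating $\Phi_\bullet=\Phi_\ex$ on $\{x_\bullet+y_\bullet>0\}$ gives $r_5=r_\bullet(x_\bullet+y_\bullet)/2$; then $x_5=(x-y)/(x+y)=(x_\bullet-y_\bullet)/(x_\bullet+y_\bullet)$, and the remaining four target coordinates follow by the same back-substitution, producing the normalizing factors $2,4,8$ in $\kappa_{\eps\to\ex}$, $\kappa_{a\to\ex}$ and $\kappa_{-a\to\ex}$.

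There is essentially no genuine obstacle: every step is a finite elementary manipulation. The only points requiring (minor) care are the bookkeeping of the $45^\circ$ rotation and the factor-of-two normalization built into $\Phi_\ex$, and the verification that the stated domains are exactly the overlap regions — which, as noted above, is immediate from the covering of $\mathbb{S}^5$ by the signs and positivity of the barred coordinates together with the diffeomorphism property of $\Phi$ on $\{\bar r>0\}$. If desired, each listed formula can alternatively be checked a posteriori by composing $\kappa_\circ$ with the computed inverse of $\kappa_\bullet$ and confirming one recovers the identity on the overlap.
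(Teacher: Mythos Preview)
Your proposal is correct and follows exactly the approach the paper itself indicates in the text preceding the Lemma: compute each $\kappa_{\bullet\to\circ}$ by equating $\Phi_\bullet=\Phi_\circ$ on the overlap and solving for the target coordinates, with the only extra care needed for the rotated exit chart $\kappa_\ex$. The paper in fact states the Lemma without a formal proof, treating the formulas as the result of precisely this elementary computation, so there is nothing to add.
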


\subsection{Another universal unfolding for the hyperbolic umbilic}
\label{app:universal_unfolding}

Consider 
\begin{equation}
	\begin{split}
		\tilde{V} = \frac{1}{3}\tilde{y}^3 + \tilde{y}\tilde{x}^2 + \tilde{a}(\tilde{y}^2 - \tilde{x}^2) + \tilde{b}\tilde{x} + \tilde{c}\tilde{y}
	\end{split}
\end{equation}
from \eqref{alternative_potential}. Here we briefly show that $\tilde{V}$ is a universal unfolding (among others) for the hyperbolic umbilic catastrophe, compare to table \ref{the7}.

We briefly recall: Let $\mathcal{E}_n$ denote the local ring of germs at $0 \in \R^n$. Let $\mathfrak{m}_n$ denote the ideal of germs vanishing at 0. $\mathfrak{m}_n$ is the maximal ideal in $\mathcal{E}_n$, which means that there does not exist a proper ideal of $\mathcal{E}_n$ strictly containing $\mathfrak{m}_n$.
Let $\Delta(f)$ denote the Jacobi ideal of $f \in \mathfrak{m}_n^2$, that is, the ideal generated by the partial derivatives of $f$ over $\mathcal{E}_n$. Let the codimension of $f$ be defined by the dimension of $\mathfrak{m}_n / \Delta(f)$ as an $\R$-vector space. The following theorem can be found in \cite{michor1985cat}, for example.

\begin{thm}
	Let $f \in \mathfrak{m}_n^2$ be a germ with finite codimension and let $b_1,\dots,b_d$ be representatives of a basis for $\mathfrak{m}_n / \Delta(f)$. Then $V(z,\alpha) = f(z) + \sum_{i = 1}^{d} b_i(z) \alpha_i $ is a universal unfolding of $f$.
\end{thm}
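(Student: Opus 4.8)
The plan is to deduce the statement from the standard equivalence between \emph{versality} and \emph{infinitesimal versality} of unfoldings, which is the analytic heart of catastrophe theory, and then to check the infinitesimal criterion for the concrete unfolding $V(z,\alpha) = f(z) + \sum_{i=1}^{d} b_i(z)\alpha_i$ by a short linear-algebra argument in the finite-dimensional quotient $\mathfrak{m}_n/\Delta(f)$ (finite-dimensional precisely because $f$ has finite codimension).

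First I would set up terminology. For an $r$-parameter unfolding $F$ of $f$ write $\dot F_i := \partial_{\alpha_i} F(\cdot,0) \in \mathcal{E}_n$ for its initial speeds, and call $F$ infinitesimally versal if
\begin{equation*}
	\mathcal{E}_n = \Delta(f) + \mathbb{R}\cdot 1 + \sum_{i=1}^{r} \mathbb{R}\cdot \dot F_i .
\end{equation*}
The input I would invoke (and cite, not reprove) is the Thom--Mather theorem: if $f$ has finite codimension then $f$ admits a versal unfolding, and an unfolding $F$ of such an $f$ is versal if and only if it is infinitesimally versal. One direction is elementary: differentiating the inducing relation $G = F\circ\phi + \mu\circ\pr_2$ at the base point shows that every initial speed $\dot G_j$ of every unfolding $G$ induced by $F$ lies in $\Delta(f) + \mathbb{R}\cdot 1 + \sum_i \mathbb{R}\cdot \dot F_i$; taking $G(z,\lambda) = f(z) + \lambda g(z)$ for arbitrary $g$ then forces the infinitesimal identity whenever $F$ is versal. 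The converse — infinitesimal versality implies versality — is the substantial part and rests on the Malgrange--Mather preparation theorem.

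Next, for $V(z,\alpha) = f(z) + \sum_{i=1}^{d} b_i(z)\alpha_i$ one simply reads off $\dot V_i = b_i$. By hypothesis the $b_i$ represent an $\mathbb{R}$-basis of $\mathfrak{m}_n/\Delta(f)$, hence $\mathfrak{m}_n = \Delta(f) + \sum_{i=1}^{d}\mathbb{R}\cdot b_i$; since $\mathcal{E}_n = \mathbb{R}\cdot 1 \oplus \mathfrak{m}_n$ as vector spaces and $\Delta(f)\subseteq\mathfrak{m}_n$ (as $f\in\mathfrak{m}_n^2$), adjoining the constant germ $1$ yields $\mathcal{E}_n = \Delta(f) + \mathbb{R}\cdot 1 + \sum_{i=1}^{d}\mathbb{R}\cdot \dot V_i$. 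Thus $V$ is infinitesimally versal, hence versal.

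Finally, for \emph{universality} (minimality of the number of parameters) I would show that any versal unfolding has at least $d := \dim_{\mathbb{R}}\mathfrak{m}_n/\Delta(f)$ parameters. Indeed, a versal $r$-parameter unfolding is infinitesimally versal, so the residue classes of its $r$ initial speeds must span $\mathcal{E}_n/(\Delta(f) + \mathbb{R}\cdot 1)$; since $\Delta(f) \subseteq \mathfrak{m}_n$ and $\mathbb{R}\cdot 1 \cap \mathfrak{m}_n = \{0\}$, this quotient is isomorphic to $\mathfrak{m}_n/\Delta(f)$, of dimension $d$, forcing $r \ge d$. As $V$ has exactly $d$ parameters, it is a universal (miniversal) unfolding of $f$. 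The main obstacle is exactly the cited equivalence in its nontrivial direction: converting the algebraic splitting $\mathcal{E}_n = \Delta(f) + \mathbb{R}\cdot 1 + \sum\mathbb{R}\dot F_i$ into an honest family of coordinate changes realizing the inducing relation requires the preparation theorem together with a homotopy/integration argument, and that is where all the analytic weight lies; everything else is bookkeeping in a fixed finite-dimensional vector space.
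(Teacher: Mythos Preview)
Your argument is correct and is exactly the standard route to this result: check that the initial speeds $\dot V_i = b_i$ together with the constants span $\mathcal{E}_n/\Delta(f)$ (immediate from the hypothesis and the splitting $\mathcal{E}_n = \mathbb{R}\cdot 1 \oplus \mathfrak{m}_n$), invoke the Thom--Mather equivalence of versality and infinitesimal versality (whose nontrivial direction you rightly flag as resting on the preparation theorem), and then count dimensions for minimality. There is nothing to compare against here, however: the paper does not give its own proof of this theorem but simply quotes it from the literature (Michor's \emph{Elementary Catastrophe Theory}) as a standard input used to verify that the alternative potential $\tilde V$ in the appendix is indeed a universal unfolding of the hyperbolic umbilic germ.
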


Consider the germ $\tilde{V}$ unfolds, that is, let $f = \frac{1}{3}y^3 + yx^2$. The hyperbolic umbilic germ $x^3 + y^3$ is right-equivalent to $2 \tilde{x}^3 + 6\tilde{x}\tilde{y}^2$ by putting $x = \tilde{x} + \tilde{y}$ and $y = \tilde{x} - \tilde{y}$. By scaling the variables, the germ $2 \tilde{x}^3 + 6\tilde{x}\tilde{y}^2$ is right equivalent to $f$. Hence $f$ is a valid germ to represent the hyperbolic umbilic catastrophe. Now consider $\mathfrak{m}_2 / \Delta(f)$. The Jacobi-ideal $\Delta(f)$ is generated by $x^2 + y^2$ and $2xy$ over $\mathcal{E}_2$. Surely $x$ and $y$ are not contained in $\Delta(f)$. But $x^k = x^{k-2}(x^2 + y^2) - \frac{1}{2}x^{k-1} y (2 x y) $ for $k \ge 3$, and similarly $y^k$ for $k \ge 3$ is contained in $\Delta(f)$. Certain quadratic germs are still missing in $\Delta(f)$. We can choose $y^2 - x^2$ together with $x$ and $y$ to form a basis for the $\R$-vector space $\mathfrak{m}_2 / \Delta(f)$. This gives the universal unfolding $\tilde{V}$.

\section*{Acknowledgements}

CK and MS acknowledge funding by the Deutsche Forschungsgemeinschaft (DFG, German Research Foundation) - KU 3333/7-1.
CK acknowledges partial support by a Lichtenberg Professorship funded by the VolkswagenStiftung.
CK and MS acknowledge support by TUM International Graduate School of Science and Engineering (IGSSE).
MS would like to thank the University of Groningen, where part of this research was conducted. The authors thank the reviewers and editors for their comments and remarks that helped to improve the manuscript.

\printbibliography

\end{document}